\title%
[The ideal of relations for the ring of invariants of $n$ points on the line]
{The ideal of relations for the ring of invariants\\of $n$ points on
  the line}
\author[Benjamin Howard, John Millson, Andrew Snowden and Ravi Vakil]
{Benjamin Howard, John Millson, Andrew Snowden and Ravi Vakil*}
\thanks{*B.~Howard was supported by NSF fellowship DMS-0703674.
J.~Millson was supported by the NSF grant DMS-0405606, the NSF FRG grant
DMS-0554254 and the Simons Foundation.
This research was partially conducted during the period A.~Snowden was
employed by the Clay Mathematics Institute as a Liftoff Fellow.  A.~Snowden
was also partially supported by NSF fellowship DMS-0902661.
R.~Vakil was partially supported by NSF grant DMS-0801196.}
\date{September 17, 2009.}
\newcommand{\cut}[1]{}
\newcommand{\Ncom}[1]{}
\newcommand{\Rnts}[1]{}
\newtheorem{theorem}{Theorem}[section]
\newtheorem{lemma}[theorem]{Lemma}
\newtheorem{corollary}[theorem]{Corollary}
\newtheorem{proposition}[theorem]{Proposition}
\theoremstyle{definition}
\theoremstyle{remark}
\newtheorem{remark}[theorem]{\it Remark}
\newcommand{\retro}{\mathrm{retro}}
\newcommand{\bwg}[1]{{\bigwedge}^{\! #1}\,}
\newcommand{\bw}{\bwg{2}}
\DeclareMathOperator{\Proj}{Proj}
\DeclareMathOperator{\Sym}{Sym}
\DeclareMathOperator{\lev}{lev}
\DeclareMathOperator{\gr}{gr}
\DeclareMathOperator{\bS}{\mathbf{S}}
\DeclareMathOperator{\Ind}{Ind}
\DeclareMathOperator{\Hom}{Hom}
\DeclareMathOperator{\Aut}{Aut}
\DeclareMathOperator{\sgn}{sgn}
\DeclareMathOperator{\Rel}{Rel}
\def\SL{\mathrm{SL}}
\def\Sp{\mathrm{Sp}}
\def\SO{\mathrm{SO}}
\def\Orth{\mathrm{O}}
\def\Z{\mathbb{Z}}
\def\P{\mathbb{P}}
\def\C{\mathbb{C}}
\def\Q{\mathbb{Q}}
\def\half{\tfrac{1}{2}}
\def\un{\mathrm{un}}
\def\wt{\mathbf{wt}}
\def\cq{/\!/}
\let\ms\mathscr
\let\mf\mathfrak
\let\mc\mathcal
\let\ss\scriptstyle
\let\wt\widetilde
\let\ol\overline
\let\ul\underline
\newcommand\dedge[1]{\overrightarrow{#1\rule{0em}{.7em}}}
\newcommand\uedge[1]{\overline{#1\rule{0em}{.7em}}}
\newcommand\dedgescr[1]{\overrightarrow{#1\rule{0em}{.5em}}}
\begin{document}


\begin{abstract}
The study of the projective coordinate ring of the (geometric invariant theory)
moduli space of $n$ ordered points on $\P^1$ up to automorphisms began with
Kempe in 1894, who proved that the ring is generated in degree one in the main
($n$ even, unit weight) case.  We describe the relations among the invariants
for all possible weights.  In the main case, we show that up to the
$\mf{S}_n$-symmetry, there is a single equation.  For $n \neq 6$, it is a
simple quadratic binomial relation.  (For $n=6$, it is the classical Segre cubic
relation.)  For  general weights, the ideal of relations is generated by
quadratics inherited from the case of $8$ points.  This paper completes the
program set out in \cite{HMSV}.
\end{abstract}

\maketitle

\tableofcontents




\section{Introduction}
\label{s:intro}

We consider the ring of invariants of $n$ points on the projective line,
defined as the projective coordinate ring of the GIT quotient of $(\P^1)^n$ by
the group $\SL(2)$.  This is a classical archetype of GIT and a common first
example in the theory.  To form the quotient, one must choose weights
$w=(w_1, \dots, w_n)$; it is then given by
\begin{displaymath}
(\P^1)^n \dashrightarrow (\P^1)^n \cq_w \SL(2) = \Proj{R_w}
\end{displaymath}
where $R_w=\bigoplus_{k=0}^{\infty} R_w^{(k)}$, with $R_w^{(k)}=
\Gamma((\P^1)^n, \mathcal{O}(kw_1, \dots, kw_n))^{\SL(2)}$.  (Note that we
use the notation $R^{(k)}$ for the $k$ piece of a graded ring $R$.)  The ring
of invariants $R_w$ turns out to be generated in its lowest nonzero degree, so
the GIT quotient has a natural projective embedding.  We denote the weight
$(1,1,\ldots,1)$ by $1^n$.
\Ncom{$R_w, R_w^{(k)}$}

\begin{theorem}[Main Theorem, informal version]
\label{informal}
If $w \neq 1^6$, the ideal of relations between lowest degree invariants is
generated by quadratics.
\end{theorem}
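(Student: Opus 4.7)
My approach reduces the claim, via $\mf{S}_n$-equivariance and weight specialization, to a combinatorial straightening problem on a finite collection of base cases.

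First, I would work in the standard combinatorial model: degree-one invariants $R_w^{(1)}$ are spanned by bracket monomials $\prod_k [i_k j_k]$, encoded as multigraphs on $[n]$ in which vertex $i$ has degree $w_i$. The Pl\"ucker two-term exchange generates an $\mf{S}_n$-equivariant subideal $Q_w$ of the full relation ideal $I_w \subset \Sym(R_w^{(1)})$, and the theorem becomes the statement $Q_w = I_w$. Equivalently, modulo $Q_w$ every monomial in such graphs can be ``straightened'' to a canonical form, and the resulting Hilbert function matches that of $R_w$.

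Second, I would first handle the main unit-weight case $w = 1^n$ ($n$ necessarily even, $n \neq 6$). The graphs become perfect matchings, $I_{1^n}$ is an $\mf{S}_n$-representation, and the goal is to produce an $\mf{S}_n$-orbit of quadric binomials whose span equals $I_{1^n}$. My plan is to exhibit the quadric explicitly (a two-term matching exchange of ``swap-a-four-cycle'' type) and to give a termination-and-confluence argument for the induced rewriting system on tuples of matchings. For general weights I would then transport quadrics into $I_w$ by two specialization devices: a ``merge-points'' map from $R_{1^N}$ to $R_w$ where $N = \sum w_i$, and a ``forget-a-point'' map that lowers $n$. By induction on $n$ and on $\sum w_i$, this should reduce the problem to finitely many base cases with $n \leq 8$, matching the abstract's claim that the $8$-point quadrics suffice.

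The hardest step will be the confluence argument in the unit-weight case, since it must succeed for $n \geq 8$ but genuinely fail for $n = 6$ with the Segre cubic as the precise obstruction; finding a reduction order that is both $\mf{S}_n$-equivariant and terminating is delicate. Once the unit case is in hand, the weight-specialization arguments are largely bookkeeping, but one must still verify that no cubic obstruction is created for intermediate weights, which is where the finite case check at $n \leq 8$ (in particular, the base case $n=8$ of all allowed weights) plays its role.
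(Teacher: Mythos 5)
Your high-level framing (reduce arbitrary weights to $w = 1^n$ via a merging/clumping map, then handle the unit-weight case by $\mf{S}_n$-equivariance, finishing with base cases) is consistent with the paper's Remark~\ref{formal}, which cites the earlier clumping argument for the reduction to $1^n$. But the core of your plan --- a termination-and-confluence argument for a rewriting system on tuples of matchings, with $n = 6$ appearing as the unique failure of confluence --- is precisely the step the paper does \emph{not} attempt in the original ring, and there is a good reason: the Pl\"ucker relation is three-term, not binomial, so the induced rewriting system has no obvious $\mf{S}_n$-equivariant term order, and local confluence genuinely fails in degree three for \emph{all} $n$, not only $n=6$. Concretely, there are nontrivial cubic obstructions (the ``generalized Segre cubics'' of \S\ref{s:gsc}) in every $I_{1^n}$, not just $I_{1^6}$; the theorem holds because for $n \ge 8$ these cubics happen to lie in the ideal generated by quadratics, which is a statement requiring proof, not a consequence of a well-chosen reduction order. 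Your proposal has no mechanism for establishing this.

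The paper circumvents this by first passing to a Speyer--Sturmfels toric degeneration $\gr_\Xi R_L$, where the Pl\"ucker relation becomes binomial and a ``balancing'' plus normal-form argument (the toric analogue of your confluence idea) is actually tractable (\S\ref{s:ytree}). Even then, the toric ideal is only generated by quadratics and explicit cubics; these cubics lift back to the generalized Segre cubic relations in $I_L$ (\S\ref{s:gsc}), and showing they lie in $Q_L$ occupies the remaining half of the paper (retrogeneration for $n \ge 10$ in \S\ref{s:retro}, then base cases in \S\ref{s:base}). Your plan effectively postpones this to the ``delicate'' step without a concrete idea of how the cubic obstruction would disappear.

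A second, more local error: you claim the reduction lands in ``finitely many base cases with $n \le 8$.'' The paper's inductive step (Theorem~\ref{thm:retro} plus Proposition~\ref{prop:outer246}) expresses a relation on $n$ points in terms of relations on $n - 2$, $n - 4$ and $n - 6$ points; starting from $n \ge 14$ this terminates at base cases $n = 8$, $10$ and $12$, not $n \le 8$. The $n = 10$ and $n = 12$ cases are far from trivial (they are \S\ref{s:base}, one of the hardest parts of the paper) and cannot be dismissed as ``bookkeeping.'' The abstract's phrase about quadratics ``inherited from the $8$-point case'' refers to the form of the generators, not to the set of base cases needed for the induction.
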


Detailed motivation and background for this problem are given in the
announcement \cite{hmsvCR}.  We describe there how small cases have long been
known to yield beautiful classical geometry.  In this paper, we show that this
rich structure extends to any number of points with any weighting: the
relations for the moduli space are generated by a particularly simple type of
quadratic, with the single exception of the Segre cubic for $6$ points.  In a
precise sense, the ideal is cut out by essentially one equation, inherited
from the $n=8$ case.  If Kempe's theorem is the analogue of Weyl's ``First Main
Theorem'' for $\SL_2$ (see \cite{W}), then this is the analogue of his ``Second Main Theorem.''  

\subsection{Integrality issues}
\label{ss:integrality}

For simplicity, we prove our results over $\Q$, but our results apply more
generally, as we show in \cite{integral}.  In preparation for this, we
prove intermediate results over more general base rings, which we hope will not
distract readers interested in characteristic $0$.  In particular, this
paper proves Theorem~\ref{informal} over $\Z[\tfrac{1}{12!}]$, but the precise
version (Theorem~\ref{mainthm}) only over $\Q$.

\subsection{The graphical formalism}
\label{ss:gf}

We use a graphical interpretation of the ring of invariants, which allows us to
deal effectively with both the $\mf{S}_n$-symmetries and the broken symmetries
of toric degenerations. To a directed graph $\Gamma$ on vertices labeled $1$
through $n$ (in bijection with the points), with valence vector $kw=(kw_1,
\ldots, kw_n)$, we associate an invariant in $R_w^{(k)}$:
\begin{displaymath}
X_{\Gamma} = \prod_{\dedgescr{ij}} (x_i y_j - x_j y_i).
\end{displaymath}
Here the product is taken over the edges $\dedge{ij}$ of $\Gamma$ and $x_i$
and $y_i$ are projective coordinates for $\P^1$.  Note that if $\Gamma$ has a
\emph{loop} --- that is, an edge with the same source and target
--- then $X_{\Gamma}=0$.  (Throughout this paper, graphs are allowed to have
loops and multiple edges between the same vertices.)  A fundamental result from
classical invariant theory states that the $X_{\Gamma}$ span $R_w^{(k)}$.
Kempe showed that for $w=1^n$ and $n$ even the ring $R_w$ is generated in
degree 1 (Theorem~\ref{thm:kempe}).  Thus, in this situation, $R_w$ is
generated by those $X_{\Gamma}$ where $\Gamma$ is a \emph{matching}, that is,
a graph in which each vertex belongs to precisely one edge.  A similar result holds for any
$n$ and $w$.
\Ncom{$X_{\Gamma}, \text{loop, matching}$}

\subsection{Relations}

The theorem of Kempe mentioned above provides generators for the ring $R_w$.
The purpose of this paper is to determine the relations between these
generators.  A number of obvious relations exist; we catalog some of them
below.  An important phenomenon shows itself already in these simple
examples:  relations on $n$ points can be extended to give relations on
more than $n$ points.  This is a key theme in our treatment and is discussed
in detail in \S \ref{ss:outer}.

The most obvious relation is the \emph{sign relation}:  we have
$X_{\dedgescr{ab}}=-X_{\dedgescr{ba}}$.  This can be regarded as a relation
between two invariants with $n=2$ and $w=1^2$.  It extends to invariants with
arbitrary $n$ and $w$ as
follows:  if $\Gamma$ is any graph and $\Gamma'$ is obtained from $\Gamma$ by
switching the direction of a single edge then $X_{\Gamma}=-X_{\Gamma'}$.

The next most simple relation appears when $n=4$ and $w=1^4$:
\Ncom{$\text{s., Pl., S.c. r.}$}
\begin{displaymath}
\begin{xy}
(0, -4)*{}="A"; (0, 4)*{}="B"; (8, -4)*{}="C"; (8, 4)*{}="D";
{\ar "A"; "D"}; {\ar "C"; "B"};
\end{xy}
\qquad = \qquad
\begin{xy}
(0, -4)*{}="A"; (0, 4)*{}="B"; (8, -4)*{}="C"; (8, 4)*{}="D";
{\ar "A"; "B"}; {\ar "C"; "D"};
\end{xy}
\qquad + \qquad
\begin{xy}
(0, -4)*{}="A"; (0, 4)*{}="B"; (8, -4)*{}="C"; (8, 4)*{}="D";
{\ar "C"; "A"}; {\ar "B"; "D"};
\end{xy}
\end{displaymath}
This is the classical \emph{Pl\"ucker relation}.  It extends to each
$R^{(k)}_w$ as well:  given a graph $\Gamma$ with valence vector $kw$ and two
edges $\dedge{ab}$ and $\dedge{cd}$ of $\Gamma$ we have the identity
\begin{equation}
X_{\Gamma}=X_{\Gamma_1}+X_{\Gamma_2}
\end{equation}
where $\Gamma_1$ and $\Gamma_2$ are the graphs obtained from $\Gamma$ by
replacing $\{ \dedge{ab},\dedge{cd} \}$ with $\{ \dedge{ad}, \dedge{cb} \}$ and
$\{ \dedge{ac}, \dedge{bd} \}$ respectively.  We will use the phrase ``to
Pl\"ucker two edges $\dedge{ab}$ and $\dedge{bc}$ of a graph $\Gamma$''  to
mean ``to replace $X_{\Gamma}$ by $X_{\Gamma_1} + X_{\Gamma_2}$.''  The sign
and Pl\"ucker relations are both linear relations; in fact, they span all
linear relations.
\Ncom{$\text{Plucker (v)}$}

Some higher degree relations are conveniently thought of in terms of colored
graphs.  By a \emph{$k$-colored graph} we mean a graph in which each edge
has been assigned one of $k$ colors.  By a \emph{multi-matching} of degree $k$
we mean a $k$-colored graph in which each vertex appears in precisely one
edge of each color.  Let $\Gamma$ be a multi-matching of degree $k$ on $n$
vertices.  We define the element $X_{\Gamma}$ of $(R^{(1)}_{1^n})^{\otimes
k}$ to be the pure tensor $\bigotimes X_{\Gamma(i)}$ where the product is
over the colors $i$ and $\Gamma(i)$ is the subgraph of $\Gamma$ on the
edges of color $i$.  In terms of colored graphs, the map
$(R_{1^n}^{(1)})^{\otimes k} \to R_{1^n}^{(k)}$ ``forgets the color.''
\Ncom{$\text{colored graph}$}

The \emph{Segre cubic relation} is described with colored graphs as follows:
\begin{equation}
\begin{xy}
(-4, 6.93)*{}="A"; (4, 6.93)*{}="B"; (8, 0)*{}="C";
(4, -6.93)*{}="D"; (-4, -6.93)*{}="E"; (-8, 0)*{}="F";
{\ar@[red]@{--} "A"; "B"};
{\ar@[red]@{--} "C"; "F"};
{\ar@[red]@{--} "D"; "E"};
{\ar@[blue]@{..}@[|(2)] "A"; "D"};
{\ar@[blue]@{..}@[|(2)] "B"; "C"};
{\ar@[blue]@{..}@[|(2)] "F"; "E"};
{\ar@[green]@{-} "A"; "F"};
{\ar@[green]@{-} "B"; "E"};
{\ar@[green]@{-} "C"; "D"};
\end{xy}
\qquad = \qquad
\begin{xy}
(-4, 6.93)*{}="A"; (4, 6.93)*{}="B"; (8, 0)*{}="C";
(4, -6.93)*{}="D"; (-4, -6.93)*{}="E"; (-8, 0)*{}="F";
{\ar@[red]@{--} "C"; "F"};
{\ar@[red]@{--} "A"; "D"};
{\ar@[red]@{--} "B"; "E"};
{\ar@[blue]@{..}@[|(2)] "A"; "B"};
{\ar@[blue]@{..}@[|(2)] "C"; "D"};
{\ar@[blue]@{..}@[|(2)] "F"; "E"};
{\ar@[green]@{-} "A"; "F"};
{\ar@[green]@{-} "B"; "C"};
{\ar@[green]@{-} "D"; "E"};
\end{xy}
\label{e:Segre}
\end{equation}
(For those readers for whom color is not available, we will follow the
convention that the solid lines are green, the dashed red and the dotted
blue.)  Each edge should be directed in the same way on both sides of the
equation.  This relation holds because the graphs on each side have the same
set of edges --- only the colors are different.  The Segre cubic relation
extends to cubic relations on $R^{(1)}_{1^n}$ for any even $n>6$.  For
example, we have the following relation on eight points starting from the
Segre cubic on six points:
\begin{equation}
\label{Segre8}
\begin{xy}
(-4, 9.93)*{}="A"; (4, 9.93)*{}="B"; (8, 3)*{}="C";
(4, -3.93)*{}="D"; (-4, -3.93)*{}="E"; (-8, 3)*{}="F";
(-4, -9.93)*{}="G"; (4, -9.93)*{}="H";
{\ar@[red]@{--} "A"; "B"};
{\ar@[red]@{--} "C"; "F"};
{\ar@[red]@{--} "D"; "E"};
{\ar@[blue]@{..}@[|(2)] "A"; "D"};
{\ar@[blue]@{..}@[|(2)] "B"; "C"};
{\ar@[blue]@{..}@[|(2)] "F"; "E"};
{\ar@[green]@{-} "A"; "F"};
{\ar@[green]@{-} "B"; "E"};
{\ar@[green]@{-} "C"; "D"};
{\ar@<.4ex>@[red]@{--} "G"; "H"};
{\ar@[green]@{-} "G"; "H"};
{\ar@<-.4ex>@[blue]@{..}@[|(2)] "G"; "H"};
\end{xy}
\qquad = \qquad
\begin{xy}
(-4, 9.93)*{}="A"; (4, 9.93)*{}="B"; (8, 3)*{}="C";
(4, -3.93)*{}="D"; (-4, -3.93)*{}="E"; (-8, 3)*{}="F";
(-4, -9.93)*{}="G"; (4, -9.93)*{}="H";
{\ar@[red]@{--} "C"; "F"};
{\ar@[red]@{--} "A"; "D"};
{\ar@[red]@{--} "B"; "E"};
{\ar@[blue]@{..}@[|(2)] "A"; "B"};
{\ar@[blue]@{..}@[|(2)] "C"; "D"};
{\ar@[blue]@{..}@[|(2)] "F"; "E"};
{\ar@[green]@{-} "A"; "F"};
{\ar@[green]@{-} "B"; "C"};
{\ar@[green]@{-} "D"; "E"};
{\ar@<.4ex>@[red]@{--} "G"; "H"};
{\ar@[green]@{-} "G"; "H"};
{\ar@<-.4ex>@[blue]@{..}@[|(2)] "G"; "H"};
\end{xy}
\end{equation}

There is a ``new'' relation for $n=8$, binomial and quadratic:
\begin{equation}\label{e:simplest}
\begin{xy}
(0, 4)*{}="A"; (8, 4)*{}="B"; (16, 4)*{}="C"; (24, 4)*{}="D";
(0, -4)*{}="E"; (8, -4)*{}="F"; (16, -4)*{}="G"; (24, -4)*{}="H";
{\ar@[red]@{--} "A"; "B"};
{\ar@[red]@{--} "E"; "F"};
{\ar@[green]@{-} "A"; "E"};
{\ar@[green]@{-} "B"; "F"};
{\ar@[red]@{--} "C"; "D"};
{\ar@[red]@{--} "G"; "H"};
{\ar@[green]@{-} "C"; "G"};
{\ar@[green]@{-} "D"; "H"};
\end{xy}
\qquad = \qquad
\begin{xy}
(0, 4)*{}="A"; (8, 4)*{}="B"; (16, 4)*{}="C"; (24, 4)*{}="D";
(0, -4)*{}="E"; (8, -4)*{}="F"; (16, -4)*{}="G"; (24, -4)*{}="H";
{\ar@[red]@{--} "A"; "B"};
{\ar@[red]@{--} "E"; "F"};
{\ar@[green]@{-} "A"; "E"};
{\ar@[green]@{-} "B"; "F"};
{\ar@[green]@{-} "C"; "D"};
{\ar@[green]@{-} "G"; "H"};
{\ar@[red]@{--} "C"; "G"};
{\ar@[red]@{--} "D"; "H"};
\end{xy}
\end{equation}
As with all previously discussed relations, the above relation extends to
relations on more points.  One way to extend the above relation is to add some
number of doubled edges to each side; we call such relations the \emph{simplest
binomial relations}.  Examples are given in \cite[\S 4]{hmsvCR}.
\Ncom{$\text{simpl. bin. r.}$}

\subsection{The main theorem}

We now state Theorem~\ref{informal} more precisely in the main case
of $n$ even and unit weights.

\begin{theorem}[Main theorem, main case]
\label{mainthm}
For even $n \ne 6$ the ideal $I_{1^n}$ of relations (the kernel of
$\Sym{R_{1^n}^{(1)}} \twoheadrightarrow R_{1^n}$) is generated by the simplest
binomial relations.  The symmetric group $\mf{S}_n$ acts transitively on these
relations, so any one of them generates $I_{1^n}$ as an $\mf{S}_n$-ideal. 
\end{theorem}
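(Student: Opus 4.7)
The plan is to combine the graphical formalism with a toric degeneration and an induction on $n$. By Kempe's theorem $R_{1^n}$ is generated in degree one by the matching invariants $X_M$, so $I_{1^n}$ is the kernel of the surjection $\Sym R_{1^n}^{(1)} \twoheadrightarrow R_{1^n}$; in each degree $k$ it consists of the linear relations among monomials $X_{M_1}\cdots X_{M_k}$ as the $M_i$ range over matchings on $\{1,\ldots,n\}$.

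First I would construct a flat toric degeneration of $R_{1^n}$ (for example along a SAGBI-type filtration coming from a trivalent tree or a weight order on graph invariants) whose initial ring is a semigroup algebra generated by matching classes; its defining ideal is then generated by binomial relations $X_{M_1}\cdots X_{M_k} = X_{M_1'}\cdots X_{M_k'}$ whenever the multisets of edges of the two sides coincide. A standard flat-family argument with constant Hilbert function then reduces Theorem~\ref{mainthm} to two tasks: show that every toric binomial relation lies in the $\mf{S}_n$-ideal generated by the initial form of one simplest binomial relation, and verify that each such initial binomial lifts to a genuine element of $I_{1^n}$. The obvious candidate for the lift is the simplest binomial relation itself.

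Second, I would attack the combinatorial reduction by induction on $n$. For a quadratic toric binomial equating pairs $(M_1, M_2)$ and $(M_1', M_2')$, the multigraph $M_1 \cup M_2 = M_1' \cup M_2'$ decomposes into edge-disjoint alternating cycles of even length, and the argument branches on the cycle structure: (a) if $M_1 \cup M_2$ contains a doubled edge on vertices $\{a,b\}$, restrict to the matchings on $\{1,\ldots,n\}\setminus\{a,b\}$ and invoke the inductive hypothesis; (b) if $M_1 \cup M_2$ contains a $4$-cycle together with any other $4$-cycle or doubled edge, a single application of the $n=8$ simplest binomial relation (perhaps after a sign) converts the $4$-cycle into a pair of doubled edges and returns us to case (a); (c) if every cycle has length $\geq 6$ and $n \geq 8$, use Pl\"ucker identities, which are consequences of the degree-one relations already in the ideal, to split some long cycle into shorter ones and hand back to case (b). Higher-degree binomials are telescoped to products of quadratic binomials by comparing matchings two factors at a time.

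The principal obstacle will be step (c): proving that whenever $M_1 \cup M_2$ consists entirely of cycles of length $\geq 6$, a controlled sequence of Pl\"ucker moves and simplest-binomial moves terminates in a configuration containing a $4$-cycle or a doubled edge. This is a delicate combinatorial lemma about how swapping two edges between matchings rearranges the cycle structure, and it is precisely here that $n = 6$ must be excluded: the only long-cycle structure on six vertices is a single hexagon, no eight-vertex subconfiguration is available on which to apply the simplest binomial, and the Segre cubic must then appear as a genuinely new, non-binomial generator of $I_{1^6}$. The $\mf{S}_n$-transitivity claim in the theorem is essentially immediate from the construction of the simplest binomial relation, since the $\mf{S}_n$-action on matchings is transitive on pairs of disjoint $4$-cycles accompanied by any fixed number of doubled edges.
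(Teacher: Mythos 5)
Your toric-degeneration starting point and the cycle-structure casework on quadratic binomials track the paper's Sections 3--4 and 6 reasonably well, but there is a structural gap that the rest of the proposal papers over: higher-degree binomials in the toric ring do \emph{not} telescope to products of quadratic binomials. The toric ideal along a Y-tree degeneration is generated by quadratics together with genuinely cubic ``generalized toric Segre'' relations (Theorem~\ref{thm:ytree}, Proposition~\ref{prop:ytree-rel}); the ``type vector'' of a triple of reduced matchings is a quadratic invariant, and the cubic Segre move is precisely what changes it. So the flat-family reduction in the form you state it --- ``show every toric binomial lies in the $\mf{S}_n$-ideal generated by the initial form of one simplest binomial relation'' --- is aimed at a false target, since that $\mf{S}_n$-ideal does not contain the cubic toric Segre relations.

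What the degeneration actually gives you (Theorem~\ref{thm:gsc}) is only that $I_{1^n}$ is generated by quadratics \emph{plus} lifts of these cubics (the ``small generalized Segre cubic'' relations). The real content of the theorem, and the part your proposal never touches, is showing that those cubic generators already lie in the quadratic ideal $Q_L$ when $n\ne 6$. In the paper this occupies two full sections: a retrogeneration theorem (\ref{thm:retro}) showing that for $n\ge 10$ every cubic relation is, modulo $Q_L$, induced from $n-2$, $n-4$, or $n-6$ points, and then three base cases $n=8,10,12$ handled by an external computation ($n=8$, via \cite{8pts}) and a delicate representation-theoretic dimension count (for $n=10,12$, using the spaces $I'_{\ms{P}}$ and ``good bipartitions''). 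Your step~(c), splitting long cycles by Pl\"ucker moves, is sound for the \emph{quadratic} ideal (it essentially reproduces Lemma~\ref{prop:pfil2-a} and Proposition~\ref{prop:pfil2}), but it does not interact with the cubic layer at all, and there is no mechanism in your induction for absorbing the Segre cubic into quadratics. Consequently, the explanation of why $n=6$ fails is right in spirit (the cubic survives because there is no room to split it) but the argument that $n\ge 8$ succeeds is missing its main engine. Finally, the last step --- that $Q_L$ is spanned by the simplest binomial relations --- is itself not immediate; the paper proves it via the multiplicity-freeness of $I_L^{(2)}$ as an $\mf{S}_L$-module (Proposition~\ref{prop:pfil2rep} and Section~\ref{s:quad}), which is another ingredient your proposal takes for granted.
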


(The ideal $I_{1^6}$ is principal and generated by the Segre cubic relation
\eqref{e:Segre} over $\Z$.) 

\begin{remark}
\label{formal}
The discussion of \cite[\S 2.17]{HMSV} explains how to reduce the case of
arbitrary weight to the ``main case.''  Thus as a corollary we have
Theorem~\ref{informal}, and more precisely, the quadratics are explicitly given
by ``clumping vertices'' (loc.\ cit.).  Thus for the remainder of the paper,
we will deal only with this ``main case'' of $w=1^n$ and $n$ even.
\end{remark}

We essentially conjectured Theorems~\ref{informal} and~\ref{mainthm} in
\cite[\S 1.5]{HMSV}.  We saw the two main theorems of that paper as evidence:
first, that a class of relations called the ``simple binomial relations,''
containing the simplest binomial relations, cuts out the quotient
scheme-theoretically, and second, that the ideal is generated by relations of
degree at most four.  These two results are subsumed by Theorem~\ref{mainthm}
in the main case, and Theorem~\ref{informal} (in its more precise form,
Remark~\ref{formal}) in general.

\begin{remark}
Given that Theorem~\ref{mainthm} states that $I_{1^n}$, for even $n \ne 6$, is
generated by a single quadratic up to $\mf{S}_n$-symmetry, one may wonder
whether the same holds when $n$ is odd.  A short representation-theoretic
argument shows that if $n$ is 5, 7 or 9 then this is indeed the case (see
\cite[Fig.~ 3(h)]{hmsvCR} for a simple generator with $n=5$), but if $n$ is
odd and at least 11 then the space of quadratic relations is not a cyclic
$\mf{S}_n$-module, so $I_{1^n}$ is not principal as an $\mf{S}_n$-ideal.
\end{remark}

\subsection{Representation-theoretic description}
\label{repthy}

Here is a representation-theoretic description of the quadratics (in the
main case) in characteristic $0$ that is both striking and relevant.  The
$\mf{S}_n$-representation on $R^{(1)}_{1^n}$ is irreducible and corresponds to
the partition $n/2+n/2$.  The representation $\Sym^2{R^{(1)}_{1^n}}$ is
multiplicity free and contains those irreducibles corresponding to partitions
with at most four parts, all even (Proposition \ref{prop:pfil2rep}).  The
space of quadratic relations is the subspace of $\Sym^2{R^{(1)}_{1^n}}$
spanned by those irreducibles corresponding to partitions with
\emph{precisely} four parts.  Being multiplicity-free, this is necessarily a
cyclic $\mf{S}_n$-module.  The reader may wonder why we privilege a
particular generator; the answer is that this relation is in some imprecise
sense forced upon us by the graphical formalism.

\subsection{Outline of proof}
\label{outline}

We now outline the proof, noting where the arguments are ad hoc or less
satisfactory.  The challenge is to relate three structures which often operate
at cross purposes: the generation of new relations from relations on fewer
points; the action of $\mf{S}_n$ on everything; and the graphical description
of the algebra, including the use of colored graphs to describe relations.

In \S \ref{s:ring}, we set the stage by giving our preferred description of
the invariant ring.  We replace the integer $n$ by a finite set $L$ of
cardinality $n$, as this makes many constructions more transparent.  In
\S \ref{s:gentor}--\ref{s:ytree} we use a Speyer-Sturmfels toric degeneration
to get some control on the degrees and types of generators, temporarily
breaking the $\mf{S}_L$-symmetry.  We show that the degenerated ring is
generated in degree one and that the relations between the degree one
generators are generated by quadratic relations and certain explicit cubic
relations.  In \S \ref{s:gsc} we lift these explicit cubic relations to the
original invariant ring.  Having deduced that the ideal of relations is
generated by quadratics and these particular (``small generalized Segre'')
cubics, we are done with the toric degeneration.  Our next goal is to show
that the particular cubics lie in the ideal $Q_L$ generated by quadratic
relations.

In order to take advantage of the $\mf{S}_L$-action, in \S \ref{s:symvl} we
study the tensor powers of the degree one invariants as representations,
introducing a useful ``partition filtration.''  Our results will (for
example) allow us to write invariants and relations in terms of highly
disconnected graphs, which is the key to our later inductive arguments.
The last result of this section is disappointing:  it is the only place in the
article where computer calculation is used.  However, the calculations are
quite mild --- they concern cubic invariants on six points and amount to 
simple linear algebra problems in vector spaces of dimension at most 35
--- and we feel that a dedicated human being could perform them in a matter
of hours.

In \S \ref{s:retro}, we show that for $n \ge 10$ all relations are induced
from those on fewer points (precisely, $n-2$, $n-4$ or $n-6$ points), modulo
quadratic relations.  The cases $n \ge 12$ are direct and structural, but the
case $n=10$ is ad hoc and inelegant because ``the graphs are too small'' to
apply the structural techniques.  As a consequence, we find that if the
ideal is generated by quadratics for $n-2$, $n-4$ and $n-6$ points then it
is for $n$ points as well.

In \S \ref{s:base}, we show that the ideal is generated by quadratics
when $n \neq 6$.  This implies Theorem~\ref{informal} by Remark~\ref{formal}.
Thanks to the previous section, showing generation by quadratics reduces by
induction to showing the result for the three ``base cases'' where $n$ is
8, 10 and 12.
We accomplish this by further reducing the 10 and 12 point
cases to the 8 point case and then appealing to earlier work for the 8
point case.  The reduction from the 10 and 12 point case to the 8 point
case is one of the most difficult and least conceptual parts of the paper, so
the reader may wish to skip this section on a first reading.  One might
hope that these results, being finite computations, could be relegated to a
computer, but the computations are large enough so that this is not possible
with current technology using naive algorithms.

Finally, in \S \ref{s:quad} we show that the quadratic relations are
spanned by the simplest binomial relations using the representation theory
of $\mf{S}_L$, completing the proof of Theorem~\ref{mainthm}.

\subsection{The projective coordinate ring of $X^n \cq G$}

\label{andrew}The third author has observed that many of the formal concepts in this paper
--- such as outer multiplication and the simple binomial relations --- in
fact apply to the study of the projective coordinate ring of $X^n \cq G$ for
any projective variety $X$ with an action of a group $G$.  He has constructed a
formalism for dealing with the resulting structures and formulated a few
general finiteness conjectures.  One of the more surprising realizations is
that the graphical formalism discussed above applies to any variety, in a
certain sense.  This general point of view may even shed more light on the
present case:  in this formalism, Theorem~\ref{mainthm} can be reinterpreted
as stating that a certain ``ring,'' made up of the rings $R_{1^n}$ with
varying $n$, is finitely presented.  If such finitely presented ``rings''
were coherent (a weakening of the noetherian property) then one would
immediately obtain universal degree bounds on syzygies, as we have for
relations.  This work will appear in a forthcoming paper.

\subsection{Notation and conventions}

We follow some conventions in an attempt to make the notation less onerous.
Throughout, $L$ will be a finite set.  By an \emph{even set} we mean finite set
of even cardinality.  Semigroups will be in script, e.g., $\ms{G}, \ms{S},
\ms{R}$.  Graphs are denoted by uppercase Greek letters, e.g., $\Gamma$, 
$\Delta$.  Trivalent trees will be denoted by $\Xi$.  An edge of
a directed (resp.\ undirected) graph from vertex $x$ to $y$ is
denoted $\dedge{xy}$ (resp.\ $\uedge{xy}$).
In general, $S$ (in
various fonts) will refer to constructions involving general directed graphs,
and $R$ will refer to regular graphs.  We work over $\Z$ (see \S
\ref{ss:integrality}) in general.

\subsection*{Acknowledgments.}

We thank Shrawan Kumar, Chris Manon and Lawrence O'Neil for useful
discussions.


\section{The invariant ring $R_L$}
\label{s:ring}

In \S \ref{s:ring} we define the ring of invariants $R_L$ for a finite set $L$, and give some of its properties.

\subsection{The semi-group $\ms{G}_L$ and the rings $S_L$ and $R_L$}
\label{ss:rings}

Let $L$ be a finite set.  Denote by $\ms{G}_L$ the set of directed graphs on
$L$.   Give $\ms{G}_L$ the structure of a semi-group by defining $\Gamma \cdot
\Gamma'$ to be the graph on $L$ whose edge set is the disjoint union of the
edge sets of $\Gamma$ and $\Gamma'$.  For an element $\Gamma$ of $\ms{G}_L$ we
denote the corresponding element of the semi-group algebra $\Z[\ms{G}_L]$ by
$X_{\Gamma}$.  (Readers interested in characteristic $0$ may freely replace
any occurrence of $\Z$ or $\Z[1/n]$ by $\Q$ throughout.)

For $a,b \in L$ let $\dedge{ab}$ denote the graph in $\ms{G}_L$ with a single directed edge
from $a$ to $b$.  Clearly $\ms{G}_L$ is the free commutative semi-group on the
$\dedge{ab}$, and $\Z[\ms{G}_L]$ is the polynomial ring on the
$X_{\dedgescr{ab}}$.
\Ncom{$X_{\Gamma} \in \Z[\ms{G}_L], L, \ms{G}_L, \dedge{ab}$}

Define the ring $S_L$ as the quotient of $\Z[\ms{G}_L]$ by the following three
types of relations, described in \S \ref{ss:gf}.
\begin{itemize}
\item \emph{Loop relation:} If $\Gamma$ has a loop then $X_{\Gamma}=0$.
\item \emph{Sign relation:} If $\Gamma$ is obtained from $\Gamma'$ by reversing
the direction of an edge then $X_{\Gamma}=-X_{\Gamma'}$.
\item \emph{Pl\"ucker relation:} If $a$, $b$, $c$ and $d$ are elements of $L$,
then $X_{\dedgescr{ab}} X_{\dedgescr{cd}}=X_{\dedgescr{ad}} X_{\dedgescr{cb}}
+X_{\dedgescr{ac}} X_{\dedgescr{bd} }$.
\end{itemize}
The sign relation implies the loop relation when 2 is inverted.  We still write
$X_{\Gamma}$ for the image of $X_{\Gamma}$ in the ring $S_L$.
\Ncom{$\text{s., Pl. r.}$} 

Recall that a graph $\Gamma$ on $L$ is said to be \emph{regular of degree $d$}
if each vertex of $\Gamma$ belongs to precisely $d$ edges.  Define $R_L$ to be the
subgroup of $S_L$ generated by the $X_{\Gamma}$ with $\Gamma$ regular.  Clearly
$R_L$ is a subring of $S_L$.  Grade $R_L$ by declaring $X_{\Gamma}$ to be of
degree $d$ if $\Gamma$ is regular of degree $d$.  If $|L|$ is odd, then every
regular graph on $L$ has even degree, so $R_L$ is concentrated in even degrees. 
(The ``first and second main theorems of invariant theory,'' mentioned
implicitly in  \S \ref{ss:gf}, imply that $S_L$ is the $\SL_2$-invariant part of
the Cox ring of $(\P^1)^L$ and that $R_L$ is the $\SL_2$-invariant part of
the projective coordinate ring of $(\P^1)^L$ with respect to the line bundle
$\ms{O}(1)^{\boxtimes L}$.)
\Ncom{$R_L \subset S_L=\Z[\ms{G}_L]/\text{s,P}, \text{$d$-regular}$}

These constructions are functorial in $L$:  A map of sets $\phi:L \to L'$
induces a homomorphism $\ms{G}_L \to \ms{G}_{L'}$ of semi-groups and thus a
homomorphism $\Z[\ms{G}_L] \to \Z[\ms{G}_{L'}]$ of semi-group rings.  This
ring homomorphism respects the sign and Pl\"ucker relations and so induces
a ring homomorphism $S_L \to S_{L'}$.  If the fibers of $L$ all have the
same cardinality then $R_L$ is mapped into $R_{L'}$, and we thus obtain a map
$R_L \to R_{L'}$.  As a special case, we see that $\mf{S}_L=\Aut(L)$ acts on
$R_L$.

\subsection{Translation from directed graphs to undirected graphs}
\label{ss:orient}

To avoid confusion with signs, it will often be convenient to translate from
directed graphs to undirected graphs.  Let $L$ be an even set.  We denote by
$\mc{M}_L$ the set of directed matchings on $L$.   An \emph{orientation} on $L$
is defined as a map $\epsilon:\mc{M}_L \to \{ \pm 1\}$ satisfying
$\epsilon(\sigma \Gamma)=\sgn(\sigma) \epsilon(\Gamma)$ for $\sigma \in
\mf{S}_L$ and $\Gamma \in \mc{M}_L$.  There are two orientations on $L$.  
\Ncom{$\mc{M}_L, \epsilon:\mc{M}_L \to \{\pm 1\}, \text{orientation}$}

Fix an orientation of $L$.  For an \emph{undirected} matching $\Gamma$ we put
$Y_{\Gamma}=\epsilon(\wt{\Gamma}) X_{\wt{\Gamma}}$ where $\wt{\Gamma}$ is any
directed matching with underlying undirected matching $\Gamma$.  The
$Y_{\Gamma}$ span the space $R_L^{(1)}$ and satisfy the loop relation
and the ``undirected Pl\"ucker relation''
\begin{displaymath}
Y_{\Gamma_1}+Y_{\Gamma_2}+Y_{\Gamma_3}=0,
\end{displaymath}
whenever $\Gamma_2$ and $\Gamma_3$ are obtained by modifying two edges of
$\Gamma_1$ appropriately.  We often prefer to work with the $Y_{\Gamma}$
instead of the $X_{\Gamma}$ since there are no directions to keep track of.
However, one must keep in mind
that the action of the symmetric group on the $Y_{\Gamma}$ is twisted by the
sign character from the most obvious action:  $\sigma Y_{\Gamma}=\sgn(\sigma)
Y_{\sigma \Gamma}$ for $\sigma \in \mf{S}_L$.  We often use the $Y_{\Gamma}$
without explicitly mentioning the choice of an orientation.
\Ncom{$Y_{\Gamma}$}

\subsection{Kempe's generation theorem}

The purpose of this paper is to give a presentation for the ring $R_L$.  To
do this we must first find a set of generators.  This problem was solved
(for the main case) by Kempe \cite{Kempe}.

\begin{theorem}[Kempe]
\label{thm:kempe}
Let $L$ be an even set.  Then the ring $R_L$ is generated in degree one.
Equivalently, the $Y_{\Gamma}$ (or $X_{\Gamma}$) with $\Gamma$ a matching
generate $R_L$.
\end{theorem}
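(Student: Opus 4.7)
My plan is to prove Kempe's theorem by induction on the regularity degree $d$: every invariant $X_\Gamma$ with $\Gamma$ regular of degree $d$ on the even set $L$ lies in the subring of $R_L$ generated by $R_L^{(1)}$, i.e.\ by the matching invariants. The cases $d=0$ and $d=1$ are tautological. For the inductive step with $d\ge 2$, the key observation is that whenever $\Gamma$ contains a perfect matching $M$ as a sub-multigraph, $\Gamma$ factors in the semi-group $\ms G_L$ as $M \cdot (\Gamma\smallsetminus M)$, where $\Gamma\smallsetminus M$ is again regular, now of degree $d-1$. Hence in $S_L$
\[
X_\Gamma \;=\; X_M \cdot X_{\Gamma\smallsetminus M},
\]
and the inductive hypothesis applied to the second factor completes this case.

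The entire problem is therefore reduced to the following combinatorial claim, which I would take as the central lemma: for every $d$-regular multigraph $\Gamma$ on $L$, a sequence of sign and Pl\"ucker moves produces an identity in $S_L$
\[
X_\Gamma \;=\; \sum_i c_i\, X_{\Gamma_i}
\]
in which each $\Gamma_i$ is still $d$-regular and contains a perfect matching of $L$. I would prove this by a secondary induction on the matching deficiency $\delta(\Gamma) := |L| - 2\mu(\Gamma)$, where $\mu(\Gamma)$ is the size of a maximum matching in $\Gamma$. The base case $\delta(\Gamma)=0$ is exactly the situation handled above. For $\delta(\Gamma)>0$, the Tutte--Berge formula furnishes a ``barrier'' $S\subseteq L$ with $o(\Gamma\smallsetminus S) - |S| = \delta(\Gamma)$, where $o$ counts odd components. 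Using this barrier, I would exhibit two edges $\dedge{ab}, \dedge{cd}$ of $\Gamma$ whose Pl\"ucker exchange
\[
X_\Gamma \;=\; X_{\Gamma'} + X_{\Gamma''}
\]
yields graphs of strictly smaller deficiency, on which we iterate.

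The one real obstacle is exhibiting this deficiency-reducing swap. The heuristic is clear: choose $\dedge{ab}$ going from the barrier $S$ into one odd component $C_1$ of $\Gamma\smallsetminus S$, and $\dedge{cd}$ incident to a second odd component $C_2$; the two rewirings $\{\dedge{ad},\dedge{cb}\}$ and $\{\dedge{ac},\dedge{bd}\}$ then redistribute endpoints so as to merge or parity-flip odd components and shrink the Tutte--Berge witness. Making this precise requires an extremal choice of barrier and a short case analysis of what happens when $c$ or $d$ also lies in $S$ or in the same component as $a$; I expect this bookkeeping to be the only genuinely nontrivial ingredient in the proof. Once the combinatorial lemma is in place, the passage between the $X_\Gamma$ and the $Y_\Gamma$ formulations from \S\ref{ss:orient} is purely formal, amounting to a choice of orientation $\epsilon$.
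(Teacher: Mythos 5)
Your outer structure matches the paper's: once a $d$-regular $\Gamma$ visibly contains a perfect matching $M$, peel it off via $X_\Gamma = X_M\, X_{\Gamma\setminus M}$ and induct on $d$, so everything reduces to expressing $X_\Gamma$ as a combination of invariants of regular graphs that do contain perfect matchings. Where you diverge is in the combinatorial lemma. The paper two-colors $L$ into halves of equal size, counts to show the numbers of \emph{positive} and \emph{negative} edges are equal, and Pl\"uckers one of each: \emph{both} resulting edge-pairs are neutral, so the count of non-neutral edges drops by two on \emph{both} branches of the relation, and the process terminates at bipartite regular graphs, where Hall/K\"onig supplies a perfect matching. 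Your route via Tutte--Berge and a deficiency-reducing swap is a genuinely different attack.

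The gap is exactly at the step you flag as bookkeeping, and I do not think it is mere bookkeeping. The Pl\"ucker relation $X_\Gamma = X_{\Gamma'} + X_{\Gamma''}$ forces you to control \emph{both} $\Gamma'$ and $\Gamma''$; your induction on $\delta$ needs $\delta(\Gamma') < \delta(\Gamma)$ \emph{and} $\delta(\Gamma'') < \delta(\Gamma)$. But the natural swaps you propose don't deliver this symmetrically. If you Pl\"ucker an edge $\dedge{s a}$ from the barrier into $C_1$ with an edge $\dedge{t b}$ from the barrier into $C_2$ (with $s,t\in S$ distinct; the case $s=t$ is degenerate, giving a loop and the trivial identity), the two rewirings are $\{\dedge{sb},\dedge{ta}\}$ and $\{\dedge{st},\dedge{ab}\}$. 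The second merges $C_1$ and $C_2$ into an even component and helps; the first merely reroutes barrier-to-component edges, leaves the component structure of $\Gamma-S$ intact, and so exhibits the \emph{same} Tutte--Berge witness $S$ with the same value. There are also structural cases you would have to handle separately: $S=\emptyset$ occurs for $d$ even (e.g.\ two disjoint triangles), where there is no barrier-to-component edge at all; and for $d$ even an odd component may have no edges to $S$, so the prescribed edge $\dedge{ab}$ need not exist. Beyond picking the ``right'' pair, you would still need to verify that the global deficiency (the max over all cuts, not just the fixed $S$) actually drops, since a Pl\"ucker move can in principle create a new barrier elsewhere. So as written the induction is not known to terminate; you'd need either a different choice of swap that demonstrably shrinks a monovariant on both branches, or a different monovariant. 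The paper's two-coloring is precisely such a monovariant, built into the setup so that both branches make progress for free, which is why that proof is short.
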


\begin{proof}
We recall the proof of \cite[Theorem~2.3]{HMSV}, which is simpler than Kempe's
original proof, to motivate later arguments.  For any regular graph $\Gamma$ on
$L$, we express $Y_{\Gamma}$ as a polynomial in elements of the form
$Y_{\Gamma'}$ with $\Gamma'$ a matching.  Partition $L$ arbitrarily into
two sets of equal cardinality, one called ``positive'' and the other
``negative.''  We then have three types of edges:  positive (both vertices
positive), negative (both negative) and neutral (one positive and one
negative).  After applying the Pl\"ucker relation to a positive and a negative
edge, one is left with only neutral edges:
\begin{displaymath}
\begin{xy}
(0, 5)*{}="A"; (0, -5)*{}="B"; (10, -5)*{}="C"; (10, 5)*{}="D";
"A"; "B"; **\dir{-};
"C"; "D"; **\dir{-};
(0, 8)*{\ss +}; (0, -8)*{\ss +}; (10, -8)*{\ss -}; (10, 8)*{\ss -};
\end{xy}
\qquad = \qquad
\begin{xy}
(0, 5)*{}="A"; (0, -5)*{}="B"; (10, -5)*{}="C"; (10, 5)*{}="D";
"A"; "C"; **\dir{-};
"B"; "D"; **\dir{-};
(0, 8)*{\ss +}; (0, -8)*{\ss +}; (10, -8)*{\ss -}; (10, 8)*{\ss -};
\end{xy}
\qquad + \qquad
\begin{xy}
(0, 5)*{}="A"; (0, -5)*{}="B"; (10, -5)*{}="C"; (10, 5)*{}="D";
"A"; "D"; **\dir{-};
"B"; "C"; **\dir{-};
(0, 8)*{\ss +}; (0, -8)*{\ss +}; (10, -8)*{\ss -}; (10, 8)*{\ss -};
\end{xy}
\end{displaymath}
(We have neglected signs in the above identity.)
As $\Gamma$ is regular, it has a positive edge if and only if it has a negative
one.  Thus by repeatedly applying Pl\"ucker relations to positive and negative
edges we end up with an expression $Y_{\Gamma}=\sum \pm Y_{\Gamma_i}$ where the
$\Gamma_i$ have only neutral edges, and are hence bipartite.  Hall's marriage
theorem states that in a regular bipartite graph one can find a matching.  Thus
each $\Gamma_i$ can be factored into matchings, which completes the proof.
\end{proof}

\subsection{Kempe's basis theorem}
\label{ss:kempebasis}

Fix an embedding of $L$ into the unit circle in the plane.  We say that a
graph $\Gamma$ on $L$ is \emph{non-crossing} if no two of its edges cross
when drawn as chords.  The following well-known theorem of Kempe (also from
\cite{Kempe}) will be used in the proofs of Proposition~\ref{prop:torpres} and
Lemma~\ref{lem:iota}.
\Ncom{$\text{(non-)crossing}$}

\begin{theorem}[Kempe]
\label{thm:kempe2}
The $X_{\Gamma}$ with $\Gamma$ non-crossing span $S_L$.  The only linear
relations among these elements are the sign and loop relations.  Thus if one
chooses for each undirected loop-free non-crossing graph a direction on the
edges then the corresponding $X_{\Gamma}$ form a basis for $S_L$.  The same is
true for $R_L$ if one considers regular non-crossing graphs.
\end{theorem}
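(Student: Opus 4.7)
My plan is to prove the two assertions separately: first that the non-crossing $X_\Gamma$ span $S_L$, and second that the only linear relations among them are generated by the sign and loop relations (which together yield the basis statement and, by restriction to regular graphs, the $R_L$ version).

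\textbf{Spanning.} Fix the embedding of $L$ on the unit circle and let $c(\Gamma)$ count the pairs of edges of $\Gamma$ whose chords cross. If $c(\Gamma)>0$, pick a crossing pair $\dedge{ab}, \dedge{cd}$ with $a,c,b,d$ in cyclic order, and Pl\"ucker them:
\begin{displaymath}
X_\Gamma \;=\; X_{\Gamma_1}+X_{\Gamma_2},
\end{displaymath}
where $\Gamma_1, \Gamma_2$ replace $\{\dedge{ab}, \dedge{cd}\}$ by the non-crossing local pairings $\{\dedge{ad}, \dedge{cb}\}$ and $\{\dedge{ac}, \dedge{bd}\}$. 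The chosen crossing disappears, contributing $-1$ to $c$; and for any third edge $\dedge{ef}$ of $\Gamma$, a short case analysis indexed by which of the four arcs between $a,c,b,d$ contain $e$ and $f$ shows that $\dedge{ef}$ crosses either new pair at most as often as it crossed $\{\dedge{ab}, \dedge{cd}\}$. Therefore $c(\Gamma_i) \le c(\Gamma)-1 < c(\Gamma)$ for both $i$, and induction on $c$ reduces every $X_\Gamma$ to a $\Z$-combination of non-crossing $X_{\Gamma'}$ (graphs with loops being zero by the loop relation). Since Pl\"ucker moves preserve the valence vector, the same induction restricted to regular graphs works inside $R_L$.

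\textbf{Basis.} The sign relation identifies $X_\Gamma$ with $\pm X_{\Gamma'}$ under an edge reversal, and the loop relation kills $\Gamma$ with a loop; choosing an orientation on each loop-free undirected non-crossing graph therefore preserves spanning. For linear independence I would count dimensions in each multigraded piece. By the First and Second Fundamental Theorems for $\SL_2$-invariants (the embedding $X_{\dedgescr{ab}} \mapsto x_a y_b - x_b y_a$ already implicit in \S\ref{ss:rings}), the rank of $S_L^w$ over $\Z$ equals $\dim_\Q (\Sym^{w_1} V \otimes \cdots \otimes \Sym^{w_n} V)^{\SL_2}$, and a Clebsch-Gordan recursion --- equivalently, standard monomial theory for $\Gr(2, |L|)$, with its Lindstr\"om-Gessel-Viennot interpretation --- matches this dimension to the number of loop-free undirected non-crossing multigraphs on $L$ with valence $w$. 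Because our spanning set has the correct cardinality it must be a $\Z$-basis; restriction to regular valences yields the $R_L$ statement.

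\textbf{Main obstacle.} The spanning step is essentially mechanical once the crossing count is recognized as a strict monovariant under Pl\"ucker. The delicate step is the basis claim --- matching the representation-theoretic rank of $S_L^w$ with the combinatorial count of non-crossing graphs of valence $w$. A more self-contained alternative, avoiding the black-box invariant-theoretic dimension formula, would be to specialize $(x_i, y_i) = (1, t_i)$ so that $X_{\dedge{ab}} \mapsto t_b - t_a$, and then exhibit a term order on $\Z[t_l]$ under which distinct non-crossing $X_\Gamma$ have distinct leading monomials. For perfect matchings the leading monomial under lex order simply records the set of smaller endpoints, and each valid such set comes from a unique non-crossing matching via the standard parenthesis-matching bijection; lifting this identification to arbitrary valences is the real combinatorial content of the theorem.
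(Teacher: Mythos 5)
Your spanning argument is correct, but it is a genuinely different termination argument than the one the paper uses.  You track the number of crossing pairs $c(\Gamma)$, and the key step is that a third chord $\dedge{ef}$ crosses either of the two non-crossing repairings of $\{\dedge{ab},\dedge{cd}\}$ at most as often as it crossed the original pair; since the original pair contributes one crossing to $c$ and the repairings contribute zero, $c$ drops strictly.  This is true --- one checks it arc-by-arc: with $a,c,b,d$ cyclically ordered the third chord's endpoints sit in the four arcs, and in the adjacent-arc case the count is $1,1,1$, while in the opposite-arc case it is $2,0,2$ --- but your proposal silently passes this over as ``a short case analysis,'' and it does need to be spelled out.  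The paper instead uses the total Euclidean length of the chords as the monovariant: for a crossing pair, the two chords are the diagonals of a convex quadrilateral, so the quadrilateral inequality $|ab|+|cd| > |ad|+|cb|$ and $|ab|+|cd| > |ac|+|bd|$ gives strict decrease with no case analysis at all, and finiteness of the set of graphs with a fixed valence vector makes this a legitimate monovariant.  Both approaches are valid; the length argument is just shorter to justify.

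On the basis half, your proposal is incomplete, and you say so yourself.  Matching the Clebsch--Gordan rank of $(\Sym^{w_1}V\otimes\cdots\otimes\Sym^{w_n}V)^{\SL_2}$ with the count of non-crossing multigraphs of valence $w$ is indeed equivalent to the theorem, so invoking it as a black box is circular unless you actually carry out the bijective or generating-function match --- neither ``standard monomial theory for $\Gr(2,|L|)$'' nor a bare appeal to LGV does this without further work.  Your suggested alternative (specialize $(x_i,y_i)=(1,t_i)$ and exhibit distinct leading monomials for distinct non-crossing graphs under a suitable term order) is essentially the route taken in the cited reference \cite[Props.~2.5, 2.6]{HMSV}, and you correctly identify the step you have not done: producing and verifying the term order for general (non-matching) valences.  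Note that the paper itself does not give a self-contained proof of the basis claim either --- it proves spanning by the straightening algorithm and delegates linear independence to the same reference --- so your proposal is at the same level of completeness as the paper, just via a different termination argument for spanning and a different (and less worked-out) sketch for independence.
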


In fact, there is a procedure called the \emph{straightening algorithm} which
expresses an arbitrary $X_{\Gamma}$ in terms of the non-crossing basis.
The algorithm is simple:  take any pair of edges in $\Gamma$ which cross and
Pl\"ucker them.  The algorithm terminates because the total lengths of the
edges in each the two graphs resulting from a Pl\"ucker operation is less
than that in the original graph.  This nearly proves the theorem; for details
see \cite[Propositions\ 2.5, 2.6]{HMSV}.

\subsection{Some definitions}

We now define some notation that will be used constantly:
\begin{itemize}
\item $R_L$ is the ring of invariants, as defined above.
\item $V_L$ is the first graded piece $R_L^{(1)}$ of $R_L$; it is spanned by
the $Y_{\Gamma}$ with $\Gamma$ a matching.
\item $I_L$ is the ideal of relations, that is, the kernel of the map
$\Sym(V_L) \to R_L$.
\item $Q_L$ is the ideal of $\Sym(V_L)$ generated by $I_L^{(2)}$; it is a
sub-ideal of $I_L$.
\end{itemize}


\section{The toric degenerations $\gr_{\Xi} S_L$ and $\gr_{\Xi} R_L$}
\label{s:gentor}

In \S \ref{s:gentor}, we discuss toric degenerations of the rings $S_L$ and
$R_L$.  These were first described in \cite{SpeyerSturmfels}, and one was used
in \cite{HMSV}.  By a ``toric ring'' we mean a ring isomorphic to a
semi-group algebra, where the semi-group is the set of lattice points in a
strictly convex rational polyhedral cone; by a ``toric degeneration'' of a ring
we mean a toric ring obtained as the associated graded of a filtration on the
original ring.  The main points of \S \ref{s:gentor} are the following:
\begin{enumerate}
\item To each trivalent tree $\Xi$ with leaf set $L$ we give a toric
degeneration of the rings $S_L$ and $R_L$, denoted $\gr_{\Xi} S_L$ and
$\gr_{\Xi} R_L$ respectively. 
\Ncom{$\gr_{\Xi} S_L, \gr_{\Xi} R_L, \text{leaf set}$}
\item We give a presentation of the ring $\gr_{\Xi} S_L$.
\item We discuss the theory of weightings on trivalent trees.
\item We identify the rings $\gr_{\Xi} S_L$ and $\gr_{\Xi} R_L$ with semi-group
algebras of ``weightings.''
\end{enumerate}

\subsection{Trivalent trees}

By a \emph{trivalent tree} we mean a connected undirected graph $\Xi$, without
cycles, all of whose vertices have valence one or three.  We call vertices of
valence three \emph{trinodes}, and vertices of valence one \emph{leaves}.  We
say leaves $x$ and $y$ form a \emph{matched pair} if they share a neighbor
(trinode).  We say that a trivalent tree is \emph{matched} if it has more than
four vertices and every leaf belongs to a (necessarily unique) matched pair.  A
matched trivalent tree necessarily has an even number of leaves.
\Ncom{$\Xi, \text{trinode, matched pair/tree}$}\label{d:matched}

\begin{figure}[!ht]
\begin{displaymath}
\begin{xy}
(-17, 7)*{}="A"; (-17, -7)*{}="B";  (-10, 0)*{}="C"; (0, 0)*{}="D";
(0, 7)*{}="E"; (10, 0)*{}="F"; (17, 7)*{}="G"; (17, -7)*{}="H";
"A"; "C"; **\dir{-};
"B"; "C"; **\dir{-};
"C"; "D"; **\dir{-};
"D"; "E"; **\dir{-};
"D"; "F"; **\dir{-};
"F"; "G"; **\dir{-};
"F"; "H"; **\dir{-};
"A"*{\bullet}; "B"*{\bullet}; "C"*{\bullet}; "D"*{\bullet};
"E"*{\bullet}; "F"*{\bullet}; "G"*{\bullet}; "H"*{\bullet};
(-17, 10)*{\ss a}; (-17, -9)*{\ss b}; (-10, -2)*{\ss c}; (0, -2)*{\ss d};
(0, 10)*{\ss e}; (10, -2)*{\ss f}; (17, 10)*{\ss g}; (17, -9)*{\ss h};
\end{xy}
\end{displaymath}
\caption{A trivalent tree.  Vertices $c$, $d$ and $f$ are trinodes; the rest
are leaves.  The vertices $a$ and $b$ form a matched pair, as do the
vertices $g$ and $h$.  
The tree is not matched because $e$ does not belong to a matched pair.}
\end{figure}
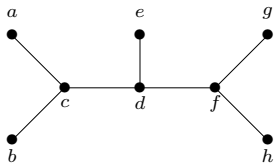

\subsection{The toric rings $\gr_{\Xi} S_L$ and $\gr_{\Xi} R_L$}

Let $\Xi$ be a trivalent tree with leaf set $L$.  For a graph $\Gamma$ on $L$,
define the \emph{level} of $\Gamma$ (relative to $\Xi$) as 
\Ncom{$\lev_{\Xi}$}
\begin{displaymath}
\lev_{\Xi}{\Gamma}=\sum_{\dedgescr{ab}} \left( \text{the distance from $a$ to
$b$ in $\Xi$} \right) 
\end{displaymath}
where the sum is over the edges of $\Gamma$, and distance is the
number of edges in the ``geodesic.''  Clearly $\lev_{\Xi}$ induces a
semi-group morphism $\lev_{\Xi}:\ms{G}_L \rightarrow \Z_{\ge 0}$.  Define an
increasing filtration $F_{\Xi}$ on $\Z[\ms{G}_L]$ by letting $F^i_{\Xi}
\Z[\ms{G}_L]$ be the
subspace of $\Z[\ms{G}_L]$ spanned by the $X_\Gamma$ with $\lev_{\Xi}{\Gamma}
\le i$. (The notation $\lev_{\Xi}$ will not be used further.)  Let $F^i_{\Xi}
S_L$ be the image of $F^i_{\Xi} \Z[\ms{G}_L]$ under the surjection
$\Z[\ms{G}_L] \to S_L$, giving a filtration of the ring $S_L$.  Let $\gr_{\Xi}
S_L$ denote the associated graded ring.  We will show that $\gr_{\Xi} S_L$ is a
toric ring (Proposition~\ref{prop:toric}).  For a graph $\Gamma$ of level $n$,
let $\ol{X}_{\Gamma}$ denote the image of $X_{\Gamma}$ in $F^n_{\Xi} S_L/
F^{n-1}_{\Xi} S_L$.  Clearly the $\ol{X}_{\Gamma}$ span $\gr_{\Xi} S_L$.
\Ncom{$F^i_{\Xi} \Z[\ms{G}_L] \to F^i_{\Xi} S_L, \text{tor.\ Gr.\ ring},
\ol{X}_{\Gamma}$}

Let $F^i_{\Xi} R_L$ be the filtration on $R_L$ induced from its inclusion into
$S_L$.  Let $\gr_{\Xi} R_L$ be the associated graded ring.  It is naturally the
subring of $\gr_{\Xi} S_L$ spanned by the $\ol{X}_{\Gamma}$ for which $\Gamma$
is regular.
\Ncom{$F^i R_L , \gr_{\Xi} R_L$}

\subsection{Presentation of the ring $\gr_{\Xi} S_L$}
\label{ss:torpres}

Let $J_{\Xi}$ denote the ideal in $\Z[\ms{G}_L]$ generated by the following
(cf.\ \S \ref{ss:rings}):
\Ncom{$J_{\Xi}=\Z[\ms{G}_L]/ \langle s,  tP \rangle$}
\begin{itemize}
\item \emph{Loop relation:} If $\Gamma$ has a loop then $X_{\Gamma}=0$.
\item \emph{Sign relation:} If $\Gamma'$ is obtained from $\Gamma$ by reversing
the direction of an edge then $X_{\Gamma}=-X_{\Gamma'}$.
\item \emph{Toric Pl\"ucker relation:} If $a$, $b$, $c$ and $d$ are elements of
$L$ satisfying:
\begin{equation}
\label{eq-overlap-cond}
\begin{array}{l}
\textrm{the path from $a$ to $b$ in $\Xi$ meets the path from $c$ to $d$,} \\
\textrm{and the path from $a$ to $c$  meets the path from $b$ to $d$
(see Figure~\ref{f:torpluck})}
\end{array}
\end{equation}
then $X_{\dedgescr{ab}} X_{\dedgescr{cd}} = X_{\dedgescr{ac}}
X_{\dedgescr{bd}}$.
\end{itemize}
(The notation $J_{\Xi}$ is only used in \S \ref{ss:torpres}.)

\begin{figure}[!ht]
\begin{displaymath}
\begin{xy}
(0, 5)*{}="A"; (0, -5)*{}="B"; (5, 0)*{}="C"; (20, 0)*{}="D";
(25, 5)*{}="E"; (25, -5)*{}="F";
"A"*{\bullet}; "B"*{\bullet}; "C"*{\bullet}; "D"*{\bullet};
"E"*{\bullet}; "F"*{\bullet};
"A"; "C"; **\dir{-};
"B"; "C"; **\dir{-};
"E"; "D"; **\dir{-};
"F"; "D"; **\dir{-};
"C"; (10, 0); **\dir{-};
"D"; (15, 0); **\dir{-};
(10, 0); (15, 0); **\dir{.};
(0, 7)*{\ss a}; (0, -7)*{\ss d}; (25, 7)*{\ss b}; (25, -7)*{\ss c};
(2, 5); (5, 2); **\dir{-};
(5, 2); (10, 2); **\dir{-};
(10, 2); (15, 2); **\dir{.};
(15, 2); (20, 2); **\dir{-};
(20, 2); (23, 5); **\dir{-};
(2, -5); (5, -2); **\dir{-};
(5, -2); (10, -2); **\dir{-};
(10, -2); (15, -2); **\dir{.};
(15, -2); (20, -2); **\dir{-};
(20, -2); (23, -5); **\dir{-};
\end{xy}
\qquad = \qquad
\begin{xy}
(0, 5)*{}="A"; (0, -5)*{}="B"; (5, 0)*{}="C"; (20, 0)*{}="D";
(25, 5)*{}="E"; (25, -5)*{}="F";
"A"*{\bullet}; "B"*{\bullet}; "C"*{\bullet}; "D"*{\bullet};
"E"*{\bullet}; "F"*{\bullet};
"A"; "C"; **\dir{-};
"B"; "C"; **\dir{-};
"E"; "D"; **\dir{-};
"F"; "D"; **\dir{-};
"C"; (10, 0); **\dir{-};
"D"; (15, 0); **\dir{-};
(10, 0); (15, 0); **\dir{.};
(0, 7)*{\ss a}; (0, -7)*{\ss d}; (25, 7)*{\ss b}; (25, -7)*{\ss c};
(2, 5); (10, -2); **\dir{-};
(10, 2); (15, 2); **\dir{.};
(15, 2); (20, 2); **\dir{-};
(20, 2); (23, 5); **\dir{-};
(2, -5); (10, 2); **\dir{-};
(10, -2); (15, -2); **\dir{.};
(15, -2); (20, -2); **\dir{-};
(20, -2); (23, -5); **\dir{-};
\end{xy}
\end{displaymath}
\caption{The toric Pl\"ucker relation $X_{\protect\dedgescr{ab}}
X_{\protect\dedgescr{cd}}=X_{\protect\dedgescr{ac}} X_{\protect\dedgescr{bd}}$.
Both pairs of geodesics overlap on the horizontal edge of the trivalent tree
$\Xi$.  Note that $\protect\dedge{ad}$ and $\protect\dedge{bc}$ do not
overlap.}
\label{f:torpluck}
\end{figure}
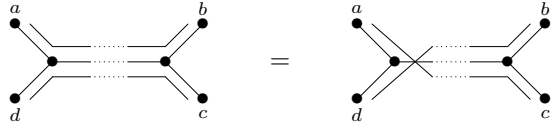

The purpose of this section is to prove the following, used in \S
\ref{ss:toric}  to identify $\gr_{\Xi} S_L$ with a semi-group algebra of
``weightings.''

\begin{proposition}
\label{prop:torpres}
The map $\Z[\ms{G}_L] \to \gr_{\Xi} S_L$ given by $X_{\Gamma} \mapsto
\ol{X}_{\Gamma}$ is surjective with kernel $J_{\Xi}$.
\end{proposition}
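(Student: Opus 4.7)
The plan is to compare $\Z[\ms{G}_L]/J_\Xi$ and $\gr_\Xi S_L$ graded piece by graded piece (by total degree $d$ and by level $k$), using Kempe's non-crossing basis (Theorem~\ref{thm:kempe2}) for the cyclic order on $L$ induced by a planar embedding of $\Xi$. The containment $J_\Xi \subseteq \ker$ is straightforward: the loop and sign relations are level-homogeneous and hold in $S_L$, so they descend; for toric Pl\"ucker, I would start from the full Pl\"ucker identity in $S_L$ and apply the four-point condition for tree metrics. Because $\Xi$ is trivalent, the Steiner tree of any four distinct leaves always has a nontrivial central edge, so among the three pairings of $\{a,b,c,d\}$ the maximum level is attained at exactly two (and never three). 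The overlap condition~\eqref{eq-overlap-cond} is precisely that $\{ab,cd\}$ and $\{ac,bd\}$ realize this maximum while $\{ad,bc\}$ is strictly smaller, so $X_{\dedgescr{ad}} X_{\dedgescr{cb}}$ has strictly smaller level and drops out in the associated graded, yielding the toric Pl\"ucker relation in $\gr_\Xi S_L$.

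For the reverse inclusion I would do a rank comparison at each $(d,k)$, hinging on one geometric lemma: for any planar embedding of $\Xi$, an edge pair $\dedgescr{ab},\dedgescr{cd}$ that is planar-crossing in the induced cyclic order has overlapping geodesics in $\Xi$. Indeed, planarity makes each one-sided leaf set contiguous in the cyclic order, so interleaving endpoints force the two geodesics to share a tree edge. Hence planar-crossing pairs are always tree-maximal, and exactly one of the two tree-maximal pairings of any four distinct leaves is planar-crossing. On $S_L$: Kempe's planar straightening applies Pl\"ucker only to planar-crossing (hence tree-maximal) pairs, and by the four-point condition neither right-hand summand exceeds the original level, so $F^k_\Xi S_L$ is spanned by planar non-crossing $X_\Gamma$ of level $\le k$; linear independence from Theorem~\ref{thm:kempe2} then gives $(\gr_\Xi S_L)^{(d)}_k$ a free $\Z$-basis indexed by planar non-crossing graphs of degree $d$ and level exactly $k$. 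On $\Z[\ms{G}_L]/J_\Xi$: any planar-crossing pair can be toric-Pl\"ucker-traded---through the \emph{other} tree-maximal pairing, whose overlap condition is exactly what the geometric lemma supplies---to a planar non-crossing pair of the same level, and the total chord-length of the planar embedding strictly decreases with each trade, so the iteration terminates at a planar non-crossing graph.

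Combining, the level-preserving surjection $(\Z[\ms{G}_L]/J_\Xi)^{(d)}_k \twoheadrightarrow (\gr_\Xi S_L)^{(d)}_k$ has source spanned by, and target free on, the same finite set of graphs, so it must be an isomorphism at every $(d,k)$, giving the proposition. The main obstacle is the geometric lemma---carefully establishing that planar-crossing implies tree-overlap in exactly the form needed for the toric trade, reconciling the planar and tree-theoretic notions of ``non-crossing,'' and bookkeeping orientation choices so that the spanning counts on both sides match exactly.
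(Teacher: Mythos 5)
Your proposal is correct and follows essentially the same route as the paper's proof: $J_\Xi \subseteq \ker$ via the leading-term argument that of the three Pl\"ucker terms the two satisfying the overlap condition have equal level while the third drops strictly; the reverse inclusion via the observation that a planar embedding compatible with $\Xi$ makes crossing chord pairs have overlapping tree geodesics, so the toric Pl\"ucker relation suffices to run the straightening algorithm, with total chord length as the termination measure; and finally the comparison against Kempe's non-crossing basis of Theorem~\ref{thm:kempe2}. The only cosmetic difference is that the paper phrases the conclusion as ``a surjection taking a spanning set to a linearly independent set is an isomorphism'' rather than as a rank count in each graded piece, and it verifies the level comparison directly (both maximal pairings use the same tree edges with the same multiplicities) rather than invoking the four-point condition by name.
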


The kernel $I$ of the map $\Z[\ms{G}_L] \to S_L$ is generated by the sign and
Pl\"ucker relations.  The kernel of the map $\Z[\ms{G}_L] \to \gr_{\Xi} S_L$ is
the ideal generated by the leading terms of all elements of $I$.  In general,
of course, this is not the same as the ideal generated by the leading terms
of a generating set of $I$.  Proposition~\ref{prop:torpres} says that
in this situation, however, this is the case.

\begin{proof}
The map is clearly surjective and contains the sign relation in its kernel.  We
now check that the toric Pl\"ucker relation lies in its kernel as well.  Let
$a$, $b$, $c$ and $d$ belong to $L$ and satisfy \eqref{eq-overlap-cond}.  The
equation
\begin{displaymath}
X_{\dedgescr{ab}} X_{\dedgescr{cd}}= X_{\dedgescr{ac}} X_{\dedgescr{bd}}
+ X_{\dedgescr{ad}} X_{\dedgescr{bc}}
\end{displaymath}
holds in $S_L$ (the normal Pl\"ucker relation).  The two graphs $\dedge{ab}
\cdot \dedge{cd}$ and $\dedge{ac} \cdot \dedge{bd}$ have the same level, say
$n$, since when drawn in $\Xi$ they use the same edges with the same
multiplicity (see Figure~\ref{f:torpluck}).  The remaining graph $\dedge{ad}
\cdot \dedge{bc}$ has level less than $n$ (again, see Figure~\ref{f:torpluck}).
Thus all terms in the above relation lie in $F^n_{\Xi} S_L$.  Reducing modulo
$F^{n-1}_{\Xi} S_L$ we obtain
\begin{displaymath}
\ol{X}_{\dedgescr{ab}} \ol{X}_{\dedgescr{cd}} = \ol{X}_{\dedgescr{ac}}
\ol{X}_{\dedgescr{bd}}
\end{displaymath}
which shows that the toric Pl\"ucker relation lies in $J_{\Xi}$.

We now show that non-crossing graphs span $\Z[\ms{G}_L]/J_{\Xi}$.  Embed $L$
into the unit circle in the plane in such a way that $\Xi$ can be drawn
inside the circle without any crossings.  Let $\Gamma$ be a graph on $L$
which contains crossing edges $\dedge{ab}$ and $\dedge{cd}$.  The paths
$ab$ and $cd$ then meet in $\Xi$.  The same reasoning as in the
non-toric case now applies:  applying the toric Pl\"ucker relation to this
pair of edges yields an identity $\ol{X}_{\Gamma}=\ol{X}_{\Gamma'}$
where the total lengths of edges in $\Gamma'$ is less than that of $\Gamma$
(here length is computed as distance in the plane, not the trivalent tree).
Continuing in this manner, we get an expression $\ol{X}_{\Gamma}=
\ol{X}_{\Gamma'}$ where $\Gamma'$ is non-crossing. 

It is now formal to conclude that $\Z[\ms{G}_L]/J_{\Xi} \to \gr_{\Xi} S_L$ is
an isomorphism.  We elaborate on this.  Choose a set $Z$ of directed
non-crossing graphs such that for each undirected non-crossing graph $\Gamma$
there is a unique way to direct the edges of $\Gamma$ such that the resulting
graph belongs to $Z$.  The previous paragraph shows that if $\Gamma$ is any
graph of level $n$ we can find $\Gamma' \in Z$ such that $X_{\Gamma}=\pm
X_{\Gamma'}+Y$ holds in $S_L$, where $Y \in F^{n-1}_{\Xi} S_L$.  Applying this
result to $Y$ repeatedly, we find that the $X_{\Gamma}$ with $\Gamma$ in $Z$
and level at most $n$ form a basis of $F^n_{\Xi} S_L$ (we already know they are
linearly independent).  It thus follows that the $\ol{X}_{\Gamma}$ with
$\Gamma$ in $Z$ are linearly independent in $\gr_{\Xi} S_L$.  Since the
surjection $\Z[\ms{G}_L]/J_{\Xi} \to \gr_{\Xi} S_L$ takes a spanning set
to a set of linearly independent vectors, it must be an isomorphism.
\end{proof}

\subsection{Weightings}
\label{ss:weightings}

A \emph{weighting} $\xi$ on a trivalent tree $\Xi$ is an assignment of a
non-negative integer to each edge of $\Xi$.  Define the \emph{weight triple} of
a trinode $v$ of $\Xi$ to be the weights of the three edges connected to $v$.
(We write it as an ordered triple even though it is not ordered.)  Consider the
following two equivalent conditions on weight triples $(a, b, c)$:
\Ncom{$\text{weighting on $\Xi$, weight triple}$}
\begin{list}{}{}
\item[(W1)] The triple $(a, b, c)$ satisfies the triangle inequalities ($a$,
$b$ and $c$ can form the sides of a triangle) and $a+b+c$ is even.
\item[(W2)] There exists a triple $(x, y, z)$ of non-negative integers such
that $a=x+y$, $b=x+z$ and $c=y+z$.
\end{list}
Note that the triple $(x, y, z)$ is uniquely determined by $(a, b, c)$.  We say
that a weighting is \emph{admissible} if the weight triple at each trinode
satisfies these conditions.
\Ncom{$\text{adm.\ weighting, wt triple}$}

Let $\Gamma$ be an undirected graph on the leaves of $\Xi$.  We say that an
edge $e$ of $\Xi$ \emph{meets} an edge $\uedge{ij}$ of $\Gamma$ if $e$ occurs
in the geodesic joining $i$ and $j$.  We define a weighting $\xi_{\Gamma}$ of
$\Xi$ by assigning to an edge of $\Xi$ the number of edges of $\Gamma$ which
it meets.  We call $\xi_{\Gamma}$ the weighting of $\Xi$ \emph{associated} to
$\Gamma$ (see Figure~\ref{f:weight}). 
\Ncom{$\text{meets, associated}, \xi_{\Gamma}$}

Define \emph{(toric) Pl\"ucker equivalence} to be the equivalence relation
$\sim$ on the semi-group of undirected graphs $\ms{G}^{\un}_L$ on $L$ generated
by the following two conditions:
\Ncom{$\ms{G}^{\un}_L, \text{(t) Pl equiv} \sim$}
\begin{itemize}
\item Given $a$, $b$, $c$, $d$ in $L$ satisfying \eqref{eq-overlap-cond},
$\uedge{ab} \cdot \uedge{cd} \sim \uedge{ac} \cdot \uedge{bd}$.
\item If $\Gamma \sim \Gamma'$ and $\Gamma''$ is any graph then $\Gamma
\cdot \Gamma'' \sim \Gamma' \cdot \Gamma''$.
\end{itemize}

\begin{figure}[!ht]
\begin{displaymath}
\begin{xy}
(-6, 6)*{}="A"; (-6, -6)*{}="B"; (17, 14)*{}="C";
(24, 7)*{}="D"; (17, -14)*{}="E"; (24, -7)*{}="F";
"A"; "B"; **\dir{-};
"A"; "C"; **\dir{-};
"B"; "C"; **\dir{-};
"D"; "E"; **\dir{-};
"E"; "F"; **\dir{-};
"D"; "F"; **\dir{-};
"A"*{\bullet}; "B"*{\bullet}; "C"*{\bullet};
"D"*{\bullet}; "E"*{\bullet}; "F"*{\bullet};
(-6, 9)*{\ss 2}; (-6, -8)*{\ss 1}; (17, 17)*{\ss 3};
(24, 10)*{\ss 4}; (17, -16)*{\ss 6}; (24, -9)*{\ss 5};
\end{xy}
\qquad \qquad
\begin{xy}
(-6, 6)*{}="A"; (-6, -6)*{}="B"; (17, 14)*{}="C";
(24, 7)*{}="D"; (17, -14)*{}="E"; (24, -7)*{}="F";
(0, 0)*{}="P1"; (10, 0)*{}="P2"; (17, 7)*{}="P3"; (17, -7)*{}="P4";
"A"; "P1"; **\dir{-};
"B"; "P1"; **\dir{-};
"C"; "P3"; **\dir{-};
"D"; "P3"; **\dir{-};
"E"; "P4"; **\dir{-};
"F"; "P4"; **\dir{-};
"P1"; "P2"; **\dir{-};
"P3"; "P2"; **\dir{-};
"P4"; "P2"; **\dir{-};
"A"*{\bullet}; "B"*{\bullet}; "C"*{\bullet};
"D"*{\bullet}; "E"*{\bullet}; "F"*{\bullet};
"P1"*{\bullet}; "P2"*{\bullet}; "P3"*{\bullet}; "P4"*{\bullet};
(-3, 3)*{2}; (-3, -3)*{2}; (5, 0)*{2};
(13.5, 3.5)*{4}; (17, 10.5)*{2}; (20.5, 7)*{2};
(13.5, -3.5)*{2}; (17, -10.5)*{2}; (20.5, -7)*{2};
(-6, 9)*{\ss 2}; (-6, -8)*{\ss 1}; (17, 17)*{\ss 3};
(24, 10)*{\ss 4}; (17, -16)*{\ss 6}; (24, -9)*{\ss 5};
\end{xy}
\end{displaymath}
\caption{A graph on the leaves of a trivalent tree and its associated
weighting.}
\label{f:weight}
\end{figure}
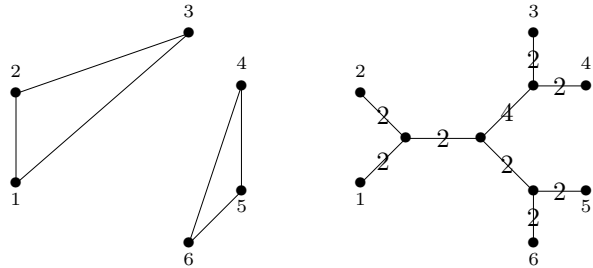

The following result gives a correspondence between graphs and weightings.

\begin{proposition}
\label{prop-wt-gr}
Associating a weighting of $\Xi$ to a graph induces a bijection between
the Pl\"ucker classes of undirected graphs on the leaves of $\Xi$ and the
admissible weightings of $\Xi$.
\end{proposition}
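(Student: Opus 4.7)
My plan has four components: well-definedness of the map $[\Gamma]\mapsto\xi_\Gamma$ on Pl\"ucker classes, admissibility of the image weighting, surjectivity, and injectivity. For well-definedness, given $a,b,c,d\in L$ satisfying \eqref{eq-overlap-cond}, the two geodesics $ab$ and $cd$ share a common subpath $\pi\subset\Xi$, and the geodesics $ac$ and $bd$ share exactly the same $\pi$; as the diagram in Figure~\ref{f:torpluck} shows, the complementary branches from $\pi$ to each of the four leaves appear exactly once in either configuration, so the multiset of tree-edges traversed is the same and $\xi_{\uedgescr{ab}\cdot\uedgescr{cd}}=\xi_{\uedgescr{ac}\cdot\uedgescr{bd}}$. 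For admissibility at a trinode $v$ with incident edges $e_1,e_2,e_3$, let $L_1,L_2,L_3$ denote the leaf sets of the three subtrees cut off by $v$, and for $i\ne j$ let $x_{ij}$ count the edges of $\Gamma$ with one endpoint in $L_i$ and one in $L_j$ (edges with both endpoints in a single $L_i$ meet none of the three edges at $v$). An edge with endpoints in $L_i$ and $L_j$ meets exactly $e_i$ and $e_j$, so the weight triple at $v$ is $(x_{12}+x_{13},\,x_{12}+x_{23},\,x_{13}+x_{23})$, which is of form (W2) with $(x,y,z)=(x_{23},x_{13},x_{12})$.

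For the bijection I would reduce to non-crossing representatives. Fix a planar embedding of $\Xi$ inducing a cyclic order on its leaves; the toric analogue of the straightening algorithm recalled in \S \ref{ss:kempebasis} applies verbatim (a toric Pl\"ucker move still strictly decreases total planar edge length), so every Pl\"ucker class contains a non-crossing representative. It then suffices to exhibit a bijection between admissible weightings and non-crossing undirected graphs on the leaves of $\Xi$. Surjectivity I would establish by induction on the number of trinodes of $\Xi$: pick a cherry $\{a,b\}$ at a trinode $v$ whose third incident edge $e_3$ leads into the rest of the tree, write the weight triple at $v$ as $(y+z,\,x+z,\,x+y)$ via (W2), place $z$ edges between $a$ and $b$, and form the smaller tree $\Xi'$ by contracting the cherry to a single leaf. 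The residual weighting on $\Xi'$ remains admissible, so the induction yields a non-crossing graph $\Gamma'$ realizing it, which lifts back by distributing the $x+y$ edges at the contracted leaf as $y$ meeting $a$ and $x$ meeting $b$ in the unique non-crossing way.

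The main obstacle is injectivity: two non-crossing graphs with identical weightings must literally coincide. At each trinode, the triple $(x,y,z)$ determined by (W2) specifies only how many edges of $\Gamma$ pass between each pair of the three sides of $v$, not which leaf on one side is matched with which on another. The non-crossing constraint in the fixed planar embedding must be used to rigidify these choices into a nested pattern (the edges between sides $i$ and $j$ of $v$ are forced to be nested with respect to the cyclic order inherited from the embedding), and the local rigidifications at adjacent trinodes must then be shown to glue compatibly. Equivalently, one must verify that the inductive construction of surjectivity above produces the same non-crossing graph regardless of which cherry is collapsed first; once this independence is established, it gives both uniqueness of the non-crossing representative realizing a given weighting and the desired injectivity, completing the bijection.
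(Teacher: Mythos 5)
Your well-definedness and admissibility computations are correct (modulo a harmless relabeling: solving (W2) gives $(x,y,z)=(x_{12},x_{13},x_{23})$, not the permutation you wrote), and the reduction to non-crossing representatives is a genuinely different route from the paper's. The paper never invokes canonical forms: it runs a greedy algorithm directly on the weighting, choosing at each trinode the outgoing edge of larger weight, and proves (i) the resulting geodesic can be subtracted while preserving admissibility, so surjectivity follows by induction on total weight, and (ii) any $\Gamma$ realizing $\xi$ is toric-Pl\"ucker-equivalent to one containing the greedy geodesic as an edge, established by a delicate induction along the greedy path. This sidesteps the question of whether non-crossing representatives are unique.

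That question is exactly where your proposal has a genuine gap. You explicitly flag injectivity — ``two non-crossing graphs with identical weightings must literally coincide'' — as ``the main obstacle'' and then offer two unexecuted plans (``must be used to rigidify,'' ``must then be shown to glue compatibly,'' ``one must verify\ldots order-independence''). None of this is carried out, and it is not a routine detail: it is the hardest part of the proposition, and nothing you have established elsewhere (Kempe's basis theorem applies to $S_L$, not to the toric ring, and Proposition~\ref{prop:toric} sits downstream of this one) hands it to you for free. Your cherry-collapsing surjectivity argument is plausible but also unverified at two points: that the residual weighting on $\Xi'$ is admissible (easy, but unstated), and that the ``unique non-crossing way'' of redistributing the $x+y$ edges at the contracted leaf actually produces a non-crossing graph whose edges remain compatible with the rest of $\Gamma'$. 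To complete your route you would need to prove uniqueness of the non-crossing representative outright — most naturally by showing your cherry-collapsing construction is independent of the order of collapses, which amounts to a diamond/confluence lemma that is currently only asserted.
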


\begin{proof}
Let $\Gamma$ be a graph and let $\xi$ be its associated weighting.  We first
show that $\xi$ is admissible.  Let $v$ be a trinode of $\Xi$ and let
$(a, b, c)$ be its weight triple.  Consider the picture
\begin{displaymath}
\begin{xy}
(0,0)*{}="V"; (-12, -12)*{}="A"; (12, -12)*{}="B"; (0, 15)*{}="C";
"V"; "A"; **\dir{-}; "V"; "B"; **\dir{-}; "V"; "C"; **\dir{-};
"V"*{\bullet}; "A"*{\bullet}; "B"*{\bullet}; "C"*{\bullet};
(2, 2)*{\ss v}; (-12, -14)*{\ss p}; (12, -14)*{\ss q}; (0, 18)*{\ss r};
(-4.2, -6.4)*{a}; (4.2, -6.4)*{b}; (1.6, 7.5)*{c};
(-12, -8); (-3, 15); **\crv{(-3, 0)};
(12, -8); (3, 15); **\crv{(3, 0)};
(-9, -12); (9, -12); **\crv{(0, -5)};
(-9, 2)*{y}; (9, 2)*{z}; (0, -12)*{x};
\end{xy}
\end{displaymath}
Here $a$ is the number of edges of $\Gamma$ that meet edge $\uedge{pv}$; $b$
and $c$ are defined similarly.  We let $x$ be the number of edges of $\Gamma$
which, when drawn as geodesics on $\Xi$, go through both $p$ and $q$; we let
$y$ and $z$ be the analogous quantities.  Clearly $a=x+y$, $b=x+z$, and
$c=y+z$.  Thus we have shown that the weighting $\xi$ satisfies condition (W2)
at each trinode and is therefore admissible.

If we apply a toric Pl\"ucker relation to $\Gamma$ then its associated
weighting does not change:  the two pairs of edges in the toric Pl\"ucker
relation contain the same edges when drawn as geodesics.  Thus associating a
weighting to a graph yields a well-defined map from the Pl\"ucker classes of
graphs to the set of admissible weightings.  We now show that this map is
bijective.

We first prove that it is surjective.  We are given an admissible
weighting $\xi$ and we must produce a graph $\Gamma$ such that $\xi$ is its
associated weighting.  To do this it suffices to prove the following:  given
an admissible weighting $\xi$ there exist two leaves $i$ and $j$ of $\Xi$ such
that when the geodesic joining $i$ and $j$ is subtracted from $\xi$ the
resulting weighting is still admissible.  For, if this is the case, then we
can let $\uedge{ij}$ be an edge of $\Gamma$, subtract the geodesic joining
$i$ and $j$ from $\xi$ and proceed by induction.

Thus let $\xi$ be an admissible weighting of $\Xi$.  Let $i$ be any leaf of
$\Xi$ for which $\xi$ does not vanish on the edge containing $i$.  We produce
the vertex $j$ by the following greedy algorithm.  Put $v_0=i$ and let
$v_1$ be the unique trinode connected to $i$.  Assume now that we have
defined $v_0$ through $v_k$.  If $v_k$ is a leaf then stop and put $j=v_k$.
Otherwise $v_k$ is connected to two vertices other than $v_{k-1}$.  Let
$v_{k+1}$ be the one for which the corresponding edge has higher weight; if
the two edges have the same weight then pick $v_{k+1}$ arbitrarily.  We have
thus produced a pair of leaves $i$ and $j$; note that the $v_k$ are the
trinodes in geodesic joining $i$ and $j$.  We must show that when this geodesic
is subtracted from $\xi$ the resulting weighting is still admissible.

Let $\ell$ be lie strictly between 0 and $k$ so that $v_{\ell}$ is a trinode.
We have the picture
\begin{displaymath}
\begin{xy}
(-15, 0)*{}="A"; (0, 0)*{}="B"; (11, 11)*{}="C"; (11, -11)*{}="D";
"A"; "B"; **\dir{-}; "C"; "B"; **\dir{-}; "D"; "B"; **\dir{-};
"A"*{\bullet}; "B"*{\bullet}; "C"*{\bullet}; "D"*{\bullet};
(-15, -2)*{\ss v_{\ell-1}}; (-2, -2)*{\ss v_{\ell}}; (15, 11)*
{\ss v_{\ell+1}};
(-7.5, 0)*{a}; (6.5, 6.5)*{b}; (6.5, -6.5)*{c};
\end{xy}
\end{displaymath}
By definition of $v_{\ell+1}$ we have $b \ge c$.  We know $(a, b, c)$ satisfies
(W1).  Clearly $(a-1, b-1, c)$ still satisfies the parity condition.  We must
show that it still satisfies the triangle inequalities, which amounts to
proving  $c \le a+b-2$.  This inequality could fail in two ways:  1) $a=0$ and
$c$ equals $b$ or $b-1$; or 2) $a=1$ and $b=c$.  The first case is ruled out by
the way we selected $v_{\ell-1}$ and an easy induction argument.  The second
case is ruled out since $a+b+c$ is even.  This proves that the greedy algorithm
indeed works and completes the proof that our map is surjective.  Note that in
building the graph from the weighting there are many arbitrary choices.

We now prove that the map is injective, i.e.,  that if $\Gamma$ and $\Gamma'$
have the same weighting $\xi$ then they are Pl\"ucker equivalent.  To show this
we show that $\Gamma$ is Pl\"ucker equivalent to any of the graphs constructed
out of $\xi$ by using the greedy algorithm.  It suffices to prove that if
$v_0, \ldots, v_k$ is a sequence coming out of the greedy algorithm then we can
apply toric Pl\"ucker relations to $\Gamma$ so that $v_0$ and $v_k$ are
connected in $\Gamma$.  For then we may remove this edge from $\Gamma$ and the
corresponding path in $\xi$ and proceed by induction.

Thus let $v_0, \ldots, v_k$ come out of the greedy algorithm.  We prove by
induction on $\ell$ that $\Gamma$ is Pl\"ucker equivalent to a graph
containing an edge which passes through $v_0$ and $v_{\ell}$ (we say that an
edge of $\Gamma$ passes through two vertices of $\Xi$ if its corresponding
geodesic does).  This is clear for $\ell=1$.  Thus assume it is true for
$\ell$ and we show that it is true for $\ell+1$.  We have the picture
\begin{displaymath}
\begin{xy}
(0, 0)*{}="V0"; (15, 0)*{}="V1"; (30, 0)*{}="V2"; (45, 0)*{}="V3";
(60, 0)*{}="V4"; (15, -15)*{}="X1"; (30, -15)*{}="X2";
(45, -15)*{}="X3";
"V0"; "V1"; **\dir{-};
"V1"; "V2"; **\dir{-};
"V3"; "V4"; **\dir{-};
"V1"; "X1"; **\dir{-};
"V2"; "X2"; **\dir{-};
"V3"; "X3"; **\dir{-};
(37.5, 0)*{\ldots};
"V0"*{\bullet}; "V1"*{\bullet}; "V2"*{\bullet}; "V3"*{\bullet};
"V4"*{\bullet}; "X1"*{\bullet}; "X2"*{\bullet}; "X3"*{\bullet};
(0, 3)*{\ss v_0}; (15, 3)*{\ss v_1}; (30, 3)*{\ss v_2};
(45, 3)*{\ss v_{\ell}}; (60, 3)*{\ss v_{\ell+1}};
(18, -15)*{\ss x_1}; (33, -15)*{\ss x_2}; (48, -15)*{\ss x_{\ell}};
\end{xy}
\end{displaymath}
Here $x_i$ is the unique vertex connected to $v_i$ besides $v_{i-1}$ and
$v_{i+1}$.  We have assumed that $\Gamma$ contains an edge $e$ passing through
$v_0$ and $v_{\ell}$; we must show that $\Gamma$ is Pl\"ucker equivalent to a
graph containing an edge passing through $v_0$ and $v_{\ell+1}$.  Now, $e$
itself either passes through $v_{\ell+1}$ or $x_{\ell}$.  In the former case
we are done.  Thus we may assume that $e$ passes through $x_{\ell}$.

By the definition of the greedy algorithm we have
\begin{displaymath}
\xi(\uedge{ v_{\ell} x_{\ell}}) \le \xi(\uedge{v_{\ell} v_{\ell+1}}), \qquad
\xi(\uedge{ v_i v_{i+1}}) \ne 0.
\end{displaymath}
Thus there exists an edge of $\Gamma$ passing through $v_{\ell}$ and
$v_{\ell+1}$.  If every edge which passed through $v_{\ell}$ and $v_{\ell+1}$
also passed through $x_{\ell}$ then, by the inequality, every edge which passed
through $x_{\ell}$ and $v_{\ell}$ would also pass through $v_{\ell+1}$; it
would follow that no edge could pass through $v_{\ell-1}$ and $v_{\ell}$.
However, this would imply $\xi(\uedge{v_{\ell-1} v_{\ell}})=0$, a
contradiction.  Thus there exists an edge $e'$ of $\Gamma$ which passes
through $v_{\ell+1}$ and $v_{\ell}$ but \emph{not} through $x_{\ell}$.  If
$v_0$ is a vertex of $e'$ then we are done.  Otherwise, applying the toric
Pl\"ucker relation to $e$ and $e'$ yields a graph containing an edge passing
through $v_0$ and $v_{\ell+1}$.  This complete the proof.
\end{proof}

\subsection{The rings $\gr_{\Xi} S_L$ and $\gr_{\Xi} R_L$ as semi-group
algebras of weightings}
\label{ss:toric}

Let $\Xi$ be a trivalent tree with leaf set $L$.  Embed $L$ into the unit
circle in such a way that $\Xi$ can be drawn inside the unit circle without
crossings.  Choose a total order on $L$ which is compatible with its
embedding into the circle in the sense that if $a \le b \le c$ then one
encounters $b$ when traveling clockwise from $a$ to $c$.  For $a, b \in L$
define $\epsilon_{ab}$ to be 1 if $a<b$, $-1$ if $a>b$ and 0 if $a=b$.
For a directed graph $\Gamma$ on $L$ define $\epsilon_{\Gamma}$ to be the
product of the $\epsilon_{ab}$ over the edges $\dedge{ab}$ of $\Gamma$.  We
 write $\Gamma^{\un}$ for the undirected graph associated to $\Gamma$.
Finally, let $\ms{S}_{\Xi}$ denote the set of admissible weights on $\Xi$.
It is a semi-group since the sum of two admissible weightings is again
admissible.  We can now prove:

\begin{proposition}
\label{prop:toric}
There is a unique isomorphism of rings $\gr_{\Xi}{S_L} \to \Z[\ms{S}_{\Xi}]$
mapping $\ol{X}_{\Gamma}$ to $\epsilon_{\Gamma} \xi_{\Gamma^{\un}}$.
\end{proposition}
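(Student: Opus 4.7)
The plan is to lift the desired map to $\Z[\ms{G}_L]$, show that it factors through $J_\Xi$ so it descends to $\gr_\Xi S_L$ by Proposition~\ref{prop:torpres}, and then verify bijectivity via a basis comparison using Propositions~\ref{prop:torpres} and~\ref{prop-wt-gr}. Concretely, I would define the ring homomorphism $\phi \colon \Z[\ms{G}_L] \to \Z[\ms{S}_\Xi]$ on generators by $X_{\dedgescr{ab}} \mapsto \epsilon_{ab}\,\xi_{\uedgescr{ab}}$. Since weightings add under graph multiplication and $\epsilon$ is multiplicative on edges, this extends to $\phi(X_\Gamma) = \epsilon_\Gamma\,\xi_{\Gamma^{\un}}$. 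By Proposition~\ref{prop:torpres}, to obtain a map on $\gr_\Xi S_L$ it suffices to verify $\phi(J_\Xi) = 0$: the loop relation vanishes because $\epsilon_{aa}=0$; the sign relation holds because $\epsilon_{ab}=-\epsilon_{ba}$ while the underlying undirected graph is unchanged; and the toric Pl\"ucker relation reduces, via Proposition~\ref{prop-wt-gr} (which yields $\xi_{\uedgescr{ab}\cdot\uedgescr{cd}} = \xi_{\uedgescr{ac}\cdot\uedgescr{bd}}$ under the overlap condition), to the sign identity $\epsilon_{ab}\epsilon_{cd} = \epsilon_{ac}\epsilon_{bd}$.

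The main obstacle is this sign identity. The argument I would give is that the four-leaf subtree of $\Xi$ on $\{a,b,c,d\}$ has the ``H-shape'' shown in Figure~\ref{f:torpluck}, and its structure canonically pairs the leaves via the two endpoints of the central path of the subtree. This ``tree pairing'' is the unique pairing $\{P,Q\}$ of $\{a,b,c,d\}$ for which the $P$-geodesic and $Q$-geodesic in $\Xi$ share no edge. Condition~\eqref{eq-overlap-cond} therefore forces the tree pairing to be $\{\{a,d\},\{b,c\}\}$. Because $\Xi$ is drawn inside the disk compatibly with the cyclic order on $L$, the tree pairing is automatically one of the two chord non-crossing matchings of $\{a,b,c,d\}$, so $\{a,d\}$ and $\{b,c\}$ are pairs of cyclically consecutive elements of $L$. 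A brief case analysis over the compatible linear orders (rotations and reversals of these cyclic orderings) then yields $\epsilon_{ab}\epsilon_{cd} = \epsilon_{ac}\epsilon_{bd}$, completing well-definedness and producing $\ol\phi \colon \gr_\Xi S_L \to \Z[\ms{S}_\Xi]$.

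Surjectivity of $\ol\phi$ is immediate from Proposition~\ref{prop-wt-gr}: every admissible $\xi$ equals $\xi_\Gamma$ for some loop-free undirected graph $\Gamma$, and any direction choice $\tilde\Gamma$ on $\Gamma$ gives $\xi = \epsilon_{\tilde\Gamma}\,\ol\phi(\ol X_{\tilde\Gamma})$ since $\epsilon_{\tilde\Gamma}\in\{\pm 1\}$. For injectivity I would use the basis of $\gr_\Xi S_L$ produced in the proof of Proposition~\ref{prop:torpres}: a choice of direction $\tilde\Gamma$ for each undirected non-crossing loop-free graph $\Gamma$ yields a $\Z$-basis $\{\ol X_{\tilde\Gamma}\}$ of $\gr_\Xi S_L$, which $\ol\phi$ sends to $\{\pm\xi_{\Gamma^{\un}}\}$. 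It thus suffices to show that distinct undirected non-crossing loop-free graphs have distinct weightings on $\Xi$; then these images form a set of $\pm$ distinct elements of the $\Z$-basis of $\Z[\ms{S}_\Xi]$, giving injectivity. I would prove this distinctness by induction on the number of trinodes of $\Xi$: any $\Xi$ with $|L|\ge 3$ admits a trinode $v$ adjacent to two leaves $a,b$ with third edge $e$, and the weight triple $(w(av),w(bv),w(e))$ determines, via the bijection (W1)$\Leftrightarrow$(W2) of \S\ref{ss:weightings}, the number of $\Gamma$-edges between $a$ and $b$ and the numbers of $\Gamma$-edges leaving $a$ and $b$ along $e$; the non-crossing hypothesis then forces those ``outgoing'' edges to connect to specific leaves of $L\setminus\{a,b\}$, reducing the data to that of a smaller tree and graph pair. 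Uniqueness of $\ol\phi$ is clear because $\gr_\Xi S_L$ is generated as a ring by the $\ol X_{\dedgescr{ab}}$.
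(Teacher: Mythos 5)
Your proof is correct, and its overall shape matches the paper's: lift the map to $\Z[\ms{G}_L]$, check that it kills $J_\Xi$, then establish bijectivity using Propositions~\ref{prop:torpres} and~\ref{prop-wt-gr}. Two things differ. First, you make explicit the sign identity $\epsilon_{ab}\epsilon_{cd}=\epsilon_{ac}\epsilon_{bd}$ under~\eqref{eq-overlap-cond}, via the observation that the overlap condition forces $\{\uedge{ad},\uedge{bc}\}$ to be the ``tree pairing,'' which the planar embedding then makes a non-crossing chord pairing of $\{a,b,c,d\}$. The paper's proof needs exactly this identity as well --- it is what makes the assertion that $\ol X_\Gamma\mapsto\epsilon_\Gamma\ol X_\Gamma^{\un}$ is well-defined from $\Z[\ms{G}_L]/J_\Xi$ to $\Z[\ms{G}_L]/J_\Xi^{\un}$ actually true --- but leaves the verification to the reader, so your argument fills in a real detail. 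Second, where the paper routes the argument through the auxiliary quotient $\Z[\ms{G}_L]/J_\Xi^{\un}$ and then transports the semigroup bijection of Proposition~\ref{prop-wt-gr} across it, you argue surjectivity and injectivity directly, with injectivity resting on a new induction showing that distinct non-crossing loop-free undirected graphs have distinct weightings.

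That last step is the one place where you do more work than necessary, and where your sketch is a bit imprecise. The distinctness you want is already available: the proof of Proposition~\ref{prop:torpres} (via Theorem~\ref{thm:kempe2}) shows that the directed non-crossing loop-free graphs give a $\Z$-basis of $\gr_\Xi S_L$, hence that distinct non-crossing undirected graphs are pairwise Pl\"ucker-inequivalent; then the injectivity half of Proposition~\ref{prop-wt-gr} immediately gives that they have distinct weightings, with no fresh induction. As for the induction you propose, the clause ``the non-crossing hypothesis then forces those outgoing edges to connect to specific leaves'' overstates what the cherry's weight triple $(w(av),w(bv),w(e))$ alone determines; one really needs to first apply the inductive hypothesis to the contracted pair (the truncated tree together with the image of $\Gamma$ under collapsing $a,b,v$ to a single leaf), and then use the planarity constraint that every target of $b$ must precede every target of $a$ in the cyclic order on $L\setminus\{a,b\}$ to decide which edges at the new leaf came from $a$ and which from $b$. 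Stated that way the induction goes through, but the shortcut via Theorem~\ref{thm:kempe2} is cleaner.
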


\begin{proof}
We define an auxiliary ring by modifying the sign relation in the presentation
of $\gr_{\Xi} S_L$ given in \S \ref{ss:torpres}.  Define the ideal
$J^{\un}_{\Xi}$ of $\Z[\ms{G}_L]$ by the following types of relations:
\begin{itemize}
\item \emph{Loop relation:} If $\Gamma$ has a loop then $X_{\Gamma}=0$.
\item \emph{Modified sign relation:} If $\Gamma'$ is obtained from $\Gamma$ by
reversing the direction of an edge then $X_{\Gamma}=X_{\Gamma'}$.
\item \emph{Toric Pl\"ucker relation:} As in \S \ref{ss:torpres}.
\end{itemize}
Denote the image of $X_{\Gamma}$ in $\Z[\ms{G}]/J_L^{\un}$ by
$\ol{X}_{\Gamma}^{\un}$.  Note that $\ol{X}_{\Gamma}^{\un}$ makes sense for
an \emph{undirected} graph $\Gamma$, and that if $\Gamma$ and $\Gamma'$ are
Pl\"ucker equivalent undirected graphs then the toric Pl\"ucker relation
implies $\ol{X}_{\Gamma}^{\un}=\ol{X}_{\Gamma'}^{\un}$.  By comparing the
definition of $J_L^{\un}$ to the presentation of $\gr_{\Xi} S_L$ given in
Proposition~\ref{prop:torpres}, we find that the map
\begin{displaymath}
\gr_{\Xi}{S_L}=\Z[\ms{G}_L]/J_{\Xi} \to \Z[\ms{G}_L]/J_{\Xi}^{\un}, \qquad
\ol{X}_{\Gamma} \mapsto \epsilon_{\Gamma} \ol{X}_{\Gamma}^{\un}
\end{displaymath}
is well-defined and an isomorphism.  From the equivalence of weightings and
Pl\"ucker classes of undirected graphs given in Proposition~\ref{prop-wt-gr},
we find that the map
\begin{displaymath}
\Z[\ms{G}_L]/J_{\Xi}^{\un} \to \Z[\ms{S}_{\Xi}], \qquad
\ol{X}_{\Gamma}^{\un} \mapsto \xi_{\Gamma^{\un}}
\end{displaymath}
is a well-defined isomorphism.  The proposition now follows.
\end{proof}

We translate this result to the regular case.  Call a weighting $\xi$ on $\Xi$
\emph{regular of degree $d$} if for each leaf $v$ we have $\xi(e_v)=d$, where
$e_v$ is the unique edge containing $v$.  Let $\ms{R}_{\Xi}$ be the semi-group
of admissible regular weightings on $\Xi$.  We then have:
\Ncom{$\ms{R}_{\Xi}$}

\begin{corollary}
\label{cor:toric}
There is  a unique isomorphism of graded rings $\gr_{\Xi} R_L \to
\Z[\ms{R}_{\Xi}]$ which takes $\ol{X}_{\Gamma}$ to $\epsilon_{\Gamma} \cdot
\xi_{\Gamma^{\un}}$.
\end{corollary}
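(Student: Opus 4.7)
The plan is to derive this as a direct corollary of Proposition~\ref{prop:toric}. By construction, $\gr_{\Xi} R_L$ sits inside $\gr_{\Xi} S_L$ as the subgroup spanned by the classes $\ol{X}_{\Gamma}$ with $\Gamma$ regular, and under the isomorphism $\gr_{\Xi} S_L \to \Z[\ms{S}_{\Xi}]$ given in Proposition~\ref{prop:toric}, $\ol{X}_{\Gamma}$ is sent to $\epsilon_{\Gamma} \xi_{\Gamma^{\un}}$. So the map in question is forced on us; uniqueness is immediate, and the task is to identify the image of $\gr_{\Xi} R_L$ with $\Z[\ms{R}_{\Xi}] \subset \Z[\ms{S}_{\Xi}]$ as a graded subring.

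The key observation is that for a graph $\Gamma$ on $L$ and a leaf $v$ of $\Xi$ with unique incident edge $e_v$, the weighting value $\xi_{\Gamma^{\un}}(e_v)$ is exactly the number of edges of $\Gamma$ with $v$ as an endpoint, since the geodesic from $v$ to anywhere must pass through $e_v$. Hence $\Gamma$ is regular of degree $d$ if and only if $\xi_{\Gamma^{\un}}$ is regular of degree $d$ as a weighting in the sense defined just before the corollary. This immediately shows that the isomorphism of Proposition~\ref{prop:toric} sends the subgroup generated by the $\ol{X}_{\Gamma}$ with $\Gamma$ regular of degree $d$ \emph{into} the subgroup $\Z \cdot \ms{R}_{\Xi}^{(d)}$ spanned by admissible regular weightings of that degree, and that the resulting map respects the grading.

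For surjectivity onto $\Z[\ms{R}_{\Xi}]$, I would invoke Proposition~\ref{prop-wt-gr}: any admissible weighting $\xi$ of $\Xi$ arises as $\xi_{\Gamma^{\un}}$ for some undirected graph $\Gamma$, and by the equivalence of (W1) and (W2) together with the leaf-count observation above, if $\xi$ is regular of degree $d$ then any such $\Gamma$ is automatically regular of degree $d$. Picking any direction on the edges of $\Gamma$ exhibits $\xi = \pm \xi_{\Gamma^{\un}}$ as coming from a regular directed graph, so $\Z[\ms{R}_{\Xi}]$ lies in the image. Combined with the previous step this identifies $\gr_{\Xi} R_L$ with $\Z[\ms{R}_{\Xi}]$.

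I do not expect a significant obstacle here: both the compatibility of ``regular graph'' with ``regular weighting'' and the surjectivity step are essentially bookkeeping once one has the general results for $S_L$. The only mild care needed is to track how directions interact with the formula $\ol{X}_{\Gamma} \mapsto \epsilon_{\Gamma} \xi_{\Gamma^{\un}}$ so that one gets genuine equality of images (up to sign) and hence that the subring generated by regular $\ol{X}_{\Gamma}$ exhausts $\Z[\ms{R}_{\Xi}]$.
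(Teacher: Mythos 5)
Your proposal is correct and is exactly the argument the paper has in mind: the paper states the corollary as an immediate translation of Proposition~\ref{prop:toric}, with the notion of ``regular weighting'' introduced precisely so that the leaf-count identification (valence of $v$ in $\Gamma$ equals $\xi_{\Gamma^{\un}}(e_v)$) makes the restriction to $\gr_{\Xi} R_L$ and $\Z[\ms{R}_{\Xi}]$ manifest. No gap; you have simply spelled out what the paper regards as immediate.
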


Proposition~\ref{prop:toric} and Corollary~\ref{cor:toric} show that $\gr_{\Xi}
S_L$ and $\gr_{\Xi} R_L$ are semi-group algebras and therefore toric rings.
(It is not difficult to see that $\ms{S}_{\Xi}$ and $\ms{R}_{\Xi}$ are the
set of lattice points in a strictly convex rational polyhedral cone,
but 
 this will not be of importance to us.)

\subsection{Reduced weightings}

We close \S \ref{s:gentor} with a discussion of reduced weightings, which we
use in \S \ref{s:ytree}--\ref{s:gsc}.  Let $\Xi$ be a trivalent tree.  By a
\emph{reduced weighting} on $\Xi$ we simply mean a weighting on $\Xi$ --- the
terms are synonymous but used to distinguish the usage of ``admissible.''  We
say that a reduced weighting is \emph{admissible} if it satisfies the triangle
inequality (as in (W1)) at each trinode; the parity condition is not enforced.
We say that a reduced weighting $\xi$ is \emph{regular of degree $\le d$} if
$\xi(e_v) \le d$ for all leaves $v$, where $e_v$ is the edge meeting leaf $v$.
Let $\ol{\ms{R}}_{\Xi}$ be the set of all ordered pairs $(\xi, d)$ with $d$ a
non-negative integer and $\xi$ an admissible reduced weighting which is
regular of degree $\le d$.  We define the \emph{degree} of $(\xi, d)$ to be
$d$.
\Ncom{$\text{(adm.) red.\ wting}, \ol{\ms{R}}_{\Xi}$}
\Rnts{reduced weightings used in \S \ref{ss:tor-segre}}

Let $\Xi$ be a matched trivalent tree and let $\Xi^-$ be the trivalent tree
obtained by deleting the leaves of $\Xi$ and the edges that they touch.  We
call $\Xi^-$ the \emph{truncation} of $\Xi$.  
\Ncom{$\text{truncation} \Xi^-$}

\begin{proposition}
\label{p2-trun-wt}
With notation as above, there is a canonical isomorphism of semi-groups
$\ms{R}_{\Xi} \to \ol{\ms{R}}_{\Xi^-}$ preserving degree.  The image of a
weighting $\xi$ on $\Xi$ is the pair $(\xi', d)$ where $d$ is the degree of
$\xi$ and $\xi'$ is the weighting on $\Xi^-$ given by $\xi'(x)=\half \xi(x)$.
\end{proposition}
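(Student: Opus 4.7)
The plan is to construct the stated map explicitly, verify it lands in $\ol{\ms{R}}_{\Xi^-}$, construct its inverse, and check both are semi-group maps preserving degree. The main hurdle is showing that for $\xi\in\ms{R}_{\Xi}$ the values of $\xi$ on edges of $\Xi^-$ are actually even, so that $\xi'(x)=\tfrac12\xi(x)$ makes sense in $\Z$.

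First I would construct the forward map. Given $\xi\in\ms{R}_{\Xi}$ of degree $d$, define $\xi'$ on $\Xi^-$ by $\xi'(x)=\tfrac12\xi(x)$, and set $(\xi,d)\mapsto(\xi',d)$. The content of this step is proving $\xi(x)\in 2\Z$ for every edge $x$ of $\Xi^-$. I would do this by first handling the edges $e_{v'}$ touching a leaf $v'$ of $\Xi^-$: such a $v'$ is a trinode of $\Xi$ adjacent to a matched pair $\{v_1,v_2\}$, whose weight triple is $(d,d,\xi(e_{v'}))$; condition (W2) gives $d=p+q$, $d=p+r$, $\xi(e_{v'})=q+r$, forcing $q=r$ and hence $\xi(e_{v'})=2q$. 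Then I would propagate evenness inward: at any trinode of $\Xi$ lying in $\Xi^-$, the parity clause of (W1) says the three edge-weights sum to an even number, so if two are even the third is as well, and the tree structure of $\Xi^-$ (rooted at any leaf) finishes an easy induction. Alternatively, using Proposition~\ref{prop-wt-gr} one can realize $\xi$ as $\xi_\Gamma$ for a regular graph $\Gamma$ of degree $d$, observe that cutting an edge $x$ of $\Xi^-$ partitions the leaves of $\Xi$ into two subsets each of which is a union of matched pairs (hence of even size), and conclude $\xi(x)=d\cdot|L_x|-2(\text{internal edges})\in 2\Z$.

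Next I would verify that $(\xi',d)\in\ol{\ms{R}}_{\Xi^-}$. Admissibility of $\xi'$ (in the reduced sense, i.e.\ triangle inequalities only) holds at every trinode of $\Xi^-$ because the same triangle inequalities hold for $\xi$ at that trinode in $\Xi$, and they are preserved under division by $2$. Regularity of degree $\le d$ holds because for a leaf $v'$ of $\Xi^-$, the trinode in $\Xi$ has triple $(d,d,\xi(e_{v'}))$, so $\xi(e_{v'})\le 2d$ and thus $\xi'(e_{v'})\le d$.

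For the inverse, given $(\xi',d)\in\ol{\ms{R}}_{\Xi^-}$, I define $\xi$ on $\Xi$ by $\xi(x)=2\xi'(x)$ for edges $x$ of $\Xi^-$ and $\xi(e_v)=d$ for the leaf edges $e_v$ of $\Xi$. Regularity of degree $d$ is immediate. Admissibility (W1) at an internal trinode of $\Xi^-$ follows since the triple $(2\xi'(a),2\xi'(b),2\xi'(c))$ has even sum and satisfies the triangle inequalities because $(\xi'(a),\xi'(b),\xi'(c))$ does; at a trinode adjacent to a matched pair the triple is $(d,d,2\xi'(e))$, whose sum is even and whose triangle inequalities reduce to $\xi'(e)\le d$, which is exactly the regularity assumption on $\xi'$. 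The two constructions are manifestly inverse to one another, and both are additive (since halving and doubling are), so the bijection is a semi-group isomorphism preserving degree, as claimed.
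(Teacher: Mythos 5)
Your proof is correct and complete. The paper declines to give a proof ("the proof is easy") and only offers an illustrative figure, so there is no argument in the text to compare against, but your write-up fills in exactly the expected details: the crucial observation is that $\xi$ takes even values on internal edges, which you establish both via the (W2) decomposition at trinodes adjacent to a matched pair (forcing $q=r$, hence $\xi(e_{v'})=2q$) followed by parity propagation, and via the graph-theoretic counting argument through Proposition~\ref{prop-wt-gr}. The remaining verifications — that triangle inequalities survive halving and doubling, that regularity of degree $\le d$ corresponds exactly to the triangle inequality $\xi(e_{v'})\le 2d$ at a boundary trinode, and that the two constructions are additive and mutually inverse — are all carried out correctly.
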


The proof is easy.  See Figure~\ref{f:trun-wt} for an illustration.

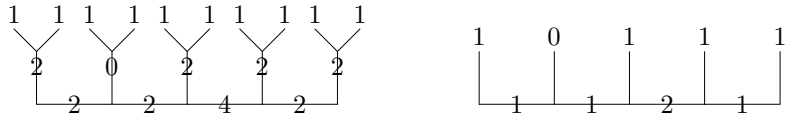
\begin{figure}[!ht]
\begin{displaymath}
\begin{xy}
(-20, 0)*{}="A"; (-10, 0)*{}="B"; (0, 0)*{}="C"; (10, 0)*{}="D";
(20, 0)*{}="E";
(-20, 7)*{}="A1"; (-10, 7)*{}="B1"; (0, 7)*{}="C1"; (10, 7)*{}="D1";
(20, 7)*{}="E1";
(-23, 10)*{}="A2"; (-17, 10)*{}="A3";
(-13, 10)*{}="B2"; (-7, 10)*{}="B3";
(-3, 10)*{}="C2"; (3, 10)*{}="C3";
(7, 10)*{}="D2"; (13, 10)*{}="D3";
(17, 10)*{}="E2"; (23, 10)*{}="E3";
"A"; "B"; **\dir{-};
"B"; "C"; **\dir{-};
"C"; "D"; **\dir{-};
"D"; "E"; **\dir{-};
"A"; "A1"; **\dir{-};
"B"; "B1"; **\dir{-};
"C"; "C1"; **\dir{-};
"D"; "D1"; **\dir{-};
"E"; "E1"; **\dir{-};
"A1"; "A2"; **\dir{-};
"A1"; "A3"; **\dir{-};
"B1"; "B2"; **\dir{-};
"B1"; "B3"; **\dir{-};
"C1"; "C2"; **\dir{-};
"C1"; "C3"; **\dir{-};
"D1"; "D2"; **\dir{-};
"D1"; "D3"; **\dir{-};
"E1"; "E2"; **\dir{-};
"E1"; "E3"; **\dir{-};
(-23, 12)*{1}; (-17, 12)*{1};
(-13, 12)*{1}; (-7, 12)*{1};
(-3, 12)*{1}; (3, 12)*{1};
(13, 12)*{1}; (7, 12)*{1};
(23, 12)*{1}; (17, 12)*{1};
(-20, 5)*{2}; (-10, 5)*{0}; (0, 5)*{2}; (10, 5)*{2}; (20, 5)*{2};
(-15, 0)*{2}; (-5, 0)*{2}; (5, 0)*{4}; (15, 0)*{2};
\end{xy}
\qquad \qquad
\begin{xy}
(-20, 0)*{}="A"; (-10, 0)*{}="B"; (0, 0)*{}="C"; (10, 0)*{}="D";
(20, 0)*{}="E";
(-20, 7)*{}="A1"; (-10, 7)*{}="B1"; (0, 7)*{}="C1"; (10, 7)*{}="D1";
(20, 7)*{}="E1";
"A"; "B"; **\dir{-};
"B"; "C"; **\dir{-};
"C"; "D"; **\dir{-};
"D"; "E"; **\dir{-};
"A"; "A1"; **\dir{-};
"B"; "B1"; **\dir{-};
"C"; "C1"; **\dir{-};
"D"; "D1"; **\dir{-};
"E"; "E1"; **\dir{-};
(-20, 9)*{1}; (-10, 9)*{0}; (0, 9)*{1}; (10, 9)*{1}; (20, 9)*{1};
(-15, 0)*{1}; (-5, 0)*{1}; (5, 0)*{2}; (15, 0)*{1};
\end{xy}
\end{displaymath}
\caption{An illustration of Proposition~\ref{p2-trun-wt}. 
The admissible weighting $\xi$ on the left is regular of degree 1, the 
associated reduced admissible weighting $\xi^-$ on the right is regular
of degree $\le 1$.}
\label{f:trun-wt}
\end{figure}


\section{The toric ideal is generated by quadratics and 
toric generalized Segre cubics}
\label{s:ytree}

We have described a family of toric degenerations of $R_L$, depending on a
choice of trivalent tree.  The purpose of \S \ref{s:ytree} is to choose a
specific family of trees (the Y-trees) to ensure the degenerated ring (i)
is generated in degree one, (ii) has relations generated in degrees two and
three, and (iii) is such that there is a precise description of the degree
three relations:

\begin{theorem}
\label{thm:ytree}
Let $\Xi$ be a Y-tree (defined in \S \ref{def:Ytree}).  Then $\gr_{\Xi} R_L$ is
generated in degree one, and the relations between degree one elements are
generated by quadratic relations and the generalized toric Segre cubic
relations (defined in \S \ref{ss:tor-segre}).
\end{theorem}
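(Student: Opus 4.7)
The plan is to translate the statement, via Corollary~\ref{cor:toric}, into a combinatorial claim about the semi-group $\ms{R}_{\Xi}$ of admissible regular weightings on the Y-tree $\Xi$. Under this identification $\gr_{\Xi} R_L \cong \Z[\ms{R}_{\Xi}]$, the first claim (generation in degree one) becomes: every element of $\ms{R}_{\Xi}$ of degree $d$ is a sum of $d$ elements of degree one. The second claim (relations generated by quadratics and toric generalized Segre cubics) becomes: the congruence on tuples of degree-one weightings with equal sum is generated by swaps of two and specific swaps of three.

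For degree-one generation, I would run the greedy algorithm of Proposition~\ref{prop-wt-gr}, adapted to the Y-tree: given a degree-$d$ admissible regular weighting $\xi$, produce a matching $\Gamma$ (hence a degree-one weighting $\xi_\Gamma$) such that $\xi - \xi_\Gamma$ is still admissible and regular of degree $d-1$. In the Y-tree the matched-pair structure restricts the possible degree-one weightings to a very explicit list, and the triangle inequality together with the matched-pair parity lets one peel off one matching at a time without creating any violation at a trinode. Induction on $d$ then finishes this step.

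For the relations, I would proceed by a local-to-global argument on the tree. Since $\Z[\ms{R}_{\Xi}]$ is a toric semi-group algebra, its ideal of relations with respect to a chosen generating set (the degree-one weightings) is generated by the \emph{syzygies} of that set: binomials $\xi_1 + \cdots + \xi_k = \eta_1 + \cdots + \eta_k$. A Y-tree has, at each trinode, a sharply limited combinatorial environment --- essentially the local configurations of a 6-leaf subtree --- and the triangle/parity conditions (W1)--(W2) are purely local. The strategy is to show that any two tuples with a common sum can be connected by a sequence of swaps in which, at each step, only the restrictions to one trinode's three incident edges change. A two-element swap at a trinode (exchange of the $(x,y,z)$ decomposition of a weight triple between two summands) is a quadratic relation; a genuinely non-binomial local swap at a trinode requires three summands and is precisely the toric Segre cubic at that trinode. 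One must then check that propagating such a local swap along the edges of $\Xi$ to restore consistency globally introduces only further quadratic and cubic swaps, which follows from the fact that $\Xi$ is a tree (so propagation has no cycles to obstruct it) and that each propagation step is itself local.

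The main obstacle will be this global-to-local reduction: showing that a general syzygy can actually be decomposed as a sequence of single-trinode swaps without requiring relations of degree four or higher. The Y-tree is chosen precisely so that this decomposition goes through; the key technical point is that whenever a local swap at one trinode forces an adjustment at an adjacent trinode, the adjustment is again a single-trinode move, and termination is guaranteed because each propagation reduces a monovariant --- for instance the total discrepancy between the two tuples measured edge by edge on $\Xi$. Verifying the existence of such a monovariant, and that all the cubic moves it produces are indeed of the generalized toric Segre form rather than of some more complicated type, is where the Y-tree hypothesis will be used most crucially.
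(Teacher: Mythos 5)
Your translation of the theorem into a statement about the semi-group $\ms{R}_\Xi$ via Corollary~\ref{cor:toric} is the right starting point, and your intuition that the relations should come from local moves at trinodes is sound. However, both halves of your argument contain gaps at exactly the places you flag as ``the main obstacle.''

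For degree-one generation, you claim you can ``peel off one matching at a time'' using the greedy algorithm of Proposition~\ref{prop-wt-gr}. But that algorithm peels off a single geodesic, i.e.\ one \emph{edge} of a graph, not a whole matching. Subtracting a single edge does not preserve regularity of the weighting, so after one step you are no longer in $\ms{R}_\Xi$. To peel off a whole degree-one weighting (a reduced matching) and remain admissible and regular is precisely the nontrivial claim you need to prove, and the greedy argument does not give it. The paper instead adapts Kempe's proof: it assigns signs to the leaves so that each matched pair has one positive and one negative leaf, shows by toric Pl\"ucker moves that any regular weighting is equivalent to one with only neutral edges (via a careful accounting of edge crossings through each base edge), and then invokes Hall's marriage theorem on the resulting bipartite graph. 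Your approach could possibly be made to work, but it requires a different and nontrivial argument which you have not supplied.

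For the relations, your plan --- decompose any syzygy into single-trinode swaps, terminate via a monovariant, and verify that the cubics that appear are of Segre type --- correctly identifies the difficulty, but a monovariant like ``total edge-by-edge discrepancy'' does not by itself deliver the reduction. The issues are: (i) a local swap at one trinode can \emph{increase} the discrepancy at adjacent trinodes, so the proposed monovariant is not obviously monotone; (ii) you need a mechanism to ensure all local moves have degree $\le 3$ and that the degree-$3$ ones match the prescribed Segre form; and (iii) without some normal form, two tuples with the same sum could in principle require a coordinated chain of moves rather than independent local ones. The paper resolves these concretely with three pieces of machinery you would need analogues of: \emph{balancing} (Proposition~\ref{keyresult}, showing any tuple is quadratically equivalent to one where the coordinate weights at every base edge differ by at most one); the \emph{breakable/unbreakable} dichotomy (Proposition~\ref{prop:non-crit} and its reduction), which lets one cut the caterpillar at a base edge where some summand is zero and glue local arguments from smaller trees; and the \emph{normal form} for unbreakable balanced tuples (Proposition~\ref{prop-norm}), showing every such tuple is quadratically equivalent to a unique canonical form. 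Finally, the \emph{type vector} of a triple is the crucial observation: the paper shows ``type'' is the complete quadratic invariant for cubic relations, so it is necessary and sufficient that the cubic generators change type, and the toric Segre cubics do exactly that. Your proposal gestures at all of these but substitutes a vague monovariant for the actual constructions, so as written it does not constitute a proof.
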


(Manon \cite{Manon} independently solved the presentation problem for a large
class of weighted trivalent trees, including this one; however, we will need
the form of (iii).) 

\subsection{The Y- and caterpillar trees}
\label{def:Ytree}

For $r \ge 3$, define the $r$th \emph{Y-tree} as:
\Ncom{$\text{Y-tree}$}
\begin{displaymath}
\begin{xy}
(-25, 0)*{}="A"; (-10, 0)*{}="B"; (10, 0)*{}="C"; (25, 0)*{}="D";
(-40, 0)*{}="P"; (40, 0)*{}="Q";
(-25, 10)*{}="A1"; (-10, 10)*{}="B1"; (10, 10)*{}="C1"; (25, 10)*{}="D1";
(-30, 15)*{}="A2"; (-20, 15)*{}="A3";
(-15, 15)*{}="B2"; (-5, 15)*{}="B3";
(5, 15)*{}="C2"; (15, 15)*{}="C3";
(20, 15)*{}="D2"; (30, 15)*{}="D3";
(-45, 5)*{}="P2"; (-45, -5)*{}="P3";
(45, 5)*{}="Q2"; (45, -5)*{}="Q3";
"A"*{\bullet}; "B"*{\bullet}; "C"*{\bullet}; "D"*{\bullet};
"P"*{\bullet}; "Q"*{\bullet};
"A1"*{\bullet}; "B1"*{\bullet}; "C1"*{\bullet}; "D1"*{\bullet};
"A2"*{\bullet}; "A3"*{\bullet};
"B2"*{\bullet}; "B3"*{\bullet};
"C2"*{\bullet}; "C3"*{\bullet};
"D2"*{\bullet}; "D3"*{\bullet};
"P2"*{\bullet}; "P3"*{\bullet};
"Q2"*{\bullet}; "Q3"*{\bullet};
"P"; "A"; **\dir{-};
"A"; "B"; **\dir{-};
"B"; "C"; **\dir{..};
"C"; "D"; **\dir{-};
"D"; "Q"; **\dir{-};
"A"; "A1"; **\dir{-};
"B"; "B1"; **\dir{-};
"C"; "C1"; **\dir{-};
"D"; "D1"; **\dir{-};
"A1"; "A2"; **\dir{-};
"A1"; "A3"; **\dir{-};
"B1"; "B2"; **\dir{-};
"B1"; "B3"; **\dir{-};
"C1"; "C2"; **\dir{-};
"C1"; "C3"; **\dir{-};
"D1"; "D2"; **\dir{-};
"D1"; "D3"; **\dir{-};
"P"; "P2"; **\dir{-};
"P"; "P3"; **\dir{-};
"Q"; "Q2"; **\dir{-};
"Q"; "Q3"; **\dir{-};
(-25, -3)*{\ss 2};
(-10, -3)*{\ss 3};
(10, -3)*{\ss r-2};
(25, -3)*{\ss r-1};
\end{xy}
\end{displaymath}
There are $r$ ``Y's'' in the tree, and $2r$ leaves.  We call the vertex at $i$
the $i$th \emph{base vertex}.  We call an edge between two base vertices a
\emph{base edge}.  By a \emph{stalk} we mean one of the internal edges in one
of the Y's.  By the $i$th stalk, for $2 \le i \le r-1$, we mean the one above
the $i$th base vertex.  We call the two remaining stalks at either end of the
tree the 1st and $r$th stalk.  The Y-tree is a matched tree (\S
\ref{d:matched}).  Sometimes we bend the first and last horizontal edges so
that all the Y's are in a row, as in Figure~\ref{f:trun-wt}.  The $r$th
\emph{caterpillar tree} is the truncation of the $r$th Y-tree:
\Ncom{$\text{base vertex, base edge, stalk, caterpillar}$} 
\begin{displaymath}
\begin{xy}
(-25, 0)*{}="A"; (-10, 0)*{}="B"; (10, 0)*{}="C"; (25, 0)*{}="D";
(-25, 10)*{}="A1"; (-10, 10)*{}="B1"; (10, 10)*{}="C1"; (25, 10)*{}="D1";
(-40, 0)*{}="P"; (40, 0)*{}="Q";
"A"*{\bullet}; "B"*{\bullet}; "C"*{\bullet}; "D"*{\bullet};
"P"*{\bullet}; "Q"*{\bullet};
"A1"*{\bullet}; "B1"*{\bullet}; "C1"*{\bullet}; "D1"*{\bullet};
"P"; "A"; **\dir{-};
"A"; "B"; **\dir{-};
"B"; "C"; **\dir{..};
"C"; "D"; **\dir{-};
"D"; "Q"; **\dir{-};
"A"; "A1"; **\dir{-};
"B"; "B1"; **\dir{-};
"C"; "C1"; **\dir{-};
"D"; "D1"; **\dir{-};
(-25, -3)*{\ss 2};
(-10, -3)*{\ss 3};
(10, -3)*{\ss r-2};
(25, -3)*{\ss r-1};
\end{xy}
\end{displaymath}
We use the same terminology (base edges, stalks, etc.) for caterpillar trees.
Note that the third caterpillar tree is just a trinode.

\subsection{Generators of $\gr_{\Xi} R_L$}

We now prove the first half of Theorem~\ref{thm:ytree}.

\begin{proposition}
\label{prop:ytree-gen}
Let $\Xi$ be a Y-tree.  Then the ring $\gr_{\Xi} R_L$ is generated in degree
one.
\Rnts{used in \S \ref{ss:tor-segre}}
\end{proposition}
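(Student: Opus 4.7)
By Corollary~\ref{cor:toric}, $\gr_\Xi R_L \cong \Z[\ms R_\Xi]$, so generation in degree one is equivalent to the combinatorial assertion that every admissible regular weighting $\xi$ on $\Xi$ of degree $d$ is a sum of $d$ admissible regular weightings of degree $1$. By Proposition~\ref{prop-wt-gr} one can realize $\xi = \xi_\Gamma$ for some $d$-regular graph $\Gamma$ on the leaf set $L$; the desired decomposition $\xi = \sum_{i=1}^d \xi_{M_i}$ with each $M_i$ a matching is then equivalent to finding a representative $\Gamma'$ in the same toric Pl\"ucker class as $\Gamma$ that decomposes as $\Gamma' = M_1 \sqcup \cdots \sqcup M_d$, i.e., that admits a proper $d$-edge-coloring.

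I would produce such a $\Gamma'$ by passing to a bipartite representative. The matched-pair structure of the Y-tree provides a natural bipartition: for each of the $r$ matched pairs, declare one leaf to be ``$+$'' and its mate to be ``$-$'', so that $|{+}| = |{-}| = r$. By K\"onig's edge-coloring theorem, every bipartite $d$-regular multigraph is $d$-edge-colorable, so it suffices to exhibit a bipartite representative (no $++$ or $--$ edge) in the toric Pl\"ucker class of $\Gamma$. To achieve this I would reduce the number of monochromatic edges iteratively, in the spirit of Kempe's argument in the proof of Theorem~\ref{thm:kempe}, but restricted to toric Pl\"ucker swaps (those which preserve the weighting). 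A degree count shows $|{++}| = |{--}|$ in any $d$-regular $\Gamma$ (both equal $(dr - |{+\text{-}}|)/2$), so both counts are simultaneously zero or positive; whenever a $++$ edge and a $--$ edge have overlapping geodesics on $\Xi$, the overlap condition~\eqref{eq-overlap-cond} is satisfied for one of the two swapped pairs, giving a toric Pl\"ucker identity that converts the monochromatic pair into two bipartite edges and strictly decreases the monochromatic count.

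The main obstacle is verifying that some such reducing swap is always available so long as monochromatic edges remain. When some Y-index $i$ carries both a $++$ edge at $x_i$ and a $--$ edge at $y_i$, the two geodesics share the $i$th stalk of $\Xi$, so overlap is automatic; otherwise, a finite case analysis (supplemented if needed by preparatory toric swaps involving a $+-$ edge to bring the monochromatic edges into overlap along the Y-spine) shows that a reducing swap always exists. Iterating produces a bipartite representative $\Gamma'$; K\"onig's theorem then yields the matching decomposition $\Gamma' = M_1 \sqcup \cdots \sqcup M_d$, whence $\xi = \sum_i \xi_{M_i}$ is the required degree-one decomposition, exhibiting the target element of $\ms R_\Xi$ as a sum of $d$ elements of degree one.
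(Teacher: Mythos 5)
Your strategy is essentially the same as the paper's: bipartition the leaves by matched pairs, use toric Pl\"ucker swaps to eliminate monochromatic ($++$ and $--$) edges, and then decompose the resulting bipartite regular graph into perfect matchings. The use of K\"onig's edge-coloring theorem in place of the paper's iterated Hall's marriage theorem is a cosmetic difference (they are interchangeable for $d$-regular bipartite multigraphs).

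However, there is a real gap at exactly the point you flag as ``the main obstacle.'' You correctly observe that the easy case is when a $++$ edge and a $--$ edge have overlapping geodesics, and you correctly compute $|{++}| = |{--}|$. But you then dispose of the hard case --- $++$ and $--$ edges that remain after Step~1 but whose geodesics are disjoint --- with ``a finite case analysis\ldots shows that a reducing swap always exists.'' This is not a finite problem in any useful sense: the configurations of non-overlapping monochromatic edges along the caterpillar spine are unbounded in $r$ and $d$, and it is genuinely non-obvious that a preparatory swap with a neutral edge can always bring a $++$ and a $--$ edge into overlap. The paper devotes the entirety of its Step~2, including a Claim with a nontrivial counting argument ($2A+C+D = mk$, $2B+C+E = mk$, hence $2A+D = 2B+E$, and $B=0$, $A\neq 0$ force $E\neq 0$), to establish that a neutral edge crossing the separating base edge $e$ with its positive endpoint to the right always exists; this neutral edge is then Pl\"uckered with a positive edge to move the leftmost positive cluster rightward. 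Without this argument (or a substitute for it), your proof has a hole at the decisive step. The earlier and later parts of your proposal --- the translation to $\ms{R}_\Xi$ via Corollary~\ref{cor:toric}, the bipartition, the degree count, and the final matching decomposition --- are all sound.
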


\begin{proof}
(This proof essentially applies to any matched tree.)  We follow the spirit of
our proof of Kempe's theorem (Theorem~\ref{thm:kempe}).  Assign to each leaf of
$\Xi$ a sign in such a way that in each matched pair of leaves one leaf is
positive and the other is negative.  Suppose $\Gamma$ is a regular undirected
graph of degree $k$.  Each edge of $\Gamma$ is then either positive, negative
or neutral.  We will show that $\Gamma$ is Pl\"ucker equivalent (\S
\ref{ss:weightings}) to a graph with only neutral edges.  As in the proof of
Theorem~\ref{thm:kempe}, this implies that it is a product of matchings,
expressing $\ol{X}_{\Gamma}$ as a product of degree one elements.  This proof
is more difficult than that of Theorem~\ref{thm:kempe} because we are now only
allowed to apply the Pl\"ucker relation to pairs of edges of $\Gamma$ which
meet (i.e., whose geodesics meet) in the tree $\Xi$.

{\it Step 1:}  Given a positive and negative edge of $\Gamma$ which meet
in $\Xi$, apply the toric Pl\"ucker relation.  In the resulting graph the
two new edges are both neutral and so the number of non-neutral edges has
decreased.  We can thus continue this process until we have reached a
graph which is Pl\"ucker equivalent to $\Gamma$ and contains no overlapping
positive and negative edges.

{\it Step 2:}
Suppose there are still non-neutral paths, but no two of opposite parity meet
each other.  Let $e$ be the leftmost base edge of $\Xi$ which separates
non-neutral edges of $\Gamma$ of opposite parity and let $e'$ be the edge
to the left of $e$.  The edge $e'$ will be a base edge unless $e$ is the first
base edge, in which case it will be the first stalk.  All non-neutral edges
of $\Gamma$ which are entirely to the left of $e$ are of the same type,
say positive.  All edges of $\Gamma$ which pass through $e$ are neutral.

There is a positive edge $\uedge{ab}$ of
$\Gamma$ which contains $e'$.  We claim there is an edge $\uedge{cd}$ of $\Gamma$
which contains $e$ and where $d$ is positive and to the right of $e$.
Assuming the claim (which we prove below), then $\uedge{ab}$ and $\uedge{cd}$
must meet (either at $e'$ or at the stalk between $e'$ and $e$); now apply the
toric Pl\"ucker relation to this pair.  The resulting positive path contains
$e$.  This brings the leftmost cluster of positive paths closer to the
negative paths.  By continuing this process, we will eventually cause the
two clusters to meet, at which point we  return to Step 1 and reduce the
number of non-neutral edges.  Continuing in this manner, we will 
eventually remove all non-neutral edges.

{\it Proof of claim:}
We now prove the claim that there must exist an edge $\uedge{cd}$ of $\Gamma$
containing $e$ and for which $d$ is positive and to the right of $e$.
Suppose there are $2m$ leaves to the left of $e$.  Let $A$ (resp.\ $B$, $C$)
be the number of positive (resp.\ negative, neutral) edges of $\Gamma$
entirely to the left of $e$.  Let $D$ (resp.\ $E$) be the number of neutral
edges containing $e$ for which the positive (resp.\ negative) leaf is to the
left of $e$.  Then
\begin{displaymath}
2A+C+D=mk, \qquad 2B+C+E=mk
\end{displaymath}
as the first (resp.\ second) counts $k$ times each positive (resp.\ negative)
leaf to the left of $e$, and there are $m$ such leaves.  We thus find $2A+D
=2B+E$ and as $B=0$ and $A \ne 0$ by assumption we conclude $E \ne 0$, as
claimed.
\end{proof}

\subsection{Relations in a semi-group algebra}

We now turn our attention to relations in $\gr_{\Xi} R_L$.  We begin with a
general discussion of relations in a semi-group algebra.  Let $\ms{R}$ be a
semi-group equipped with a homomorphism $\deg:\ms{R} \to \Z_{\ge 0}$, so that
the semi-group algebra $\Z[\ms{R}]$ is graded.  Assume that $\ms{R}$ is
generated by $\ms{V}=\deg^{-1}(1)$, so that the natural map $\Sym(\Z\langle
\ms{V} \rangle) \to \Z$ is a surjection (here $\Sym(\Z\langle \ms{V} \rangle)$
is the polynomial ring in indeterminates $\ms{V}$ while $\Z[\ms{R}]$ is the
semi-group algebra of $\ms{R}$).  Call the kernel $I$ of this surjection the
\emph{ideal of relations} of $\Z[\ms{R}]$.
\Ncom{$\text{ideal of relations}$}

Let $\ul{\xi}=(\xi_1, \ldots, \xi_n)$ and $\ul{\xi}'=(\xi'_1, \ldots, \xi'_n)$
be two elements of $\ms{V}^n$ such that $\sum \xi_i=\sum \xi_i'$.  We
then write $\ul{\xi} \sim \ul{\xi}'$ and say that $\ul{\xi}$ and $\ul{\xi}'$
are \emph{related}.  For an element $\xi$ of $\ms{V}$ let $[\xi]$ denote the
corresponding element of $\Sym(\Z\langle \ms{V} \rangle)$; for an element
$\ul{\xi}$ of $\ms{V}^n$ let $[\ul{\xi}]$ denote the monomial $[\xi_1]
\cdots [\xi_n]$.  Given related $\ul{\xi}$ and $\ul{\xi}'$ the element
$[\ul{\xi}]-[\ul{\xi}']$ belongs to $I$.  We call such relations
\emph{binomial relations}.  One easily verifies that $I$ is generated by
binomial relations.  
\Ncom{$\text{related, bin.\ rels.}$}

We say that a relation $\ul{\xi} \sim \ul{\xi}'$ has \emph{degree $\le k$} if
$\xi_i=\xi'_i$ holds for all but $k$ indices.  We say that $\ul{\xi} \sim
\ul{\xi}'$ has \emph{essentially degree $\le k$} if there exists a sequence of
relations 
\Ncom{$\text{ess.\ deg., ess.\ quad.}$}
\begin{displaymath}
\ul{\xi}=\ul{\xi}^{(0)} \sim \ul{\xi}^{(1)} \sim \ldots \sim \ul{\xi}^{(p)}
=\ul{\xi}'
\end{displaymath}
for which each $\ul{\xi}^{(i)} \sim \ul{\xi}^{(i+1)}$ has degree $\le k$.
This notation is only relevant to us when $k$ is 2 or 3 and we then use the
terms ``essentially quadratic'' and ``essentially cubic.'' If $\ul{\xi} \sim
\ul{\xi}'$ has essential degree $\le k$ then the element $[\ul{\xi}]-
[\ul{\xi}']$ of $I$ lies in the ideal generated by the $k$th graded piece of
$I$.

Note that we consider our tuples as ordered, so that $(\xi_1, \xi_2)=
(\xi_2, \xi_1)$ constitutes a non-trivial relation.  However, as the symmetric
group is generated by transpositions, it follows that if $\ul{\xi}'$ is a
permutation of the tuple $\ul{\xi}$ then the relation $\ul{\xi} \sim
\ul{\xi}'$ is essentially quadratic.

\subsection{The toric generalized Segre cubic relation}
\label{ss:tor-segre}

We have the following relation between degree one reduced weightings on the
third caterpillar tree:
\begin{equation}
\label{eq:torsegre}
\begin{xy}
(-8, -3)*{}="A"; (0, -3)*{}="B"; (8, -3)*{}="C"; (0, 2)*{}="B1";
"A"; "B"; **\dir{-};
"B"; "C"; **\dir{-};
"B"; "B1"; **\dir{-};
(-4, -3)*{1}; (0, 4)*{0}; (4, -3)*{1};
\end{xy}
\quad
\begin{xy}
(-8, -3)*{}="A"; (0, -3)*{}="B"; (8, -3)*{}="C"; (0, 2)*{}="B1";
"A"; "B"; **\dir{-};
"B"; "C"; **\dir{-};
"B"; "B1"; **\dir{-};
(-4, -3)*{0}; (0, 4)*{1}; (4, -3)*{1};
\end{xy}
\quad
\begin{xy}
(-8, -3)*{}="A"; (0, -3)*{}="B"; (8, -3)*{}="C"; (0, 2)*{}="B1";
"A"; "B"; **\dir{-};
"B"; "C"; **\dir{-};
"B"; "B1"; **\dir{-};
(-4, -3)*{1}; (0, 4)*{1}; (4, -3)*{0};
\end{xy}
\qquad = \qquad
\begin{xy}
(-8, -3)*{}="A"; (0, -3)*{}="B"; (8, -3)*{}="C"; (0, 2)*{}="B1";
"A"; "B"; **\dir{-};
"B"; "C"; **\dir{-};
"B"; "B1"; **\dir{-};
(-4, -3)*{1}; (0, 4)*{1}; (4, -3)*{1};
\end{xy}
\quad
\begin{xy}
(-8, -3)*{}="A"; (0, -3)*{}="B"; (8, -3)*{}="C"; (0, 2)*{}="B1";
"A"; "B"; **\dir{-};
"B"; "C"; **\dir{-};
"B"; "B1"; **\dir{-};
(-4, -3)*{1}; (0, 4)*{1}; (4, -3)*{1};
\end{xy}
\quad
\begin{xy}
(-8, -3)*{}="A"; (0, -3)*{}="B"; (8, -3)*{}="C"; (0, 2)*{}="B1";
"A"; "B"; **\dir{-};
"B"; "C"; **\dir{-};
"B"; "B1"; **\dir{-};
(-4, -3)*{0}; (0, 4)*{0}; (4, -3)*{0};
\end{xy}
\end{equation}
(We will omit $+$ signs in such equations, interpreting them as binomial
relations in $\Z[\ms{R}]$.)  One obtains this relation by
converting the usual graphical Segre cubic \eqref{e:Segre} into a relation
between weightings on the third Y-tree and then passing to the associated
reduced weighting on the third caterpillar tree.  One may verify by hand that
this single relation generates all relations among admissible reduced
weightings on the third caterpillar tree.

We now introduce a class of toric relations that generalize
\eqref{eq:torsegre}.  We call a reduced weighting of $\Xi^-$ of degree
$\le 1$ a \emph{reduced matching}.  Recall that the set $\ms{V}_\Xi$ of
reduced matchings generates $\ms{R}_\Xi$ (Proposition~\ref{prop:ytree-gen}).
Let $X$, $Y$ and $Z$ be reduced matchings on the $r$th caterpillar tree such
that $X$ and $Y$ take value 1 on the $r$th stalk and $Z$ takes value 0 on the
$r$th stalk.  Let $X'$, $Y'$ and $Z'$ be reduced matchings on the $s$th
caterpillar tree such that $X'$ and $Y'$ take value 1 on the first stalk and
$Z'$ takes value 0 on the first stalk.  We then have the relation 
\Ncom{$\text{reduced matching }  \ms{V}_\Xi$}
\begin{displaymath}
\begin{split}
&
\begin{xy}
(-8, -3)*{}="A"; (0, -3)*{}="B"; (8, -3)*{}="C"; (0, 2)*{}="D";
"A"; "B"; **\dir{-};
"B"; "C"; **\dir{-};
"B"; "D"; **\dir{-};
(-4, -3)*{1}; (0, 4)*{1}; (4, -3)*{1};
(-8, -6); (-8, 0); **\dir{-};
(-8, 0); (-14, 0); **\dir{-};
(-14, 0); (-14, -6); **\dir{-};
(-14, -6); (-8, -6); **\dir{-};
(-11, -3)*{X};
(8, -6); (8, 0); **\dir{-};
(8, 0); (14, 0); **\dir{-};
(14, 0); (14, -6); **\dir{-};
(14, -6); (8, -6); **\dir{-};
(11, -3)*{X'};
\end{xy}
\qquad
\begin{xy}
(-8, -3)*{}="A"; (0, -3)*{}="B"; (8, -3)*{}="C"; (0, 2)*{}="D";
"A"; "B"; **\dir{-};
"B"; "C"; **\dir{-};
"B"; "D"; **\dir{-};
(-4, -3)*{1}; (0, 4)*{1}; (4, -3)*{1};
(-8, -6); (-8, 0); **\dir{-};
(-8, 0); (-14, 0); **\dir{-};
(-14, 0); (-14, -6); **\dir{-};
(-14, -6); (-8, -6); **\dir{-};
(-11, -3)*{Y};
(8, -6); (8, 0); **\dir{-};
(8, 0); (14, 0); **\dir{-};
(14, 0); (14, -6); **\dir{-};
(14, -6); (8, -6); **\dir{-};
(11, -3)*{Y'};
\end{xy}
\qquad
\begin{xy}
(-8, -3)*{}="A"; (0, -3)*{}="B"; (8, -3)*{}="C"; (0, 2)*{}="D";
"A"; "B"; **\dir{-};
"B"; "C"; **\dir{-};
"B"; "D"; **\dir{-};
(-4, -3)*{0}; (0, 4)*{0}; (4, -3)*{0};
(-8, -6); (-8, 0); **\dir{-};
(-8, 0); (-14, 0); **\dir{-};
(-14, 0); (-14, -6); **\dir{-};
(-14, -6); (-8, -6); **\dir{-};
(-11, -3)*{Z};
(8, -6); (8, 0); **\dir{-};
(8, 0); (14, 0); **\dir{-};
(14, 0); (14, -6); **\dir{-};
(14, -6); (8, -6); **\dir{-};
(11, -3)*{Z'};
\end{xy}
\\ = \qquad &
\begin{xy}
(-8, -3)*{}="A"; (0, -3)*{}="B"; (8, -3)*{}="C"; (0, 2)*{}="D";
"A"; "B"; **\dir{-};
"B"; "C"; **\dir{-};
"B"; "D"; **\dir{-};
(-4, -3)*{1}; (0, 4)*{1}; (4, -3)*{0};
(-8, -6); (-8, 0); **\dir{-};
(-8, 0); (-14, 0); **\dir{-};
(-14, 0); (-14, -6); **\dir{-};
(-14, -6); (-8, -6); **\dir{-};
(-11, -3)*{X};
(8, -6); (8, 0); **\dir{-};
(8, 0); (14, 0); **\dir{-};
(14, 0); (14, -6); **\dir{-};
(14, -6); (8, -6); **\dir{-};
(11, -3)*{Z'};
\end{xy}
\qquad
\begin{xy}
(-8, -3)*{}="A"; (0, -3)*{}="B"; (8, -3)*{}="C"; (0, 2)*{}="D";
"A"; "B"; **\dir{-};
"B"; "C"; **\dir{-};
"B"; "D"; **\dir{-};
(-4, -3)*{0}; (0, 4)*{1}; (4, -3)*{1};
(-8, -6); (-8, 0); **\dir{-};
(-8, 0); (-14, 0); **\dir{-};
(-14, 0); (-14, -6); **\dir{-};
(-14, -6); (-8, -6); **\dir{-};
(-11, -3)*{Z};
(8, -6); (8, 0); **\dir{-};
(8, 0); (14, 0); **\dir{-};
(14, 0); (14, -6); **\dir{-};
(14, -6); (8, -6); **\dir{-};
(11, -3)*{X'};
\end{xy}
\qquad
\begin{xy}
(-8, -3)*{}="A"; (0, -3)*{}="B"; (8, -3)*{}="C"; (0, 2)*{}="D";
"A"; "B"; **\dir{-};
"B"; "C"; **\dir{-};
"B"; "D"; **\dir{-};
(-4, -3)*{1}; (0, 4)*{0}; (4, -3)*{1};
(-8, -6); (-8, 0); **\dir{-};
(-8, 0); (-14, 0); **\dir{-};
(-14, 0); (-14, -6); **\dir{-};
(-14, -6); (-8, -6); **\dir{-};
(-11, -3)*{Y};
(8, -6); (8, 0); **\dir{-};
(8, 0); (14, 0); **\dir{-};
(14, 0); (14, -6); **\dir{-};
(14, -6); (8, -6); **\dir{-};
(11, -3)*{Y'};
\end{xy}
\end{split}
\end{displaymath}
We call these  \emph{toric generalized Segre cubic} relations.
\Ncom{$\text{gen.\ t. S c rels}$}

\subsection{The type vector of a triple}

Let $\ul{\xi}=(\xi_1, \xi_2, \xi_3)$ be a triple of reduced matchings on the
$r$th caterpillar tree.  Define the \emph{type} of $\xi$ at the $i$th base
vertex to be one of $A$, $B$ or $\emptyset$, as follows.  We call $\ul{\xi}$
type $A$ at $i$ if it looks like 
\Ncom{$\text{type of triple}$}
\begin{displaymath}
\begin{xy}
(-8, -3)*{}="A"; (0, -3)*{}="B"; (8, -3)*{}="C"; (0, 2)*{}="D";
"A"; "B"; **\dir{-};
"B"; "C"; **\dir{-};
"B"; "D"; **\dir{-};
(-4, -3)*{1}; (0, 4)*{1}; (4, -3)*{1};
\end{xy}
\qquad
\begin{xy}
(-8, -3)*{}="A"; (0, -3)*{}="B"; (8, -3)*{}="C"; (0, 2)*{}="D";
"A"; "B"; **\dir{-};
"B"; "C"; **\dir{-};
"B"; "D"; **\dir{-};
(-4, -3)*{1}; (0, 4)*{1}; (4, -3)*{1};
\end{xy}
\qquad
\begin{xy}
(-8, -3)*{}="A"; (0, -3)*{}="B"; (8, -3)*{}="C"; (0, 2)*{}="D";
"A"; "B"; **\dir{-};
"B"; "C"; **\dir{-};
"B"; "D"; **\dir{-};
(-4, -3)*{0}; (0, 4)*{0}; (4, -3)*{0};
\end{xy}
\end{displaymath}
at the $i$th vertex (the order of the triple is irrelevant).  We call
$\ul{\xi}$ type $B$ at $i$ if it looks like
\begin{displaymath}
\begin{xy}
(-8, -3)*{}="A"; (0, -3)*{}="B"; (8, -3)*{}="C"; (0, 2)*{}="D";
"A"; "B"; **\dir{-};
"B"; "C"; **\dir{-};
"B"; "D"; **\dir{-};
(-4, -3)*{1}; (0, 4)*{1}; (4, -3)*{0};
\end{xy}
\qquad
\begin{xy}
(-8, -3)*{}="A"; (0, -3)*{}="B"; (8, -3)*{}="C"; (0, 2)*{}="D";
"A"; "B"; **\dir{-};
"B"; "C"; **\dir{-};
"B"; "D"; **\dir{-};
(-4, -3)*{0}; (0, 4)*{1}; (4, -3)*{1};
\end{xy}
\qquad
\begin{xy}
(-8, -3)*{}="A"; (0, -3)*{}="B"; (8, -3)*{}="C"; (0, 2)*{}="D";
"A"; "B"; **\dir{-};
"B"; "C"; **\dir{-};
"B"; "D"; **\dir{-};
(-4, -3)*{1}; (0, 4)*{0}; (4, -3)*{1};
\end{xy}
\end{displaymath}
there.  In all other cases we call $\ul{\xi}$ type $\emptyset$ at $i$.  We
define the \emph{type vector} of $\ul{\xi}$, denoted $t(\ul{\xi})$, to be the
ordered tuple of the $r-2$ types of $\ul{\xi}$.  The type of a triple is a
quadratic invariant --- if $\ul{\xi} \sim \ul{\xi}'$ is an essentially
quadratic relation between triples, then $t(\ul{\xi})=t(\ul{\xi}')$.   The
toric generalized Segre cubic changes the type.
\Ncom{$\text{type vec.}$}

\subsection{Reformulation of Theorem~\ref{thm:ytree}}

Rather than proving the statement about relations in Theorem~\ref{thm:ytree}
directly, we will prove the following: 
\Ncom{$\text{ideal of relations} I_{\Xi}$}

\begin{proposition}
\label{prop:ytree-rel}
Let $\Xi$ be the $r$th caterpillar tree ($r \ge 3$).  Then the ideal of
relations $I_{\Xi}$ of $\gr_{\Xi} R_L=\Z[\ms{R}_{\Xi}]$ is
generated by relations of degree two and three.  Furthermore, ``type'' is the
only quadratic invariant on $\ms{V}_{\Xi}^3$, that is, if $\ul{\xi} \sim
\ul{\xi}'$ is a relation with $\ul{\xi}, \ul{\xi}' \in \ms{V}_{\Xi}^3$ then
$\ul{\xi} \sim \ul{\xi}'$ is essentially quadratic if and only if
$t(\ul{\xi})=t(\ul{\xi}')$.
\end{proposition}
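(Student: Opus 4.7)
The plan is to prove both statements simultaneously by induction on $r$.

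The base case is $r=3$, where $\Xi$ is a single trinode and $\ms{V}_\Xi$ consists of the five admissible $\{0,1\}$-triples cut out by the triangle inequality. The associated toric variety is classically the Segre cubic, and a direct computation in the rank-three semi-group $\ms{R}_\Xi$ shows that $I_\Xi$ is generated by its quadratic part together with the single cubic relation \eqref{eq:torsegre}. For the second statement at $r=3$, one enumerates the sum-fibers of $\ms{V}_\Xi^3 \to \ms{R}_\Xi$ and observes that every sum other than $(2,2,2)$ admits a unique unordered decomposition as a triple of reduced matchings, while the sum $(2,2,2)$ has exactly two such decompositions, corresponding to the type $A$ and type $B$ configurations. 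Since essentially quadratic moves can only re-order a triple, they preserve the unordered decomposition, so the essentially quadratic equivalence classes on triples are exactly the type-vector level sets.

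For the inductive step ($r \ge 4$), pick a base edge $e$ of $\Xi$; it cuts $\Xi$ into two smaller caterpillars $\Xi_L$ and $\Xi_R$ glued along $e$, and every reduced matching restricts to compatible matchings on the two sides sharing the common $e$-value. Given a binomial relation $\ul{\xi} \sim \ul{\xi}'$ of degree $n$, the multisets $\{\xi_i(e)\}_i$ and $\{\xi_i'(e)\}_i$ agree, so after a sequence of essentially quadratic swaps exchanging the $e$-values of pairs of matchings we may assume $\xi_i(e) = \xi_i'(e)$ for every $i$. The relation then splits as the sum of a relation on $\Xi_L$ and one on $\Xi_R$, and the inductive hypothesis handles each piece. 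The second statement reduces analogously: triples with matching type vectors restrict to triples on $\Xi_L$ and on $\Xi_R$ with matching type vectors on those sub-trees (since type is defined trinode-by-trinode), and the inductive essentially quadratic transformations combine into a global one.

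The main obstacle is verifying that an $e$-value swap between two matchings is genuinely an essentially quadratic move in $\ms{R}_\Xi$ and that it can be performed in a way that preserves the type vector at every trinode. By Proposition~\ref{prop-wt-gr}, quadratic binomials in $I_\Xi$ correspond to Pl\"ucker equivalences of the underlying undirected graphs, so the existence of an appropriate swap requires a local analysis along the geodesics through $e$. For the second statement one must additionally ensure that no intermediate swap pushes some trinode's triple-sum through the critical value $(2,2,2)$ in a way that would interchange type $A$ and type $B$; delicate control of which Pl\"ucker equivalences are used near edges whose trinodes sit at the critical sum is the technical crux of the argument.
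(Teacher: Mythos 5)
Your inductive skeleton (cut at a base edge, split into two caterpillars, induct) is close in spirit to the paper's reduction, but the central claim driving the induction is false and the most difficult case is left unaddressed.

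The claim that ``the multisets $\{\xi_i(e)\}_i$ and $\{\xi_i'(e)\}_i$ agree'' does not follow from $\sum \xi_i = \sum \xi_i'$. The relation only gives $\sum_i \xi_i(e) = \sum_i \xi_i'(e)$; for example one could have $e$-values $(0,2)$ on one side and $(1,1)$ on the other. To make the multisets agree you must first show that $\ul{\xi}$ and $\ul{\xi}'$ can each be replaced, via essentially quadratic moves, by tuples that are \emph{balanced} at $e$ (values differ pairwise by at most one). That is precisely the paper's Proposition~\ref{keyresult}, which is not a reordering swap but a nontrivial lemma. Once balanced, the multiset of $e$-values \emph{is} determined by the sum, but only then. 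You have in effect relabelled the main technical content as an ``obstacle'' and stated it without proof.

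Second, even after equalizing $e$-values, the splitting only stays inside the relevant semi-groups if the common value of $\xi_i(e)$ is $0$ or $1$: a reduced matching restricted to the left sub-caterpillar, with $e$ made into a leaf edge, is a reduced matching on $\Xi_L$ only when its $e$-value lies in $\{0,1\}$. If the common value is $\ge 2$ you land outside $\ms{V}_{\Xi_L}$, and the inductive hypothesis does not apply. This is why the paper does not cut at an arbitrary base edge: it first defines \emph{breakable} (some $\xi_i$ vanishes on some base edge) and splits only there, where balancing forces all $e$-values into $\{0,1\}$. The complementary \emph{unbreakable} case — a balanced tuple with every $\xi_i(e) \ge 1$ for all base edges $e$ — is handled by an entirely different normal-form argument (Propositions~\ref{prop:non-crit} and~\ref{prop-norm}), which your outline omits. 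Without it the induction has no base to reduce to when no edge is breakable, so the argument does not close.

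Your base case at $r=3$ and the observation that ``type'' is trinode-local are fine, and mirror the paper. To repair the inductive step you would need to import both the balancing lemma and the unbreakable normal-form analysis, at which point you have essentially reproduced the paper's proof.
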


Theorem~\ref{thm:ytree} follows easily from this, since generalized toric
Segre cubics allow one to switch the type of a triple between $A$
and $B$ at any base vertex.

We now give an overview of the proof of Proposition~\ref{prop:ytree-rel}.
First of all we split relations into two types defined in \S
\ref{ss:noncritical}, ``breakable'' and ``unbreakable.''  Breakable means that
some degree one piece of the relation vanishes at some base edge.  The first
idea in the proof is that, given a relation, one can chop the tree into pieces
such that the relation is unbreakable on each piece.  One can then use a
gluing argument to reduce to the unbreakable case.  We then prove that an
unbreakable relation is essentially quadratic.  We do this by introducing a
normal form for monomials and showing that any monomial can be changed into
normal form by a series of quadratic relations.  We use the notion of a
``balanced'' monomial, a tuple of matchings which assume roughly the same value
on each edge of the tree.  A key result (Proposition  \ref{keyresult}) is that
any tuple can be balanced by quadratic relations.
\Ncom{$\text{breakable rel.}$}

\subsection{Balancing}

We say that a tuple of integers $(x_i)$ is \emph{balanced} if $\vert x_i - x_j
\vert$ is always 0 or 1.  We say that a tuple $\ul{\xi} \in \ms{V}^n$ is
\emph{balanced} if for each base edge $e$ of $\Xi$ the tuple of integers
$(\xi_i(e))$ is balanced.
\Ncom{$\text{balanced tuple}$}

\begin{proposition}
Given any tuple $\ul{\xi}$ in $\mc{V}^n$ there exists an essentially quadratic
relation $\ul{\xi} \sim \ul{\xi}'$ with $\ul{\xi}'$ balanced. \label{keyresult}
\end{proposition}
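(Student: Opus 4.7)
The approach is an inductive one. Define the imbalance statistic
\[
\mu(\ul{\xi}) \;=\; \sum_{e}\,\sum_{i<j}\bigl(\xi_i(e)-\xi_j(e)\bigr)^2,
\]
where the outer sum runs over base edges $e$ of the caterpillar $\Xi$. Since transpositions of the tuple entries are essentially quadratic, it will suffice to show that whenever some $|\xi_i(e)-\xi_j(e)|\ge 2$, we can perform a single quadratic replacement of the offending pair that strictly decreases $\mu$.

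The whole problem therefore reduces to a pair lemma: \emph{if $\xi,\eta\in\ms{V}_\Xi$ satisfy $\xi(e)-\eta(e)\ge 2$ at some base edge $e$, then there exist $\xi',\eta'\in\ms{V}_\Xi$ with $\xi+\eta=\xi'+\eta'$ in $\ms{R}_\Xi$, $\xi'(e)=\xi(e)-1$, $\eta'(e)=\eta(e)+1$, and $|\xi'(e')-\eta'(e')|\le|\xi(e')-\eta(e')|$ for every other base edge $e'$.} The equality $\xi+\eta=\xi'+\eta'$ is precisely a binomial degree-two relation in $\Z[\ms{R}_\Xi]$, so the replacement $(\xi_i,\xi_j)\leadsto(\xi',\eta')$ is a single quadratic move, and the hypothesis then forces a strict drop in $\mu$.

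To prove the pair lemma I would lift to the Y-tree via Proposition~\ref{prop-wt-gr}: pick matchings $M$ and $N$ on its leaf set $L$ whose associated reduced weightings on $\Xi$ are $\xi$ and $\eta$ respectively. The union $M\cup N$ is a $2$-regular multigraph on $L$ and so decomposes into edge-disjoint alternating cycles (length-two cycles arising from shared edges may be ignored). Swapping the $M$/$N$ labels along any subfamily $\mathcal{C}$ of these cycles produces a new pair of matchings whose associated reduced weightings $(\xi',\eta')$ satisfy $\xi'+\eta'=\xi+\eta$ and
\[
\xi'(e')-\eta'(e') \;=\; \bigl(\xi(e')-\eta(e')\bigr) \;-\; 2\!\sum_{C\in\mathcal{C}}\!\delta_C(e'),
\]
where $\delta_C(e')$ is the signed excess of $M$- over $N$-edges of $C$ crossing $e'$. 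Summing over all cycles gives $\sum_C\delta_C(e')=\xi(e')-\eta(e')$, so in particular $\sum_C\delta_C(e)\ge 2$, and we seek $\mathcal{C}$ with $\sum_{C\in\mathcal{C}}\delta_C(e)=1$.

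The main obstacle is controlling the sums $\sum_{C\in\mathcal{C}}\delta_C(e')$ at the other base edges. A naive greedy choice of $\mathcal{C}$ could flip $\xi(e')-\eta(e')$ in sign by more than its absolute value, violating the required bound. I would deal with this by performing toric Pl\"ucker moves within $M$ or $N$ separately (which do not alter $\xi$ or $\eta$) to split any overlong cycle straddling a problematic edge $e'$ into shorter alternating cycles separated by $e'$. Each such move strictly decreases the total planar edge-length of $M\cup N$, exactly as in the proof of Theorem~\ref{thm:kempe2}, so the subroutine terminates; a case analysis on the resulting alternating-cycle decomposition then produces a subfamily $\mathcal{C}$ simultaneously hitting the $e$-target and respecting the elsewhere-bound, which establishes the pair lemma and completes the proof.
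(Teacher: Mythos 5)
Your framing via the imbalance statistic $\mu$ is a clean alternative to the paper's direct construction, and the reduction is logically sound: I verified that when one applies your ``pair lemma'' to a single offending pair, the fact that $\xi'(e')+\eta'(e')=\xi(e')+\eta(e')$ together with $|\xi'(e')-\eta'(e')|\le|\xi(e')-\eta(e')|$ forces the summed squared differences against every third weighting $\xi_k$ to be non-increasing at every base edge and strictly decreasing at $e$, so $\mu$ strictly drops. This is genuinely different from the paper's route, which never isolates a pair lemma of this shape; the paper cuts the caterpillar into its trinodes, balances a pair on each trinode by explicit formulas (the $(a,b,c)$ case analysis), and then reassembles the local balancings using the auxiliary $1/2$-valued sequence $t_2,\dots,t_{r-1}$ to make the base-edge values match at the cuts. (The paper's later Lemma~\ref{lemma-key} --- pointwise $\min/\max$ at every internal edge --- is a strengthening of your pair lemma, but only proved for \emph{already balanced} unbreakable pairs, again by a cut-permute-glue argument rather than anything like your cycle swap.)

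The gap is in your proof of the pair lemma itself, which is where essentially all the work lives. Two issues. First, some arithmetic: lifting to matchings $M,N$ on the Y-tree and halving (Proposition~\ref{p2-trun-wt}) gives $\sum_C \delta_C(e')=2\bigl(\xi(e')-\eta(e')\bigr)$, not $\xi(e')-\eta(e')$, and each $\delta_C(e')$ is automatically even (traversing an alternating cycle crosses $e'$ an even number of times), so your target is $\sum_{C\in\mathcal{C}}\delta_C(e)=2$, not $1$; the intended identity $\xi'(e')-\eta'(e')=\xi(e')-\eta(e')-\sum_{C\in\mathcal{C}}\delta_C(e')$ then comes out without the spurious factor $2$. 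These are fixable, but they indicate the bookkeeping hasn't actually been carried out. Second, and more seriously, the two load-bearing claims are asserted rather than proved: (a) that toric Pl\"ucker moves inside $M$ or $N$ alone can always split an alternating cycle that ``straddles'' a problematic base edge into shorter cycles separated by it, and (b) that after this preprocessing a ``case analysis'' finds a subfamily $\mathcal{C}$ with $\sum_{\mathcal{C}}\delta_C(e)=2$ and $0\le\sum_{\mathcal{C}}\delta_C(e')\le\sum_C\delta_C(e')$ at every other $e'$ (taking signs appropriately). Neither is routine: the admissible Pl\"ucker moves are only those where both geodesic pairs overlap, a single cycle $C$ can a priori contribute $\delta_C(e')$ with the wrong sign or with magnitude exceeding $|\sum_C\delta_C(e')|$, and including or excluding such a cycle can simultaneously overshoot at $e$ and reverse the sign at $e'$. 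Until (a) is established and (b) is replaced by an actual selection argument, the pair lemma --- and hence the whole proof --- is not in place. For comparison, the paper avoids this global combinatorial problem entirely by only ever moving units of weight between the two members of the pair along a single path through one trinode at a time, where admissibility is checked by a short inequality chase; if you want to salvage the cycle-swap viewpoint you will likely need to prove that one can reduce to a decomposition in which every alternating cycle crosses at most one base edge, which is itself a nontrivial planarity/Pl\"ucker argument.
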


\begin{proof}
It suffices to prove the proposition for $n=2$, as one can repeatedly balance
pairs of integers to balance a set of integers.  Thus suppose that $n=2$.
First we show that the proposition holds for the third caterpillar tree.  In
the general case of the $r$th caterpillar, we break $\Xi$ into its trinodes,
balance the weightings on each of these separately and then ``glue.''

{\it Third caterpillar:} We indicate a reduced matching $\xi$ on the third
caterpillar tree by a triple $(a,b,c)$ where $a$ is the weight of the first
(left) stalk, $b$ is the weight of the second (vertical) stalk and $c$ is the
weight of the third (right) stalk.  The triple $(a,b,c)$ satisfies the triangle
inequalities and has $b \le 1$.  If $b=0$ then $a=c$.  If $b=1$ then one of $a$
or $c$ is nonzero, and $\vert a-c \vert \le 1$.  Suppose $\xi_1=(a_1, b_1,
c_1)$ and $\xi_2=(a_2, b_2, c_2)$ are reduced matchings and $\vert a_1-a_2
\vert \ge 2$ or $\vert c_1-c_2 \vert \ge 2$.  Without loss of generality we
take $a_1 + 2 \le a_2$.  Since $c_1 \le a_1+1$ and $c_2 \ge a_2-1$ we have
$c_1 \le c_2$.  We know that $c_2 \ge 1$ since $a_2 \ge 2$.  

If $c_1<c_2$ then define $\xi_1'=(a_1+1, b_1, c_1+1)$ and $\xi_2'=(a_2-1, b_2,
c_2-1)$.  Then $\xi_1'+\xi_2'=\xi_1+\xi_2$ so $(\xi_1',\xi_2') \sim
(\xi_1, \xi_2)$.  The resulting pair $(\xi_1', \xi_2')$ is now closer to being
balanced since $\vert (a_1+1) - (a_2-1) \vert = \vert a_1-a_2 \vert-2$
and $\vert (c_1+1) - (c_2-1) \vert \le \vert c_1-c_2 \vert$.  

If $c_1 = c_2$ then $a_2 = a_1+2$ and $c_1 = c_2 = a_1 + 1$.  Define 
$\xi_1' = (a_1+1,b_1,c_1)$ and $\xi_2' = (a_2-1,b_2,c_2)$.  Then $\xi_1'$ and
$\xi_2'$ are reduced admissible matchings and $(\xi_1',\xi_2') \sim (\xi_1,
\xi_2)$.  Again, we get strictly closer to a balanced pair with such an
assignment.

A finite number of steps as above will related the original pair $(\xi_1,
\xi_2)$ to a balanced pair.

{\it Breaking and gluing:}  Suppose now that we have a pair $(\xi_1,\xi_2)$
of reduced admissible matchings on the $r$th caterpillar.  Break the
caterpillar up into an ordered tuple of $r-2$ caterpillar trees with three
vertices by cutting each base edge in two.  These are arranged from left to
right and indexed as $2, \ldots, r-1$.  The matchings $\xi_i$ on the original
caterpillar define matchings on each copy of the third caterpillar.  Let $e_j$,
$f_j$ and $g_j$ denote the first, second and third stalk on the $j$th trinode
and define $\xi_{i,j} = (\xi_i(e_j), \xi_i(f_j), \xi_i(g_j))$ for $i=1, 2$ and
$2 \le j \le r-1$.  Now apply our previous result on the third caterpillar ---
each pair $(\xi_{1,j},\xi_{2,j})$ is equivalent to a balanced pair
$(\xi_{1,j}', \xi_{2,j}')$.    

Define $t_2=1$.  Now, because the pairs are balanced, for $3 \le j \le r-1$
we have either $c_{1,j}'=a_{1,j+1}'$ and $c_{2,j}'=a_{2,j+1}'$, or
$c_{1,j}'=a_{2,j+1}'$ and $c_{2,j}' = a_{1,j+1}'$.  In the former case set
$t_j=t_{j-1}$ while in the latter case set $t_j=3-t_{j-1}$.  We will use the
tuple $(t_2,\ldots,t_{r-1})$ of 1's and 2's to glue these balanced pairs on 
individual trinodes to obtain a balanced pair on the $r$th caterpillar. 

We now define a pair of admissible weightings $(\xi_1',\xi_2')$ on the $r$th
caterpillar by a gluing procedure.  Let the edges of the $r$th caterpillar be
labeled as $s_1$, $s_2$, $b_2$, $s_3$, $b_3$, \ldots, $b_{r-2}$, $s_{r-1}$,
$s_{r}$, where $s_j$ is the $j$th stalk and $b_k$ is the base edge between the
$k$th and $(k+1)$st base vertices.  To begin, we define $\xi_1'$
(resp.\ $\xi_2'$) on $s_1, s_2, b_2$ to agree with $\xi_{1,2}'$
(resp.\ $\xi_{2,2}'$).  For $3 \le j \le r-3$ set
\begin{displaymath}
(\xi_1'(b_{j-1}),\xi_1'(s_j),\xi_1'(b_j)) = \xi_{t_j}', 
\quad (\xi_2'(b_{j-1}),\xi_2'(s_j),\xi_2'(b_j)) = \xi_{2-t_j}'.
\end{displaymath}
Finally define 
\begin{displaymath}
(\xi_1'(b_{r-2}),\xi_1'(s_{r-1}),\xi_1'(s_r)) = \xi_{t_{r-1}}', 
\quad (\xi_2'(b_{r-2}),\xi_2'(s_{r-1}),\xi_2'(s_{r})) = \xi_{2-t_{r-1}}'.
\end{displaymath}
The above assignments are well-defined, $(\xi_1',\xi_2')$ is balanced and
$(\xi_1',\xi_2') \sim (\xi_1, \xi_2)$.
\end{proof}

\subsection{Reduction of Proposition~\ref{prop:ytree-rel} to the unbreakable
case}
\label{ss:noncritical}

We say that a matching $\xi$ on $\Xi$ is \emph{breakable} if there exists
a base edge $e$ with $\xi(e)=0$, and \emph{unbreakable} otherwise.  We say that
a tuple of matchings $\ul{\xi} \in \ms{V}_{\Xi}^n$ is \emph{unbreakable} if
each $\xi_i$ is.  In \S \ref{ss:non-crit} we will prove the following
proposition: 
\Ncom{$\text{breakable}$}

\begin{proposition}
\label{prop:non-crit}
Let $\ul{\xi} \sim \ul{\xi}'$ be a relation with $\ul{\xi}, \ul{\xi}' \in
\ms{V}_{\Xi}^n$ and $\ul{\xi}$ unbreakable.  Then  $\ul{\xi} \sim \ul{\xi}'$
is essentially quadratic.
\end{proposition}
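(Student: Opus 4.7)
The plan is to combine the balancing result (Proposition~\ref{keyresult}) with a normal form argument along the caterpillar. First I would apply Proposition~\ref{keyresult} to each of $\ul{\xi}$ and $\ul{\xi}'$ in turn, replacing them by balanced tuples via essentially quadratic relations. Since $\ul{\xi}$ is unbreakable, $\sum_i \xi_i(b) \ge n$ for every base edge $b$, and because $\ul{\xi}'$ is balanced with the same sum, each $\xi'_i(b) \ge 1$ as well; in other words, after balancing $\ul{\xi}'$ is automatically unbreakable. Moreover, the balanced condition forces the multiset $\{\xi_i(e)\}_i$ to coincide with $\{\xi'_i(e)\}_i$ on every edge $e$ of $\Xi$.

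The heart of the argument is then to show that every balanced unbreakable tuple with a prescribed sum $\eta$ can be brought to a fixed normal form via essentially quadratic moves; once this is established, transitivity of essentially quadratic equivalence closes the argument. I would fix a total order on $\ms{V}_\Xi$, for example lexicographic order on the sequence of values along a chosen traversal of the caterpillar, and declare a tuple to be in normal form when its entries are sorted in this order. The normalization proceeds by induction on the number of trinodes of $\Xi$. At the inductive step I focus on the leftmost trinode $v$ of $\Xi$, together with the base edge $b$ immediately to its right. Unbreakability gives $\xi_i(b) \ge 1$ for every $i$, which supplies the slack needed for a local bubble-sort at $v$: whenever a pair $(\xi_i, \xi_j)$ has restrictions to $v$ that are out of order, I would swap them via a single toric Pl\"ucker move on the edges around $v$, the triangle inequalities at $v$ remaining satisfied precisely because $\xi_i(b), \xi_j(b) \ge 1$. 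After this local sort, the restrictions of the $\xi_i$ to the sub-caterpillar obtained by deleting $v$ and its stalk form a balanced unbreakable tuple on a shorter caterpillar, to which the inductive hypothesis applies.

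The main obstacle, and the only delicate step, is verifying that the required local swaps at $v$ are genuinely realized as toric Pl\"ucker (hence essentially quadratic) moves that preserve admissibility of the individual matchings. This is exactly where unbreakability is essential: when $\xi_i(b) = 0$ or $\xi_j(b) = 0$ the triangle inequalities at $v$ become saturated, and the required swaps can be forced through cubic (Segre-type) rearrangements — the very phenomenon that appears on the third caterpillar and forces the toric generalized Segre cubics into the presentation. The condition $\xi_i(b), \xi_j(b) \ge 1$ rules out these degenerate configurations, so that a case-by-case analysis of the admissible weight triples at $v$ produces in each case a quadratic replacement of the pair $(\xi_i, \xi_j)$ that reorders their restrictions to $v$ without disturbing any other edge. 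This then closes the induction and yields the proposition.
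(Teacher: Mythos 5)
Your overall strategy — balance via Proposition~\ref{keyresult}, then bring the balanced tuple to a normal form by essentially quadratic moves — matches the paper's proof. The preliminary observations are also correct: balancing preserves edge sums, and a balanced tuple whose sum on each base edge is $\ge n$ is automatically unbreakable, so unbreakability propagates. The gap lies entirely in the normalization step.

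The ``single toric Pl\"ucker move on the edges around $v$'' is not a valid essentially quadratic move in the cases where it is actually needed, and unbreakability does not rescue it. A quadratic move must replace $(\xi_i,\xi_j)$ by $(\eta_i,\eta_j)$ with $\xi_i+\xi_j=\eta_i+\eta_j$ and both $\eta_i,\eta_j$ admissible on the \emph{whole} caterpillar. If your swap keeps the base edge $b$ fixed and exchanges only the stalk values at $v$, the triangle inequality can fail at $v$ itself: take $\xi_i|_v=(0,1,1)$ and $\xi_j|_v=(1,1,2)$ (both unbreakable, both admissible); swapping stalk values gives $(1,1,1)$ and $(0,1,2)$, and the second violates $2\le 0+1$. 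If instead your swap changes the weight on $b$, admissibility is threatened not at $v$ but at the \emph{next} trinode, where $b$ is the incoming edge, and the hypothesis $\xi_i(b),\xi_j(b)\ge 1$ is irrelevant to that constraint. Independently, the inductive step is not well-posed: after deleting $v$ and its stalks, $b$ becomes the new leftmost stalk of the shorter caterpillar, and a restriction with $\xi_i(b)=2$ fails the degree-$\le 1$ condition, so it is not a reduced matching and the inductive hypothesis does not apply. What is actually needed here is the paper's Lemma~\ref{lemma-key}: a global break-and-glue that cuts the caterpillar at every base edge, re-pairs the trinode pieces (tracking consistency with the $t_j$ choices from the proof of Proposition~\ref{keyresult}), and reassembles a pair with min/max values on each base edge that is admissible because every glued trinode piece is a valid local pairing. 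A move that is genuinely local at $v$ cannot reproduce this, and the proof as written does not close.
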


In this section we prove the following:

\begin{proposition}
Proposition~\ref{prop:non-crit} implies Proposition~\ref{prop:ytree-rel}.
\end{proposition}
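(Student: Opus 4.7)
The plan is to reduce an arbitrary binomial relation to one between unbreakable tuples, where Proposition~\ref{prop:non-crit} applies directly, and to dispatch the breakable case by cutting $\Xi$ at a zero-edge and inducting on $|\Xi|$.

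Given $\ul{\xi} \sim \ul{\xi}'$ in $\ms{V}_\Xi^n$, first apply Proposition~\ref{keyresult} to each side to pass, via essentially quadratic moves, to balanced tuples $\ul{\eta}, \ul{\eta}'$ with $\ul{\xi} \sim \ul{\eta}$ and $\ul{\xi}' \sim \ul{\eta}'$. If $\ul{\eta}$ is unbreakable, Proposition~\ref{prop:non-crit} yields that $\ul{\eta} \sim \ul{\eta}'$ is essentially quadratic and we are done. Otherwise some $\eta_i(e) = 0$ on a base edge $e$; balancedness forces $\eta_j(e), \eta'_j(e) \in \{0,1\}$ for every $j$, and the identity $\sum_j \eta_j(e) = \sum_j \eta'_j(e)$ implies that the same number of indices on each side vanish at $e$. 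Permuting the tuples (essentially quadratic, since $\mf{S}_n$ is generated by transpositions) we may arrange $\eta_j(e) = 0$ if and only if $\eta'_j(e) = 0$. Now cut $\Xi$ at $e$ into two smaller caterpillar trees $\Xi_1, \Xi_2$, attaching a new leaf at each severed endpoint of $e$: each $\eta_j$ decomposes into a pair $(\eta_j^{(1)}, \eta_j^{(2)}) \in \ms{V}_{\Xi_1} \times \ms{V}_{\Xi_2}$, where a crossing matching $\eta_j$ with $\eta_j(e)=1$ splits so that its unique crossing edge becomes a half-edge to the new leaf on each side. This produces two binomial relations on trees of strictly smaller size, to which induction on $|\Xi|$ applies; the resulting quadratic and toric generalized Segre cubic relations lift naturally to such relations on $\Xi$.

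The base case $\Xi$ the third caterpillar tree is a direct finite enumeration: there are only a handful of degree-one reduced matchings and the binomial relations among their sums are generated by quadratics and the cubic~\eqref{eq:torsegre}. For the type-vector statement on $\ms{V}_\Xi^3$, observe that $t(\ul{\xi})$ is determined vertex-by-vertex from the multiset of stalk-weight triples at each base vertex, and any essentially quadratic move (a swap of two matchings with prescribed sum) leaves this multiset unchanged at each vertex; hence $t$ is a quadratic invariant. Conversely, if $t(\ul{\xi}) = t(\ul{\xi}')$, the reduction above introduces no cubic, because balancing and cutting preserve the type vector and the unbreakable endgame invokes only Proposition~\ref{prop:non-crit}; consequently the relation is essentially quadratic. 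The main obstacle is the cut-and-induct step: one must verify that attaching new leaves preserves admissibility of the reduced weightings, that crossing matchings split unambiguously into legitimate matchings on the two subtrees, and that cubics lifted from the subtrees remain toric generalized Segre cubics on the full $\Xi$. Each of these is a routine but delicate local check at $e$.
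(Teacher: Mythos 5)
Your strategy coincides with the paper's: balance via Proposition~\ref{keyresult}, dispatch the unbreakable case with Proposition~\ref{prop:non-crit}, cut at a vanishing base edge and induct on the size of the tree, with the third caterpillar as the base case. Two aspects need more care before the argument is complete.

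The cut-and-induct step is not merely a routine local check at $e$, and flagging it as ``the main obstacle'' does not close the gap. After cutting $\Xi$ into $\Xi_1$ and $\Xi_2$, the inductive hypothesis supplies a chain of degree-$\le 3$ moves from $x'$ to $y'$ on $\Xi_1$ and a separate chain from $x''$ to $y''$ on $\Xi_2$; to produce a single chain on $\Xi$ one must say how the intermediate $\Xi_1$-tuple is to be glued with the (for now fixed) $\Xi_2$-tuple so that every glued step is a legitimate element of $\ms{V}_{\Xi}^n$. The obvious coordinate-wise pairing fails because moves on $\Xi_1$ can permute \emph{which} components take value $0$ and which take value $1$ at $e$. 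The paper's resolution is to run the $\Xi_1$-chain and then the $\Xi_2$-chain, gluing each step by grouping the components with value $0$ at $e$ and those with value $1$ at $e$ (this multiset of values is constant across the chain, since each value is $0$ or $1$ by balancedness and their sum is fixed), with the order inside each group chosen arbitrarily; the residual ambiguity is a permutation and hence essentially quadratic. This reasoning has to be spelled out, not deferred.

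Separately, your justification that type is a quadratic invariant --- that a degree-$\le 2$ move preserves the multiset of stalk-weight triples at each base vertex --- is false. A single quadratic move fixes only the \emph{sum} of the two weight triples at a vertex, not the individual triples. For instance, on the fourth caterpillar a quadratic move can replace the pair of reduced matchings $\bigl((1,1,1,1,1),(1,1,1,1,1)\bigr)$ by $\bigl((1,1,0,1,1),(1,1,2,1,1)\bigr)$ (both admissible), changing the multiset of weight triples at the left base vertex from $\{(1,1,1),(1,1,1)\}$ to $\{(1,1,0),(1,1,2)\}$. So the line of reasoning you offer for the ``only if'' direction of the type statement does not hold, and that assertion requires a different argument than the one you give.
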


\begin{proof}
As remarked in \S \ref{ss:tor-segre},
Proposition~\ref{prop:ytree-rel} is true for the third caterpillar tree.  This
will be the base case of an inductive argument.

Let $x \sim y$ be a relation of length $n$.  Using quadratic relations, we
may assume that both $x$ and $y$ are balanced.  If each $x_i$ is unbreakable
then each $y_i$ is as well (since $x$ and $y$ are balanced) and we are done.
Assume then that there is a base edge $e$ for which $x_i$ is breakable at $e$
for some $i$.

Cut the edge $e$ in half to produce two new trees $\Xi'$ and $\Xi''$.  We
regard $e$ as an edge of both of these trees.  Both of these trees can be
regarded as smaller caterpillar trees.  Also, giving a weighting on $\Xi$ is
equivalent to giving weightings on $\Xi'$ and $\Xi''$
which agree at $e$.

Now, let $x'$ and $x''$ be the restrictions of $x$ to $\Xi'$ and $\Xi''$ (and
similarly for $y$).  The key point is that because $e$ is breakable and $x$ and
$y$ are balanced these restricted weightings are \emph{matchings}, that is,
they assign $e$ either 0 or 1.  In other words, we have $x', y' \in
\mc{V}_{\Xi'}^n$ and $x'', y'' \in \mc{V}_{\Xi''}^n$.

Proceeding by induction, we can assume that all relations are essentially
cubic on $\Xi'$ and $\Xi''$, or essentially quadratic if the types agree.  We can then pick
a sequence of cubic (resp.\ quadratic) relations between $x'$ and $y'$ and
between $x''$ and $y''$ and concatenate them to form a sequence of cubic
(resp.\ quadratic) relations between $x$ and $y$.   By ``concatenate,'' we mean
that one should first order the tuples so that those taking value zero at the
edge $e$ should be glued together (the order does not matter), and those taking
value one at $e$ should be glued together (the order does not matter).  In the
final step --- that is, after $x'$ has been replaced with $y'$ (up to
permutation)
and $x''$ has been replaced with $y''$ (up to permutation) --- one can finally
permute the $y''$ matchings taking value zero at $e$, and permute those taking
value one at $e$, and lastly permute the concatenated matchings, so that the
result is equal to $y$. (Recall that permutations are essentially quadratic
since they are generated by 2-cycles.)
\end{proof}

\subsection{Proof of Proposition~\ref{prop:non-crit}}
\label{ss:non-crit}

Let $\xi$ be an unbreakable matching.  There are four possibilities for $\xi$
at one of the internal trinodes.  We label them as follows:
\begin{displaymath}
\begin{xy}
(0, 0); (16, 0); **\dir{-};
(8, 0); (8, 8); **\dir{-};
(10, 4)*{0}; (4, -2)*{n}; (12, -2)*{n};
(8, -5)*{A_n};
\end{xy}
\qquad
\begin{xy}
(0, 0); (16, 0); **\dir{-};
(8, 0); (8, 8); **\dir{-};
(10, 4)*{1}; (4, -2)*{n}; (12, -2)*{n};
(8, -5)*{B_n};
\end{xy}
\qquad
\begin{xy}
(0, 0); (16, 0); **\dir{-};
(8, 0); (8, 8); **\dir{-};
(10, 4)*{1}; (4, -2)*{n}; (12, -2)*{n+1};
(8, -5)*{C_n};
\end{xy}
\qquad
\begin{xy}
(0, 0); (16, 0); **\dir{-};
(8, 0); (8, 8); **\dir{-};
(10, 4)*{1}; (4, -2)*{n+1}; (12, -2)*{n};
(8, -5)*{D_n};
\end{xy}
\end{displaymath}
In all cases $n$ is non-zero.  We order them as follows: $A_n \le B_n \le C_n
\le D_n \le A_{n+1}$.  There are also four possibilities for $\xi$ at the
trinodes on the left end:
\begin{displaymath}
\begin{xy}
(0, 0); (0, 16); **\dir{-};
(0, 8); (8, 8); **\dir{-};
(-2, 4)*{0}; (-2, 12)*{1}; (4, 11)*{1};
(4, -3)*{E};
\end{xy}
\qquad
\begin{xy}
(0, 0); (0, 16); **\dir{-};
(0, 8); (8, 8); **\dir{-};
(-2, 4)*{1}; (-2, 12)*{0}; (4, 11)*{1};
(4, -3)*{F};
\end{xy}
\qquad
\begin{xy}
(0, 0); (0, 16); **\dir{-};
(0, 8); (8, 8); **\dir{-};
(-2, 4)*{1}; (-2, 12)*{1}; (4, 11)*{1};
(4, -3)*{G};
\end{xy}
\qquad
\begin{xy}
(0, 0); (0, 16); **\dir{-};
(0, 8); (8, 8); **\dir{-};
(-2, 4)*{1}; (-2, 12)*{1}; (4, 11)*{2};
(4, -3)*{H};
\end{xy}
\end{displaymath}
The right end is similar (flip each over).  We order these:  $E \le F \le G
\le H$. 

We can express an unbreakable matching as a string using the above alphabet and
some obvious rules.  Let $\xi$ and $\xi'$ be two unbreakable matchings.  We
define a partial ordering on unbreakable matchings, by $\xi \le \xi'$ if this
is the case when restricted to each trinode.  Equivalently, $\xi \le \xi'$ if
the following holds:
\begin{itemize}
\item Both ends of $\xi$ are at most  the respective ends of $\xi'$. 
\item For each base edge $e$ we have $\xi(e) \le \xi'(e)$.
\item If $e$ and $e'$ are consecutive base edges for which $\xi(e)=\xi'(e)$
and $\xi(e')=\xi'(e')$ then $\xi(s) \le \xi'(s)$ where $s$ is the stalk in
between $e$ and $e'$.
\end{itemize}
We say that an unbreakable tuple of matchings $\ul{\xi} \in \ms{V}_{\Xi}^n$ is
in \emph{normal form} if $\ul{\xi}$ is balanced and $\ul{\xi}_1 \le \ul{\xi}_2
\le \cdots \le \ul{\xi}_n$.  Proposition~\ref{prop:non-crit} now follows from
the following proposition:

\begin{proposition}
\label{prop-norm}
Every unbreakable element of $\ms{V}_{\Xi}^n$ is related to a unique normal
form, and this relation is essentially quadratic.
\end{proposition}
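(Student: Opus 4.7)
The plan is to establish existence of a normal form by essentially quadratic relations and then verify uniqueness. For existence, I would first apply Proposition~\ref{keyresult} to replace $\ul{\xi}$ by an essentially quadratic-equivalent balanced tuple. A small observation is needed: balancing is performed pairwise at each base edge and replaces two values $(\xi_i(e), \xi_j(e))$ that differ by $\geq 2$ by the closer pair with the same sum, so if both $\xi_i(e)$ and $\xi_j(e)$ are positive then so are the rebalanced values. Hence unbreakability is preserved throughout the balancing process, and we may assume $\ul{\xi}$ is balanced and unbreakable.

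The heart of the argument is a pairwise sorting lemma: given any two balanced unbreakable matchings $\xi_1, \xi_2$ there exist $\xi_1' \leq \xi_2'$ with $\xi_1' + \xi_2' = \xi_1 + \xi_2$ and with both $\xi_1', \xi_2'$ balanced and unbreakable. Granting this, iterated adjacent swaps (as in bubble sort) convert any balanced unbreakable tuple into the sorted normal form by a sequence of essentially quadratic relations. I would prove the pairwise lemma by a trinode-by-trinode analysis in the alphabet $\{A_n, B_n, C_n, D_n\}$ (internal) and $\{E, F, G, H\}$ (ends). At each trinode the local types are totally ordered, so one can define $\xi_1'$ (resp.\ $\xi_2'$) locally to be the minimum (resp.\ maximum) of $\xi_1, \xi_2$ in this order; by construction the local sum is preserved. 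The nontrivial content is global coherence: these local min/max assignments must glue into weightings that respect the triangle inequalities at every trinode and match across shared base edges. The balanced hypothesis forces $|\xi_1(e) - \xi_2(e)| \leq 1$ at each base edge, and the unbreakable hypothesis prevents any base edge value from being zero, which together show that the cases collapse to a short enumeration: at a shared base edge the two matchings either already agree (and no swap is needed) or they differ by exactly one, in which case the choice of assigning the smaller value to $\xi_1'$ is consistent with the local min at both neighboring trinodes.

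For uniqueness, suppose $\ul{\xi}$ and $\ul{\xi}'$ are both in normal form with $\sum \xi_i = \sum \xi_i' = \Sigma$. At each base edge $e$ the multiset $\{\xi_i(e)\}$ is forced by balancing to consist of copies of $\lfloor \Sigma(e)/n \rfloor$ and $\lceil \Sigma(e)/n \rceil$ in the unique proportions summing to $\Sigma(e)$; the sort condition $\xi_1 \leq \cdots \leq \xi_n$ then uniquely assigns these values across $i$. The values on the stalks are pinned by the condition that the local triple at each trinode be an admissible reduced matching, together with the sort condition applied locally at each stalk edge using the third clause of the partial order.

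The main obstacle is the gluing step in the pairwise sorting lemma. Local min/max at a single trinode is straightforward, but coherence across two adjacent trinodes sharing a base edge requires that the assignment of ``which matching gets the higher stalk value'' does not flip inconsistently from one trinode to the next. This is precisely what unbreakability buys us: a base edge value of zero would permit independent choices on the two sides (as in the breaking/gluing argument of the previous subsection), but positivity forces a single global coordination, and this is where the case analysis of the alphabet, though tedious, closes without obstruction.
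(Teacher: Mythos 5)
Your overall strategy mirrors the paper's proof, which first balances via Proposition~\ref{keyresult}, then bubble-sorts via a pairwise swap lemma (the paper's Lemma~\ref{lemma-key}), and finally argues uniqueness by pinning down the base-edge and then stalk values. Your observation that the balancing operation of Proposition~\ref{keyresult} preserves unbreakability is correct and a useful point that the paper only makes implicitly. However, your proposed proof of the pairwise sorting lemma contains a genuine error in the gluing step.

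You propose to define $\xi_1'$, $\xi_2'$ at each trinode as the minimum and maximum of the two local types in the alphabet total order, and then assert that ``the choice of assigning the smaller value to $\xi_1'$ is consistent with the local min at both neighboring trinodes.'' This consistency fails. On the fifth caterpillar tree with edge order $(s_1,s_2,b_2,s_3,b_3,s_4,s_5)$, take the balanced, unbreakable reduced matchings $\xi_1=(1,1,1,1,2,1,1)$ and $\xi_2=(1,1,2,1,1,1,1)$. At the internal trinode the local types are $C_1=(1,1,2)$ and $D_1=(2,1,1)$, and since $C_1<D_1$ in the alphabet order, your local alphabet-min is $C_1$, which assigns the value $2$ to $b_3$. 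But the smaller of the two values on $b_3$ is $1$, and the alphabet-min at the adjacent end trinode (types $G$ versus $H$, flipped) also assigns $1$ to $b_3$. So the local alphabet-min assignments at the two neighboring trinodes disagree on the shared base edge, and your construction does not produce a well-defined weighting. The underlying point: the alphabet total order on types at a single trinode is \emph{not} the pointwise order on base-edge values --- $C_n$ and $D_n$ are pointwise incomparable --- so ``min of local type'' and ``pointwise min of base-edge values'' are genuinely different operations, and only the latter glues. The paper's Lemma~\ref{lemma-key} therefore prescribes pointwise min/max on the base edges, and in the $C_n$/$D_n$ case this forces the new types to be $B_n$ and $B_{n+1}$, which are neither of $\{C_n,D_n\}$. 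Fixing this requires you to first set the base-edge values by pointwise min/max, then verify admissibility and choose stalk and end values compatibly (this is where balance and unbreakability enter, and where the ends, whose alphabet order like $E<F$ is also not pointwise, must be handled on their own). Your uniqueness sketch is in the right direction, but note that it tacitly relies on the fact --- spelled out in the paper --- that the set of indices at which the two neighboring base-edge values coincide is a union of at most two intervals; without this, the sort condition plus the total does not obviously pin down the stalk values.
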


The following is the key lemma:

\begin{lemma}
\label{lemma-key}
Let $(\xi, \xi') \in \ms{V}_{\Xi}^2$ be unbreakable and balanced.  Then there
exists $(\eta, \eta') \in \ms{V}_{\Xi}^2$ balanced and unbreakable such that
$\xi+\xi'=\eta +\eta'$ and for each internal edge $e$ we have
\begin{displaymath}
\eta(e)=\min(\xi(e), \xi'(e)), \qquad \eta'(e)=\max(\xi(e), \xi'(e)).
\end{displaymath}
\end{lemma}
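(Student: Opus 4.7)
The plan is to specify $\eta$ and $\eta'$ on every edge of $\Xi$ and then check admissibility. On base (internal) edges the values are dictated by the statement as $\eta(e)=\min(\xi(e),\xi'(e))$ and $\eta'(e)=\max(\xi(e),\xi'(e))$; balance and unbreakability on these edges are immediate, since $\eta(e)\geq 1$ (both $\xi(e),\xi'(e)\geq 1$) and $\eta'(e)-\eta(e)=|\xi(e)-\xi'(e)|\leq 1$. On each stalk $s$ the sum is forced by $\eta(s)+\eta'(s)=\xi(s)+\xi'(s)\in\{0,1,2\}$, and the pair $(\eta(s),\eta'(s))$ must lie in $\{0,1\}^2$. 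Because each stalk meets a unique trinode, the choice of stalk values can be made independently trinode by trinode, reducing the problem to producing a compatible local assignment at each trinode.

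At an internal trinode with stalk $s$ and base edges $b_\ell,b_r$, write $T=\xi(s)+\xi'(s)$. When $T=0$, the stalk values are forced to $0$; the original triangle inequality at this trinode gives $\xi(b_\ell)=\xi(b_r)$ and $\xi'(b_\ell)=\xi'(b_r)$, so $\eta$ and $\eta'$ are constant on $\{b_\ell,b_r\}$. When $T=2$, the stalk values are forced to $1$, and combining the original bounds $|\xi(b_\ell)-\xi(b_r)|\leq 1$, $|\xi'(b_\ell)-\xi'(b_r)|\leq 1$, and the balance hypothesis $|\xi(e)-\xi'(e)|\leq 1$ yields $|\eta(b_\ell)-\eta(b_r)|\leq 1$ and $|\eta'(b_\ell)-\eta'(b_r)|\leq 1$ by a short comparison. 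When $T=1$, there are two allocations of the $1$ between $\eta$ and $\eta'$; I will check by a case analysis on the admissible pairs $(\xi(b_\ell),\xi(b_r))$ relative to the common value $\xi'(b_\ell)=\xi'(b_r)$ (which is forced by the vanishing of $\xi'(s)$) that at least one of the two allocations satisfies both triangle inequalities.

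At an end trinode with stalks $s_1,s_2$ and base edge $b$, the only non-automatic triangle constraints are $\eta(b)\leq\eta(s_1)+\eta(s_2)$ and its $\eta'$ analogue. Writing $T_i=\xi(s_i)+\xi'(s_i)$ and $a_i=\eta(s_i)$, this becomes a search for $(a_1,a_2)\in\{0,1\}^2$ with each $a_i$ in the admissible range dictated by $T_i$ and $\eta(b)\leq a_1+a_2\leq(T_1+T_2)-\eta'(b)$. Summing the original triangle inequalities at this trinode gives $T_1+T_2\geq\xi(b)+\xi'(b)=\eta(b)+\eta'(b)\geq 2$, the last step by unbreakability, so the target interval is non-empty; a direct enumeration of the cases $(T_1,T_2)\in\{0,1,2\}^2$ that remain after this exhibits an admissible $(a_1,a_2)$ in every one.

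I expect the main obstacle to be the internal-trinode case $T=1$: one must rule out configurations in which both stalk allocations simultaneously fail. Such a simultaneous failure would require either $|\eta(b_\ell)-\eta(b_r)|\geq 2$ or $|\eta'(b_\ell)-\eta'(b_r)|\geq 2$, each of which is incompatible with the original triangle inequality for $\xi$ (respectively $\xi'$) combined with the balance hypothesis, so the bad cases do not arise.
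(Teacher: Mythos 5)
Your direct construction is correct and is really the same argument as the paper's (which is a one-line reference back to the gluing argument of Proposition~\ref{keyresult}): fix the base-edge values, then make independent local choices of stalk values at each trinode. The paper phrases this as ``break apart at a base edge, permute, and glue back,'' while you phrase it as a global definition of $\eta,\eta'$ with a local admissibility check; these amount to the same thing since each stalk meets exactly one trinode. Your verification at the end trinodes and in the $T\in\{0,2\}$ cases at interior trinodes is sound.

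There is, however, a flaw in the last paragraph, in the justification of the $T=1$ case at an interior trinode. A simultaneous failure of both stalk allocations does \emph{not} require one of $|\eta(b_\ell)-\eta(b_r)|$ or $|\eta'(b_\ell)-\eta'(b_r)|$ to be at least $2$. Since the total slack $\eta(s)+\eta'(s)$ is $1$, both allocations fail exactly when both differences are nonzero, and you have already bounded each difference by $1$; so the bad configuration to exclude is that both are equal to $1$, not that one is $\geq 2$. The correct way to finish is the case analysis you defer to: writing $a=\xi(b_\ell)$, $b=\xi(b_r)$, $c=\xi'(b_\ell)=\xi'(b_r)$ (the last forced since $\xi'(s)=0$), one has
\[
\bigl(\min(a,c)-\min(b,c)\bigr)+\bigl(\max(a,c)-\max(b,c)\bigr)=a-b,
\]
and each summand lies in $\{-1,0,1\}$; since $|a-b|\leq 1$ this forces at least one summand to vanish, so at least one allocation works. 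With that correction your argument is complete.
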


\begin{proof}
The idea is the same as in the proof of Proposition~\ref{keyresult}: break apart at a base edge (or at
an end), permute, and glue back together to achieve the desired order.
\end{proof}

We now prove Proposition~\ref{prop-norm}.

\begin{proof}[Proof of Proposition~\ref{prop-norm}]
Let $\ul{\xi} \in \ms{V}_{\Xi}^n$ be a given unbreakable tuple.  We may
assume that $\ul{\xi}$ is balanced.  By repeatedly using Lemma~\ref{lemma-key}
we find that $\ul{\xi}$ is quadratically related to an unbreakable element
$\ul{\xi}' \in \ms{V}_{\Xi}^n$ which has the property that $\xi'_i(e) \le
\xi'_j(e)$ for $i \le j$ and all internal edges $e$.  

For uniqueness, it suffices to show uniqueness for third caterpillars and for
the ends.  Consider the third caterpillar at a base vertex.  Suppose the sum
of the $n$ weightings $(a_i,b_i,c_i)$, $1 \le i \le n$, on the third
caterpillar is equal to $(a,b,c)$ (left, stalk, right).  Suppose these
weightings are increasing in the order we have defined.  This means the $a_i$'s
are increasing, starting as the floor of $a/n$ and ending as the ceiling
of $a/n$.  This determines the value of each $a_i$.  Similarly the value of
each $c_i$ is determined (beginning with floor of $c/n$, ending with ceiling
of $c/n$).  Wherever $a_i \ne c_i$ we must have $b_i = 1$, so these $b_i$'s
are determined.  However if $a_i = c_i$, then $b_i$ could be either $0$ or
$1$.  The set $\{i \mid a_i = c_i\}$ consists of at most two intervals $I, J$.
These intervals are determined by the $a_i, c_i$ (which are determined by the
normality condition).  Within the interval $I$, the $b_i$'s must be increasing.
Similarly in the interval $J$, the $b_i$'s must increase.  Thus the $b_i$'s
are all determined by the normality condition.  The argument for the ends is
similar.
\end{proof}


\section{The ideal is generated by quadratics and generalized Segre cubics}
\label{s:gsc}

In \S \ref{s:gsc}, we lift the generalized Segre toric cubics to the
ring $R_L$ and prove the following:

\begin{theorem}
\label{thm:gsc}
For any even set $L$ the ideal $I_L$ is generated over $\Z$ by quadratics and
the small generalized Segre cubic relations.
\end{theorem}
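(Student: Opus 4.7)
The strategy is a standard filtered-ring / initial-ideal lifting argument. Fix a Y-tree $\Xi$ with leaf set $L$ and extend the level filtration $F_\Xi$ from $R_L$ to its symmetric algebra $\Sym V_L$ in the obvious way (a monomial $Y_{\Gamma_1}\cdots Y_{\Gamma_k}$ has level $\sum_i \lev_\Xi \Gamma_i$). The general principle I would invoke is: if $I$ is an ideal of a filtered ring $R$ whose initial ideal $\gr I \subset \gr R$ is generated by $\bar f_1,\dots,\bar f_r$, and we can produce lifts $f_i \in I$ whose $F_\Xi$-leading terms are the $\bar f_i$, then the $f_i$ already generate $I$ over $\Z$. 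I would apply this with $R = \Sym V_L$ and $I = I_L$; by Theorem~\ref{thm:ytree} the initial ideal $\gr I_L$ is generated by the toric quadratic relations and by the toric generalized Segre cubic relations of \S\ref{ss:tor-segre}.

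The toric quadratic relations come in two flavors, and both lift transparently. Sign relations lift tautologically. A toric Pl\"ucker binomial $\ol{X}_{\dedgescr{ab}}\ol{X}_{\dedgescr{cd}} = \ol{X}_{\dedgescr{ac}}\ol{X}_{\dedgescr{bd}}$ attached to a quadruple satisfying \eqref{eq-overlap-cond} lifts as follows: by Figure~\ref{f:torpluck} the third Pl\"ucker term $X_{\dedgescr{ad}}X_{\dedgescr{bc}}$ has strictly smaller level, so the ordinary Pl\"ucker identity in $R_L$, expanded against an arbitrary background graph to produce a quadratic relation among elements of $V_L$, is a genuine quadratic element of $I_L$ whose $F_\Xi$-leading term is precisely the given toric binomial. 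Iterating this against arbitrary background matchings (and using Kempe's theorem to present everything in degree one) exhausts all toric quadratics.

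The substance is the lift of the toric generalized Segre cubics. I would define a \emph{small generalized Segre cubic} relation in $R_L$ to be the classical Segre cubic identity \eqref{e:Segre} applied to a six-element subset $L_0 \subset L$ and multiplied by a common matching on the remaining vertices $L \setminus L_0$ (as in \eqref{Segre8}). This is manifestly a cubic element of $I_L$. To show it has the correct leading term I would compute directly: the six-point Segre identity, translated via $\Gamma \mapsto \xi_{\Gamma}$ on the Y-tree and truncated to the corresponding central trinode of the caterpillar $\Xi^- $ via Proposition~\ref{p2-trun-wt}, yields exactly the model relation \eqref{eq:torsegre}; the ``outside'' matching on $L \setminus L_0$ contributes, after the same translation, the free matching data $(X,Y,Z)$ and $(X',Y',Z')$ on the two caterpillar subtrees flanking that trinode. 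Varying $L_0$ and the outside matching over all possibilities produces every toric generalized Segre cubic.

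The hard part will be precisely this combinatorial alignment: one must choose $L_0$ and a bracketing of the Segre cubic compatible with a chosen internal trinode of $\Xi^-$, then check that the six matched pairs on $L_0$ sit in the Y-tree so that each of the three pairs on each side of the trinode specializes, under $\xi$ and the truncation map, to the required degree-one reduced matchings. Given this bookkeeping, the lifting lemma closes the argument and yields Theorem~\ref{thm:gsc}.
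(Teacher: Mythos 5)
Your strategy — filter $\Sym V_L$ by $F_\Xi$, compute the initial ideal via Theorem~\ref{thm:ytree}, and produce lifts of the toric binomials — is exactly the paper's. The toric quadratics lift as you describe. The gap is entirely in the cubic step, and it stems from a misreading of what ``small generalized Segre cubic relation'' means.

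The paper defines (in \S\ref{s:gsc}) a generalized Segre datum $\Sigma=(\Gamma,\ms U)$ where $\Gamma$ is a $3$-colored regular graph on \emph{all} of $L$ and $\ms U=\{U_R,U_G,U_B\}$ is a partition of $L$ into three even parts satisfying the special-edge condition; ``small'' only means that \emph{one} of the three parts has cardinality two. You have replaced this with the far more restrictive class ``classical Segre cubic on a six-element subset $L_0$, times a common matching on $L\setminus L_0$'' (as in \eqref{Segre8}). These are not the same object: your relations are generalized Segre data whose graph is disconnected (a $6$-point piece plus a background blob), which by Proposition~\ref{prop:gsd-discon} form a very special subclass, whereas the paper's small data have all three colors weaving across $U_G$, $U_R$, $U_B$ with no separating ``background.''

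This matters because the toric generalized Segre cubic of \S\ref{ss:tor-segre} has, as its three degree-one factors, matchings whose restrictions to the left caterpillar subtree are three \emph{distinct} reduced matchings $X$, $Y$, $Z$, and whose restrictions to the right subtree are three \emph{distinct} $X'$, $Y'$, $Z'$. A ``common matching on $L\setminus L_0$'' forces the three factors to agree outside $L_0$, so its $F_\Xi$-leading term always has $X=Y=Z$ and $X'=Y'=Z'$. Thus your proposed lifts hit only a measure-zero subfamily of the toric cubics, and the final sentence of your proposal (``given this bookkeeping, the lifting lemma closes the argument'') cannot be made to go through. The fix is to use the paper's actual small data: set $U_R=\{x,y\}$ (the two leaves of the central Y), $U_G=P$, $U_B=P'$, and encode the six distinct lifts $\wt X,\wt Y,\wt Z,\wt X',\wt Y',\wt Z'$ as the blue, green and red edge sets of a single $3$-colored graph $\Gamma$ on all of $L$, exactly as in the second half of \S\ref{s:gsc}.
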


\Rnts{used in \ref{thm:retro}}

We will introduce the generalized Segre cubics, and the small generalized
Segre cubics, shortly.  Theorem~\ref{thm:gsc} will follow easily from our toric
results once we make these definitions.  In one of our ad hoc arguments in
the 10 point case we will need a slightly refined version of
Theorem~\ref{thm:gsc} given in Remark~\ref{rem:gsc}.  The proof of this theorem
is the only place we use the toric results.

\subsection{Brief additional comments on colored graphs}

We will use the language of colored graphs, introduced in \S \ref{s:intro}.
We consider both directed and undirected colored graphs.  If $\Gamma$ is a
directed multi-matching on $L$ whose edges have been colored with colors from
the set $C$ then $X_{\Gamma}$ is defined as an element of $V_L^{\otimes C}$
as in \S \ref{s:intro}.  If $L$ is oriented and $\Gamma$ is a regular
undirected multi-matching one can also make sense of $Y_{\Gamma}$ as an element
of $V_L^{\otimes C}$.  Clearly the $X_{\Gamma}$ (or $Y_{\Gamma}$) span
$V_L^{\otimes C}$.  Furthermore, the $X_{\Gamma}$ satisfy the sign and
``colored'' Pl\"ucker relations, and these generate all the linear relations
among them.  (The colored Pl\"ucker relation is just the usual Pl\"ucker
relation on a pair of edges, with the restriction that these two edges be of
the same color.)  We have thus given a description of $V_L^{\otimes C}$ in
terms of colored graphs.  There is a similar description for $\Sym^k(V_L)$.
The only difference is that in $\Sym^k(V_L)$ the particular color of an edge
is not relevant.  What matters is whether two edges have the same color --- two
colored graphs represent the same element of $\Sym^k(V_L)$ if one is obtained
from the other by permuting the colors.

\subsection{Generalized Segre cubic data}

Let $L$ be an even set.  By a \emph{generalized Segre datum} we mean a pair
$\Sigma=(\Gamma, \ms{U})$ where $\Gamma$ is an undirected graph on $L$
whose edges have been colored one of red, green or blue and $\ms{U}=\{U_R, U_G,
U_B\}$ is a partition of $L$ into three even subsets (called \emph{parts}),
such that
\begin{itemize}
\item Every vertex of $\Gamma$ has valence one for each of the three colors.
\item Between any two parts there are either two edges or no edges.  If there
are two edges then these edges have the color of the ``opposite'' part.
For instance, any edge from $U_G$ to $U_B$ must be red.
\end{itemize}
See Figure~\ref{f:genseg} for a schematic presentation.  We call the edges
between the parts \emph{special}.  We call $\Sigma$ \emph{small} if $U_R$,
$U_G$ or $U_B$ has cardinality two.
\Ncom{$\text{gen.\ S. datum, special edges, small}$}

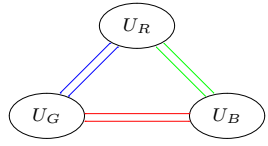
\begin{figure}[!ht]
\begin{displaymath}
\begin{xy}
(0, 0)*\xycircle(5, 3){-};
(-12, -12)*\xycircle(5, 3){-};
(12, -12)*\xycircle(5, 3){-};
(0, 0)*{\ss U_R}; (-12, -12)*{\ss U_G}; (12, -12)*{\ss U_B};
(-.5, .75)*{}="A";
{\ar@[blue]@{-} "A"+(-9.7, -9.7); "A"+(-3.1, -3.1)};
(-.5, -.75)*{}="A";
{\ar@[blue]@{-} "A"+(-8.6, -8.6); "A"+(-2.1, -2.1)};
(.5, .75)*{}="A";
{\ar@[green]@{-} "A"+(9.7, -9.7); "A"+(3.1, -3.1)};
(.5, -.75)*{}="A";
{\ar@[green]@{-} "A"+(8.6, -8.6); "A"+(2.1, -2.1)};
(-12, -11.7)*{}="A";
{\ar@[red]@{-} "A"+(5.1, 0); "A"+(18.9, 0)};
(-12, -12.7)*{}="A";
{\ar@[red]@{-} "A"+(5.1, 0); "A"+(18.9, 0)};
\end{xy}
\end{displaymath}
\caption{Schematic presentation of a generalized Segre datum.}
\label{f:genseg}
\end{figure}

Let $\Sigma$ be a generalized Segre datum.  Suppose $e$ and $e'$ are edges of
$\Gamma$ of the same color and have a vertex in a common part.  Given such a
pair, let $Y_{\Gamma}+Y_{\Gamma'}+Y_{\Gamma''}=0$ be the colored Pl\"ucker
relation on $e$ and $e'$.  Then it is easily verified that $\Sigma'=(\Gamma',
\ms{U})$ and $\Sigma''=(\Gamma'', \ms{U})$ are both generalized Segre data.
Define the \emph{space of generalized Segre data} to be the $\Z$-module spanned
by generalized Segre data modulo relations of the form $\Sigma+\Sigma'+\Sigma''
=0$.

\subsection{Generalized Segre cubic relations}

Let $\Sigma=(\Gamma, \ms{U})$ be a generalized Segre datum.  Let $\wt{\Gamma}$
be a directed colored graph with underlying undirected graph $\Gamma$.  We let
$\epsilon(\wt{\Gamma})$ be the product of the $\epsilon(\wt{\Gamma}_i)$, where
$\wt{\Gamma}_i$ is the directed matching of color $i$ in $\wt{\Gamma}$.  (Here
$\epsilon$ is a chosen orientation on $L$, see \S \ref{ss:orient}.)
Form a new directed colored graph $\wt{\Gamma}'$ as
follows.  The graph $\wt{\Gamma}'$ will be a recoloring of $\wt{\Gamma}$, so we
just specify a new color for each edge.  We  use the colors purple
and black.  The red (resp.\ blue, green) edges of $\wt{\Gamma}$ in $U_R$
(resp.\ $U_B$, $U_G$) are black in $\wt{\Gamma}'$ and all other edges of
$\wt{\Gamma}$ are purple in $\wt{\Gamma}'$.  It is clear that every vertex has
black valence one and thus  purple valence two.

Define $Y_{\Sigma} \in \Sym^3(V_L)$ and $Y_{\Sigma}' \in R_L^{(1)} \otimes
R_L^{(2)}$ by
\begin{displaymath}
Y_{\Sigma}=Y_{\Gamma}=\epsilon(\wt{\Gamma}) X_{\wt{\Gamma}}, \qquad
Y_{\Sigma}'=\epsilon(\wt{\Gamma}) \left( X_{\wt{\Gamma}'_{\mathrm{black}}}
\otimes X_{\wt{\Gamma}'_{\mathrm{purple}}} \right).
\end{displaymath}
These only depend on $\Sigma$ and not the choice of $\wt{\Gamma}$.  There is a
well-defined map $R_L^{(1)} \otimes R_L^{(2)} \to \Sym^3(V_L)/Q_L^{(3)}$
given by writing the element of $R_L^{(2)}$ in terms of degree one elements and
then formally multiplying to get an element of $\Sym^3(V_L)$.  (This is
only defined modulo quadratic relations $Q_L$, because of the choice of how to write $R_L^{(2)}$ in
terms of degree one elements.)  We may therefore regard both $Y_{\Sigma}$ and
$Y_{\Sigma}'$ as elements of $\Sym^3(R_L^{(1)})/Q_L$.  Define
\begin{displaymath}
\Rel(\Sigma)=Y_{\Sigma}-Y_{\Sigma}',
\end{displaymath}
regarded as an element of $\Sym^3(V_L)/Q_L^{(3)}$.  As the two terms in
$\Rel(\Sigma)$ are recolorings of the same graph, $\Rel(\Sigma)$ is a relation,
that is, it maps to zero in $R_L$.  We call such relations \emph{generalized
Segre cubic relations}.  We use the same name for lift of some $\Rel(\Sigma)$
to  $I^{(3)}_L \subset \Sym^3(V_L)$.  A generalized Segre cubic relation is
shown in Figure~\ref{f:genseg6}.  Note that for $\sigma \in
\mf{S}_L$ we have $\sigma \Rel(\Sigma)=\sgn(\sigma) \Rel(\sigma \Sigma)$.
\Ncom{$\text{gen.\ S. c. rels.}$}

\begin{figure}[!ht]
\begin{displaymath}
\begin{xy}(-10, -12)*{}="A1"; (-8, -3)*{}="A2";
(0, 3)*{}="B1"; (2, 12)*{}="B2";
(10, -12)*{}="C1"; (12, -3)*{}="C2";
{\ar@[blue]@{..}@[|(2)] "A1"; "B1"};
{\ar@[red]@{--} "A1"; "C1"};
{\ar@[green]@{-} "B1"; "C1"};
{\ar@[blue]@{..}@[|(2)] "A2"; "B2"};
{\ar@[red]@{--} "A2"; "C2"};
{\ar@[green]@{-} "B2"; "C2"};
{\ar@[blue]@{..}@[|(2)] "C1"; "C2"};
{\ar@[red]@{--} "B1"; "B2"};
{\ar@[green]@{-} "A1"; "A2"};
(-12, -12)*{\ss 2}; (-10, -3)*{\ss 1};
(2, 3)*{\ss 4}; (4, 12)*{\ss 3};
(12, -12)*{\ss 6}; (14, -3)*{\ss 5};
\end{xy}
\qquad = \qquad
\begin{xy}
(-10, -12)*{}="A1"; (-8, -3)*{}="A2";
(0, 3)*{}="B1"; (2, 12)*{}="B2";
(10, -12)*{}="C1"; (12, -3)*{}="C2";
{\ar@[Thistle]@{-} "A1"; "B1"};
{\ar@[Thistle]@{-} "A1"; "C1"};
{\ar@[Thistle]@{-} "B1"; "C1"};
{\ar@[Thistle]@{-} "A2"; "B2"};
{\ar@[Thistle]@{-} "A2"; "C2"};
{\ar@[Thistle]@{-} "B2"; "C2"};
{\ar@[black]@{-}@[|(3)] "C1"; "C2"};
{\ar@[black]@{-}@[|(3)] "B1"; "B2"};
{\ar@[black]@{-}@[|(3)] "A1"; "A2"};
\end{xy}
\end{displaymath}
\caption{A generalized Segre relation on six points.  Here $U_G=\{1, 2\}$,
$U_R=\{3, 4\}$ and $U_B=\{5, 6\}$.  This is equal to the image of the usual
Segre cubic relation in $I_L/Q_L$.}
\label{f:genseg6}
\end{figure}
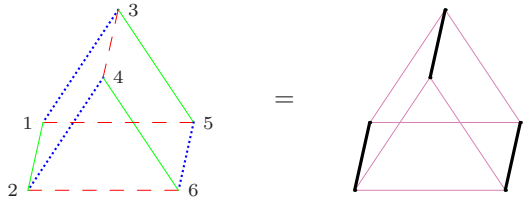

One easily verifies that $\Rel$ gives a homomorphism
\begin{displaymath}
\Rel:\{ \textrm{the space of generalized Segre cubic data} \} \to
I_L^{(3)}/Q_L^{(3)}.
\end{displaymath}
This allows us to interpret relations as graphs with extra structure, which
has two advantages.  First, it gives a source of linear relations between
(also thought of as operations we can perform on) generalized Segre relations:
Pl\"ucker relations on the generalized Segre data which respect the extra
structure.  Second, one can read off certain properties of the generalized
Segre relation from the original graph.  For instance, if $\Sigma$ is a
generalized Segre cubic datum for which $\Gamma$ is disconnected, then the
associated generalized Segre relation arises from a relation on fewer points
(see Proposition~\ref{prop:gsd-discon} for a precise statement).

\subsection{Degenerate Segre cubic relations}
\label{ss:gsd-degen}

We call a generalized Segre datum $\Sigma$ \emph{degenerate} if:
\begin{enumerate}
\item one of the pairs of special edges is missing; or
\item in one of the parts the two pairs of special edges do not connect.
\end{enumerate}
By (2) we mean that one part, say $U_G$, can be partitioned into two pieces
$A$ and $B$ such that no edges go between $A$ and $B$, the blue special edges
go into $A$ and the red special edges go into $B$.  A generalized Segre
relation is called degenerate if it comes from a degenerate generalized Segre
datum. 
\Ncom{$\text{degenerate gSdatum}$}

\begin{proposition}
\label{prop:gsd-degen}
A degenerate generalized Segre relation lies in $Q_L$.
\end{proposition}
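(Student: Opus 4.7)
The plan is to show $Y_\Sigma \equiv Y_\Sigma' \pmod{Q_L}$ by a sequence of quadratic ``swap'' moves. Write $M_R, M_G, M_B$ for the red, green, blue matchings of $\Gamma$, and write $M_K, M_P$ for the black matching and purple $2$-regular graph appearing in $Y_\Sigma'$. The key tool is that for any four matchings with $M \cup N = M' \cup N'$ (as edge sets), one has $Y_M Y_N \equiv Y_{M'} Y_{N'} \pmod{Q_L}$, since both sides represent the same element of $R_L^{(2)}$. Each move will thus ``rewire'' a pair of matchings in the cubic monomial $Y_{M_R} Y_{M_G} Y_{M_B}$ while fixing their edge-union, and the goal is to arrive at an expression $Y_{M_K} Y_{N} Y_{N'}$ with $N \cup N' = M_P$; such an expression represents $Y_\Sigma'$ in $\Sym^3(V_L)/Q_L$.

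Case (1): Suppose the missing pair is the red special between $U_G$ and $U_B$. Then every red edge lies within a single part, so no edge of $M_R \cup M_G$ exits $U_G$, and the restriction of this $2$-regular graph to $U_G$ is the disjoint union of two full matchings of $U_G$. A first swap flips the roles of $M_R$ and $M_G$ on the $U_G$-cycles, producing $(N_1, N_2)$ where $N_1$ agrees with $M_R$ off $U_G$ and with $M_G$ on $U_G$. Analogously, $N_1 \cup M_B$ restricts to two full matchings of $U_B$ with no exiting edges, so a second swap replaces $(N_1, M_B)$ by $(M_K, M_B')$, where the new first matching is exactly the black matching $M_K$. One then reads off $N_2 \cup M_B' = M_P$, so $Y_\Sigma \equiv Y_{M_K} Y_{N_2} Y_{M_B'} \equiv Y_\Sigma' \pmod{Q_L}$.

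Case (2): Write $U_G = A \sqcup B$ as in the hypothesis; three swaps suffice. Since no edges of $\Gamma$ run between $A$ and $B$, and only blue (resp.\ red) specials meet $A$ (resp.\ $B$), the cluster $A$ is isolated in $M_R \cup M_G$ and $B$ is isolated in $M_G \cup M_B$. A first swap on the $A$-cycles of $M_R \cup M_G$ yields $(N_1, N_2, M_B)$; a second swap on the $B$-cycles of $N_2 \cup M_B$ yields $(N_1, N_2', M_B'')$ in which the green edges of $A$ and $B$ have been moved into $N_1$ and $M_B''$ respectively. I would then verify that the only specials that could cross between $A \cup U_R$ and $B \cup U_B$ are the green specials $E_{RB}$, and these lie in $N_2'$ rather than in $N_1$ or $M_B''$; consequently $N_1 \cup M_B''$ splits into two $2$-regular subgraphs confined to $A \cup U_R$ and to $B \cup U_B$. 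A third swap, on the cycles inside $B \cup U_B$, yields $(M_K, N_2', M_B''')$, where $M_K$ is the black matching. Checking $N_2' \cup M_B''' = M_P$ gives $Y_\Sigma \equiv Y_\Sigma' \pmod{Q_L}$.

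The main subtlety is confirming at each step that the pairwise union being ``swapped'' has cycles confined to the intended vertex set, so that the local flip genuinely produces matchings of all of $L$. This confinement is a direct consequence of the degeneracy hypothesis---in case (1) the absence of one pair of specials, in case (2) the non-connection of the two pairs of specials inside $U_G$---and it is precisely where that hypothesis enters the argument; once confinement is established at each move, the subsequent bookkeeping of which edges land in which matching is routine.
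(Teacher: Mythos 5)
Your argument is correct. For case (1) you do exactly what the paper does: swap red/green inside $U_G$, then swap the new red with blue inside $U_B$, tracking that the union of the two non-black matchings becomes the purple graph. For case (2), however, you take a genuinely different route. The paper's proof recolors $A$ and repartitions (setting $U_R' = U_R \cup A$, $U_G' = B$), observes that the resulting datum $\Sigma'$ is degenerate of type (1), and then shows $\Rel(\Sigma)-\Rel(\Sigma') \in Q_L$ by noting that $Y'_\Sigma = Y'_{\Sigma'}$ and that $Y_\Sigma - Y_{\Sigma'}$ is a color swap on $A$. That reduction is short and conceptually clean but relies on noticing that the hypotheses of case (2) exactly allow a change of partition that makes the missing-edge condition hold. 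Your three-swap argument instead carries out the rewiring directly from $(M_R, M_G, M_B)$ to $(M_K, \cdot, \cdot)$, which requires more bookkeeping — in particular the nontrivial observation that after two swaps the intermediate pair $N_1 \cup M_B''$ has no edge crossing between $A \cup U_R$ and $B \cup U_B$, so that the third swap on $B \cup U_B$ is legal. I verified this confinement and the final identity $N_2' \cup M_B''' = M_P$, so the proof goes through. The paper's version is shorter and exposes a structural fact (case (2) reduces to case (1) by repartitioning); yours is more hands-on and avoids introducing the auxiliary datum $\Sigma'$, at the cost of longer edge-tracking.
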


\begin{proof}
First consider a degenerate datum satisfying (1).  Say that the red special
edges are missing.  We show how to move from $Y_{\Sigma}$ to $Y_{\Sigma}'$
using quadratic relations.  First switch the red edges and the green edges of
$\Gamma$ which occur in $U_G$ (this relation lies in $Q_L$).  Then switch the
red edges and blue edges of $\Gamma$ in $U_B$.  The resulting graph now looks
like $\Gamma'$ if we make the red edges black and the other edges purple.

Now say that $\Sigma$ is a degenerate datum satisfying (2).  Say that in $U_G$
the red and blue special edges do not connect and let $U_G=A \amalg B$ be a
decomposition as described above.  Let $\Gamma'$ be the graph obtained from
$\Gamma$ by switching the red and green colors in $A$, let $U_G'=B$, let
$U_R'=U_R \cup A$ and let $U_B'=U_B$.  Let $\Sigma'$ be the generalized Segre
datum $(\Gamma', \{U_R', U_G', U_B'\})$.  Then $\Sigma'$ is degenerate of type
(1) and so $\Rel(\Sigma') \in Q_L$.  Consider the difference
\begin{displaymath}
\Rel(\Sigma)-\Rel(\Sigma')=(Y_{\Sigma}-Y_{\Sigma'})+(Y'_{\Sigma}-Y'_{\Sigma'}).
\end{displaymath}
Now $Y'_{\Sigma}=Y'_{\Sigma'}$.  Also, $Y_{\Sigma}- Y_{\Sigma'} \in Q_L$ (the blue subgraphs of $Y_{\Sigma}$
and $Y_{\Sigma'}$ are identical, and the remainder of the graph gives
a quadratic relation).  Thus $\Rel(\Sigma)\in Q_L$.
\end{proof}

\subsection{Proof of Theorem~\ref{thm:gsc}}

It follows from Theorem~\ref{thm:ytree} that $I_L$ is generated by quadratics
and lifts of the generalized toric Segre cubics.  Thus to prove
Theorem~\ref{thm:gsc}, it suffices to show that the generalized Segre cubics
are lifts of the generalized toric Segre cubics, which is what we now do.

Let $L$ be an even set.  Pick a Y-tree $\Xi$ with leaf set $L$ and let $\Xi^-$
be the associated reduced (caterpillar) tree.  Consider a generalized toric
Segre cubic relation on $\Xi^-$:
\begin{equation}
\label{eq:tgsc}
\begin{split}
&
\begin{xy}
(-8, -3)*{}="A"; (0, -3)*{}="B"; (8, -3)*{}="C"; (0, 2)*{}="D";
"A"; "B"; **\dir{-};
"B"; "C"; **\dir{-};
"B"; "D"; **\dir{-};
(-4, -3)*{1}; (0, 4)*{1}; (4, -3)*{0};
(-8, -6); (-8, 0); **\dir{-};
(-8, 0); (-14, 0); **\dir{-};
(-14, 0); (-14, -6); **\dir{-};
(-14, -6); (-8, -6); **\dir{-};
(-11, -3)*{X};
(8, -6); (8, 0); **\dir{-};
(8, 0); (14, 0); **\dir{-};
(14, 0); (14, -6); **\dir{-};
(14, -6); (8, -6); **\dir{-};
(11, -3)*{Z'};
\end{xy}
\qquad
\begin{xy}
(-8, -3)*{}="A"; (0, -3)*{}="B"; (8, -3)*{}="C"; (0, 2)*{}="D";
"A"; "B"; **\dir{-};
"B"; "C"; **\dir{-};
"B"; "D"; **\dir{-};
(-4, -3)*{0}; (0, 4)*{1}; (4, -3)*{1};
(-8, -6); (-8, 0); **\dir{-};
(-8, 0); (-14, 0); **\dir{-};
(-14, 0); (-14, -6); **\dir{-};
(-14, -6); (-8, -6); **\dir{-};
(-11, -3)*{Z};
(8, -6); (8, 0); **\dir{-};
(8, 0); (14, 0); **\dir{-};
(14, 0); (14, -6); **\dir{-};
(14, -6); (8, -6); **\dir{-};
(11, -3)*{X'};
\end{xy}
\qquad
\begin{xy}
(-8, -3)*{}="A"; (0, -3)*{}="B"; (8, -3)*{}="C"; (0, 2)*{}="D";
"A"; "B"; **\dir{-};
"B"; "C"; **\dir{-};
"B"; "D"; **\dir{-};
(-4, -3)*{1}; (0, 4)*{0}; (4, -3)*{1};
(-8, -6); (-8, 0); **\dir{-};
(-8, 0); (-14, 0); **\dir{-};
(-14, 0); (-14, -6); **\dir{-};
(-14, -6); (-8, -6); **\dir{-};
(-11, -3)*{Y};
(8, -6); (8, 0); **\dir{-};
(8, 0); (14, 0); **\dir{-};
(14, 0); (14, -6); **\dir{-};
(14, -6); (8, -6); **\dir{-};
(11, -3)*{Y'};
\end{xy}
\\ = \qquad &
\begin{xy}
(-8, -3)*{}="A"; (0, -3)*{}="B"; (8, -3)*{}="C"; (0, 2)*{}="D";
"A"; "B"; **\dir{-};
"B"; "C"; **\dir{-};
"B"; "D"; **\dir{-};
(-4, -3)*{1}; (0, 4)*{1}; (4, -3)*{1};
(-8, -6); (-8, 0); **\dir{-};
(-8, 0); (-14, 0); **\dir{-};
(-14, 0); (-14, -6); **\dir{-};
(-14, -6); (-8, -6); **\dir{-};
(-11, -3)*{X};
(8, -6); (8, 0); **\dir{-};
(8, 0); (14, 0); **\dir{-};
(14, 0); (14, -6); **\dir{-};
(14, -6); (8, -6); **\dir{-};
(11, -3)*{X'};
\end{xy}
\qquad
\begin{xy}
(-8, -3)*{}="A"; (0, -3)*{}="B"; (8, -3)*{}="C"; (0, 2)*{}="D";
"A"; "B"; **\dir{-};
"B"; "C"; **\dir{-};
"B"; "D"; **\dir{-};
(-4, -3)*{1}; (0, 4)*{1}; (4, -3)*{1};
(-8, -6); (-8, 0); **\dir{-};
(-8, 0); (-14, 0); **\dir{-};
(-14, 0); (-14, -6); **\dir{-};
(-14, -6); (-8, -6); **\dir{-};
(-11, -3)*{Y};
(8, -6); (8, 0); **\dir{-};
(8, 0); (14, 0); **\dir{-};
(14, 0); (14, -6); **\dir{-};
(14, -6); (8, -6); **\dir{-};
(11, -3)*{Y'};
\end{xy}
\qquad
\begin{xy}
(-8, -3)*{}="A"; (0, -3)*{}="B"; (8, -3)*{}="C"; (0, 2)*{}="D";
"A"; "B"; **\dir{-};
"B"; "C"; **\dir{-};
"B"; "D"; **\dir{-};
(-4, -3)*{0}; (0, 4)*{0}; (4, -3)*{0};
(-8, -6); (-8, 0); **\dir{-};
(-8, 0); (-14, 0); **\dir{-};
(-14, 0); (-14, -6); **\dir{-};
(-14, -6); (-8, -6); **\dir{-};
(-11, -3)*{Z};
(8, -6); (8, 0); **\dir{-};
(8, 0); (14, 0); **\dir{-};
(14, 0); (14, -6); **\dir{-};
(14, -6); (8, -6); **\dir{-};
(11, -3)*{Z'};
\end{xy}
\end{split}
\end{equation}
Implicit in this diagram is a decomposition of $\Xi^-$ into three pieces:
\begin{displaymath}
\begin{xy}
(0, 0)*{}="A"; (8, 0)*{}="B"; (16, 0)*{}="C"; (24, 0)*{}="D"; (32, 0)*{}="E";
(8, 8)*{}="B1"; (16, 8)*{}="C1"; (24, 8)*{}="D1";
(32, -2)*{\ss p};
"A"; "B"; **\dir{-};
"B"; "C"; **\dir{-};
"C"; "D"; **\dir{..};
"D"; "E"; **\dir{-};
"B"; "B1"; **\dir{-};
"C"; "C1"; **\dir{-};
"D"; "D1"; **\dir{-};
\end{xy}
\qquad\qquad
\begin{xy}
(-8, 0)*{}="A"; (0, 0)*{}="B"; (8, 0)*{}="C"; (0, 8)*{}="B1";
(-8, -2)*{\ss p}; (8, -2)*{\ss q};
"A"; "B"; **\dir{-};
"B"; "C"; **\dir{-};
"B"; "B1"; **\dir{-};
\end{xy}
\qquad\qquad
\begin{xy}
(0, 0)*{}="A"; (8, 0)*{}="B"; (16, 0)*{}="C"; (24, 0)*{}="D"; (32, 0)*{}="E";
(8, 8)*{}="B1"; (16, 8)*{}="C1"; (24, 8)*{}="D1";
(0, -2)*{\ss q};
"A"; "B"; **\dir{-};
"B"; "C"; **\dir{-};
"C"; "D"; **\dir{..};
"D"; "E"; **\dir{-};
"B"; "B1"; **\dir{-};
"C"; "C1"; **\dir{-};
"D"; "D1"; **\dir{-};
\end{xy}
\end{displaymath}
The labels indicate how these three pieces are glued together (after gluing,
$p$ and $q$ disappear).  Corresponding to this decomposition is a decomposition
of $L$ into three pieces, the left piece $P$, the right piece $P'$ and the
center piece, which has two elements $x$ and $y$ (the two vertices of
the Y-tree $\Xi$ which connect to the stalk of the trinode in the center of the
above diagram).  We regard $X$, $Y$ and $Z$ as reduced degree one weightings on
the left caterpillar tree, where $X$ and $Y$ take value 1 on $p$ while $Z$
takes value 0 there.  Similarly, we regard $X'$, $Y'$ and $Z'$ as reduced
weightings on the right caterpillar tree, where $X'$ and $Y'$ take value 1 on
$q$ while $Z'$ takes value 0 there.

Now, we can regard $X$, $Y$ and $Z$ as non-reduced degree one weightings on the
Y-tree with vertex set $P \cup \{p^+, p^-\}$.  Similarly, we can regard $X'$,
$Y'$ and $Z'$ as non-reduced degree one weightings on the Y-tree with vertex
set $P' \cup \{q^+, q^-\}$.  We lift these six weightings to matchings, which
we denote by $\wt{X}$, etc.\  In the graph $\wt{X}$ there are two elements of
$P$, say $a$ and $b$, which connect to $p^+$ and $p^-$.  We let $\wt{X}_0$ be
the restriction of $\wt{X}$ to $P \setminus \{ a, b\}$.  We let $c$ and $d$ be
the two vertices in $\wt{Y}$ connecting to $p^+$ and $p^-$ and let $\wt{Y}_0$
be the restriction of $\wt{Y}$ to $P \setminus \{c, d\}$.  In $\wt{Z}$ the
vertices $p^+$ and $p^-$ are connected to each other.  We similarly define
primed versions.

We now define a small generalized Segre cubic datum $\Sigma$.  The partition is
given by $U_B=P'$, $U_R=\{x, y\}$ and $U_G=P$.  The graph $\Gamma$ is given as
follows:
\Ncom{$\text{sm. gSc datum}$}
\begin{itemize}
\item The blue graph is the union of $\wt{X}_0$, $\wt{Z}'$ and the edges
$\uedge{ax}$ and $\uedge{by}$.
\item The green graph is the union of $\wt{Z}$, $\wt{X}_0'$ and the edges
$\uedge{a' x}$ and $\uedge{b' y}$.
\item The red graph is the union of $\wt{Y}_0$, $\wt{Y}_0'$ and the edges
$\uedge{xy}$, $\uedge{cc'}$ and $\uedge{dd'}$.
\end{itemize}
These three matchings are lifts for the three reduced weightings appearing on
the left side of \eqref{eq:tgsc}.  It is clear that this is a valid small
generalized Segre cubic datum.

We must now show that the generalized Segre cubic relation $\Rel(\Sigma)$
associated to the generalized Segre cubic datum defined above lifts the
relation \eqref{eq:tgsc}.  Now, the relation associated to the datum is
\begin{displaymath}
\begin{split}
&
\left( \wt{X}_0 \cdot \uedge{ax} \cdot \uedge{by} \cdot \wt{Z}' \right)
\left( \wt{Z} \cdot \uedge{a'x} \cdot \uedge{b'y} \cdot \wt{X}_0' \right)
\left( \wt{Y} \cdot \uedge{cc'} \cdot \uedge{dd'} \cdot \uedge{xy}
\cdot \wt{Y}' \right) 
\\ 
=&
\left( \wt{X}_0 \cdot \wt{Y}_0 \cdot \uedge{ax} \cdot \uedge{a'x} \cdot
\uedge{by} \cdot \uedge{b'y} \cdot \uedge{cc'} \cdot \uedge{dd'} \cdot
\wt{X}_0' \cdot \wt{Y}_0' \right)
\left( \wt{Z} \cdot \uedge{xy} \cdot \wt{Z}' \right).
\end{split}
\end{displaymath}
Here the parentheses should each be interpreted as single graphs --- for
example, the first parenthetical is the concatenation of $\wt{X}_0$,
$\uedge{ax}$, $\uedge{by}$ and $\wt{Z}'$.  The quadratic term $\Delta$ on the
right side would be the purple subgraph in the graphical notation.  Now, the
above relation belongs to $I_L^{(3)}/Q_L^{(3)}$.  To get an element of
$I_L^{(3)}$, we must rewrite $\Delta$ in terms of degree one elements.  Because
the toric ring $\gr_{\Xi} R_L$ is generated in degree one, we can write
\begin{displaymath}
\Delta=\sum_{i=1}^n \Phi_i \Phi_i'
\end{displaymath}
where each $\Phi_i$ and $\Phi_i'$ has degree one, $\Phi_1 \Phi_1'$ has toric
weight equal to that of $\Delta$ and $\Phi_i \Phi_i'$ has toric weight strictly
less than that of $\Delta$ for $i>1$.  We may take
$\Phi_1=\wt{X}_0 \cdot \uedge{ax} \cdot \uedge{a' y} \cdot
\uedge{bb'} \cdot \wt{X}_0'$ and
$\Phi_1'=\wt{Y}_0 \cdot \uedge{cx} \cdot \uedge{c'y} \cdot
\uedge{dd'} \cdot \wt{Y}_0'$,
since the product of these two graphs is equal to $\Delta$ in the toric ring.  

It now follows that $\Rel(\Sigma)$ is represented by the  relation
\begin{displaymath}
\left( \wt{X}_0 \cdot \uedge{ax} \cdot  \uedge{by} \cdot \wt{Z}' \right)
\left( \wt{Z} \cdot \uedge{a'x} \cdot \uedge{b'y} \cdot \wt{X}_0' \right)
\left( \wt{Y} \cdot \uedge{cc'} \cdot \uedge{dd'}  \cdot \uedge{xy} \cdot
\wt{Y}' \right) 
=
\left( \sum_{i=1}^n \Phi_i \Phi_i' \right) 
\left( \wt{Z}  \cdot \uedge{xy} \cdot \wt{Z}' \right).
\end{displaymath}
The leading term (in terms of the grading) of this relation is 
\begin{displaymath}
\left( \wt{X}_0 \cdot \uedge{ax} \cdot \uedge{by} \cdot \wt{Z}' \right)
\left( \wt{Z} \cdot \uedge{a'x} \cdot \uedge{b'y} \cdot \wt{X}_0' \right)
\left( \wt{Y} \cdot \uedge{cc'} \cdot \uedge{dd'} \cdot \uedge{xy} \cdot
\wt{Y}' \right) \\
=
\left( \Phi_1 \Phi_1' \right) 
\left( \wt{Z} \cdot \uedge{xy} \cdot \wt{Z}' \right).
\end{displaymath}
The two sides are  the same as the two sides of \eqref{eq:tgsc}.  This shows
that the leading term of our generalized Segre relation $\Rel({\Sigma})$ is
equal to the generalized toric Segre relation we started with, which proves
Theorem~\ref{thm:gsc}.

\begin{remark}
\label{rem:gsc}
It follows from the proof that, if we totally order $L$, then $I_L$ is
generated by quadratic relations and those small generalized Segre cubic
relations coming from data for which $U_G<U_R<U_B$ and $|U_R|=2$.  We will
use this stronger form of Theorem~\ref{thm:gsc} in the proof of
Proposition~\ref{prop:retro10b}.
\end{remark}


\section{The structure of $V_L$ and its tensor powers}
\label{s:symvl}

In \S \ref{s:symvl}, we study the partition filtration on $\bigotimes^n V_L$,
$\Sym^n(V_L)$ and $\bwg{n}{V_L}$ for $n \le 3$ and the $\mf{S}_L$-action on the
associated graded pieces.  This provides us with essential structural
properties of the Pl\"ucker relation; for example: the elements $(Y_{\Gamma})^2$
with $\Gamma$ a matching span $\Sym^2(V_L)$ (see the discussion following
Proposition~\ref{prop:pfil2}) and useful generalizations.

\subsection{The partition filtration}

Let $L$ be an even set.  By a \emph{partition} of $L$ we mean a collection of
non-empty disjoint subsets of $L$ whose union is $L$.  We say that a partition
is \emph{into even parts} if each of the subsets is even.  We similarly speak
of partitions of $|L|$ into even parts.  Given a partition $\ms{U}$ of $L$ we
denote by $|\ms{U}|$ the corresponding partition of $|L|$.  We partially order
the set of partitions of $L$ and $|L|$ by refinement.  For example $2+2+2+2$ is
smaller than $4+2+2$, but $4+4$ and $6+2$ are not comparable.
\Ncom{$\text{into even parts}$}

Given a regular colored graph $\Gamma$ on $L$ we obtain a partition
$\ms{U}_{\Gamma}$ of $L$ by taking the vertex sets of the connected
components of $\Gamma$.  The partition $\ms{U}_{\Gamma}$ necessarily has even
parts.  We use this to define a filtration, indexed by the partitions of $|L|$
into even parts, on any $\Z$-module $U$ which is spanned by graphs.  We denote
this filtration, which we call the \emph{partition filtration}, by $F_pU$.
(Warning:  this filtration is indexed by a partially ordered set, not a totally
ordered set.) For instance, if $p$ is a partition of $|L|$ then $F_p
\Sym^k(V_L)$ is the span of the $X_{\Gamma}$ (or $Y_{\Gamma}$) for which
$|\ms{U}_{\Gamma}| \le p$.  We denote the associated graded subquotient by
$\gr_pU$.  To be precise, $\gr_pU$ is the quotient of $F_pU$ by the span of
the $F_{p'}U$ with $p'<p$.  The partition filtration is preserved by the action
of $\mf{S}_L$, so this group naturally acts on $\gr_p U$.
\Ncom{$\text{part.\ filt.}, F_p U, gr_p U$}

\subsection{Degree one spaces}

The $n=1$ case is easy:  the only non-zero piece of the partition filtration
on $V_L$ occurs for $p=2+\cdots+2$ and then $\gr_p=V_L$ is the irreducible
representation of $\mf{S}_L$ corresponding to the partition $n/2+n/2$ (\S
\ref{repthy}). 

\subsection{Degree two spaces}

We now study the partition filtration on $V_L^{\otimes 2}$, $\Sym^2(V_L)$ and
$\bw{V_L}$.

\Rnts{Used in Proposition~\ref{prop:simp-binom} and mentioned in
Proposition~\ref{prop:sr}}

\begin{proposition}
\label{prop:pfil2}
We have, over $\Z[\half]$:
\begin{displaymath}
\gr_p(V_L^{\otimes 2})=\begin{cases}
\Sym^2(V_L) & \textrm{if $p=2+\cdots+2$,} \\
\bw{V_L} & \textrm{if $p=4+2+\cdots+2$,} \\
0 & \textrm{otherwise.}
\end{cases}
\end{displaymath}
\end{proposition}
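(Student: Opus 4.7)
The plan is to establish three facts: (i) $\gr_p(V_L^{\otimes 2}) = 0$ for partitions $p$ other than $2+\cdots+2$ and $4+2+\cdots+2$; (ii) $F_{2+\cdots+2} \cong \Sym^2 V_L$; and (iii) $\gr_{4+2+\cdots+2} \cong \bw V_L$. The main tool is the Pl\"ucker relation applied within connected components of the $2$-colored union $\Gamma_1 \cup \Gamma_2$ of a pure tensor $Y_{\Gamma_1} \otimes Y_{\Gamma_2}$; Pl\"uckering allows modification of cycle structure and hence of the partition type.

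\textbf{Vanishing of $\gr_p$ for other $p$.} Suppose $|\ms{U}_{\Gamma_1 \cup \Gamma_2}| = p$ contains either a cycle of length $\ge 6$ or two components of length $\ge 4$. The basic move: in a $2k$-cycle with $k \ge 3$, labeled $v_1, \ldots, v_{2k}$ cyclically with red edges $\{v_{2i-1}, v_{2i}\}$ and blue edges $\{v_{2i}, v_{2i+1}\}$, applying the (undirected) Pl\"ucker relation to the two red edges $\{v_1, v_2\}$ and $\{v_5, v_6\}$ yields $Y_{\Gamma_1} = -Y_{\Gamma_1'} - Y_{\Gamma_1''}$, where $\Gamma_1''$ creates a $2$-cycle on $\{v_1, v_6\}$ together with a $4$-cycle on $\{v_2, v_3, v_4, v_5\}$ (so $Y_{\Gamma_1''} \otimes Y_{\Gamma_2}$ has strictly smaller partition), while $\Gamma_1'$ retains a $2k$-cycle with permuted vertices. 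Hence $Y_{\Gamma_1} \otimes Y_{\Gamma_2} \equiv -Y_{\Gamma_1'} \otimes Y_{\Gamma_2}$ modulo $\sum_{p' < p} F_{p'}$. Iterating such moves traces out Pl\"ucker-equivalence classes of tensors of partition type $p$, each edge of which carries the sign $-1$; finding a closed sequence of odd length forces $Y_{\Gamma_1} \otimes Y_{\Gamma_2} \equiv -Y_{\Gamma_1} \otimes Y_{\Gamma_2}$ modulo lower partition, and hence vanishing over $\Z[\half]$. The case of two length-$\ge 4$ components is handled similarly, by Pl\"uckering one $4$-cycle into two $2$-cycles, which already strictly reduces the partition.

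\textbf{Identification of the two nonzero pieces.} The module $F_{2+\cdots+2}$ is spanned by the symmetric tensors $Y_\Gamma \otimes Y_\Gamma$, so $F_{2+\cdots+2} \subseteq \Sym^2 V_L \subset V_L^{\otimes 2}$ (using the splitting $V_L^{\otimes 2} = \Sym^2 V_L \oplus \bw V_L$ available over $\Z[\half]$). For the reverse inclusion, polarization gives $Y_\Gamma Y_{\Gamma'} = \tfrac12 \bigl((Y_\Gamma + Y_{\Gamma'})^2 - Y_\Gamma^2 - Y_{\Gamma'}^2\bigr)$; when $\Gamma, \Gamma'$ are Pl\"ucker-adjacent, the undirected Pl\"ucker relation $Y_\Gamma + Y_{\Gamma'} + Y_{\Gamma''} = 0$ shows $(Y_\Gamma + Y_{\Gamma'})^2 = Y_{\Gamma''}^2$ is a single square, placing $Y_\Gamma Y_{\Gamma'}$ in $F_{2+\cdots+2}$. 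Extending to general pairs of matchings via systematic reduction (using the straightening algorithm together with the identity $Y_\Gamma Y_{\Gamma'} = -Y_\Gamma Y_{\Gamma''} - Y_\Gamma Y_{\Gamma'''}$ at each Pl\"ucker triple) gives $\Sym^2 V_L \subseteq F_{2+\cdots+2}$. Combined with the vanishing step, $F_{4+2+\cdots+2} = V_L^{\otimes 2}$, and therefore $\gr_{4+2+\cdots+2} = V_L^{\otimes 2} / \Sym^2 V_L \cong \bw V_L$.

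\textbf{Main obstacle.} The delicate point is the sign-flipping argument in the vanishing step: a single Pl\"ucker move only exhibits $Y_{\Gamma_1} \otimes Y_{\Gamma_2}$ as $-1$ times a permuted tensor modulo lower partition, so outright vanishing requires exhibiting a closed sequence of cycle-preserving Pl\"ucker moves whose net sign is $-1$. This amounts to analyzing enough of the ``Pl\"ucker symmetry group'' of the bipartite $2k$-cycle (and, for the two-component case, of its multi-cycle analogue) to guarantee that the corresponding sign-labeled graph on equivalence classes is not bipartite.
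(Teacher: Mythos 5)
Your overall strategy — Pl\"ucker within connected components, track the sign, use the $\Sym^2 \oplus \bigwedge^2$ splitting over $\Z[\tfrac12]$ — is the same as the paper's. But there are two concrete gaps, and they are the crux of the argument, not peripheral details. First, the step you flag as the ``main obstacle'' (exhibiting a closed Pl\"ucker sequence of odd sign on a $2k$-cycle, $k\geq 3$) really is where the proof lives, and your proposal never closes it. The paper's Lemma~\ref{prop:pfil2-a} resolves it by a specific sequence: transposing two consecutive vertices costs a sign modulo disconnected graphs (one Pl\"ucker on the two outer edges of a length-3 path), flipping a string of four consecutive vertices also costs a sign (one Pl\"ucker on the two outer edges of a length-5 path), and then undoing the flip by six transpositions carries no net sign; the composition yields $Y_\Gamma = -Y_\Gamma$, so $2Y_\Gamma$ is a sum of disconnected graphs. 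Without some such explicit parity computation, the vanishing for $p$ with a part $\geq 6$ is unproven.

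Second, the sentence ``The case of two length-$\geq 4$ components is handled similarly, by Pl\"uckering one $4$-cycle into two $2$-cycles, which already strictly reduces the partition'' is incorrect. Pl\"uckering the two same-colored edges of a $4$-cycle produces \emph{two} terms: one is indeed two $2$-cycles (lower partition), but the other is again a $4$-cycle (the color-swapped one, same partition). So this move does not reduce $\gr_{4,4,2,\dots,2}$ to lower strata — you merely trade your element for $\pm$ another one of the same partition type. The paper needs a genuinely separate argument here (Lemma~\ref{prop:pfil2-c}), which works with $|L|=8$ and the transposition-of-factors involution $\tau$: switching colors on a single $4$-cycle acts by $-1$ on $\gr_{4,4}$ (Lemma~\ref{prop:pfil2-b}), so switching colors on both squares acts by $+1$; but also, carrying one vertex all the way around a connected $8$-cycle by seven transpositions shows $\tau$ acts by $-1$; hence $\gr_{4,4}=0$ after inverting $2$. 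You would need to supply something equivalent. Finally, once the vanishing is in hand, the identification of the two surviving pieces in the paper is cleaner than your polarization argument: the involution $\tau$ fixes $F_{2+\cdots+2}$ pointwise and acts by $-1$ on $\gr_{4+2+\cdots+2}$, which immediately forces $F_{2+\cdots+2}=\Sym^2 V_L$ and $\gr_{4+2+\cdots+2}\cong\bigwedge^2 V_L$; your polarization route can also be made to work but is more roundabout.
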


The space $V_L^{\otimes 2}$ is spanned by regular 2-colored graphs.  Such
graphs are disjoint unions of cycles of even size.
Proposition~\ref{prop:pfil2} says that $V_L^{\otimes 2}$ is spanned by graphs
which are unions of 2-cycles and at most one 4-cycle.  Furthermore,
$\Sym^2(V_L)$ is spanned by graphs which are unions of 2-cycles:  the elements
$(Y_{\Gamma})^2$ with $\Gamma$ a matching span $\Sym^2(V_L)$.

We begin our proof of Proposition~\ref{prop:pfil2} with the following result:

\begin{lemma}
\label{prop:pfil2-a}
The space $V_L^{\otimes 2}$ is spanned over $\Z[\half]$ by graphs which are
unions of 2-cycles and 4-cycles.
\end{lemma}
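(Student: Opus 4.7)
The plan is to proceed by induction on the maximum cycle length $2k$ appearing in the $2$-colored graph $\Gamma = \Delta_1 \cup \Delta_2$ associated to a pair of matchings. If $2k \le 4$, there is nothing to show. Otherwise fix a cycle $C$ of length $2k \ge 6$ in $\Gamma$, and pick two consecutive $\Delta_1$-edges $v_1 v_2$ and $v_3 v_4$ of $C$ separated by the $\Delta_2$-edge $v_2 v_3$. Applying the Pl\"ucker relation in the first tensor factor gives
\begin{displaymath}
Y_{\Delta_1} \otimes Y_{\Delta_2} = -\,Y_{\Delta_1'} \otimes Y_{\Delta_2} \,-\, Y_{\Delta_1''} \otimes Y_{\Delta_2},
\end{displaymath}
where $\Delta_1' = (\Delta_1 \setminus \{v_1 v_2, v_3 v_4\}) \cup \{v_1 v_3, v_2 v_4\}$ and $\Delta_1'' = (\Delta_1 \setminus \{v_1 v_2, v_3 v_4\}) \cup \{v_1 v_4, v_2 v_3\}$. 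In $\Delta_1'' \cup \Delta_2$ the edge $v_2 v_3$ now appears in both matchings, producing a $2$-cycle and shrinking $C$ to a $(2k-2)$-cycle; this term has strictly smaller maximum cycle length and is handled by the inductive hypothesis.

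The difficulty is that the $\Delta_1'$ term replaces $C$ by a different $2k$-cycle (with $v_2,v_3$ transposed in the cyclic sequence) and leaves the maximum cycle length unchanged, so a single Pl\"ucker operation is not monotone. I would iterate: each further Pl\"ucker applied to two consecutive same-color edges of the rearranged cycle produces a ``good'' term (strictly smaller maximum cycle length, handled by induction) and a term that further permutes the cyclic order. Since the cyclic orderings of a fixed $2k$-cycle form a finite set, iterating must eventually return to a previously encountered configuration. Telescoping the chain of identities and carefully tracking the $-1$ that appears at each Pl\"ucker step produces a relation of the form $c \cdot (Y_{\Delta_1} \otimes Y_{\Delta_2}) \equiv \omega \pmod{\text{graphs with strictly smaller maximum cycle length}}$, with $c$ a nonzero small integer and $\omega$ in the inductively handled span; over $\Z[\tfrac{1}{2}]$, this solves for $Y_{\Delta_1} \otimes Y_{\Delta_2}$.

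The hardest step is making this telescoping precise, in particular in the base case $k = 3$ of a $6$-cycle, where the sign accumulated around a closed chain of Pl\"ucker rearrangements must yield an integer $c$ invertible in $\Z[\tfrac{1}{2}]$. For cycles of length $2k \ge 8$ one can bypass the iteration by applying Pl\"ucker to two well-chosen \emph{non-adjacent} same-color edges of $C$: a suitable such choice cleanly splits $C$ into two strictly shorter cycles in one of the Pl\"ucker terms (for instance, an $8$-cycle produces a $4+4$ decomposition on one side and a rearranged $8$-cycle on the other), allowing a direct reduction to the $6$-cycle base case. In this way the only genuinely delicate case is the $6$-cycle, where invertibility of $2$ is used.
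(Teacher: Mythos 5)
Your overall strategy---induct on the longest cycle, work modulo graphs with a shorter longest cycle, and apply Pl\"ucker to two same-color edges of a long cycle so that one of the two new terms splits the cycle---is exactly the paper's strategy.  You also correctly identify that the Pl\"ucker on two \emph{consecutive} same-color edges acts as an adjacent transposition of the cyclic vertex order at the cost of a sign, modulo the span of smaller-cycle graphs.  But the step you flag as ``the hardest step'' is in fact a genuine gap, and neither of your two suggested fixes closes it.

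Plan A (iterate adjacent-edge Pl\"uckers until a configuration repeats) cannot work for a $2k$-cycle with $k\ge 4$.  Each adjacent-edge Pl\"ucker replaces the cyclic order by an adjacent transposition of it and contributes one factor of $-1$; so if a sequence of such moves returns the edge-set (with its $2$-coloring) to itself, the accumulated sign is $(-1)^{\ell}$ where $\ell$ is the number of adjacent transpositions, and $(-1)^{\ell}=\sgn(\sigma)$ for the permutation $\sigma$ of positions you have performed.  Thus you learn $Y_\Gamma \equiv \sgn(\sigma)\,Y_\Gamma$, which is useful only if the color-preserving symmetry group of the cycle contains an odd permutation.  For a $6$-cycle it does (e.g.\ the reflection $(1\,2)(3\,6)(4\,5)$, an odd permutation, preserves both color classes), so your plan can in principle work there.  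But for an $8$-cycle (and more generally for $k\ge 4$) every color-preserving rotation or reflection is an even permutation, so every closed chain of adjacent-edge Pl\"uckers accumulates the sign $+1$: the telescoping always produces $0\equiv\omega$, never $c\,Y_\Gamma\equiv\omega$ with $c$ invertible.  The integer $c$ you hope for is exactly $0$.

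Plan B (non-adjacent Pl\"ucker) is the paper's key move, but you misread its output.  Applying Pl\"ucker to two same-color edges $\uedge{ab}$ and $\uedge{ef}$ with two same-color edges strictly between them gives \emph{one} term that splits off a $4$-cycle; the other new term is again a cycle of the \emph{same} length $2k$, with the four interior vertices $b,c,d,e$ reversed.  For an $8$-cycle this is the $4{+}4$ split you mention on one side, but the other side is a rearranged $8$-cycle, not anything of length $6$; it does not ``directly reduce to the $6$-cycle base case.''  What the paper does with this move is the one idea your write-up is missing: the reversal of four consecutive vertices is also realized by \emph{six} adjacent transpositions, which by your own Plan-A computation costs $(+1)$ modulo smaller cycles, whereas the single six-vertex Pl\"ucker costs $(-1)$.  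Comparing the two gives $Y_\Gamma\equiv -Y_\Gamma$, hence $2Y_\Gamma\equiv 0$, modulo graphs with a shorter longest cycle, uniformly for all $2k\ge 6$.  Equivalently, in your telescoping language: the loop must use an odd number of six-vertex Pl\"uckers together with adjacent-edge Pl\"uckers, because the six-vertex move realizes an even permutation ($(b\,e)(c\,d)$) at the cost of a single $-1$, breaking the parity constraint that dooms Plan A.  Without this comparison the proof does not close.
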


\begin{proof}
It suffices, by induction, to show that every regular 2-colored graph on at
least six vertices can be written as a sum of graphs which are not connected.
In other words, letting $U_L$ be the subspace of $V_L^{\otimes 2}$ spanned by
disconnected graphs, it suffices to show that $V_L^{\otimes 2}/U_L$ is zero.
Thus let $\Gamma$ be a graph on $L$, which we can assume to be connected
(otherwise it already belongs to $U_L$).  We must show $Y_{\Gamma}=0$ in
$V_L^{\otimes 2}/U_L$.

Pick four consecutive vertices $a$, $b$, $c$ and $d$ of $\Gamma$.  The
Pl\"ucker relation on the edges $\uedge{ab}$ and $\uedge{cd}$ is
\begin{displaymath}
0 \qquad = \qquad
\begin{xy}
(-10, -4.33)*{}="A"; (-5, 4.33)*{}="B"; (5, 4.33)*{}="C";
(10, -4.33)*{}="D";
{\ar@[blue]@{..}@[|(2)] "A"; "B"};
{\ar@[black]@{-} "B"; "C"};
{\ar@[blue]@{..}@[|(2)] "C"; "D"};
(-10, -6.33)*{\ss a}; (-5, 6.33)*{\ss b}; (5, 6.33)*{\ss c};
(10, -6.33)*{\ss d};
\end{xy}
\qquad + \qquad
\begin{xy}
(-10, -4.33)*{}="A"; (-5, 4.33)*{}="B"; (5, 4.33)*{}="C";
(10, -4.33)*{}="D";
{\ar@[blue]@{..}@[|(2)] "A"; "B"};
{\ar@[black]@{-} "B"; "C"};
{\ar@[blue]@{..}@[|(2)] "C"; "D"};
(-10, -6.33)*{\ss a}; (-5, 6.33)*{\ss c}; (5, 6.33)*{\ss b};
(10, -6.33)*{\ss d};
\end{xy}
\qquad + \qquad
\begin{xy}
(-10, -4.33)*{}="A"; (-5, 4.33)*{}="B"; (5, 4.33)*{}="C";
(10, -4.33)*{}="D";
{\ar@[blue]@{..}@[|(2)] "A"; "D"};
{\ar@<-.2ex>@[black]@{-} "B"; "C"};
{\ar@<.2ex>@[blue]@{..}@[|(2)] "B"; "C"};
(-10, -6.33)*{\ss a}; (-5, 6.33)*{\ss b}; (5, 6.33)*{\ss c};
(10, -6.33)*{\ss d};
\end{xy}
\end{displaymath}
The rightmost term belongs to $U_L$.  We therefore have $Y_{\Gamma}
=-Y_{\Gamma'}$ in $V_L^{\otimes 2}/U_L$ where $\Gamma'$ is obtained by
transposing two consecutive vertices in $\Gamma$.  In words, we may transpose
consecutive vertices at the cost of a sign, modulo $U_L$.

Now consider six consecutive vertices $a$, $b$, $c$, $d$, $e$ and $f$ in
$\Gamma$.  Applying the Pl\"ucker relation on the edges $\uedge{a b}$ and
$\uedge{e f}$ we find
\begin{displaymath}
0 \qquad = \qquad
\begin{xy}
(-4, -6.928)*{}="A"; (-8, 0)*{}="B"; (-4, 6.928)*{}="C";
(4, 6.928)*{}="D"; (8, 0)*{}="E"; (4, -6.928)*{}="F";
{\ar@[blue]@{..}@[|(2)] "A"; "B"};
{\ar@[black]@{-} "B"; "C"};
{\ar@[blue]@{..}@[|(2)] "C"; "D"};
{\ar@[black]@{-} "D"; "E"};
{\ar@[blue]@{..}@[|(2)] "E"; "F"};
(-4, -8.928)*{\ss a}; (-10, 0)*{\ss b}; (-4, 8.928)*{\ss c};
(4, 8.928)*{\ss d}; (10, 0)*{\ss e}; (4, -8.928)*{\ss f};
\end{xy}
\qquad + \qquad
\begin{xy}
(-4, -6.928)*{}="A"; (-8, 0)*{}="B"; (-4, 6.928)*{}="C";
(4, 6.928)*{}="D"; (8, 0)*{}="E"; (4, -6.928)*{}="F";
{\ar@[blue]@{..}@[|(2)] "A"; "B"};
{\ar@[black]@{-} "B"; "C"};
{\ar@[blue]@{..}@[|(2)] "C"; "D"};
{\ar@[black]@{-} "D"; "E"};
{\ar@[blue]@{..}@[|(2)] "E"; "F"};
(-4, -8.928)*{\ss a}; (-10, 0)*{\ss e}; (-4, 8.928)*{\ss d};
(4, 8.928)*{\ss c}; (10, 0)*{\ss b}; (4, -8.928)*{\ss f};
\end{xy}
\qquad + \qquad
\begin{xy}
(-4, -6.928)*{}="A"; (-8, 0)*{}="B"; (-4, 6.928)*{}="C";
(4, 6.928)*{}="D"; (8, 0)*{}="E"; (4, -6.928)*{}="F";
{\ar@[blue]@{..}@[|(2)] "A"; "F"};
{\ar@[black]@{-} "B"; "C"};
{\ar@[blue]@{..}@[|(2)] "C"; "D"};
{\ar@[black]@{-} "D"; "E"};
{\ar@[blue]@{..}@[|(2)] "E"; "B"};
(-4, -8.928)*{\ss a}; (-10, 0)*{\ss b}; (-4, 8.928)*{\ss c};
(4, 8.928)*{\ss d}; (10, 0)*{\ss e}; (4, -8.928)*{\ss f};
\end{xy}
\end{displaymath}
Again, the rightmost term belongs to $U_L$ and thus can be discarded.  This
shows that we may pick four consecutive vertices and reverse their order at the
cost of picking up a sign.  However, we may now move the affected vertices back
to the original position using six transpositions, \emph{not} introducing a new
sign.  We thus find that $Y_{\Gamma}=-Y_{\Gamma}$ in $V_L^{\otimes 2}/U_L$,
which shows that $2 Y_{\Gamma}$ belongs to $U_L$.
\end{proof}

In the proof of Lemma~\ref{prop:pfil2-a} we only ever use six consecutive
vertices.  By keeping track of the graphs we discarded during the course of the
proof we obtain an identity, shown in Figure~\ref{f:id6}, that we will use
on a few later occasions.

\begin{figure}[!ht]
\begin{displaymath}
\begin{split}
(-2) \quad
\begin{xy}
(-4, -6.928)*{}="A"; (-8, 0)*{}="B"; (-4, 6.928)*{}="C";
(4, 6.928)*{}="D"; (8, 0)*{}="E"; (4, -6.928)*{}="F";
{\ar@[black]@{-} "A"; "B"};
{\ar@[blue]@{..}@[|(2)] "B"; "C"};
{\ar@[black]@{-} "C"; "D"};
{\ar@[blue]@{..}@[|(2)] "D"; "E"};
{\ar@[black]@{-} "E"; "F"};
\end{xy}
\quad &= \quad
\begin{xy}
(-4, -6.928)*{}="A"; (-8, 0)*{}="B"; (-4, 6.928)*{}="C";
(4, 6.928)*{}="D"; (8, 0)*{}="E"; (4, -6.928)*{}="F";
{\ar@[black]@{-} "A"; "F"};
{\ar@[blue]@{..}@[|(2)] "B"; "C"};
{\ar@[black]@{-} "C"; "D"};
{\ar@[blue]@{..}@[|(2)] "D"; "E"};
{\ar@[black]@{-} "E"; "B"};
\end{xy}
\quad + \quad
\begin{xy}
(-4, -6.928)*{}="A"; (-8, 0)*{}="B"; (-4, 6.928)*{}="C";
(4, 6.928)*{}="D"; (8, 0)*{}="E"; (4, -6.928)*{}="F";
{\ar@[black]@{-} "A"; "D"};
{\ar@[blue]@{..}@[|(2)] "D"; "C"};
{\ar@[black]@{-} "C"; "F"};
{\ar@<.2ex>@[blue]@{..}@[|(2)] "B"; "E"};
{\ar@<-.2ex>@[black]@{-} "B"; "E"};
\end{xy}
\quad + \quad
\begin{xy}
(-4, -6.928)*{}="A"; (-8, 0)*{}="B"; (-4, 6.928)*{}="C";
(4, 6.928)*{}="D"; (8, 0)*{}="E"; (4, -6.928)*{}="F";
{\ar@[black]@{-} "A"; "B"};
{\ar@[blue]@{..}@[|(2)] "B"; "E"};
{\ar@[black]@{-} "E"; "F"};
{\ar@<.2ex>@[blue]@{..}@[|(2)] "C"; "D"};
{\ar@<-.2ex>@[black]@{-} "C"; "D"};
\end{xy}
\\ & \\ &+ \quad
\begin{xy}
(-4, -6.928)*{}="A"; (-8, 0)*{}="B"; (-4, 6.928)*{}="C";
(4, 6.928)*{}="D"; (8, 0)*{}="E"; (4, -6.928)*{}="F";
{\ar@[black]@{-} "A"; "D"};
{\ar@[blue]@{..}@[|(2)] "D"; "E"};
{\ar@[black]@{-} "E"; "F"};
{\ar@<.2ex>@[blue]@{..}@[|(2)] "B"; "C"};
{\ar@<-.2ex>@[black]@{-} "B"; "C"};
\end{xy}
\quad + \quad
\begin{xy}
(-4, -6.928)*{}="A"; (-8, 0)*{}="B"; (-4, 6.928)*{}="C";
(4, 6.928)*{}="D"; (8, 0)*{}="E"; (4, -6.928)*{}="F";
{\ar@[black]@{-} "A"; "B"};
{\ar@[blue]@{..}@[|(2)] "B"; "C"};
{\ar@[black]@{-} "C"; "F"};
{\ar@<.2ex>@[blue]@{..}@[|(2)] "D"; "E"};
{\ar@<-.2ex>@[black]@{-} "D"; "E"};
\end{xy}
\quad + \quad
\begin{xy}
(-4, -6.928)*{}="A"; (-8, 0)*{}="B"; (-4, 6.928)*{}="C";
(4, 6.928)*{}="D"; (8, 0)*{}="E"; (4, -6.928)*{}="F";
{\ar@[black]@{-} "A"; "F"};
{\ar@<.2ex>@[blue]@{..}@[|(2)] "B"; "C"};
{\ar@<-.2ex>@[black]@{-} "B"; "C"};
{\ar@<.2ex>@[blue]@{..}@[|(2)] "D"; "E"};
{\ar@<-.2ex>@[black]@{-} "D"; "E"};
\end{xy}
\\ & \\ &- \quad
\begin{xy}
(-4, -6.928)*{}="A"; (-8, 0)*{}="B"; (-4, 6.928)*{}="C";
(4, 6.928)*{}="D"; (8, 0)*{}="E"; (4, -6.928)*{}="F";
{\ar@[black]@{-} "A"; "C"};
{\ar@[blue]@{..}@[|(2)] "C"; "E"};
{\ar@[black]@{-} "E"; "F"};
{\ar@<.2ex>@[blue]@{..}@[|(2)] "B"; "D"};
{\ar@<-.2ex>@[black]@{-} "B"; "D"};
\end{xy}
\quad - \quad
\begin{xy}
(-4, -6.928)*{}="A"; (-8, 0)*{}="B"; (-4, 6.928)*{}="C";
(4, 6.928)*{}="D"; (8, 0)*{}="E"; (4, -6.928)*{}="F";
{\ar@[black]@{-} "A"; "B"};
{\ar@[blue]@{..}@[|(2)] "B"; "D"};
{\ar@[black]@{-} "D"; "F"};
{\ar@<.2ex>@[blue]@{..}@[|(2)] "C"; "E"};
{\ar@<-.2ex>@[black]@{-} "C"; "E"};
\end{xy}
\quad - \quad
\begin{xy}
(-4, -6.928)*{}="A"; (-8, 0)*{}="B"; (-4, 6.928)*{}="C";
(4, 6.928)*{}="D"; (8, 0)*{}="E"; (4, -6.928)*{}="F";
{\ar@[black]@{-} "A"; "F"};
{\ar@<.2ex>@[blue]@{..}@[|(2)] "B"; "D"};
{\ar@<-.2ex>@[black]@{-} "B"; "D"};
{\ar@<.2ex>@[blue]@{..}@[|(2)] "C"; "E"};
{\ar@<-.2ex>@[black]@{-} "C"; "E"};
\end{xy}
\end{split}
\end{displaymath}
\caption{A graphical identity on 6 points.  More accurately, this should be
regarded as a family of identities between various tensors $Y_{\Gamma}
\otimes Y_{\Delta}$ in $V_L^{\otimes 2}$ for any $L$; we have only drawn the
edges on each side which are different.
\label{f:id6}}
\end{figure}
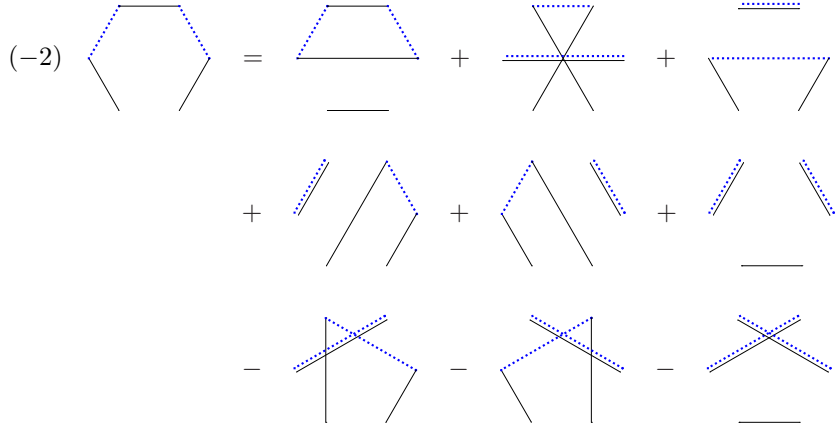

Lemma~\ref{prop:pfil2-a} states that $V_L^{\otimes 2}$ is spanned by
graphs with only 2- and 4-cycles.  To prove Proposition~\ref{prop:pfil2}, we
must therefore show that two 4-cycles can converted into two 2-cycles and one
4-cycle.  This is a question about graphs on eight points; in fact, it suffices
to show $\gr_{4, 4}(V_L^{\otimes 2})=0$ when $L$ has cardinality eight.  We
prove this after the following lemma.

\begin{lemma}
\label{prop:pfil2-b}
Let $L$ be an even set and let $\Gamma$ be a regular 2-colored graph on $L$.
Put $p=|\ms{U}_{\Gamma}|$.  Assume that $\Gamma$ has a 4-cycle and let
$\Gamma'$ be the graph obtained by switching the colors of $\Gamma$ on this
4-cycle.  Then $Y_{\Gamma}=-Y_{\Gamma'}$ holds in $\gr_p(V_L^{\otimes 2})$.
\end{lemma}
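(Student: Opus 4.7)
The plan is to apply the undirected Plücker relation three times to the 4-cycle, exploiting the fact that any 4 vertices carry exactly three perfect matchings $M_1, M_2, M_3$. After relabeling, assume the 4-cycle of $\Gamma$ has blue matching $M_1$ and black matching $M_3$, so that $\Gamma'$ has blue $M_3$ and black $M_1$. For a pair $(P,Q)$ of perfect matchings on the 4-cycle vertex set $\{a,b,c,d\}$, write $\Gamma(P,Q)$ for the 2-colored graph with blue matching $P$ and black matching $Q$ on $\{a,b,c,d\}$ and agreeing with $\Gamma$ elsewhere, so $\Gamma = \Gamma(M_1,M_3)$ and $\Gamma' = \Gamma(M_3,M_1)$.

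The key observation is that if $P = Q$ then the two colored matchings on $\{a,b,c,d\}$ coincide as sets of edges, giving two doubled-edge components each of size $2$. Hence $|\ms{U}_{\Gamma(P,P)}|$ is a strict refinement of $p$ (the size-$4$ part is split into two size-$2$ parts), so $Y_{\Gamma(P,P)}$ lies in $F_{p'}(V_L^{\otimes 2})$ for some $p' < p$ and therefore vanishes in $\gr_p(V_L^{\otimes 2})$. On the other hand, every $\Gamma(P,Q)$ with $P \neq Q$ retains a 4-cycle on $\{a,b,c,d\}$ and hence has partition exactly $p$.

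Now perform three successive Plücker moves, each on a same-colored pair of edges inside the 4-cycle. Plückering the blue $M_1$ in $\Gamma(M_1,M_3)$ yields
\[
Y_{\Gamma(M_1,M_3)} + Y_{\Gamma(M_2,M_3)} + Y_{\Gamma(M_3,M_3)} = 0,
\]
and the last term dies in $\gr_p$, giving $Y_{\Gamma(M_1,M_3)} = -Y_{\Gamma(M_2,M_3)}$. Plückering the black $M_3$ in $\Gamma(M_2,M_3)$ gives $Y_{\Gamma(M_2,M_3)} = -Y_{\Gamma(M_2,M_1)}$ (the $\Gamma(M_2,M_2)$ term dies). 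Finally, Plückering the blue $M_2$ in $\Gamma(M_2,M_1)$ gives $Y_{\Gamma(M_2,M_1)} = -Y_{\Gamma(M_3,M_1)}$ (the $\Gamma(M_1,M_1)$ term dies). Chaining the three sign flips yields $Y_\Gamma = -Y_{\Gamma'}$ in $\gr_p(V_L^{\otimes 2})$.

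The only thing to confirm carefully is that each ``doubled'' Plücker term strictly refines the partition, but this is immediate: the $4$-cycle component of $\Gamma$ is replaced by two disjoint $2$-cycle components when the two colors agree on the four vertices. No division by $2$ is needed, so the argument goes through over $\Z$; the main (and essentially only) obstacle is the bookkeeping of which Plücker move to apply at which step, which is resolved by the three-step cycle $(M_1,M_3) \to (M_2,M_3) \to (M_2,M_1) \to (M_3,M_1)$ through the blue-black configuration space.
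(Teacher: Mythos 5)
Your proof is correct and is, at bottom, the same mechanism as the paper's: both rely on colored Pl\"ucker moves inside the 4-cycle together with the observation that the ``diagonal'' terms $\Gamma(P,P)$ split the 4-cycle into two 2-cycles and hence refine the partition. The paper packages this as a single step---squaring the 4-point Pl\"ucker identity to produce one five-term relation in $V_L^{\otimes 2}$ whose only surviving terms in $\gr_p$ are $\Gamma$ and $\Gamma'$---whereas you chain three single Pl\"ucker moves, each contributing a sign flip; adding your three intermediate relations reproduces the paper's squared identity (up to relabeling $M_1, M_2, M_3$), so the two derivations are equivalent.
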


\begin{proof}
Let $\Gamma$ be given and say that the vertices in the 4-cycle are labeled
$\{1, 2, 3, 4\}$.  By squaring the identity
\begin{displaymath}
(-1) \quad
\begin{xy}
(0, -4)*{}="A"; (0, 4)*{}="B"; (8, -4)*{}="C"; (8, 4)*{}="D";
(-2, -4)*{\ss 1}; (-2, 4)*{\ss 2}; (10, 4)*{\ss 3}; (10, -4)*{\ss 4};
{\ar@{-} "A"; "D"}; {\ar@{-} "B"; "C"};
\end{xy}
\qquad = \qquad
\begin{xy}
(0, -4)*{}="A"; (0, 4)*{}="B"; (8, -4)*{}="C"; (8, 4)*{}="D";
{\ar@{-} "A"; "C"}; {\ar@{-} "B"; "D"};\end{xy}
\qquad + \qquad
\begin{xy}
(0, -4)*{}="A"; (0, 4)*{}="B"; (8, -4)*{}="C"; (8, 4)*{}="D";
{\ar@{-} "A"; "B"}; {\ar@{-} "C"; "D"};
\end{xy}
\end{displaymath}
we obtain
\begin{displaymath}
\begin{xy}
(0, -4)*{}="A"; (0, 4)*{}="B"; (8, -4)*{}="C"; (8, 4)*{}="D";
{\ar@<.2ex>@[blue]@{..}@[|(2)] "A"; "D"};
{\ar@<-.2ex>@[black]@{-} "A"; "D"};
{\ar@<.2ex>@[blue]@{..}@[|(2)] "B"; "C"};
{\ar@<-.2ex>@[black]@{-} "B"; "C"};
\end{xy}
\qquad = \qquad
\begin{xy}
(0, -4)*{}="A"; (0, 4)*{}="B"; (8, -4)*{}="C"; (8, 4)*{}="D";
{\ar@<.2ex>@[blue]@{..}@[|(2)] "A"; "C"};
{\ar@<-.2ex>@[black]@{-} "A"; "C"};
{\ar@<.2ex>@[blue]@{..}@[|(2)] "B"; "D"};
{\ar@<-.2ex>@[black]@{-} "B"; "D"};
\end{xy}
\qquad + \qquad
\begin{xy}
(0, -4)*{}="A"; (0, 4)*{}="B"; (8, -4)*{}="C"; (8, 4)*{}="D";
{\ar@<.2ex>@[blue]@{..}@[|(2)] "A"; "C"};
{\ar@<-.2ex>@[black]@{-} "A"; "B"};
{\ar@<.2ex>@[blue]@{..}@[|(2)] "B"; "D"};
{\ar@<-.2ex>@[black]@{-} "C"; "D"};
\end{xy}
\qquad + \qquad
\begin{xy}
(0, -4)*{}="A"; (0, 4)*{}="B"; (8, -4)*{}="C"; (8, 4)*{}="D";
{\ar@<.2ex>@[blue]@{..}@[|(2)] "A"; "B"};
{\ar@<-.2ex>@[black]@{-} "A"; "C"};
{\ar@<.2ex>@[blue]@{..}@[|(2)] "C"; "D"};
{\ar@<-.2ex>@[black]@{-} "B"; "D"};
\end{xy}
\qquad + \qquad
\begin{xy}
(0, -4)*{}="A"; (0, 4)*{}="B"; (8, -4)*{}="C"; (8, 4)*{}="D";
{\ar@<.2ex>@[blue]@{..}@[|(2)] "A"; "B"};
{\ar@<-.2ex>@[black]@{-} "A"; "B"};
{\ar@<.2ex>@[blue]@{..}@[|(2)] "C"; "D"};
{\ar@<-.2ex>@[black]@{-} "C"; "D"};
\end{xy}
\end{displaymath}
\Rnts{identity used in pf of L \ref{lem:retro-b}.}
The middle two terms on the right side of this identity are $\Gamma$ and
$\Gamma'$.  The other terms have a more refined partition and so map to 0 in
$\gr_p(V_L^{\otimes 2})$.  This proves the lemma.
\end{proof}

\begin{lemma}
\label{prop:pfil2-c}
Let $L$ be a set of cardinality eight.  Then $\gr_{4, 4}(V_L^{\otimes 2})=0$
over $\Z[\half]$.
\end{lemma}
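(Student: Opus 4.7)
The plan is to rewrite $Y_\Gamma$ (for $\Gamma$ of partition exactly $4+4$) as a combination of $8$-cycle graphs via a single Pl\"ucker move, then iteratively apply the technique from the proof of Lemma~\ref{prop:pfil2-a} to each $8$-cycle to conclude $Y_\Gamma\in F_{<4+4}$. By Lemma~\ref{prop:pfil2-a}, every element of $V_L^{\otimes 2}$ is a $\Z[\half]$-combination of $Y_\Delta$ for $\Delta$ a disjoint union of $2$-cycles and $4$-cycles; for $|L|=8$ every such partition refines $4+4$, so $V_L^{\otimes 2}=F_{4+4}$ and $\gr_{4+4}(V_L^{\otimes 2})=V_L^{\otimes 2}/F_{<4+4}$ with $F_{<4+4}=F_{4+2+2}+F_{2+2+2+2}$. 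Since $\mathfrak{S}_L$ acts transitively on regular $2$-colored graphs of partition exactly $4+4$, and preserves the partition filtration, it suffices to exhibit a single such $\Gamma$ with $Y_\Gamma\in F_{<4+4}$.

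Fix $\Gamma$ with $4$-cycles $C_1$ on $\{1,2,3,4\}$ and $C_2$ on $\{5,6,7,8\}$, and apply the Pl\"ucker relation to one blue edge of $C_1$ and one blue edge of $C_2$. Each of the two resulting graphs $\Gamma_1,\Gamma_2$ is a regular $2$-colored graph forming an $8$-cycle on $L$ (a direct trace of the edges), so $Y_\Gamma = Y_{\Gamma_1}\pm Y_{\Gamma_2}$. Apply the argument of Lemma~\ref{prop:pfil2-a} to each $\Gamma_i$: that argument derives $2Y_{\Gamma_i}\in U_L$ from iterated Pl\"ucker moves on $4$ consecutive vertices of the cycle, and at each step the ``doubled edge'' term creates a $2$-component between two adjacent vertices while leaving the remaining six in an intact $6$-cycle. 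Hence $2Y_{\Gamma_i}$ is a $\Z$-linear combination of graphs of partition $6+2$. Now apply the same argument a second time, this time using the $6$-cycle component of each such graph $\Xi$ as the connected subgraph to which Pl\"ucker is applied; the Pl\"ucker relations leave the $2$-component untouched, and break the $6$-cycle into partition $4+2$ or $2+2+2$ on its six vertices, giving overall partition $4+2+2$ or $2+2+2+2$ on $L$, both refining $4+4$. The two iterations together give $4Y_{\Gamma_i}\in F_{<4+4}$, hence $Y_{\Gamma_i}\in F_{<4+4}$ over $\Z[\half]$, and so $Y_\Gamma\in F_{<4+4}$ as required.

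The main point to verify carefully is the claim that each doubled-edge term in the Pl\"ucker argument of Lemma~\ref{prop:pfil2-a}, when applied to an alternating cycle (of length $8$, and then inside the $6+2$ graphs to a cycle of length $6$), isolates exactly a $2$-component and leaves a single intact shorter cycle. This is plausible because the doubled edge produced by the proof connects two vertices that were already adjacent in the original cycle (the $b$ and $c$ among the four consecutive vertices $a,b,c,d$), but it must be checked against the specific Pl\"ucker configurations used there, and an analogous check is needed for the six-consecutive-vertex Pl\"ucker step in that proof.
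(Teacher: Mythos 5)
Your overall strategy is reasonable: exhibit a single $4+4$ graph $\Gamma$ with $Y_\Gamma\in F_{4+2+2}$, then use $\mf{S}_L$-transitivity to kill $\gr_{4,4}$. And the first step, Pl\"uckering one edge of each $4$-cycle to write $Y_\Gamma$ as a $\pm$-combination of two alternating $8$-cycles $Y_{\Gamma_1},Y_{\Gamma_2}$, is correct. But the claim you rightly flagged as needing verification in fact fails.

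You assert that the argument of Lemma~\ref{prop:pfil2-a} applied to an $8$-cycle $\Gamma_i$ expresses $2Y_{\Gamma_i}$ as a $\Z$-linear combination of graphs of partition $6+2$, since ``at each step the doubled-edge term creates a $2$-component.'' That description matches only the \emph{four}-consecutive-vertex Pl\"ucker step of that proof. The proof also uses a \emph{six}-consecutive-vertex Pl\"ucker (on $\uedge{ab}$ and $\uedge{ef}$ for consecutive $a,b,c,d,e,f$), and the discarded graph there has the new edges $\uedge{af}$ and $\uedge{be}$: it contains a $4$-cycle on $\{b,c,d,e\}$ and an $(n-4)$-cycle on the remaining vertices. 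For $n=8$ that discarded graph has partition exactly $4+4$, not $6+2$. This is visible concretely in the paper's Figure~\ref{f:id8}: the final term on the right-hand side (with coefficient $-1$) is a union of two $4$-cycles. So ``$2Y_{\Gamma_i}\in U_L$'' does not place $2Y_{\Gamma_i}$ in $F_{4+2+2}$ — the expansion reintroduces a $4+4$ graph, which is precisely the class you are trying to show vanishes. Without tracking these reappearing $4+4$ terms and showing they cancel (a nontrivial bookkeeping task that your proposal does not attempt), the induction does not close and you cannot conclude $4Y_\Gamma\in F_{4+2+2}$.

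The paper avoids this difficulty with a different idea rather than a direct Pl\"ucker reduction. It uses the involution $\tau$ on $V_L^{\otimes 2}$ that transposes tensor factors (swaps the two colors). Walking a single vertex around an $8$-cycle using \emph{only} the four-consecutive-vertex move — each of whose discards is a $6+2$ graph, hence zero in $\gr_{4,4}$ — takes seven transpositions and shows that $\tau$ acts by $-1$ on $\gr_{4,4}$. Independently, Lemma~\ref{prop:pfil2-b} shows switching colors on a single $4$-cycle of a $4+4$ graph introduces one sign in $\gr_{4,4}$; doing this to both $4$-cycles therefore shows $\tau$ acts by $(-1)^2=+1$. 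Since $\tau=+1$ and $\tau=-1$ simultaneously and $2$ is invertible, $\gr_{4,4}=0$. That eigenvalue trick is exactly what sidesteps the $4+4$ discard produced by the six-vertex move.
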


\begin{proof}
In this proof we write $F_p$ for $F_p(V_L^{\otimes 2})$, and similarly for
$\gr_p$.  Let $\tau$ be the involution on $V_L^{\otimes 2}$ which transposes
factors.  In terms of colored graphs, $\tau$ switches the colors.  We will show
that $\tau$ acts by $+1$ and by $-1$ on $\gr_{4, 4}$, which will establish the
lemma.

We first show that $\tau$ acts by $-1$ on $\gr_{4, 4}$.  By
Lemma~\ref{prop:pfil2-a} we have $F_{4, 4}=V_L^{\otimes 2}$ and $F_{6, 2}=
F_{4, 2, 2}$.  Thus $\gr_{4, 4}$ can be described as the space of all degree
two multi-matchings on $L$ modulo the disconnected ones.  It therefore suffices
to show that $\tau$ acts by $-1$ on a connected graph.  Consider such a
graph $\Gamma$, say the following one:
\begin{displaymath}
\begin{xy}
(3.06, 7.39)*{}="A"; (7.39, 3.06)*{}="B"; (7.39, -3.06)*{}="C";
(3.06, -7.39)*{}="D"; (-3.06, -7.39)*{}="E"; (-7.39, -3.06)*{}="F";
(-7.39, 3.06)*{}="G"; (-3.06, 7.39)*{}="H";
(3.06, 9.39)*{\ss 2}; (9.39, 3.06)*{\ss 3}; (9.39, -3.06)*{\ss 4};
(3.06, -9.39)*{\ss 5}; (-3.06, -9.39)*{\ss 6}; (-9.39, -03.06)*{\ss 7};
(-9.39, 3.06)*{\ss 8}; (-3.06, 9.39)*{\ss 1};
{\ar@{-}@[black] "A"; "B"}; {\ar@{-}@[black] "C"; "D"};
{\ar@{-}@[black] "E"; "F"}; {\ar@{-}@[black] "G"; "H"};
{\ar@{..}@[blue]@[|(2)] "B"; "C"}; {\ar@{..}@[blue]@[|(2)] "D"; "E"};
{\ar@{..}@[blue]@[|(2)] "F"; "G"}; {\ar@{..}@[blue]@[|(2)] "A"; "H"};
\end{xy}
\end{displaymath}
By the same reasoning as in the proof of Lemma~\ref{prop:pfil2-a} we may switch
consecutive vertices at the cost of a sign, when working modulo disconnected
graphs.  Thus we may pick vertex 1 and move it counterclockwise around the
cycle to the position of vertex 8.  This uses seven transpositions and so
introduces a sign.  The resulting cycle is the same as the original but with
the colors switched.  We thus have $\tau \Gamma=-\Gamma$ modulo disconnected
graphs, which shows that $\tau$ acts by $-1$ on $\gr_{4, 4}$.

We now show that $\tau$ acts by $+1$ on $\gr_{4, 4}$.  To do this, it suffices
to show that $\tau$ fixes the graph
\begin{displaymath}
\begin{xy}
(0, 4)*{}="A"; (8, 4)*{}="B"; (16, 4)*{}="C"; (24, 4)*{}="D";
(0, -4)*{}="E"; (8, -4)*{}="F"; (16, -4)*{}="G"; (24, -4)*{}="H";
{\ar@{-}@[black] "A"; "B"};
{\ar@{-}@[black] "E"; "F"};
{\ar@{-}@[black] "C"; "D"};
{\ar@{-}@[black] "H"; "G"};
{\ar@{..}@[|(2)]@[blue] "A"; "E"};
{\ar@{..}@[|(2)]@[blue] "B"; "F"};
{\ar@{..}@[|(2)]@[blue] "C"; "G"};
{\ar@{..}@[|(2)]@[blue] "D"; "H"};
\end{xy}
\end{displaymath}
modulo $\gr_{4, 2, 2}$.
This follows immediately from Lemma~\ref{prop:pfil2-b}:  switching the color
in one square introduces a sign modulo $\gr_{4, 2, 2}$, so switching the color
in both squares introduces no sign.
\end{proof}

We now complete the proof of Proposition~\ref{prop:pfil2}.

\begin{proof}[Proof of Proposition~\ref{prop:pfil2}]
Combining Lemma~\ref{prop:pfil2-a} with Lemma~\ref{prop:pfil2-c} shows that
$\gr_p(V_L^{\otimes 2})=0$ unless $p=2+\cdots+2$ or $p=4+2+\cdots+2$.  We thus
have a filtration
\begin{displaymath}
0 \subset F_{2+\cdots+2}(V_L^{\otimes 2}) \subset
F_{4+2+\cdots+2}(V_L^{\otimes 2}) =V_L^{\otimes 2}.
\end{displaymath}
Let $\tau$ be the transposition of factors on $V_L^{\otimes 2}$, as in the
proof of Proposition~\ref{prop:pfil2-c}.  It is clear that $\tau$ acts as the
identity on $F_{2+\cdots+2}$ since this space is spanned by graphs which are
unions of 2-cycles.  On the other hand, Lemma~\ref{prop:pfil2-b} shows that
$\tau$ acts by $-1$ on $F_{4+2+\cdots+2}/F_{2+\cdots+2}$.  From this it follows
that $\Sym^2(V_L)=F_{2+\cdots+2}=\gr_{2+\cdots+2}$ and that the quotient map
$V_L^{\otimes 2} \to \gr_{4+2+\cdots+2}$ factors to give an isomorphism
$\bw{V_L} \to \gr_{4+2+\cdots+2}$.
\end{proof}

\subsection{The action of $\mf{S}_L$ on degree two spaces}

\label{action}Having described the spaces $\gr_p(V_L^{\otimes 2})$, we now turn to their
structure as $\mf{S}_L$-modules.  Our main result is the following.  Note that
characteristic $0$ notions and arguments from the representation theory of
$\mf{S}_L$ make sense over $\Z[1/{|L|!}]$ with the obvious changes.

\begin{proposition}
\label{prop:pfil2rep}
In the following table, each $\mf{S}_L$-module is multiplicity free.  The set
of irreducibles it contains corresponds to the given set of partitions.
\begin{center}
\rm
\begin{tabular}{c|c}
$\mf{S}_L$-module & Set of partitions of $|L|$ \\[.5ex]
\hline \\[-2ex]
$\Sym^2(V_L)$ & at most four parts, all even \\[1ex]
$\bw{V_L}$ & exactly four parts, all odd \\[1ex]
$V_L^{\otimes 2}$ & union of previous two sets \\[1ex]
$R^{(2)}_L$ & at most three parts, all even \\[1ex]
$I_L^{(2)}$ & exactly four parts, all even
\end{tabular}
\end{center}
These statements hold over $\Z[1/{|L|!}]$.
\end{proposition}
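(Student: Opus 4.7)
The plan is to realize each of the five $\mf{S}_L$-modules as a controlled quotient of an induced representation of $\mf{S}_L$ and then invoke the multiplicity-free decompositions provided by Gelfand pair theory.  I treat $\Sym^2(V_L)$ first.  By Proposition~\ref{prop:pfil2}, this space is spanned by the squares $Y_\Gamma^2$ as $\Gamma$ ranges over matchings of $L$.  Since $\sigma Y_\Gamma = \sgn(\sigma) Y_{\sigma\Gamma}$, squaring gives the untwisted action $\sigma Y_\Gamma^2 = Y_{\sigma\Gamma}^2$, and the stabilizer of a matching in $\mf{S}_L$ is the hyperoctahedral group $H_{n/2}$.  Hence there is a surjection of $\mf{S}_L$-modules $M := \Ind_{H_{n/2}}^{\mf{S}_L}(\mathbf{1}) \twoheadrightarrow \Sym^2(V_L)$ sending $\Gamma \mapsto Y_\Gamma^2$.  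The classical result that $(\mf{S}_n, H_{n/2})$ is a Gelfand pair gives (over $\Z[1/|L|!]$, as with all characteristic-zero representation-theoretic statements needed here) a multiplicity-free decomposition $M = \bigoplus_\lambda V_\lambda$ where $\lambda$ ranges over partitions of $n$ into even parts.  Hence $\Sym^2(V_L)$ is the direct sum of some subset of these $V_\lambda$.  I would pin down the subset by combining (a) a dimension count $\dim \Sym^2(V_L) = \binom{\dim V_L+1}{2}$, which by hook lengths matches $\sum \dim V_\lambda$ over even partitions with at most four parts, with (b) a vanishing argument showing that the unique (up to scalar) $H_{n/2}$-spherical vector in each $V_\lambda$ with $\ell(\lambda) \ge 5$ is annihilated by iterated Pl\"ucker relations on the matchings.

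The remaining cases follow by parallel arguments.  For $\bw V_L$, Proposition~\ref{prop:pfil2} identifies it with $\gr_{4+2+\cdots+2}(V_L^{\otimes 2})$, spanned by 2-colored regular graphs with exactly one 4-cycle and 2-cycles elsewhere; this realizes $\bw V_L$ as a quotient of $\Ind_K^{\mf{S}_L}(\chi)$, where $K$ is the stabilizer of the combinatorial datum and $\chi$ is the sign character determined by the coloring on the 4-cycle.  A parallel Gelfand-pair-style decomposition and a matching dimension/vanishing analysis give partitions with exactly four odd parts.  Over $\Z[1/|L|!]$ we have $V_L^{\otimes 2} = \Sym^2(V_L) \oplus \bw V_L$, handling the tensor row.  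Finally, for $R_L^{(2)}$ and $I_L^{(2)}$, use the short exact sequence $0 \to I_L^{(2)} \to \Sym^2(V_L) \to R_L^{(2)} \to 0$: the simplest binomial relation~\eqref{e:simplest} is a nonzero element of $I_L^{(2)}$, and its $\mf{S}_L$-orbit together with the inflations obtained by adjoining doubled edges spans the isotypic subspace corresponding to those $V_\lambda$ with exactly four even parts inside $\Sym^2(V_L)$.  Combined with the already-computed decomposition of $\Sym^2(V_L)$, this forces the claimed splitting.

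The main obstacle is step (b) in the $\Sym^2(V_L)$ analysis: explicitly showing that the spherical vector in each $V_\lambda$ with $\ell(\lambda) \ge 5$ maps to zero under $M \to \Sym^2(V_L)$.  The dimension count gives one linear constraint, but singling out precisely which $V_\lambda$ lie in the kernel requires either a combinatorial manipulation of zonal polynomials on matchings via Pl\"ucker, or else appealing to a Kronecker coefficient formula for $V_{(n/2,n/2)} \otimes V_{(n/2,n/2)}$.  Once the upper bound $\ell(\lambda) \le 4$ is secured, multiplicity-freeness of $M$ and the matching of dimensions determine the decomposition completely.
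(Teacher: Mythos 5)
Your strategy is genuinely different from the paper's, and the idea of using the Gelfand pair $(\mf{S}_n,H_{n/2})$ is a natural one, but as you yourself note, the central step is missing, and it cannot be patched by a dimension count alone.

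The paper proceeds in a way that makes the ``at most four parts'' constraint automatic.  It writes $V_L\otimes V_L = ((P\otimes P)^{\otimes L})^{\SL(P)\times\SL(P)}$, applies Schur--Weyl duality to obtain $V_L\otimes V_L\cong\bigoplus_\lambda M_\lambda\otimes \bS_\lambda(P\otimes P)^{\SL(P)\times\SL(P)}$, observes that $\SL(P)\times\SL(P)$ surjects onto $\SO(P\otimes P)$ with the transposition $\tau$ accounting for the nontrivial coset of $\Orth$, and then cites Kumar's lemma: for a $4$-dimensional orthogonal space $W$, $\bS_\lambda(W)^{\Orth(W)}$ is one-dimensional exactly when $\lambda$ has at most four parts, all even, and $\bS_\lambda(W)^{\Orth(W),-}$ is one-dimensional exactly when $\lambda$ has exactly four parts, all odd.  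The bound ``$\le 4$ parts'' thus comes for free from $\dim(P\otimes P)=4$; the bound ``$\le 3$ parts'' for $R_L^{(2)}$ comes, by the same mechanism, from $\dim\Sym^2(P)=3$.  Your Gelfand-pair argument only sees the permutation module $M_L=\Ind_{H_{n/2}}^{\mf{S}_L}(\mathbf 1)$ and its quotient $\Sym^2(V_L)$, and so only captures the parity constraint and multiplicity-freeness; it has no intrinsic mechanism producing the restriction on the number of parts.

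Concretely: the map $M_L\twoheadrightarrow\Sym^2(V_L)$, $\Gamma\mapsto Y_\Gamma^2$, has a nontrivial kernel, and identifying precisely which $V_\lambda$ lie in that kernel is the entire content of the statement.  The dimension count in your step~(a) furnishes only a single linear constraint on a characteristic function of a subset of partitions, which is far from determining the subset, so everything hinges on step~(b).  There you only gesture at ``zonal polynomials via Pl\"ucker'' or ``a Kronecker coefficient formula for $V_{(n/2,n/2)}^{\otimes 2}$''; the latter is essentially what the paper computes via Schur--Weyl plus Kumar's lemma, so invoking it would amount to citing the same result by a different name rather than supplying an independent proof.  The $\bw{V_L}$ row is handled even more sketchily---you would need a twisted analogue of the Gelfand pair for the stabilizer $K$ of a colored $4$-cycle-plus-$2$-cycles configuration with a sign character $\chi$, and it is not clear $\Ind_K^{\mf{S}_L}(\chi)$ is multiplicity-free by any classical result.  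Finally, you give no direct route to $R_L^{(2)}$: your treatment of $I_L^{(2)}$ \emph{assumes} that the simplest binomial relation and its inflations span exactly the four-part isotypic piece, which is essentially the conclusion of the proposition rather than a step towards it.  In sum, the approach is a legitimate alternative framework, but the decisive lemma (identifying the survivors in the quotient, and the analogous fact for $\bw{V_L}$ and $R_L^{(2)}$) is left open, and it is exactly the step the paper's Schur--Weyl/Kumar argument is designed to resolve.
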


\Rnts{used in Cor~\ref{lastcor}} 
For instance, $I^{(2)}_L$ is a direct sum of those irreducible representations of
$\mf{S}_L$ corresponding to partitions of $|L|$ into exactly four even parts.
To prove the proposition it suffices to work over the complex numbers $\C$.  We
use Schur-Weyl theory: for a vector space $A$ we have a decomposition
\begin{displaymath}
A^{\otimes L} \cong \bigoplus_{\lambda} M_{\lambda} \otimes \bS_{\lambda}(A)
\end{displaymath}
where the sum is over all partitions $\lambda$ of $|L|$, $M_{\lambda}$ denotes
the irreducible representation of $\mf{S}_L$ attached to a partition $\lambda$
and $\bS_{\lambda}$ is the Schur functor corresponding to $\lambda$.  Here
$A^{\otimes L}$ denotes the tensor product of copies of $A$ indexed by $L$.
The above decomposition respects the action of $\mf{S}_L$ and is functorial
with respect to $A$.

Let $P$ be the two-dimensional vector space over $\C$ with basis $\{x, y \}$.
Let $\bw{P} \to \C$ be the isomorphism taking $x \wedge y$ to 1.  We define
$\SL(P)=\Sp(P)$ to be the group of linear transformations of $P$ preserving
this alternating form. 

Since $V_L$ is the space of degree one invariants, it is equal, by definition,
to $(P^{\otimes L})^{\SL(P)}$.  The action of $\mf{S}_L$ on $V_L$ in this
description is the obvious one.  We thus have an $\mf{S}_L$-equivariant
isomorphism
\begin{displaymath}
V_L \otimes V_L = (P^{\otimes L} \otimes P^{\otimes L})^{\SL(P)
\times \SL(P)} = ((P \otimes P)^{\otimes L})^{\SL(P) \times \SL(P)}.
\end{displaymath}
The $\mf{S}_L$-action is the diagonal action on the first two spaces and the
usual one on the last space.  The above isomorphism is also equivariant with
respect to the transposition of factors $\tau$.  We now apply the Schur-Weyl
decomposition to obtain
\begin{displaymath}
V_L \otimes V_L \cong \bigoplus_{\lambda} M_{\lambda} \otimes \bS_{\lambda}(P
\otimes P)^{\SL(P) \times \SL(P)}.
\end{displaymath}
The space $P \otimes P$ has a natural symmetric inner product coming from the
alternating inner product on $P$.  This inner product is preserved by the group
$\SL(P) \times \SL(P)$, and it is not hard to see that the resulting map
$\SL(P) \times \SL(P) \to \SO(P \otimes P)$ is surjective.  The transposition
$\tau$ on $P \otimes P$ preserves the inner product but has determinant $-1$.
Thus $\tau$ and $\SO(P \otimes P)$ generate $\Orth(P \otimes P)$.  Now, the
space $\Sym^2(V_L)$ is just the $\tau$ invariant part of $V_L \otimes V_L$,
so
\begin{displaymath}
\Sym^2(V_L) \cong \bigoplus_{\lambda} M_{\lambda} \otimes \bS_{\lambda}(P
\otimes P)^{\Orth(P \otimes P)}.
\end{displaymath}
Similarly, $\bw{V_L}$ is just the subspace of $V_L \otimes V_L$ on which $\tau$
acts by $-1$ and so
\begin{displaymath}
\bw{V_L} \cong \bigoplus_{\lambda} M_{\lambda} \otimes \bS_{\lambda}(P \otimes
P)^{\Orth(P \otimes P), -}
\end{displaymath}
where the minus sign means to take the subspace on which $\Orth(P \otimes P)$
acts by its sign representation.  The first three lines of the table in
Proposition~\ref{prop:pfil2rep} now follow from the $n=4$ case of following
lemma.  This lemma appears as statements (1) and (2) in the proof of
\cite[Lemma~2.2]{Kumar}.

\begin{lemma}[S.~Kumar]
\label{KumarLemma}
Let $\lambda$ be a partition and let $V$ be a vector space of dimension $n$
with a non-degenerate symmetric inner product.
\begin{itemize}
\item $\bS_{\lambda}(V)^{\Orth(V)}$ is one-dimensional if $\lambda$ has at most
$n$ parts, all even, and is zero otherwise.
\item $\bS_{\lambda}(V)^{\Orth(V), -}$ is one-dimensional if $\lambda$ has
exactly $n$ parts, all odd, and is zero otherwise.
\end{itemize}
\end{lemma}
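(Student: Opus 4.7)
The plan is to reduce both statements to the classical Littlewood branching rule for $\GL(V) \downarrow \Orth(V)$. That rule says: for a partition $\lambda$ with at most $n = \dim V$ parts,
$$\bS_\lambda(V)\big|_{\Orth(V)} \;\cong\; \bigoplus_\mu \Bigl( \sum_\delta c^\lambda_{\mu, \delta} \Bigr)\, W^\mu,$$
where $\mu$ ranges over partitions indexing the irreducible $\Orth(V)$-modules $W^\mu$, $\delta$ ranges over partitions whose rows are all of even length, and $c^\lambda_{\mu,\delta}$ is the Littlewood-Richardson coefficient. This is the main nontrivial ingredient, and its proof (via Howe duality for the reductive dual pair $(\Orth(V), \Sp_{2k})$ acting on $\C[V \otimes \C^k]$, or by a Weyl character computation) is the principal obstacle; I would simply cite the standard reference rather than reprove it.

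Granting the branching rule, the first statement is immediate. The trivial $\Orth(V)$-representation corresponds to $\mu = 0$, and $c^\lambda_{0,\delta}$ vanishes unless $\delta = \lambda$, in which case it equals $1$. Hence the multiplicity of the trivial $\Orth(V)$-representation in $\bS_\lambda(V)$ is $1$ if $\lambda$ itself has all rows even and at most $n$ parts, and $0$ otherwise.

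For the second statement, I would twist by the determinant character. The sign character of $\Orth(V)$ is the restriction of the $\GL(V)$ determinant, i.e.\ of $\bS_{(1^n)}(V) = \bwg{n} V$ as a $\GL(V)$-module. Therefore
$$\dim \bS_\lambda(V)^{\Orth(V), -} \;=\; \dim \bigl( \bS_\lambda(V) \otimes \bS_{(1^n)}(V) \bigr)^{\Orth(V)}.$$
By Pieri's rule, adding a vertical strip of length $n$ to $\lambda$ (which has at most $n$ parts) forces exactly one box into each of the $n$ rows, so $\bS_\lambda(V) \otimes \bS_{(1^n)}(V) \cong \bS_{\lambda + (1^n)}(V)$. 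Applying the first statement, this is one-dimensional iff every entry of $\lambda + (1^n)$ is even and positive, equivalently iff $\lambda$ has exactly $n$ parts, all odd. This yields both assertions.
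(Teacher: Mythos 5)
The paper gives no proof of this lemma at all: it simply cites the source, noting that the two bullets appear as statements (1) and (2) in the proof of Lemma 2.2 of Kumar's paper. Your proposal is therefore a genuine addition rather than a rederivation of the authors' argument, and it is essentially correct. The reduction of the second bullet to the first by twisting by $\det V = \bwg{n}V$ (whose restriction to $\Orth(V)$ is the sign character) and then using Pieri to identify $\bS_\lambda(V)\otimes\bwg{n}V$ with $\bS_{\lambda+(1^n)}(V)$ is clean and exactly what one wants.

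One caution about how you invoke Littlewood's branching rule. The version you state, with multiplicity $\sum_\delta c^\lambda_{\mu,\delta}$ over even-row $\delta$, holds without modification only in a stable range; for $\Orth(V)$ the safe hypothesis is roughly $\ell(\lambda)+\ell(\mu)\le n$, whereas the naive rule can fail when $\ell(\lambda)>n/2$ for general $\mu$. You need it for all $\lambda$ with up to $n$ parts, which is outside the usually-quoted range $\ell(\lambda)\le n/2$. For the single case $\mu=0$ that you actually use, the formula is correct on the full range $\ell(\lambda)\le n$, but this is worth saying explicitly. Better still, the Howe-duality route you mention in passing proves exactly the $\mu=0$ case directly and unconditionally: by the Cauchy decomposition $\C[V\otimes W]\cong\bigoplus_{\lambda}\bS_\lambda(V)\otimes\bS_\lambda(W)$ together with the first and second fundamental theorems for $\Orth(V)$, one has $\C[V\otimes W]^{\Orth(V)}\cong\Sym(\Sym^2 W)\cong\bigoplus_{\mu\text{ even}}\bS_\mu(W)$ whenever $\dim W\le\dim V$; taking $\dim W=n$ and comparing $\GL(W)$-isotypic pieces yields the first bullet for all $\lambda$ with $\ell(\lambda)\le n$, with no branching rule and no range issues. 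I would phrase the argument that way and then keep your determinant twist for the second bullet.
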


We now turn our attention to the space $R^{(2)}_L$, the degree two invariants.
We have an $\mf{S}_L$-equivariant isomorphism
$R^{(2)}_L=(\Sym^2(P)^{\otimes L})^{\SL(P)}$, so
\begin{displaymath}
R^{(2)}_L \cong \bigoplus_{\lambda} M_{\lambda} \otimes \bS_{\lambda}(\Sym^2(P))^{\SL(P)}.
\end{displaymath}
Use the alternating inner product on $P$ to define a symmetric inner product on
$\Sym^2(P)$ via
\begin{displaymath}
\langle vv', ww' \rangle=\langle v, w \rangle \langle v', w' \rangle +
\langle v, w' \rangle \langle v', w \rangle.
\end{displaymath}
The group $\SL(P)$ preserves this inner product and it is not hard to show that
the map $\SL(P) \to \SO(\Sym^2(P))$ is surjective.  We thus have
\begin{displaymath}
R^{(2)}_L=\bigoplus_{\lambda} M_{\lambda} \otimes \bS_{\lambda}(\Sym^2(P))^{
\SO(\Sym^2(P))}.
\end{displaymath}
The fourth line of the table in Proposition~\ref{prop:pfil2rep} now follows
from the following lemma:

\begin{lemma}
Let $\lambda$ be a partition of an even number and let $V$ be a three
dimensional vector space with a non-degenerate symmetric inner product.  Then
$\bS_{\lambda}(V)^{\SO(V)}$ is one-dimensional if $\lambda$ has at most three
parts, all of which are even, and is zero otherwise.
\end{lemma}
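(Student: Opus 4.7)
The plan is to reduce this lemma to the Kumar lemma (Lemma~\ref{KumarLemma}) already quoted for $\Orth(V)$, by exploiting the fact that in odd dimension the group $\Orth(V)$ is generated by $\SO(V)$ together with the central element $-I$.

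Concretely, since $\dim V = 3$, we have $\det(-I) = -1$, so $-I \in \Orth(V) \setminus \SO(V)$ and $-I$ commutes with all of $\SO(V)$. Thus every element of $\Orth(V)$ is of the form $\pm g$ with $g \in \SO(V)$, and the quotient $\Orth(V)/\SO(V) \cong \Z/2$ is generated by the image of $-I$. The action of $-I$ on $\bS_{\lambda}(V)$ is by the scalar $(-1)^{|\lambda|}$, because $-I$ acts on $V^{\otimes|\lambda|}$ by $(-1)^{|\lambda|}$ and $\bS_{\lambda}$ is a functorial subquotient. Since by hypothesis $|\lambda|$ is even, $-I$ acts trivially on $\bS_{\lambda}(V)$, so the natural inclusion
\begin{displaymath}
\bS_{\lambda}(V)^{\Orth(V)} \hookrightarrow \bS_{\lambda}(V)^{\SO(V)}
\end{displaymath}
is in fact an equality: an $\SO(V)$-invariant vector is automatically fixed by $-I$, hence by all of $\Orth(V)$.

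Now I can directly invoke the first bullet of Lemma~\ref{KumarLemma} with $n=3$: $\bS_{\lambda}(V)^{\Orth(V)}$ is one-dimensional when $\lambda$ has at most three parts, all even, and is zero otherwise. Combined with the identification in the previous paragraph, this gives exactly the statement of the lemma.

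The only potential subtlety is confirming the scalar by which $-I$ acts on $\bS_{\lambda}(V)$, but this is immediate from the construction of Schur functors as images of Young symmetrizers inside $V^{\otimes |\lambda|}$, on which $-I$ acts by $(-1)^{|\lambda|}$. There is no real obstacle here; the lemma is essentially a corollary of the $\Orth$-version once one notices the parity observation. (Note also that in the complementary case $|\lambda|$ odd — which does not occur in the statement — the same argument combined with the second bullet of Lemma~\ref{KumarLemma} would give the familiar fact that $\bS_{\lambda}(V)^{\SO(V)}$ is one-dimensional exactly when $\lambda$ has three odd parts.)
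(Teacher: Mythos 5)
Your argument is correct, and it is a genuinely different (and arguably cleaner) route than the one in the paper. Both proofs hinge on identifying $\bS_{\lambda}(V)^{\SO(V)}$ with $\bS_{\lambda}(V)^{\Orth(V)}$ and then applying the first part of Kumar's lemma; the difference is in how that identification is established. The paper decomposes the $\SO(V)$-invariants into $\Orth(V)$-invariants plus $\Orth(V)$-anti-invariants and then invokes the \emph{second} part of Kumar's lemma to kill the anti-invariant summand (three odd parts cannot have an even sum). You instead observe that $\Orth(V) = \SO(V) \cdot \{\pm I\}$ in odd dimension and that $-I$ acts on $\bS_\lambda(V)$ by $(-1)^{|\lambda|}$, which is $+1$ under the hypothesis that $|\lambda|$ is even; the anti-invariant part therefore vanishes for trivial reasons, with no need for Kumar's second statement. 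Your approach is slightly more self-contained and makes the parity phenomenon explicit, while the paper's keeps both Kumar bullets in play symmetrically. Both are fine; no correction is needed.
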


\begin{proof}
By the second part of Lemma~\ref{KumarLemma} we have
$\bS_{\lambda}(V)^{\Orth(V), -}=0$, as three odd numbers cannot have an even
sum.  We thus have $\bS_{\lambda}(V)^{\SO(V)}=\bS_{\lambda}(V)^{\Orth(V)}$ and
the result follows from the first part of Lemma~\ref{KumarLemma}.
\end{proof}

The final line of the table in Proposition~\ref{prop:pfil2rep} follows from
$R^{(2)}_L \cong \Sym^2(V_L)/I_L^{(2)}$.

\subsection{Degree three spaces}

We now turn our attention to the cubic spaces $V_L^{\otimes 3}$ and
$\Sym^3(V_L)$.  We say that a $3$-regular graph on $L$ is a \emph{benzene
cycle} if it is a cycle in which the edges alternate between being single and
doubled (Figure~\ref{fig:benzene}).  We use this term because molecules of
benzene are depicted with such graphs (on six points), as in the figure.
We use the term \emph{benzene chain} for a chain of edges which alternate
between being single and doubled.  A benzene 2-cycle is interpreted to mean a
triple edge.  The main result of this section is the following: 
\Ncom{$\text{benzene cycle/chain}$}

\begin{proposition}
\label{prop:pfil3}
The space $\Sym^3(V_L)$ is spanned over $\Z[\half]$ by graphs which are unions
of benzene 2-, 4- and 6-cycles.  In particular, $\gr_p(\Sym^3(V_L))=0$ unless
the parts of $p$ are at most 6.
\end{proposition}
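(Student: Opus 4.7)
The plan is to mimic the two-stage strategy from the proof of Proposition~\ref{prop:pfil2}: first reduce to disjoint unions of benzene cycles, then eliminate benzene cycles of length at least eight.

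For Stage 1, I would invoke Proposition~\ref{prop:pfil2}, which says that $\Sym^2(V_L)$ is spanned over $\Z[\half]$ by the squares $(Y_\Gamma)^2$ for $\Gamma$ a matching --- that is, by disjoint unions of doubled edges.  Via the surjective multiplication map $\Sym^2(V_L) \otimes V_L \to \Sym^3(V_L)$, the space $\Sym^3(V_L)$ is spanned by products $(Y_\Gamma)^2 \, Y_{\Gamma'}$ for matchings $\Gamma, \Gamma'$ on $L$.  Viewed as a 3-regular multigraph, such a product has only doubled edges (from $\Gamma$) and single edges (from $\Gamma'$), so each vertex lies on exactly one single edge and one doubled edge; each connected component is forced to be a benzene cycle by the alternating structure.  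This already gives that $\Sym^3(V_L)$ is spanned by disjoint unions of benzene cycles of arbitrary even length.

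For Stage 2, let $\Gamma$ be a benzene $2k$-cycle with $k \ge 4$, having single edges $v_1 v_2, v_3 v_4, \ldots$ and doubled edges $v_2 v_3, v_4 v_5, \ldots$ in cyclic order, and let $p$ be the corresponding partition.  I would apply the Pl\"ucker relation to the two single edges $v_1 v_2$ and $v_3 v_4$ flanking a common doubled edge; a direct computation shows that one of the two Pl\"ucker options is another benzene $2k$-cycle on the same vertex set differing from $\Gamma$ by an adjacent transposition in cyclic order, while the other is a disjoint union of a benzene 2-cycle and a benzene $(2k-2)$-cycle, which has strictly finer partition.  This yields an identity $Y_\Gamma \equiv -Y_{\Gamma'}$ modulo $F_{p'}$ for $p' < p$, encoding ``adjacent swap costs a sign.''  Similarly, applying Pl\"ucker to two single edges at distance two or three along the cycle realizes a four-vertex or six-vertex reversal in a single Pl\"ucker step, each with a single sign of $-1$.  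Comparing these direct evaluations against those obtained by iterating adjacent-swap identities (in the spirit of the identity of Figure~\ref{f:id6}) should produce a sign discrepancy of the form $Y_\Gamma \equiv -Y_\Gamma$ modulo strictly finer partitions; inverting $2$ then gives $Y_\Gamma = 0$ in $\gr_p(\Sym^3(V_L))$, completing the proof.

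The main obstacle will be Stage 2.  The adjacent swaps generated by Pl\"ucker on flanking singles of a doubled edge swap only vertices joined by a doubled edge, and by themselves generate only an elementary abelian subgroup of $\mf{S}_L$ that does not contain the longer-range transformations produced directly by the distance-two and distance-three Pl\"ucker moves.  Closing the sign-discrepancy loop will therefore require either Pl\"ucker moves on same-color half-doubled edges (which a priori produce non-benzene intermediates and must be carefully handled modulo finer partitions) or a more refined analogue of the six-vertex identity of Figure~\ref{f:id6} adapted to the alternating single/double structure of a benzene cycle.
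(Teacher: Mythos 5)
Your Stage~1 matches the paper's: the paper invokes Proposition~\ref{prop:pfil2} to replace the red--green subgraph by disjoint $2$-cycles, which makes the $3$-colored graph a union of benzene cycles, and your formulation via the surjection $\Sym^2(V_L)\otimes V_L\to\Sym^3(V_L)$ is the same observation.

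Your Stage~2 is modelled on Lemma~\ref{prop:pfil2-a}, where one performs a four-vertex reversal (one Pl\"ucker, one sign) and then undoes it with six adjacent transpositions (six Pl\"uckers, six signs); the composite permutation is the identity but the word has odd length, hence $Y_\Gamma\equiv -Y_\Gamma$.  You correctly see why this cannot be transplanted directly to benzene cycles: with the doubled edges fixed as a matching $\Delta$, only the single edges admit Pl\"ucker moves, so the available adjacent transpositions swap only endpoints of $\Delta$-edges and cannot undo a longer reversal.  The paper's fix is Proposition~\ref{prop:discon-span} together with Corollary~\ref{cor:benzene}: working modulo matchings $\Gamma'$ with $\Delta\Gamma'$ disconnected, a $\Delta$-pair transposition, a four-vertex reversal, and a six-vertex reversal each cost a sign, and the essential new content is the explicit closed word
\begin{displaymath}
[a,b,c,d,e,f] = -[f,e,d,c,b,a] = [f,e,a,b,c,d] = -[b,a,e,f,c,d] = [b,a,d,c,f,e] = -[a,b,c,d,e,f]
\end{displaymath}
of odd length in \emph{these} restricted moves, which returns to the start and gives $2Y_{\Gamma'}\equiv 0$.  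This needs $|L|\ge 8$, which is why benzene $4$- and $6$-cycles survive; the resulting eight-point identity is recorded in Figure~\ref{f:id8}.  So your final paragraph has exactly the right diagnosis --- the obstacle is real, and the cure must be a refined identity adapted to the alternating single/double structure rather than a comparison against iterated adjacent swaps --- but the proposal stops short of producing that identity, and it is the one genuinely new ingredient in the paper's Stage~2.
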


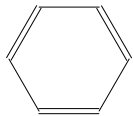
\begin{figure}[!ht]
\begin{center}
\leavevmode
\begin{xy}
(-4, -6.928)*{}="A"; (-8, 0)*{}="B"; (-4, 6.928)*{}="C";
(4, 6.928)*{}="D"; (8, 0)*{}="E"; (4, -6.928)*{}="F";
"A"; "B"; **\dir{-}; "B"; "C"; **\dir{=};
"C"; "D"; **\dir{-}; "D"; "E"; **\dir{=};
"E"; "F"; **\dir{-}; "F"; "A"; **\dir{=};
\end{xy}
\end{center}
\caption{A benzene 6-cycle. \label{fig:benzene}
}
\end{figure}

We deduce Proposition~\ref{prop:pfil3} from the following:

\begin{proposition}
\label{prop:discon-span}
Let $L$ be a set of cardinality at least 8 and let $\Delta$ be a fixed matching
on $L$.  Then $V_L$ is spanned by those $Y_{\Gamma}$ for which the graph
$\Delta  \Gamma$ is not connected.
\end{proposition}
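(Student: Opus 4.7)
The plan is to show $V_L^{\mathrm{disc}} = V_L$, where $V_L^{\mathrm{disc}} \subseteq V_L$ denotes the span of the $Y_\Gamma$ with $\Delta\cdot\Gamma$ disconnected. By Kempe's generation theorem (Theorem~\ref{thm:kempe}) the $Y_\Gamma$ for $\Gamma$ a matching span $V_L$, so it suffices to show $Y_\Gamma\in V_L^{\mathrm{disc}}$ whenever $\Delta\cdot\Gamma$ is a single Hamiltonian cycle on $L$. Fix such a $\Gamma$ and label $L=\{v_1,\dots,v_{2n}\}$ (with $n\ge 4$) in cyclic order along $\Delta\cdot\Gamma$, so that the $\Gamma$-edges are $v_{2i-1}v_{2i}$ and the $\Delta$-edges are $v_{2i}v_{2i+1}$ (indices mod $2n$).

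I will follow the strategy of the proof of Lemma~\ref{prop:pfil2-a}: two distinct Pl\"ucker-based moves will be used to alter the cyclic order of $\Delta\cdot\Gamma$, and comparing the signs they produce with the signs of the corresponding permutations will yield a conflict forcing $Y_\Gamma\equiv -Y_\Gamma\pmod{V_L^{\mathrm{disc}}}$, whence $Y_\Gamma\in V_L^{\mathrm{disc}}$ (after inverting~$2$, as in the paper's conventions). The first move, a \emph{short Pl\"ucker} on two consecutive $\Gamma$-edges $v_{2i-1}v_{2i}$ and $v_{2i+1}v_{2i+2}$, produces $Y_\Gamma = \pm Y_{\Gamma_1} \pm Y_{\Gamma_2}$: the matching $\Gamma_2$ contains the $\Delta$-edge $v_{2i}v_{2i+1}$ and so $\Delta\cdot\Gamma_2$ has a double edge, lying in $V_L^{\mathrm{disc}}$; the matching $\Gamma_1$ has $\Delta\cdot\Gamma_1$ a single cycle with cyclic order equal to that of $\Delta\cdot\Gamma$ except that $v_{2i}$ and $v_{2i+1}$ are transposed. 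The second, a \emph{long Pl\"ucker} on the two $\Gamma$-edges $v_1v_2$ and $v_5v_6$ (for which $|L|\ge 8$ is precisely what is needed), similarly gives $Y_\Gamma = \pm Y_{\Gamma_1'} \pm Y_{\Gamma_2'}$ where $\Delta\cdot\Gamma_2'$ splits the Hamiltonian cycle into two smaller cycles (so $\Gamma_2' \in V_L^{\mathrm{disc}}$) and $\Delta\cdot\Gamma_1'$ is a single cycle with the block $v_2,v_3,v_4,v_5$ reversed.

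Using the fixed orientation $\epsilon$ of \S~\ref{ss:orient}, one computes the ``Pl\"ucker sign'' for each such move against the sign of the permutation it induces on the cyclic order. The short Pl\"ucker realizes a transposition and the two signs agree; the long Pl\"ucker realizes an even permutation $(v_2\,v_5)(v_3\,v_4)$ yet its Pl\"ucker sign is $-1$, so the two signs disagree by a factor of $-1$. By composing Pl\"uckers (including further short and long variants at other positions along the cycle, all of which have room to exist because $|L|\ge 8$) to form a loop returning to some target matching $\Gamma'$ via two routes whose total sign ratios differ, one deduces $Y_{\Gamma'} \equiv -Y_{\Gamma'} \pmod{V_L^{\mathrm{disc}}}$, and hence $Y_\Gamma \in V_L^{\mathrm{disc}}$. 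The principal obstacle is exhibiting such a loop explicitly and doing the sign bookkeeping correctly; this is a finite combinatorial calculation in direct analogy with the six-point identity of Figure~\ref{f:id6} used in the proof of Lemma~\ref{prop:pfil2-a}.
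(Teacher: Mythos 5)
Your overall strategy is the paper's: reduce to the case where $\Delta\cdot\Gamma$ is a single Hamiltonian cycle and deduce $Y_\Gamma \equiv -Y_\Gamma$ modulo the span of disconnected graphs, using Pl\"ucker moves that fix $\Delta$. Your ``short Pl\"ucker'' is the paper's blue-edge transposition and your ``long Pl\"ucker'' is the paper's reversal of a block of four consecutive vertices. But the deferred ``finite combinatorial calculation'' is the heart of the proof, and the two moves you name are provably insufficient to produce the loop you need. Record a state by the cyclic arrangement of the $n$ $\Delta$-dominoes together with an orientation of each, and assign to each move the parity $(\text{number of dominoes reversed}) + (\text{parity of the induced permutation of dominoes}) \in \Z/2$. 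Your transposition reverses one domino in place: parity $1 + 0 = 1$. Your 4-flip swaps two adjacent dominoes and reverses both: parity $2 + 1 \equiv 1$. Both are parity-odd. If a sequence of such moves returns to the original matching, then the net domino permutation is trivial (so the number of odd-permutation moves is even) and each domino has been reversed an even number of times (so the total reversal count is even); hence the total parity is even, so the number of moves is even, and the accumulated Pl\"ucker sign is $(-1)^{\mathrm{even}} = +1$. No contradiction. The ``sign mismatch'' you observe between the transposition (odd permutation) and the 4-flip (even permutation) is real, but it is not the relevant invariant, and composing more of the same two moves cannot change this conclusion.

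The missing ingredient is the paper's \emph{6-flip}: the Pl\"ucker of two $\Gamma$-edges spanning eight consecutive vertices (i.e.\ with \emph{two} $\Gamma$-edges between them), which reverses a block of six. That move swaps the outer two of three consecutive dominoes and reverses all three, giving parity $3 + 1 \equiv 0$ --- it is parity-even, and this is exactly what breaks the obstruction. It is also the true source of the hypothesis $|L|\ge 8$; your claim that the long Pl\"ucker needs $|L|\ge 8$ is incorrect, since a 4-flip already exists for six points, but a 6-flip requires eight distinct vertices. The paper's explicit loop consists of one 6-flip, three 4-flips and three transpositions, for a net sign of $(-1)^7 = -1$; it is written as the chain $[a,b,c,d,e,f] = -[f,e,d,c,b,a] = \cdots = -[a,b,c,d,e,f]$ in the proof, and the expanded eight-point identity is Figure~\ref{f:id8} --- not Figure~\ref{f:id6}, which is the degree-two identity from the proof of Lemma~\ref{prop:pfil2-a}.
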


\begin{proof}
Let $\Gamma$ be a given matching.  Think of $\Delta$ as having blue edges and
$\Gamma$ as having black edges, so $\Delta \Gamma$ is a regular 2-colored
graph.  We must show that by using only black Pl\"ucker relations we can write
it as a sum of disconnected graphs.  (We assume that $\Delta  \Gamma$ is
connected to begin with.)

Let $a$, $b$, $c$ and $d$ be four consecutive vertices in $\Delta
 \Gamma$,
where $\uedge{ab}$ is a black edge.  We have the black Pl\"ucker relation
\begin{displaymath}
0 \qquad = \qquad
\begin{xy}
(-10, -4.33)*{}="A"; (-5, 4.33)*{}="B"; (5, 4.33)*{}="C";
(10, -4.33)*{}="D";
{\ar@[black]@{-} "A"; "B"};
{\ar@[blue]@{..}@[|(2)] "B"; "C"};
{\ar@[black]@{-} "C"; "D"};
(-10, -6.33)*{\ss a}; (-5, 6.33)*{\ss b}; (5, 6.33)*{\ss c};
(10, -6.33)*{\ss d};
\end{xy}
\qquad + \qquad
\begin{xy}
(-10, -4.33)*{}="A"; (-5, 4.33)*{}="B"; (5, 4.33)*{}="C";
(10, -4.33)*{}="D";
{\ar@[black]@{-} "A"; "B"};
{\ar@[blue]@{..}@[|(2)] "B"; "C"};
{\ar@[black]@{-} "C"; "D"};
(-10, -6.33)*{\ss a}; (-5, 6.33)*{\ss c}; (5, 6.33)*{\ss b};
(10, -6.33)*{\ss d};
\end{xy}
\qquad + \qquad
\begin{xy}
(-10, -4.33)*{}="A"; (-5, 4.33)*{}="B"; (5, 4.33)*{}="C";
(10, -4.33)*{}="D";
{\ar@[black]@{-} "A"; "D"};{\ar@<-.2ex>@[blue]@{..}@[|(2)] "B"; "C"};
{\ar@<.2ex>@[black]@{-} "B"; "C"};
(-10, -6.33)*{\ss a}; (-5, 6.33)*{\ss b}; (5, 6.33)*{\ss c};
(10, -6.33)*{\ss d};
\end{xy}
\end{displaymath}
The rightmost graph is disconnected.  Thus, working modulo such graphs, we
may transpose blue edges at the cost of a sign.  Now let $a$, $b$, $c$, $d$,
$e$ and $f$ be six consecutive vertices where $\uedge{a b}$ is a black edge.  We
have the black Pl\"ucker relation
\begin{displaymath}
0 \qquad = \qquad
\begin{xy}
(-4, -6.928)*{}="A"; (-8, 0)*{}="B"; (-4, 6.928)*{}="C";
(4, 6.928)*{}="D"; (8, 0)*{}="E"; (4, -6.928)*{}="F";
{\ar@[black]@{-} "A"; "B"};
{\ar@[blue]@{..}@[|(2)] "B"; "C"};
{\ar@[black]@{-} "C"; "D"};
{\ar@[blue]@{..}@[|(2)] "D"; "E"};
{\ar@[black]@{-} "E"; "F"};
(-4, -8.928)*{\ss a}; (-10, 0)*{\ss b}; (-4, 8.928)*{\ss c};
(4, 8.928)*{\ss d}; (10, 0)*{\ss e}; (4, -8.928)*{\ss f};
\end{xy}
\qquad + \qquad
\begin{xy}
(-4, -6.928)*{}="A"; (-8, 0)*{}="B"; (-4, 6.928)*{}="C";
(4, 6.928)*{}="D"; (8, 0)*{}="E"; (4, -6.928)*{}="F";
{\ar@[black]@{-} "A"; "B"};
{\ar@[blue]@{..}@[|(2)] "B"; "C"};
{\ar@[black]@{-} "C"; "D"};{\ar@[blue]@{..}@[|(2)] "D"; "E"};
{\ar@[black]@{-} "E"; "F"};
(-4, -8.928)*{\ss a}; (-10, 0)*{\ss e}; (-4, 8.928)*{\ss d};
(4, 8.928)*{\ss c}; (10, 0)*{\ss b}; (4, -8.928)*{\ss f};
\end{xy}
\qquad + \qquad
\begin{xy}
(-4, -6.928)*{}="A"; (-8, 0)*{}="B"; (-4, 6.928)*{}="C";
(4, 6.928)*{}="D"; (8, 0)*{}="E"; (4, -6.928)*{}="F";
{\ar@[black]@{-} "A"; "F"};
{\ar@[blue]@{..}@[|(2)] "B"; "C"};
{\ar@[black]@{-} "C"; "D"};
{\ar@[blue]@{..}@[|(2)] "D"; "E"};
{\ar@[black]@{-} "E"; "B"};
(-4, -8.928)*{\ss a}; (-10, 0)*{\ss b}; (-4, 8.928)*{\ss c};
(4, 8.928)*{\ss d}; (10, 0)*{\ss e}; (4, -8.928)*{\ss f};
\end{xy}\end{displaymath}
As before, the rightmost graph is disconnected.  This shows that we may take
four consecutive vertices and reverse their direction, at the cost of a sign,
assuming the outer two edges are blue.  In the same way, by considering eight
consecutive vertices we see that a consecutive string of six vertices may be
flipped at the cost of a sign, assuming that the outer two edges are blue.

Now let $a$, $b$, $c$, $d$, $e$ and $f$ be six consecutive vertices, the outer
two edges of which are blue.  We write $[a, b, c, d, e, f]$ to denote this
situation.  By the above, we have
\begin{displaymath}
\begin{split}
[a, b, c, d, e, f]
&=- [f, e, d, c, b, a]
=  [f, e, a, b, c, d] \\
&= - [b, a, e, f, c, d]
=  [b, a, d, c, f, e] 
= - [a, b, c, d, e, f]
\end{split}
\end{displaymath}
so $Y_{\Gamma}=-Y_{\Gamma}$ modulo matchings for which $\Delta \Gamma$ is
disconnected, establishing the proposition.
\end{proof}

By keeping track of the discarded graphs in the above proof we obtain a
complicated identity, shown in Figure~\ref{f:id8}, that we will use on a few
later occasions.  As an immediate corollary of the above proposition we have
the following.

\begin{figure}[!ht]
\begin{displaymath}
\begin{split}
& (-2) \quad
\begin{xy}
(3.06, 7.39)*{}="A"; (7.39, 3.06)*{}="B"; (7.39, -3.06)*{}="C";
(3.06, -7.39)*{}="D"; (-3.06, -7.39)*{}="E"; (-7.39, -3.06)*{}="F";
(-7.39, 3.06)*{}="G"; (-3.06, 7.39)*{}="H";
{\ar@{-}@[black] "A"; "B"}; {\ar@{-}@[black] "C"; "D"};
{\ar@{-}@[black] "E"; "F"}; {\ar@{-}@[black] "G"; "H"};
{\ar@{..}@[blue]@[|(2)] "B"; "C"}; {\ar@{..}@[blue]@[|(2)] "D"; "E"};
{\ar@{..}@[blue]@[|(2)] "F"; "G"}; {\ar@{..}@[blue]@[|(2)] "A"; "H"};
\end{xy}
\\[5ex]
=& \qquad
\begin{xy}
(3.06, 7.39)*{}="A"; (7.39, 3.06)*{}="B"; (7.39, -3.06)*{}="C";
(3.06, -7.39)*{}="D"; (-3.06, -7.39)*{}="E"; (-7.39, -3.06)*{}="F";
(-7.39, 3.06)*{}="G"; (-3.06, 7.39)*{}="H";
{\ar@{-}@[black]@<.2ex> "A"; "H"}; {\ar@{-}@[black] "C"; "D"};
{\ar@{-}@[black] "E"; "F"}; {\ar@{-}@[black] "G"; "B"};
{\ar@{..}@[blue]@[|(2)] "B"; "C"}; {\ar@{..}@[blue]@[|(2)] "D"; "E"};
{\ar@{..}@[blue]@[|(2)] "F"; "G"}; {\ar@{..}@[blue]@[|(2)]@<-.2ex> "A"; "H"};
\end{xy}
\qquad + \qquad
\begin{xy}
(3.06, 7.39)*{}="A"; (7.39, 3.06)*{}="B"; (7.39, -3.06)*{}="C";
(3.06, -7.39)*{}="D"; (-3.06, -7.39)*{}="E"; (-7.39, -3.06)*{}="F";
(-7.39, 3.06)*{}="G"; (-3.06, 7.39)*{}="H";
{\ar@{-}@[black] "A"; "D"}; {\ar@{-}@<.2ex>@[black] "B"; "C"};
{\ar@{-}@[black] "E"; "F"}; {\ar@{-}@[black] "G"; "H"};
{\ar@{..}@<-.2ex>@[blue]@[|(2)] "B"; "C"}; {\ar@{..}@[blue]@[|(2)] "D"; "E"};
{\ar@{..}@[blue]@[|(2)] "F"; "G"}; {\ar@{..}@[blue]@[|(2)] "A"; "H"};
\end{xy}
\qquad + \qquad
\begin{xy}
(3.06, 7.39)*{}="A"; (7.39, 3.06)*{}="B"; (7.39, -3.06)*{}="C";
(3.06, -7.39)*{}="D"; (-3.06, -7.39)*{}="E"; (-7.39, -3.06)*{}="F";
(-7.39, 3.06)*{}="G"; (-3.06, 7.39)*{}="H";
{\ar@{-}@[black] "A"; "B"}; {\ar@{-}@[black] "C"; "F"};
{\ar@{-}@<.2ex>@[black] "D"; "E"}; {\ar@{-}@[black] "G"; "H"};
{\ar@{..}@[blue]@[|(2)] "B"; "C"}; {\ar@{..}@<-.2ex>@[blue]@[|(2)] "D"; "E"};
{\ar@{..}@[blue]@[|(2)] "F"; "G"}; {\ar@{..}@[blue]@[|(2)] "A"; "H"};
\end{xy}
\qquad + \qquad
\begin{xy}
(3.06, 7.39)*{}="A"; (7.39, 3.06)*{}="B"; (7.39, -3.06)*{}="C";
(3.06, -7.39)*{}="D"; (-3.06, -7.39)*{}="E"; (-7.39, -3.06)*{}="F";
(-7.39, 3.06)*{}="G"; (-3.06, 7.39)*{}="H";
{\ar@{-}@[black] "A"; "B"}; {\ar@{-}@[black] "C"; "D"};
{\ar@{-}@[black] "E"; "H"}; {\ar@{-}@<.2ex>@[black] "F"; "G"};
{\ar@{..}@[blue]@[|(2)] "B"; "C"}; {\ar@{..}@[blue]@[|(2)] "D"; "E"};
{\ar@{..}@<-.2ex>@[blue]@[|(2)] "F"; "G"}; {\ar@{..}@[blue]@[|(2)] "A"; "H"};
\end{xy}
\\[5ex]
+& \qquad
\begin{xy}
(3.06, 7.39)*{}="A"; (7.39, 3.06)*{}="B"; (7.39, -3.06)*{}="C";
(3.06, -7.39)*{}="D"; (-3.06, -7.39)*{}="E"; (-7.39, -3.06)*{}="F";
(-7.39, 3.06)*{}="G"; (-3.06, 7.39)*{}="H";
{\ar@{-}@[black] "A"; "B"}; {\ar@{-}@[black] "C"; "H"};
{\ar@{-}@[black]@<.2ex> "D"; "E"}; {\ar@{-}@<.2ex>@[black] "F"; "G"};
{\ar@{..}@[blue]@[|(2)] "B"; "C"}; {\ar@{..}@[blue]@<-.2ex>@[|(2)] "D"; "E"};
{\ar@{..}@<-.2ex>@[blue]@[|(2)] "F"; "G"}; {\ar@{..}@[blue]@[|(2)] "A"; "H"};
\end{xy}
\qquad + \qquad
\begin{xy}
(3.06, 7.39)*{}="A"; (7.39, 3.06)*{}="B"; (7.39, -3.06)*{}="C";
(3.06, -7.39)*{}="D"; (-3.06, -7.39)*{}="E"; (-7.39, -3.06)*{}="F";
(-7.39, 3.06)*{}="G"; (-3.06, 7.39)*{}="H";
{\ar@{-}@[black] "A"; "F"}; {\ar@{-}@[black]@<.2ex> "B"; "C"};
{\ar@{-}@<.2ex>@[black] "D"; "E"}; {\ar@{-}@[black] "G"; "H"};
{\ar@{..}@[blue]@<-.2ex>@[|(2)] "B"; "C"};
{\ar@{..}@<-.2ex>@[blue]@[|(2)] "D"; "E"};
{\ar@{..}@[blue]@[|(2)] "F"; "G"}; {\ar@{..}@[blue]@[|(2)] "A"; "H"};
\end{xy}
\qquad + \qquad
\begin{xy}
(3.06, 7.39)*{}="A"; (7.39, 3.06)*{}="B"; (7.39, -3.06)*{}="C";
(3.06, -7.39)*{}="D"; (-3.06, -7.39)*{}="E"; (-7.39, -3.06)*{}="F";
(-7.39, 3.06)*{}="G"; (-3.06, 7.39)*{}="H";
{\ar@{-}@[black] "A"; "H"}; {\ar@{-}@[black]@<.2ex> "B"; "C"};
{\ar@{-}@[black] "D"; "G"}; {\ar@{-}@[black] "E"; "F"};
{\ar@{..}@[blue]@<-.2ex>@[|(2)] "B"; "C"}; {\ar@{..}@[blue]@[|(2)] "D"; "E"};
{\ar@{..}@[blue]@[|(2)] "F"; "G"}; {\ar@{..}@[blue]@<-.2ex>@[|(2)] "A"; "H"};
\end{xy}
\qquad + \qquad
\begin{xy}
(3.06, 7.39)*{}="A"; (7.39, 3.06)*{}="B"; (7.39, -3.06)*{}="C";
(3.06, -7.39)*{}="D"; (-3.06, -7.39)*{}="E"; (-7.39, -3.06)*{}="F";
(-7.39, 3.06)*{}="G"; (-3.06, 7.39)*{}="H";
{\ar@{-}@[black] "A"; "H"}; {\ar@{-}@[black] "B"; "E"};
{\ar@{-}@[black] "C"; "D"}; {\ar@{-}@<.2ex>@[black] "F"; "G"};
{\ar@{..}@[blue]@[|(2)] "B"; "C"}; {\ar@{..}@[blue]@[|(2)] "D"; "E"};
{\ar@{..}@<-.2ex>@[blue]@[|(2)] "F"; "G"};
{\ar@{..}@[blue]@<-.2ex>@[|(2)] "A"; "H"};
\end{xy}
\\[5ex]
+& \qquad
\begin{xy}
(3.06, 7.39)*{}="A"; (7.39, 3.06)*{}="B"; (7.39, -3.06)*{}="C";
(3.06, -7.39)*{}="D"; (-3.06, -7.39)*{}="E"; (-7.39, -3.06)*{}="F";
(-7.39, 3.06)*{}="G"; (-3.06, 7.39)*{}="H";
{\ar@{-}@[black] "A"; "B"}; {\ar@{-}@[black] "C"; "H"};
{\ar@{-}@[black] "D"; "G"}; {\ar@{-}@[black] "E"; "F"};
{\ar@{..}@[blue]@[|(2)] "B"; "C"}; {\ar@{..}@[blue]@[|(2)] "D"; "E"};
{\ar@{..}@[blue]@[|(2)] "F"; "G"}; {\ar@{..}@[blue]@[|(2)] "A"; "H"};
\end{xy}
\qquad + \qquad
\begin{xy}
(3.06, 7.39)*{}="A"; (7.39, 3.06)*{}="B"; (7.39, -3.06)*{}="C";
(3.06, -7.39)*{}="D"; (-3.06, -7.39)*{}="E"; (-7.39, -3.06)*{}="F";
(-7.39, 3.06)*{}="G"; (-3.06, 7.39)*{}="H";
{\ar@{-}@[black] "A"; "F"}; {\ar@{-}@[black] "B"; "E"};
{\ar@{-}@[black] "C"; "D"}; {\ar@{-}@[black] "G"; "H"};
{\ar@{..}@[blue]@[|(2)] "B"; "C"}; {\ar@{..}@[blue]@[|(2)] "D"; "E"};
{\ar@{..}@[blue]@[|(2)] "F"; "G"}; {\ar@{..}@[blue]@[|(2)] "A"; "H"};
\end{xy}
\qquad + \qquad
\begin{xy}
(3.06, 7.39)*{}="A"; (7.39, 3.06)*{}="B"; (7.39, -3.06)*{}="C";
(3.06, -7.39)*{}="D"; (-3.06, -7.39)*{}="E"; (-7.39, -3.06)*{}="F";
(-7.39, 3.06)*{}="G"; (-3.06, 7.39)*{}="H";
{\ar@{-}@[black]@<.2ex> "A"; "H"}; {\ar@{-}@[black]@<.2ex> "B"; "C"};
{\ar@{-}@[black]@<.2ex> "D"; "E"}; {\ar@{-}@<.2ex>@[black] "F"; "G"};
{\ar@{..}@[blue]@<-.2ex>@[|(2)] "B"; "C"};
{\ar@{..}@[blue]@<-.2ex>@[|(2)] "D"; "E"};
{\ar@{..}@[blue]@<-.2ex>@[|(2)] "F"; "G"};
{\ar@{..}@[blue]@<-.2ex>@[|(2)] "A"; "H"};
\end{xy}
\qquad - \qquad
\begin{xy}
(3.06, 7.39)*{}="A"; (7.39, 3.06)*{}="B"; (7.39, -3.06)*{}="C";
(3.06, -7.39)*{}="D"; (-3.06, -7.39)*{}="E"; (-7.39, -3.06)*{}="F";
(-7.39, 3.06)*{}="G"; (-3.06, 7.39)*{}="H";
{\ar@{-}@[black] "A"; "D"}; {\ar@{-}@[black] "B"; "G"};
{\ar@{-}@[black] "C"; "F"}; {\ar@{-}@[black] "E"; "H"};
{\ar@{..}@[blue]@[|(2)] "B"; "C"}; {\ar@{..}@[blue]@[|(2)] "D"; "E"};
{\ar@{..}@[blue]@[|(2)] "F"; "G"}; {\ar@{..}@[blue]@[|(2)] "A"; "H"};
\end{xy}
\end{split}
\end{displaymath}
\caption{A graphical identity.  The blue edges here are
not relevant to the identity:  they are just drawn to emphasize the
disconnectedness of the graphs on the right side.  Thus this is a linear
relation between $Y_{\Gamma}$'s where the $\Gamma$ are matchings on 8 points.
This identity can be obtained by applying the procedure of
Proposition~\ref{prop:discon-span} to the term on the left side, or by
applying the straightening algorithm to the final term on the right side.
\label{f:id8}}
\Rnts{Used repeatedly} 
\end{figure}
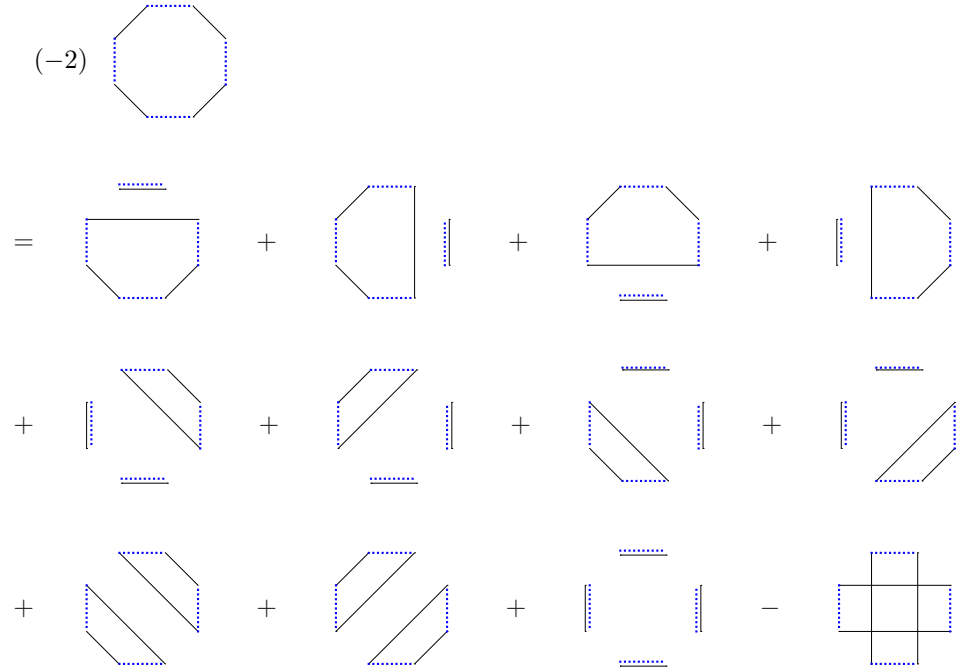

\begin{corollary}
\label{cor:benzene}
Let $\Gamma$ be a regular 3-colored graph and let $\Phi$ be a benzene cycle in
$\Gamma$.  Then in $V_L^{\otimes 3}$ one can write $Y_{\Gamma}$ as a linear
combination of $Y_{\Gamma_i}$'s where in each $\Gamma_i$ the subgraph $\Phi$ is
replaced with a union of benzene 2-, 4- and 6-cycles.  This holds over
$\Z[\half]$.
\Rnts{Used repeatedly:  P\ref{prop:pfil3},
P\ref{prop:sr},T\ref{thm:retro}(a)}
\end{corollary}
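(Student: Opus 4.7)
The plan is to reduce to Proposition~\ref{prop:discon-span} applied inside the vertex set $V(\Phi)$ of the benzene cycle and then induct on the length of $\Phi$. Say $\Phi$ is a benzene $2k$-cycle. If $2k \le 6$ there is nothing to prove, so assume $2k \ge 8$. After renaming the three colors, we may take the single edges of $\Phi$ to be of color I and the doubled edges to carry both colors II and III, so that the color II and color III matchings agree on $V(\Phi)$; write $\Delta$ for this common matching. Then $\Gamma_I|_{V(\Phi)} \cup \Delta$ is precisely the underlying $2k$-cycle of $\Phi$ as a $2$-colored graph.

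The first step is to apply Proposition~\ref{prop:discon-span} with $L$ replaced by the even set $L' = V(\Phi)$ (which has cardinality at least $8$) and with $\Delta$ as the fixed matching. All Pl\"ucker moves used in the proof of that proposition are local, involving at most $8$ consecutive vertices of the $2$-colored cycle $\Gamma_I|_{L'} \cdot \Delta$, and so the same argument can be carried out entirely inside $L'$, yielding an equality $Y_{\Gamma_I} = \sum_j c_j \, Y_{\Gamma'_{I,j}}$ in $V_L$ in which each $\Gamma'_{I,j}$ agrees with $\Gamma_I$ away from $L'$, and $\Gamma'_{I,j}|_{L'} \cup \Delta$ is disconnected on $L'$. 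Tensoring with the unaltered $Y_{\Gamma_{II}} \otimes Y_{\Gamma_{III}}$ in $V_L^{\otimes 3}$ gives a corresponding expansion $Y_\Gamma = \sum_j c_j \, Y_{\Gamma_j}$, with each $\Gamma_j$ equal to $\Gamma$ outside $V(\Phi)$ and in colors II and III.

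The second step is to read off the structure of $\Gamma_j$ on $V(\Phi)$. Each connected component of $\Gamma'_{I,j}|_{L'} \cup \Delta$ has some even size $2m$ and is an alternating cycle in colors I and II; because $\Gamma_{III}|_{L'}$ still equals $\Delta$, the color III restriction to that component agrees with its color II restriction, exhibiting the component as a genuine benzene $2m$-cycle in $\Gamma_j$. Since $\Gamma'_{I,j}|_{L'} \cup \Delta$ is disconnected one has $2m \le 2k - 2 < 2k$, so an induction on $2k$ finishes the proof: any component of length at least $8$ is itself a benzene cycle in $\Gamma_j$ and can be processed by the same argument, until only benzene $2$-, $4$-, and $6$-cycles remain. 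The main point to verify carefully is that the Pl\"ucker moves from Proposition~\ref{prop:discon-span} can indeed be performed entirely inside $V(\Phi)$ without altering the rest of the graph, and that the components of the resulting $2$-colored graph get promoted to genuine benzene cycles precisely because of the equality $\Gamma_{II}|_{V(\Phi)} = \Gamma_{III}|_{V(\Phi)}$ that characterizes a benzene cycle in a $3$-coloring.
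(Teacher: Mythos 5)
Your proof is correct and follows the same route as the paper's: apply Proposition~\ref{prop:discon-span} to the single-edge matching of $\Phi$ inside $L' = V(\Phi)$ with $\Delta$ the doubled-edge matching, observe that the colored Pl\"ucker moves stay inside $L'$ so the two identical colors remain identical there, and induct on the length of $\Phi$. You spell out more carefully than the paper why each resulting component is a genuine benzene cycle and why the moves don't disturb the rest of $\Gamma$, but the argument is the same.
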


\begin{proof}
Assume $\Phi$ has at least eight vertices (otherwise there is nothing to
prove).  Let $\Delta$ be the doubled edges of $\Phi$ and let $\Phi'$ be the
single edges of $\Phi$, so that $\Phi=\Delta \Phi'$.  Use
Proposition~\ref{prop:discon-span} to rewrite $Y_{\Phi'}$ as a sum of
$Y_{\Phi_i'}$ with $\Delta \Phi_i'$ disconnected.  This will rewrite
$Y_{\Gamma}$ as a sum of $Y_{\Gamma_i}$ where in each $Y_{\Gamma_i}$ the
benzene cycle $\Phi$ has been replaced with a union of smaller benzene cycles.
Continuing in this way, one deduces the proposition.
\end{proof}

We can now prove Proposition~\ref{prop:pfil3}.

\begin{proof}[Proof of Proposition~\ref{prop:pfil3}]
Let $\Gamma$ be a regular 3-colored graph on $L$, thought of as having colors
red, green and blue.  Consider the red-green subgraph of $\Gamma$.  We may
apply Proposition~\ref{prop:pfil2} to rewrite this graph as a sum of graphs
which are unions of 2-cycles.  Thus we may as well assume that the red-green
subgraph of $\Gamma$ is made up of 2-cycles.  By now considering the blue edges
as well, we see that $\Gamma$ is a union of benzene cycles.  We may now appeal
to Corollary~\ref{cor:benzene} to break up large benzene cycles into smaller
ones.
\end{proof}

\subsection{The action of $\mf{S}_L$ on degree three spaces}

We now examine the spaces $\gr_p(\Sym^3(V_L))$ more closely when $L$ is small
and determine their structure as $\mf{S}_L$-modules.  We assume throughout that
$|L|!$ is invertible.  We denote by $M_L$ the $\Z[\mf{S}_L]$-module
with a basis given by the set of undirected matchings on $L$.

\begin{proposition}
\label{prop:matchrep}
As an $\mf{S}_L$-representation the space $M_L$ is multiplicity free and
contains those irreducibles corresponding to partitions of $|L|$ into even
parts.
\end{proposition}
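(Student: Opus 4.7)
The plan is to realize $M_L$ as an orthogonal-invariant space inside a tensor power and then apply Schur--Weyl duality, exactly as in the proof of Proposition~\ref{prop:pfil2rep}. Work over $\Q$ and write $|L| = 2n$. Choose a vector space $W$ of dimension $d \ge n$ equipped with a nondegenerate symmetric bilinear form $\langle \cdot, \cdot \rangle$. For each undirected matching $\Gamma = \{\{a_k, b_k\}\}_{k=1}^n$ on $L$, let $t_{\Gamma} \in W^{\otimes L}$ be the unique tensor whose contraction with any $w_1 \otimes \cdots \otimes w_L$ equals $\prod_k \langle w_{a_k}, w_{b_k}\rangle$. Each $t_\Gamma$ is $\Orth(W)$-invariant, and the First Fundamental Theorem for $\Orth(W)$ guarantees that the $t_\Gamma$ span $(W^{\otimes L})^{\Orth(W)}$. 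A short evaluation argument (pair a given $t_\Gamma$ with tuples of orthonormal vectors matched along the edges of $\Gamma$) shows they are linearly independent once $d \ge n$. Since the assignment $\Gamma \mapsto t_\Gamma$ is plainly $\mf{S}_L$-equivariant, we obtain an isomorphism of $\mf{S}_L$-modules
$$M_L \;\cong\; (W^{\otimes L})^{\Orth(W)}.$$

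Next, Schur--Weyl duality yields
$$W^{\otimes L} \;\cong\; \bigoplus_{\lambda \vdash |L|} M_\lambda \otimes \bS_\lambda(W)$$
as an $\mf{S}_L \times \GL(W)$-module. Taking $\Orth(W)$-invariants produces
$$M_L \;\cong\; \bigoplus_{\lambda \vdash |L|} M_\lambda \otimes \bS_\lambda(W)^{\Orth(W)}.$$
Lemma~\ref{KumarLemma} (Kumar) says $\bS_\lambda(W)^{\Orth(W)}$ is one-dimensional exactly when $\lambda$ has at most $d$ parts, all even, and vanishes otherwise. Since $d \ge n$ and every partition of $2n$ into even parts has at most $n$ parts, the condition simplifies to ``all parts of $\lambda$ are even,'' giving the desired multiplicity-free decomposition.

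The only nontrivial ingredient is the identification $M_L \cong (W^{\otimes L})^{\Orth(W)}$; this rests on the First Fundamental Theorem and the linear independence of the matching tensors in the stable range, for which the bound $d \ge n$ is essential. Once this is in place the remainder is a Schur--Weyl/Kumar argument strictly parallel to the degree-two computation in the proof of Proposition~\ref{prop:pfil2rep}.
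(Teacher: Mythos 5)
Your proof is correct, and it is genuinely more substantive than what the paper offers: the paper simply remarks that the proposition is well-known, cites the decomposition of the plethysm $\Sym^n \Sym^2$, and notes that the only cases actually used ($|L|=4,6$) can be checked by hand. You instead give a self-contained argument by realizing $M_L$ as $(W^{\otimes L})^{\Orth(W)}$ via the matching tensors $t_\Gamma$, using the First Fundamental Theorem for orthogonal invariance and a stable-range evaluation argument for linear independence, and then invoking Schur--Weyl duality together with Lemma~\ref{KumarLemma}. This is exactly parallel in structure to the paper's proof of Proposition~\ref{prop:pfil2rep}, where the same sequence (identify the space as a group-invariant subspace of a tensor power, apply Schur--Weyl, apply Kumar's lemma) is carried out; in effect you have made explicit the proof the paper gestured at. The only points worth stating carefully are the ones you flagged: that the stable range $d \ge |L|/2$ is needed both for the evaluation argument and so that Kumar's bound on the number of parts is never active, and that the bijection $\Gamma \mapsto t_\Gamma$ is $\mf{S}_L$-equivariant because the symmetric group acts by permuting tensor factors in the same way it permutes the vertex labels. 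Both of these are handled correctly.
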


This proposition is well-known and essentially equivalent to the decomposition
of the plethysm $\Sym^n \Sym^2$.  In any case, we only need to use this when
$L$ has cardinality four or six, where it can easily be established by hand.
We now begin our study of $\gr_p(\Sym^3(V_L))$.  For the sake of brevity, we
will denote this space simply by $\gr_p$ in this section.  We will also write
$\mf{S}_n$ in place of $\mf{S}_L$ where $n=|L|$.  Our first result is the
following:

\begin{proposition}
\label{prop:match-cube}
The space $\gr_{2+\cdots+2}$ is multiplicity free and contains only those
irreducible representations of $\mf{S}_n$ which have an even number of parts.
\end{proposition}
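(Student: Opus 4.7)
The plan is to exhibit $\gr_{2+\cdots+2}$ as a quotient of the $\mf{S}_L$-module $M_L$ (possibly twisted by the sign character) and then invoke Proposition~\ref{prop:matchrep} together with a combinatorial observation about conjugate partitions.

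First I would identify a convenient spanning set. By Proposition~\ref{prop:pfil3}, $\Sym^3(V_L)$ is spanned by unions of benzene $2$-, $4$- and $6$-cycles, and a union of benzene cycles is in $F_p$ for $p$ the partition listing the vertex-sets of its components. Hence the image of such a union in $\gr_{2+\cdots+2}$ vanishes unless every component is a benzene $2$-cycle, i.e.\ a triple edge. Equivalently, $\gr_{2+\cdots+2}$ is spanned by the elements $Y_\Gamma^3$ as $\Gamma$ ranges over (undirected) matchings of $L$.

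Next, fix an orientation of $L$ and consider the $\Z$-linear map
\[
\pi: M_L \longrightarrow \gr_{2+\cdots+2}, \qquad \Gamma \longmapsto Y_\Gamma^3.
\]
The previous paragraph shows $\pi$ is surjective. Because $\sigma \cdot Y_\Gamma = \sgn(\sigma)\,Y_{\sigma\Gamma}$ and $\sgn(\sigma)^3 = \sgn(\sigma)$, the map $\pi$ is $\mf{S}_L$-equivariant only after twisting the source by the sign character; that is, $\pi$ factors as a surjection $M_L \otimes \mathrm{sgn} \twoheadrightarrow \gr_{2+\cdots+2}$ of $\mf{S}_L$-modules. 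Working over $\Z[1/|L|!]$, the source is semisimple, so the target is as well.

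Finally I would invoke Proposition~\ref{prop:matchrep}: $M_L$ is multiplicity free and its irreducible constituents are the $\bS_\lambda$ with $\lambda$ a partition of $n = |L|$ into even parts. Tensoring with $\mathrm{sgn}$ replaces $\bS_\lambda$ by $\bS_{\lambda'}$. The key combinatorial observation is that the number of parts of $\lambda'$ equals $\lambda_1$, the largest part of $\lambda$; when every part of $\lambda$ is even, in particular $\lambda_1$ is even, so $\lambda'$ has an even number of parts. Thus $M_L \otimes \mathrm{sgn}$ is multiplicity free and all its irreducibles are indexed by partitions of $n$ with an even number of parts. Since $\gr_{2+\cdots+2}$ is a quotient of this, both conclusions of the proposition follow immediately. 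There is no real obstacle here beyond keeping the sign-twist and the conjugate-partition bookkeeping straight; the proof is essentially a formal consequence of Propositions~\ref{prop:pfil3} and~\ref{prop:matchrep}.
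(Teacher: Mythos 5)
Your proof is correct and follows essentially the same route as the paper: realize $\gr_{2+\cdots+2}$ as the image of the sign-twisted matching module $\epsilon\otimes M_L$ under $\Gamma\mapsto Y_\Gamma^3$ and then invoke Proposition~\ref{prop:matchrep}. You merely spell out the conjugate-partition step (tensoring with sign sends $M_\lambda$ to $M_{\lambda'}$, and $\lambda'$ has $\lambda_1$ parts, which is even when all parts of $\lambda$ are even) that the paper leaves to the reader.
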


\begin{proof}
The space $\gr_{2+\cdots+2}$ is the image of the map $\epsilon \otimes M_L \to
\Sym^3(V_L)$ which takes a matching $\Gamma$ to $Y_{\Gamma}^3$.  The result now
follows from Proposition~\ref{prop:matchrep}.
\end{proof}

We will only need to use the above proposition for $n \le 6$.  We now determine
the spaces $\gr_p$ completely for $n=4$ and $6$.  As a warm-up, consider the
$n=2$ case: $\gr_2$ is one-dimensional, and it is the sign representation of
$\mf{S}_2$.  Reason: the space $\Sym^3(V_L)$ is spanned by $Y_{\Gamma}$ where
$\Gamma$ is the graph on $L$ with a tripled edge.  The group $\mf{S}_2$ fixes
$\Gamma$ and so acts on $Y_{\Gamma}$ through the sign character.  All other
$\gr_p$ in this case are zero.  We now turn to $n=4$.

\begin{proposition}
\label{prop:g4}
(a) The space $\gr_{2, 2}$ is free over $\Z[\tfrac{1}{4!}]$ and
three-dimensional.  As an $\mf{S}_4$-representation it decomposes into two
irreducibles corresponding to
the partitions $2+2$ and $1+1+1+1$. \newline
(b) The space $\gr_4$ is free over $\Z[\tfrac{1}{4!}]$ and
one-dimensional.  The representation of $\mf{S}_4$ on it is trivial.
\Rnts{used in \S \ref{placeone}}
\end{proposition}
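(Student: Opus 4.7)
The plan is to pin down the dimensions of $\gr_{2+2}$ and $\gr_4$ by a direct count and then read off the $\mf{S}_4$-representation structure from the results of \S\ref{s:symvl}. With $|L|=4$ there are exactly three matchings $m_1,m_2,m_3$ satisfying a single Pl\"ucker relation, so $V_L$ is free of rank $2$ and $\Sym^3(V_L)$ is free of rank $4$. The only partitions of $4$ into even parts are $4$ and $2+2$, and $2+2$ is the finer one, so $F_{2+2}=\gr_{2+2}$ is spanned by the $Y_\Gamma$ with $\Gamma$ a regular $3$-colored graph whose components have size $2$. Any such $\Gamma$ must have its three color-matchings all equal, giving the three ``triple-matching'' elements $Y_{m_i}^3$, $i=1,2,3$. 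These are linearly independent in $\Sym^3(V_L)$: substituting $Y_{m_3}=-Y_{m_1}-Y_{m_2}$ and expanding shows that $Y_{m_3}^3$ acquires nontrivial coefficients on the mixed monomials $Y_{m_1}^2Y_{m_2}$ and $Y_{m_1}Y_{m_2}^2$, which cannot be produced by $Y_{m_1}^3$ or $Y_{m_2}^3$. Hence $\dim\gr_{2+2}=3$ and $\dim\gr_4=1$.

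For part (a), I would invoke Proposition~\ref{prop:match-cube}: the map $\Gamma\mapsto Y_\Gamma^3$ surjects $\epsilon\otimes M_L$ onto $\gr_{2+2}$. For $|L|=4$, Proposition~\ref{prop:matchrep} together with the dimension count $1+2=3=|\mathcal{M}_L|$ forces $M_L=M_{(4)}\oplus M_{(2,2)}$, and twisting by the sign yields $\epsilon\otimes M_L\cong M_{(1,1,1,1)}\oplus M_{(2,2)}$ since $(2,2)'=(2,2)$. Source and target both have dimension $3$, so the surjection is an isomorphism and $\gr_{2+2}$ splits as the irreducibles corresponding to the partitions $2+2$ and $1+1+1+1$, as claimed.

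For part (b), the one-dimensional $\mf{S}_4$-representation $\gr_4$ must be one of the two one-dimensional irreducibles. I would settle this by a short character computation: starting from $V_L=M_{(2,2)}$ with character $(2,0,2,-1,0)$ on the conjugacy classes $e,(12),(12)(34),(123),(1234)$, one obtains $\chi_{\Sym^3 V_L}=(4,0,4,1,0)=\chi_{M_{(4)}}+\chi_{M_{(1,1,1,1)}}+\chi_{M_{(2,2)}}$. Subtracting the $M_{(1,1,1,1)}$ and $M_{(2,2)}$ summands already accounted for in $\gr_{2+2}$ leaves the trivial representation $M_{(4)}$, proving (b). The main obstacle in the whole argument is the independence of the three cubes $Y_{m_i}^3$ in Part 1; once that is secured, the remainder is routine character arithmetic.
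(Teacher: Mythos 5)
Your proof is correct, and it takes a genuinely different route from the paper. The paper establishes (b) first, by explicit Pl\"ucker manipulations on benzene 4-cycles: it shows any two benzene 4-cycles with the same doubled edges agree up to sign, and then that one may switch which pair of edges is doubled, so that $\gr_4$ is spanned by a single element on which some transpositions act trivially (hence the full action is trivial); it then deduces $\dim\gr_{2,2}=3$ by subtraction and identifies it via Proposition~\ref{prop:match-cube}. You instead pin down $\gr_{2+2}$ directly by an elementary rank computation inside the rank-$4$ module $\Sym^3(V_L)$ (the $3\times4$ matrix for the three triple-matchings has a $3\times3$ minor equal to $\pm 3$, a unit in $\Z[\tfrac{1}{4!}]$), deduce $\dim\gr_4=1$ by subtraction, and read off the representation structure from Proposition~\ref{prop:match-cube} plus a character computation in $\mf{S}_4$.

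The two approaches buy different things. The paper's graph-theoretic argument is in the spirit of the rest of \S\ref{s:symvl} and, importantly, gives the \emph{method} that generalizes to Proposition~\ref{prop:g6}(d) (pinning down $\gr_6$ by explicit benzene-cycle manipulations). Your linear-algebra argument is cleaner and more self-contained for $|L|=4$; in particular it establishes $\dim\gr_4=1$ (not merely $\dim\gr_4\le 1$) without needing the dimension count on $\gr_{2,2}$, which the paper's prose glosses over slightly. The only point worth making explicit in your write-up is the freeness: the determinant $\pm 3$ of the relevant minor shows the span of $Y_{m_1}^3,Y_{m_2}^3,Y_{m_3}^3$ is a free rank-$3$ direct summand of $\Sym^3(V_L)$ over $\Z[\tfrac{1}{4!}]$, which gives the freeness claims in both parts (a) and (b) rather than just dimensions over a field.
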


\begin{proof}
The space $\gr_4$ is spanned by benzene 4-cycles.  We have the identity
\begin{displaymath}
0 \qquad = \qquad
\begin{xy}
(0, -4)*{}="A"; (0, 4)*{}="B"; (8, -4)*{}="C"; (8, 4)*{}="D";
{\ar@<.2ex>@[red]@{--} "A"; "B"};
{\ar@<-.2ex>@[green]@{-} "A"; "B"};
{\ar@<.2ex>@[red]@{--} "C"; "D"};
{\ar@<-.2ex>@[green]@{-} "C"; "D"};
{\ar@[blue]@{..}@[|(2)] "B"; "D"};
{\ar@[blue]@{..}@[|(2)] "A"; "C"};
\end{xy}
\qquad + \qquad
\begin{xy}
(0, -4)*{}="A"; (0, 4)*{}="B"; (8, -4)*{}="C"; (8, 4)*{}="D";
{\ar@<.2ex>@[red]@{--} "A"; "B"};
{\ar@<-.2ex>@[green]@{-} "A"; "B"};
{\ar@<.2ex>@[red]@{--} "C"; "D"};
{\ar@<-.2ex>@[green]@{-} "C"; "D"};
{\ar@[blue]@{..}@[|(2)] "B"; "C"};
{\ar@[blue]@{..}@[|(2)] "A"; "D"};
\end{xy}\qquad + \qquad
\begin{xy}
(0, -4)*{}="A"; (0, 4)*{}="B"; (8, -4)*{}="C"; (8, 4)*{}="D";
{\ar@<.4ex>@[red]@{--} "A"; "B"};
{\ar@[green]@{-} "A"; "B"};
{\ar@<-.4ex>@[blue]@{..}@[|(2)] "A"; "B"};
{\ar@<.4ex>@[red]@{--} "C"; "D"};
{\ar@[green]@{-} "C"; "D"};
{\ar@<-.4ex>@[blue]@{..}@[|(2)] "C"; "D"};
\end{xy}
\end{displaymath}
obtained by Pl\"uckering the blue edges.  We thus see that if $\Gamma$ is
benzene 4-cycle in which $\uedge{ab}$ is a doubled edge then $Y_{\Gamma}+
Y_{(a  b) \Gamma}=0$ in $\gr_4$, which can be rephrased as $(a  b)
Y_{\Gamma}=Y_{\Gamma}$.  In particular, two benzene 4-cycles are equal in
$\gr_4$ (up to a possible sign) if they have the same doubled edges.  By cubing
\begin{displaymath}
\begin{xy}
(0, -4)*{}="A"; (0, 4)*{}="B"; (8, -4)*{}="C"; (8, 4)*{}="D";
{\ar@{-} "A"; "D"}; {\ar@{-} "B"; "C"};
\end{xy}
\qquad = \qquad
\begin{xy}
(0, -4)*{}="A"; (0, 4)*{}="B"; (8, -4)*{}="C"; (8, 4)*{}="D";
{\ar@{-} "A"; "C"}; {\ar@{-} "B"; "D"};
\end{xy}
\qquad + \qquad
\begin{xy}
(0, -4)*{}="A"; (0, 4)*{}="B"; (8, -4)*{}="C"; (8, 4)*{}="D";
{\ar@{-} "A"; "B"}; {\ar@{-} "C"; "D"};
\end{xy}
\end{displaymath}
we obtain
\begin{displaymath}
0 \qquad = \qquad\begin{xy}
(0, -4)*{}="A"; (0, 4)*{}="B"; (8, -4)*{}="C"; (8, 4)*{}="D";
{\ar@<.2ex>@[red]@{--} "A"; "B"};
{\ar@<-.2ex>@[green]@{-} "A"; "B"};
{\ar@<.2ex>@[red]@{--} "C"; "D"};
{\ar@<-.2ex>@[green]@{-} "C"; "D"};
{\ar@[blue]@{..}@[|(2)] "B"; "D"};
{\ar@[blue]@{..}@[|(2)] "A"; "C"};
\end{xy}
\qquad + \qquad
\begin{xy}
(0, -4)*{}="A"; (0, 4)*{}="B"; (8, -4)*{}="C"; (8, 4)*{}="D";
{\ar@<.2ex>@[red]@{--} "A"; "C"};
{\ar@<-.2ex>@[green]@{-} "A"; "C"};
{\ar@<.2ex>@[red]@{--} "B"; "D"};
{\ar@<-.2ex>@[green]@{-} "B"; "D"};
{\ar@[blue]@{..}@[|(2)] "A"; "B"};
{\ar@[blue]@{..}@[|(2)] "C"; "D"};
\end{xy}
\end{displaymath}
in $\gr_4$ (all other terms belong to $F_{2, 2}$).  This shows that we can
switch which edges are the doubled edges.  We have thus shown that $\gr_4$ is
one dimensional.  Since we already know that some transpositions act by the
identity, it follows that $\mf{S}_4$ acts trivially on $\gr_4$.

Now, $\Sym^3(V_L)$ is four-dimensional.  We know that $\gr_4$ is
one-dimensional and so $\gr_{2, 2}$ must be three-dimensional.  By
Proposition~\ref{prop:match-cube} we know that $\gr_{2, 2}$ is a quotient of
the direct sum of the irreducible representations of $\mf{S}_4$ corresponding
to $2+2$ and $1+1+1+1$.  Since this direct sum has dimension $3$, it follows
that $\gr_{2, 2}$ must equal it.
\end{proof}

Finally we consider the case $n=6$.  As stated in \S \ref{outline}, the
computer calculations in the proof are very mild and could probably be done by
hand in a matter of hours.

\begin{proposition}
\label{prop:g6}
(a) The space $\gr_{2, 2, 2}$ is free over $\Z[\tfrac{1}{6!}]$ and
15-dimensional.  As an $\mf{S}_6$-representation it decomposes into three
irreducibles, corresponding to the partitions $3+3$, $2+2+1+1$ and
$1+1+1+1+1+1$.
\newline
(b)  The space $\gr_{4, 2}$ is free over $\Z[\tfrac{1}{6!}]$, 15-dimensional
and decomposes into two irreducibles: one corresponding to $5+1$ and the other
to $4+1+1$.
\newline
(c) The space $\gr_6$ is free over $\Z[\tfrac{1}{6!}]$ and five-dimensional.
It is irreducible and corresponds to $3+3$.
\newline
(d) Let $Q$ be a set of cardinality six and let $c$, $d$ and $e$ be three
distinct elements of $Q$.  Then $\gr_6(\Sym^3(V_Q))$ is spanned by benzene
6-cycles in which $cd$ or $ce$ appears as a doubled edge.  (We call the set
$Q$ here, rather than $L$, since that is what it will be called in the one
place where we apply this statement.)
\Rnts{used in \S \ref{placeone}}
\end{proposition}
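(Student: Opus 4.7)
The plan is to combine the benzene-cycle spanning theorem (Proposition~\ref{prop:pfil3} and Corollary~\ref{cor:benzene}) with an $\mathfrak{S}_6$-character decomposition of $\Sym^3(V_L)$, and to resolve the remaining assignments of irreducibles between the pieces of the partition filtration by an explicit linear-algebra check. Since $V_L \cong S^{(3,3)}$ has dimension $5$, the space $\Sym^3(V_L)$ has dimension $35$, and computing the character $\chi_{\Sym^3 V_L}(\sigma) = \tfrac{1}{6}\bigl(\chi(\sigma)^3 + 3\,\chi(\sigma^2)\chi(\sigma) + 2\,\chi(\sigma^3)\bigr)$ on each conjugacy class and matching against the $\mathfrak{S}_6$-character table yields
\[
\Sym^3(V_L) \;\cong\; S^{(1^6)} \oplus S^{(2,2,1,1)} \oplus S^{(3,3)} \oplus S^{(3,3)} \oplus S^{(4,1,1)} \oplus S^{(5,1)},
\]
with dimensions $1 + 9 + 5 + 5 + 10 + 5 = 35$. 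This is the global ceiling that the filtration $F_\bullet$ must distribute among $\gr_{2,2,2}$, $\gr_{4,2}$, and $\gr_6$ in an $\mathfrak{S}_6$-equivariant way.

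For (a), the surjection $\epsilon \otimes M_L \twoheadrightarrow \gr_{2,2,2}$ from the proof of Proposition~\ref{prop:match-cube}, together with Proposition~\ref{prop:matchrep}, realizes $\gr_{2,2,2}$ as a quotient of $S^{(1^6)} \oplus S^{(2,2,1,1)} \oplus S^{(3,3)}$, since the $\sgn$-twists of the even-partitions $(6), (4,2), (2,2,2)$ appearing in $M_L$ are the transpose partitions $(1^6), (2,2,1,1), (3,3)$. To see that this surjection is an isomorphism I will apply the Young symmetrizer for each of these three shapes to a conveniently chosen matching and verify that the image is nonzero in the non-crossing basis of Theorem~\ref{thm:kempe2}; this is a direct verification in the $15$-dimensional image.

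For (b) and (c), the three remaining irreducibles $S^{(5,1)}$, $S^{(4,1,1)}$, and a second copy of $S^{(3,3)}$ (with total dimension $20$) must be distributed between $\gr_{4,2}$ and $\gr_6$. I will show $\dim \gr_{4,2} \le 15$ by bounding the span of (benzene $4$-cycle on a $4$-subset)$\cdot$(tripled edge on its complement): there are $\binom{6}{4} = 15$ choices of $4$-subset, and on each fixed $4$-subset $L'$ Proposition~\ref{prop:g4}(b) gives a one-dimensional $\gr_4(\Sym^3 V_{L'})$. A parallel enumeration of benzene $6$-cycles modulo the relations of Corollary~\ref{cor:benzene} gives $\dim \gr_6 \le 5$. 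Adding these two bounds yields exactly the ceiling $20$, so both are tight; a character comparison then identifies which irreducibles live where, since the orbit of a benzene $6$-cycle in $\mathfrak{S}_6$ has dihedral stabilizer of order $6$, and the only irreducible of dimension $5$ compatible with this stabilizer and with the ceiling is $S^{(3,3)}$, forcing (c). The complement $S^{(5,1)} \oplus S^{(4,1,1)}$ then lives in $\gr_{4,2}$, giving (b).

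For (d), since $\gr_6 \cong S^{(3,3)}$ is irreducible, it suffices to exhibit a single benzene $6$-cycle with $\uedge{cd}$ or $\uedge{ce}$ as a doubled edge whose image in $\gr_6$ is nonzero; I will do this by a direct non-crossing-basis calculation. The main obstacle throughout is the upper bound $\dim \gr_6 \le 5$: this requires carefully exploiting Corollary~\ref{cor:benzene} to collapse all $120$ benzene $6$-cycles on $L$ down to a five-dimensional quotient. As the authors note, this is the step where a small computer (or patient human) calculation is needed, but the entire analysis takes place in the $35$-dimensional ambient $\Sym^3(V_L)$, keeping it tractable.
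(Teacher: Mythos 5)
Your overall structure — compute the character decomposition of $\Sym^3(V_L)$, establish (a), and then allocate the three remaining irreducibles between $\gr_{4,2}$ and $\gr_6$ — is the same framework the paper uses, and the character identity $\Sym^3(V_L)\cong S^{(1^6)}\oplus S^{(2,2,1,1)}\oplus 2S^{(3,3)}\oplus S^{(4,1,1)}\oplus S^{(5,1)}$ (dimension $35$) is exactly the ``character computation'' the paper invokes in part~(c). Your upper bound $\dim\gr_{4,2}\le 15$ via the $\binom{6}{4}$ choices of $4$-subset and Proposition~\ref{prop:g4}(b) is likewise sound and is essentially the paper's observation that $\gr_{4,2}$ is a quotient of $\Ind_{\mf{S}_4\times\mf{S}_2}^{\mf{S}_6}(\gr_4\otimes\gr_2)\cong S^{(5,1)}\oplus S^{(4,1,1)}$.

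However, there is a genuine gap in your argument for $\dim\gr_6\le 5$. You attribute this bound to Corollary~\ref{cor:benzene}, but that corollary only breaks apart benzene cycles of length at least \emph{eight}; for a benzene $6$-cycle on all of $L$ it is vacuous and produces no relations at all. The relations that actually collapse the $120$ benzene $6$-cycles are Pl\"ucker relations, and working them out in the quotient $\gr_6$ requires first knowing $F_{4,2}$ (since $\gr_6 = F_6/F_{4,2}$). So the computation you are proposing is not independent of, and is in fact at least as large as, the one you are trying to avoid. The paper takes the other route: one computes $\dim F_{4,2}=30$ directly (a rank computation on the span of graphs built from benzene $2$- and $4$-cycles inside the $35$-dimensional $\Sym^3(V_L)$, which does not require knowing $\gr_6$), concludes $\dim\gr_{4,2}=15$, hence $\gr_{4,2}\cong S^{(5,1)}\oplus S^{(4,1,1)}$, and then $\gr_6\cong S^{(3,3)}$ drops out by subtraction from the character decomposition. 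Your version needs either this computation or an honest direct computation of $\gr_6$; the appeal to Corollary~\ref{cor:benzene} should be removed.

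There is a second gap in part~(d). You argue that since $\gr_6\cong S^{(3,3)}$ is irreducible, it suffices to find one benzene $6$-cycle with $\uedge{cd}$ or $\uedge{ce}$ doubled whose image is nonzero. This does not follow: the set of benzene $6$-cycles with $\uedge{cd}$ or $\uedge{ce}$ doubled is \emph{not} $\mf{S}_6$-stable (it is only stable under the order-$12$ subgroup stabilizing $c$ and $\{d,e\}$), so its span is merely some nonzero subspace of $\gr_6$, not automatically all of it. Irreducibility of $\gr_6$ as an $\mf{S}_6$-module does not constrain proper subspaces that are only invariant under a small subgroup. Establishing (d) requires actually computing the span, which is again the ``straightforward computer calculation'' the paper refers to.
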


\begin{proof}
The code for the computer calculations required here can be found on the
webpage \cite{webpage}.

(a) A computer calculation shows that $F_{2, 2, 2}=\gr_{2, 2, 2}$ is 15
dimensional.  By Proposition~\ref{prop:match-cube}, the space $\gr_{2, 2, 2}$
is a quotient of the direct sum of the irreducible representations of
$\mf{S}_6$ corresponding to the partitions given in the statement of this
proposition.  Since this direct sum is also 15 dimensional the quotient map
is an isomorphism.

(b) A computer calculation shows that $F_{4, 2}$ is 30 dimensional, and so
$\gr_{4, 2}=F_{4, 2}/F_{2,2,2}$ is 15 dimensional.  Now observe that there is
a map $F_4 \otimes F_2 \to F_{4, 2}$ which takes an element of $F_4$ on the
vertices $\{1, 2, 3, 4\}$ and appends a tripled edge on the vertices
$\{5, 6\}$.  One sees using this that that $\gr_{4, 2}$ is a quotient of
$\Ind_{\mf{S}_4 \times \mf{S}_2}^{\mf{S}_6}(\gr_4 \otimes \gr_2)$.  By the
Littlewood-Richardson rule, the induction is a direct sum of the two
irreducible representations of $\mf{S}_6$ corresponding to the partitions given
in the statement of the proposition.  Since this sum is also 15 dimensional the
quotient map is an isomorphism.

(c) We have a non-canonical $\mf{S}_6$-equivariant decomposition $\Sym^3(V_L)
\cong \gr_{2,2,2} \oplus \gr_{4, 2} \oplus \gr_6$.  On the other hand, a
character computation shows that $\Sym^3(V_L) \cong \gr_{2,2,2} \oplus
\gr_{4, 2} \oplus M_{3+3}$ (where $M_{3,3}$ is the irreducible representation
corresponding to $3+3$). Thus $\gr_6= M_{3+3}$.

(d)  This is a straightforward computer calculation.
\end{proof}


\section{Retrogeneration of the ideal}
\label{s:retro}

In this section, we prove that for $|L|$ sufficiently large, the ideal of
relations is retrogenerated (generated by quadratics and relations on fewer
points):

\begin{theorem}
\label{thm:retro}
Let $L$ be an even set of cardinality at least 10.  Then $I_L=I_L^{\retro}$.
For $|L| \ge 12$ this holds over $\Z[\half]$ while for $|L|=10$ it holds
over $\Z[\tfrac{1}{10!}]$.
\end{theorem}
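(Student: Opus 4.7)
By Theorem~\ref{thm:gsc}, the ideal $I_L$ is generated over $\Z$ by quadratic relations together with the small generalized Segre cubic relations $\Rel(\Sigma)$. Since $Q_L \subset I_L^{\retro}$, the theorem reduces to showing that every such cubic lies in $I_L^{\retro}$. The guiding principle is that if the underlying colored graph $\Gamma$ of $\Sigma$ is disconnected --- or more usefully, if $\Rel(\Sigma)$ is equivalent modulo $Q_L$ to $\Rel(\Sigma')$ with $\Sigma'$ disconnected --- then the factor supported on a proper subset $L' \subset L$ gives a relation on $L'$, which, combined with an invariant on $L \setminus L'$ by outer multiplication, exhibits $\Rel(\Sigma)$ as retrogenerated.

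For $|L|\ge 12$, fix a datum $\Sigma=(\Gamma,\{U_R,U_G,U_B\})$ with $|U_R|=2$, so that $|U_G|+|U_B|\ge 10$ and without loss of generality $|U_G|\ge 6$. Only the two red special edges meet $U_G$, so the subgraph $\Gamma|_{U_G}$ contains a 2-colored regular piece $\Delta_{BG}$ (the green and blue edges) of valence one in each color on all but two vertices. The plan is to process $\Delta_{BG}$ first by Proposition~\ref{prop:pfil2}, rewriting it modulo quadratic relations as a disjoint union of 2-cycles and at most one 4-cycle, and then to reincorporate the two red special edges, assembling the result into a union of benzene cycles on $U_G$. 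Applying Corollary~\ref{cor:benzene} to each benzene cycle of length $\ge 8$ breaks it into benzene cycles of length $\le 6$, all at the cost of quadratic relations. Because $|U_G|\ge 6$ with only two vertices ``tethered'' to the rest of $\Gamma$ via the red specials, the resulting decomposition forces a connected component of $\Gamma$ to live inside a proper subset of $L$, so the modified $\Sigma$ is disconnected and $\Rel(\Sigma)\in I_L^{\retro}$.

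The case $|L|=10$ is the main obstacle. The possible shapes of $\{|U_R|,|U_G|,|U_B|\}$ are $(2,2,6)$ and $(2,4,4)$, and in each case the two parts not of size two are too small for the reduction of the previous paragraph: the benzene-breaking procedure no longer guarantees a disconnected component, because all available room is consumed by the special edges. The plan here is to enumerate the $\mf{S}_L$-orbits of small generalized Segre data --- using the stronger form $U_G<U_R<U_B$, $|U_R|=2$ of Remark~\ref{rem:gsc} to cut the case count --- and for each orbit produce an explicit reduction of $\Rel(\Sigma)$ to a retrogenerated relation by combining colored Pl\"ucker on individual colors, the degenerate Segre identities of Proposition~\ref{prop:gsd-degen}, and the graphical identities of Figures~\ref{f:id6} and \ref{f:id8}. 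The representation-theoretic content of \S\ref{s:symvl} (notably Propositions~\ref{prop:g4}--\ref{prop:g6}) is used to organize these reductions on the small parts; this is why $10!$ rather than just $2$ must be inverted in this case.

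The hard part is therefore the $|L|=10$ analysis: the arguments used for $|L|\ge 12$ rely essentially on having a part of size $\ge 6$ in which the benzene/Pl\"ucker machinery produces a genuinely disconnected subgraph, and when $|L|=10$ one is always at the boundary of applicability. I expect the write-up of \S\ref{s:retro} to split cleanly into a short structural proof for $|L|\ge 12$ (essentially the paragraph above) and a longer ad hoc case analysis for $|L|=10$, with the latter consuming most of the effort and being the piece that resists a conceptual reformulation.
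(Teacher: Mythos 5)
Your overall architecture (reduce via Theorem~\ref{thm:gsc} to small generalized Segre cubics, show these are retrogenerated when a part is big enough, and fall back to an ad hoc argument for $|L|=10$) matches the paper's, but both halves have real gaps.

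For $|L|\ge 12$: you reduce the blue-green subgraph of $U_G$ to 2-cycles and at most one 4-cycle via Proposition~\ref{prop:pfil2}, then claim to ``reincorporate the two red special edges, assembling the result into a union of benzene cycles.'' This does not work as stated: a blue-green 4-cycle is a square, not a doubled edge, so adding red edges does not give a benzene cycle at that spot. The paper needs an extra piece of machinery you do not mention --- the \emph{square rotation relations}, with Proposition~\ref{prop:sr} establishing their retrogeneration for $|L|\ge 12$, and Lemmas~\ref{lem:retro-a} and~\ref{lem:retro-b} using them to replace blue-green 4-cycles by 2-cycles modulo retrogenerated relations. Only after that is the graph in $U_G$ genuinely a union of benzene chains and the benzene-breaking of Corollary~\ref{cor:benzene}/Figure~\ref{f:id8} applies. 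Moreover your closing sentence --- that since $|U_G|\ge 6$ a connected component is forced into a proper subset --- is an assertion, not an argument; the paper's actual use of $|L|\ge 12$ is that (i) square rotation relations require it, and (ii) the final benzene chain joining the two parts has $\ge 4$ single red edges, which is what lets Figure~\ref{f:id8} disconnect it.

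For $|L|=10$: your plan to enumerate $\mf S_L$-orbits of small generalized Segre data and ``for each orbit produce an explicit reduction of $\Rel(\Sigma)$ to a retrogenerated relation'' is not what the paper does, and the paper's own partial results indicate it \emph{cannot} be done this way. Proposition~\ref{prop:retro10c} shows that when $|U_G|=|U_B|=4$ the relevant space of relations modulo $I_L^{\retro}$ vanishes, but when one of $U_G,U_B$ has size 6 the paper only proves a dimension bound $\le 1$, not vanishing. The ``type A'' 6-point relations resist direct retrogeneration. The paper closes the gap not by explicit reduction but by a representation-theoretic squeeze: Proposition~\ref{prop:retro10b} bounds $\dim(I_L^{(3)}/I_L^{\retro,(3)})\le 2$, and since (over a field of characteristic not $2,3,5,7$) $\Sym^3(V_L)$ has no trivial or sign subrepresentation, a subquotient of dimension $\le 2$ must be zero. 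That character argument, not Propositions~\ref{prop:g4}--\ref{prop:g6} by themselves, is the source of the $10!$ denominator (in particular the prime $7$). Your sketch would need to either produce the explicit reductions the paper could not find, or adopt the paper's dimension-plus-representation-theory strategy.
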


The ideal $I_L^{\retro}$ of retrogenerated relations will be defined in
\S \ref{ss:outer}.  We use general structural arguments to prove
Theorem~\ref{thm:retro} when $|L| \ge 12$ but when $|L|=10$ the small size of
the graphs involved forces us to give an inelegant ad hoc argument.  We
suggest that a reader consider skipping the arguments in the $|L|=10$ case
on a first reading.

\subsection{Outer multiplication and the retrogenerated ideal $I^{\retro}_L$}
\label{ss:outer}

Let $L$ and $L'$ be even sets.  We have an \emph{outer multiplication} map 
\begin{displaymath}
\boxtimes:V_L^{\otimes n} \otimes V_{L'}^{\otimes n} \to
V_{L \, \amalg \, L'}^{\otimes n}, \qquad
Y_{\Gamma} \otimes Y_{\Gamma'} \mapsto Y_{\Gamma \, \amalg \, \Gamma'}.
\end{displaymath}
In words, one takes a colored graph $\Gamma$ on $L$ and a colored graph
$\Gamma'$ on $L'$, with the same set of colors, and obtains a colored graph on
$L \amalg L'$  by taking the disjoint union of $\Gamma$ and $\Gamma'$.   We
will often omit the symbol $\boxtimes$ and write the outer product using
juxtaposition.  
\Ncom{$\text{outer mult.} \boxtimes$}

Outer multiplication does not descend to symmetric powers, as shown by the
following example:
\begin{displaymath}
\begin{xy}
(0, 4)*{}="A"; (8, 4)*{}="B"; (0, -4)*{}="E"; (8, -4)*{}="F";
{\ar@[red]@{--} "A"; "B"};
{\ar@[red]@{--} "E"; "F"};
{\ar@[green]@{-} "A"; "E"};
{\ar@[green]@{-} "B"; "F"};
\end{xy}
\quad \boxtimes \quad
\begin{xy}
(16, 4)*{}="C"; (24, 4)*{}="D";
(16, -4)*{}="G"; (24, -4)*{}="H";
{\ar@[red]@{--} "C"; "D"};
{\ar@[red]@{--} "G"; "H"};
{\ar@[green]@{-} "C"; "G"};
{\ar@[green]@{-} "D"; "H"};
\end{xy}
\qquad = \qquad
\begin{xy}
(0, 4)*{}="A"; (8, 4)*{}="B"; (16, 4)*{}="C"; (24, 4)*{}="D";
(0, -4)*{}="E"; (8, -4)*{}="F"; (16, -4)*{}="G"; (24, -4)*{}="H";
{\ar@[red]@{--} "A"; "B"};
{\ar@[red]@{--} "E"; "F"};
{\ar@[green]@{-} "A"; "E"};
{\ar@[green]@{-} "B"; "F"};
{\ar@[red]@{--} "C"; "D"};
{\ar@[red]@{--} "G"; "H"};
{\ar@[green]@{-} "C"; "G"};
{\ar@[green]@{-} "D"; "H"};
\end{xy}
\end{displaymath}
\begin{displaymath}
\begin{xy}
(0, 4)*{}="A"; (8, 4)*{}="B"; (0, -4)*{}="E"; (8, -4)*{}="F";
{\ar@[red]@{--} "A"; "B"};
{\ar@[red]@{--} "E"; "F"};
{\ar@[green]@{-} "A"; "E"};
{\ar@[green]@{-} "B"; "F"};
\end{xy}
\quad \boxtimes \quad
\begin{xy}
(16, 4)*{}="C"; (24, 4)*{}="D"; (16, -4)*{}="G"; (24, -4)*{}="H";
{\ar@[green]@{-} "C"; "D"};
{\ar@[green]@{-} "G"; "H"};
{\ar@[red]@{--} "C"; "G"};
{\ar@[red]@{--} "D"; "H"};
\end{xy}
\qquad = \qquad
\begin{xy}
(0, 4)*{}="A"; (8, 4)*{}="B"; (16, 4)*{}="C"; (24, 4)*{}="D";
(0, -4)*{}="E"; (8, -4)*{}="F"; (16, -4)*{}="G"; (24, -4)*{}="H";
{\ar@[red]@{--} "A"; "B"};
{\ar@[red]@{--} "E"; "F"};
{\ar@[green]@{-} "A"; "E"};
{\ar@[green]@{-} "B"; "F"};
{\ar@[green]@{-} "C"; "D"};
{\ar@[green]@{-} "G"; "H"};
{\ar@[red]@{--} "C"; "G"};
{\ar@[red]@{--} "D"; "H"};
\end{xy}
\end{displaymath}
Let $L$ and $L'$ be the two sets of four vertices occurring on the left sides.
The two left sides above define equal elements of $\Sym^2(V_L) \otimes
\Sym^2(V_{L'})$.  However, the two right sides are different elements of
$\Sym^2(V_{L \, \amalg \, L'})$ ---  their difference is the simplest binomial
relation \eqref{e:simplest}. 

One sees from the above example that outer multiplication does not descend to
symmetric powers for the following reason:  if $\Gamma$ is an $n$-colored graph
on $L$ and one permutes the colors in each connected component of $\Gamma$ to
obtain a new $n$-colored graph $\Gamma'$ then $Y_{\Gamma}$ and $Y_{\Gamma'}$ do
not represent the same element of $\Sym^n(V_L)$ in general.  Now, if $\Gamma$
and $\Gamma'$ are as in the previous sentence then $Y_{\Gamma}-Y_{\Gamma'}$
lies in the ideal $Q_L$ of $\Sym(V_L)$ generated by quadratic relations.
(Reason: any permutation of colors can be obtained by successive transpositions
of colors, and the relations thus arising are clearly quadratic.)  We hence
find that outer multiplication descends to a map
\begin{displaymath}
\boxtimes : \Sym^n(V_L)/Q_L^{(n)} \otimes \Sym^n(V_{L'})/Q_{L'}^{(n)} \to
\Sym^n(V_{L \, \amalg \, L'})/Q_{L \, \amalg \, L'}^{(n)}
\end{displaymath}
and that $I_L^{(n)}/Q_L^{(n)} \otimes \Sym^n(V_{L'})/Q_{L'}^{(n)}$ is mapped
into $I_{L \, \amalg \, L'}^{(n)}/Q_{L \, \amalg \, L'}^{(n)}$ under
$\boxtimes$, that is, the outer product of anything with a relation is still a
relation.  A motivating example appeared in the introduction:  the Segre
relation on $6$ points \eqref{e:Segre} induces a relation on $8$ points
\eqref{Segre8}.

For a given $L$, we define the \emph{ideal of retrogenerated relations},
denoted $I_L^{\retro}$, to be the ideal of $\Sym(V_L)/Q_L$ generated by the
images of $I^{(n)}_{L'}/Q_{L'}^{(n)} \otimes \Sym^n(V_{L''})/Q_{L''}^{(n)}$
under $\boxtimes$ as $(L', L'')$ varies over all partitions of $L$ into two
disjoint proper even subsets and $n$ varies over all positive integers.  We
also write $I_L^{\retro}$  for the inverse image of $I_L^{\retro}$ under
$\Sym(V_L) \to \Sym(V_L)/Q_L$.  We have inclusions $Q_L \subset I_L^{\retro}
\subset I_L$.  (Theorem~\ref{thm:quadgen} will show that these three ideals
are all the same.)  We say a relation (in $I_L$) is \emph{retrogenerated} if it
lies in $I^{\retro}_L$. 
\Ncom{$I^{\retro}_L \text{ideal of retrogen.\ rels.}$}

A basic fact is that outer multiplication does not increase the ``essential
degree'' of a relation:

\begin{proposition}
\label{prop:stabdeg}
Let $L=L' \amalg L''$ be a partition of $L$ into two proper even subsets.
Let $x \in I_{L'}^{(n)}/Q_{L'}^{(n)}$ belong to the ideal generated by
relations of degree $\le k$ and let $y \in \Sym^n(V_{L''})/Q^{(n)}_{L''}$.
Then the outer product $x \boxtimes y$ belongs to the ideal of $\Sym(V_L)/Q_L$
generated by relations of degree $\le k$.
\end{proposition}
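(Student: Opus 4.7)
The plan is to reduce the proposition to a single key lemma describing how outer multiplication interacts with the ordinary multiplication inside $\Sym(V_L)$: namely, that for any $a, b \ge 0$ and any $x_1 \in \Sym^a(V_{L'})$, $x_2 \in \Sym^b(V_{L'})$, $y_1 \in \Sym^a(V_{L''})$, $y_2 \in \Sym^b(V_{L''})$,
\[
(x_1 x_2) \boxtimes (y_1 y_2) \;=\; (x_1 \boxtimes y_1)(x_2 \boxtimes y_2) \pmod{Q_L^{(a+b)}}.
\]
Once this is in hand, the proposition follows quickly: writing $x = \sum_i r_i s_i$ with $r_i \in I_{L'}$ of degree $j_i \le k$ and $s_i \in \Sym^{n-j_i}(V_{L'})$, and using that $\Sym^n(V_{L''})$ is spanned by products of $n$ degree-one elements to decompose $y = \sum_\alpha y_{1,i,\alpha}\, y_{2,i,\alpha}$ with $y_{1,i,\alpha} \in \Sym^{j_i}(V_{L''})$ and $y_{2,i,\alpha} \in \Sym^{n-j_i}(V_{L''})$, one applies the lemma termwise to obtain
\[
x \boxtimes y \;=\; \sum_{i,\alpha} (r_i \boxtimes y_{1,i,\alpha})\,(s_i \boxtimes y_{2,i,\alpha}) \pmod{Q_L^{(n)}}.
\]
Each factor $r_i \boxtimes y_{1,i,\alpha}$ lies in $I_L$ (outer product of a relation with anything is a relation, as noted just before the proposition) and has degree $j_i \le k$, so the displayed sum exhibits $x \boxtimes y$ inside the ideal of $\Sym(V_L)/Q_L$ generated by relations of degree at most $k$, as required.

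The main work, then, is to establish the key lemma. The plan is to verify it directly on basis graphs and then extend by linearity. Picking regular colored graphs $\Gamma_1, \Gamma_2$ on $L'$ and $\Delta_1, \Delta_2$ on $L''$ representing $x_1, x_2, y_1, y_2$, both sides of the identity are represented by the same underlying multi-graph on $L$ obtained by disjoint union of edge sets. The only ambiguity is which color of $\Gamma_i$ gets paired with which color of $\Delta_j$ when forming the outer product; different pairings give colored graphs on $L$ whose underlying graph is the same but whose color classes differ by permuting colors within connected components (the two connected-component groups being the $\Gamma_1 \cup \Delta_1$--part and the $\Gamma_2 \cup \Delta_2$--part). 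By the observation made in the discussion just before the definition of $I_L^{\retro}$, such differences lie in $Q_L$, which is precisely why we pass to $\Sym(V_L)/Q_L$.

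The step I expect to be the main obstacle is the bookkeeping in the key lemma: one has to be careful that outer multiplication $\boxtimes$ is only defined in matching degrees and that the symmetric-power classes $x_1 x_2$ and $y_1 y_2$ are only determined up to a global $\mf{S}_{a+b}$-action on colors, whereas the outer product depends on the individual color of each edge. The clean way to handle this is to fix once and for all a representative of $x_1 x_2$ whose first $a$ colors come from $x_1$ and whose last $b$ come from $x_2$, and similarly for $y_1 y_2$; with those representatives the two sides of the lemma become literally the same colored graph on $L$, and independence of the choice of representative (up to $Q_L$) is exactly the connected-component color-permutation observation. Everything else in the argument is formal manipulation of sums and uses no new ideas beyond Kempe's generation theorem (implicit in writing $y$ as a product of degree-one elements, which here is even easier since $\Sym^n(V_{L''})$ itself is a polynomial ring).
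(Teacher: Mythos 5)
Your proposal is correct and takes essentially the same approach as the paper: your ``key lemma'' $(x_1 x_2) \boxtimes (y_1 y_2) = (x_1 \boxtimes y_1)(x_2 \boxtimes y_2)$ modulo $Q_L$ is exactly what the paper calls ``the basic compatibility between outer multiplication and usual multiplication,'' and the subsequent decomposition of $x$ and $y$ into factors of matching degrees is the same as the paper's. The only difference is that you spend more effort justifying the compatibility (which the paper dismisses as ``trivial to verify''), correctly identifying the connected-component color-permutation argument as the reason it holds only modulo $Q_L$.
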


\begin{proof}
Write $x=\sum a_i r_i$ where $r_i$ is a relation of degree $\le k$ and $a_i$
belongs to $\Sym(V_{L'})/Q_{L'}$ and write $y=\sum y_i$ where each $y_i$
is a product of degree one elements.  The outer product of $x \boxtimes y$
is a sum of terms of the form $a_i r_i \boxtimes y_j$.  Say $r_i$ has degree
$k' \le k$ so that $a_i$ has degree $n-k'$, and write $y_j=b_1 \cdots b_n$.
Then $a_i r_i \boxtimes y_j=(a_i \boxtimes (b_1 \cdots b_{n-k'}))(r_i \boxtimes
(b_{n-k'+1} \cdots b_n))$ --- this is the basic compatibility between outer
multiplication and usual multiplication and is trivial to verify.  We have
thus shown that $a_i r_i \boxtimes y_j$ is a multiple of $r_i \boxtimes
(b_{n-k'+1} \cdots b_n)$, a relation of degree $k' \le k$.
\end{proof}

Outer multiplication, simple binomial relations and the retrogenerated ideal
are quite formal constructions and are present when studying $X^n \cq G$ for
any $X$ and $G$.  They are described nicely by the formalism of third author
mentioned in \S \ref{andrew}.  By contrast, the following two propositions are
specific to the present case.

\begin{proposition}
\label{prop:gsd-discon}
Let $\Sigma=(\Gamma, \ms{U})$ be a generalized Segre datum for which $\Gamma$
is disconnected.  Then $\Rel(\Sigma)$ belongs to $I_L^{\retro}$.
\end{proposition}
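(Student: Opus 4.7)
The plan is to leverage the formal compatibility between the outer product $\boxtimes$ and the graphical data defining $Y_\Sigma$ and $Y_\Sigma'$. Since $\Gamma$ is disconnected, partition its connected components into two non-empty groups, giving a decomposition $\Gamma = \Gamma_1 \amalg \Gamma_2$ on a corresponding partition $L = L_1 \amalg L_2$. Each $\Gamma_i$ is three-regular with valence one in every color, so restricting a single color to $L_i$ gives a matching; in particular, $|L_i|$ must be even. Thus $L_1$ and $L_2$ are proper even subsets of $L$, as required for an application of retrogeneration.

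Next I would restrict the partition $\ms{U}$ to obtain $\ms{U}_i = \{U_R \cap L_i,\, U_G \cap L_i,\, U_B \cap L_i\}$ on $L_i$. Some of the restricted parts may be empty (so $(\Gamma_i, \ms{U}_i)$ need not itself satisfy all the axioms of a generalized Segre datum), but this is harmless: the recoloring procedure defining $\Sigma'$ is local at each vertex, and so yields a well-defined black/purple recoloring of $\Gamma_i$ for each $i$. Let $Y_i$ and $Y_i'$ denote the resulting elements of $\Sym^3(V_{L_i})/Q_{L_i}^{(3)}$. Because $Y_i$ and $Y_i'$ share the same underlying multiset of edges, they have the same image in $R_{L_i}$, so the difference $r_i := Y_i - Y_i'$ lies in $I_{L_i}^{(3)}/Q_{L_i}^{(3)}$.

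The key observation is that both the $Y$- and $Y'$-constructions factor through disjoint unions of their defining (colored) graphs, giving $Y_\Sigma = Y_1 \boxtimes Y_2$ and $Y_\Sigma' = Y_1' \boxtimes Y_2'$ in $\Sym^3(V_L)/Q_L^{(3)}$. The telescoping identity
\begin{equation*}
\Rel(\Sigma) \;=\; Y_\Sigma - Y_\Sigma' \;=\; r_1 \boxtimes Y_2 \;+\; Y_1' \boxtimes r_2
\end{equation*}
then exhibits $\Rel(\Sigma)$ as a sum of two outer products, each pairing a relation on a proper even subset of $L$ with an element of $\Sym^3$ on its complement. By the very definition of $I_L^{\retro}$, each summand lies in $I_L^{\retro}$, completing the proof.

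I expect no serious obstacle here: the argument is essentially formal once the outer-product decomposition is noted. The only care needed is to verify that the recoloring defining $Y_\Sigma'$ commutes with the disjoint-union decomposition, which it does because the rule for recoloring a given edge depends only on which single part of $\ms{U}$ contains both of its endpoints. Any auxiliary choice in the definition of $Y_i'$ (for instance, how the purple degree-two piece is expressed as a product of two matchings) can therefore be made independently on $L_1$ and $L_2$; the resulting ambiguity lives in $Q_{L_i}$ and hence disappears upon outer-multiplying and reducing modulo $Q_L$.
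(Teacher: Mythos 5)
Your proof is correct, and it takes a genuinely different route from the paper's. The paper splits into two cases: if some union of components lies entirely inside one part $U_R$, $U_G$, or $U_B$, then $\Rel(\Sigma)$ factors directly as $Y_{\Gamma_1}\boxtimes\Rel(\Sigma_2)$; otherwise each of $\Gamma_1,\Gamma_2$ contains a pair of special edges, so $\Sigma$ is degenerate of type (2) in \S\ref{ss:gsd-degen}, and the paper invokes Proposition~\ref{prop:gsd-degen} to conclude $\Rel(\Sigma)\in Q_L\subset I_L^{\retro}$.

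Your telescoping identity $\Rel(\Sigma)=r_1\boxtimes Y_2+Y_1'\boxtimes r_2$ is a uniform argument that handles both cases at once and avoids Proposition~\ref{prop:gsd-degen} entirely. It in fact subsumes the paper's first case: when $\Gamma_1$ lies inside one part, the black subgraph of $\Gamma_1$ is just that part's color and the purple subgraph factors as the remaining two colors, so $r_1=Y_1-Y_1'=0$ in $\Sym^3(V_{L_1})/Q_{L_1}^{(3)}$ and your formula collapses to the paper's $Y_1\boxtimes\Rel(\Sigma_2)$. You were right to flag the points needing care: that the black/purple recoloring is local to each edge and hence restricts to components (giving $Y_i'$ a genuine meaning on $L_i$, with the restricted black graph a matching and the restricted purple graph $2$-regular); that $Y_i-Y_i'$ is a relation because both sides have the same underlying edge multiset; and that any choice of factoring the degree-two purple piece, made componentwise, changes $Y_i'$ only by an element of $Q_{L_i}$, which is absorbed because $\boxtimes$ is already defined modulo $Q$. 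One should also record that $\boxtimes$ is symmetric (a disjoint union is unordered), so $Y_1'\boxtimes r_2$ is covered by the definition of $I_L^{\retro}$ just as well as $r_1\boxtimes Y_2$. The trade-off: the paper's case split is less slick but isolates precisely when a disconnected datum's relation is already in $Q_L$, information which is not visible from the telescoping argument.
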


\begin{proof}
Write $\Gamma=\Gamma_1 \amalg \Gamma_2$.  If one of $\Gamma_1$ or $\Gamma_2$
is entirely contained within one of the parts $U_R$, $U_G$ or $U_B$ then
the relation $\Rel(\Sigma)$ is manifestly retrogenerated.  For instance,
if $\Gamma_1$ is contained within one of the parts then $\Gamma_2$ with
the partition induced from $\ms{U}$ forms a generalized Segre datum $\Sigma_2$
and $\Rel(\Sigma)$ is the outer product of $Y_{\Gamma_1}$ and $\Rel(\Sigma_2)$.
If neither $\Gamma_1$ nor $\Gamma_2$ is contained solely within one part
then each contains a pair of special edges and the datum $\Sigma$ is
forced to be degenerate (of the second case given in \S \ref{ss:gsd-degen}).
The relation $\Rel(\Sigma)$ thus belongs to $Q_L \subset I_L^{\retro}$ by
Proposition~\ref{prop:gsd-degen}.
\end{proof}

The next proposition is a key point in our inductive arguments.

\begin{proposition}
\label{prop:outer246}
The ideal $I_L^{\retro} \subset \Sym(V_L)/Q_L$ is generated over $\Z[\half]$ by
the images of $I^{(3)}_{L'}/Q_{L'}^{(3)} \otimes \Sym^3(V_{L''})/Q_{L''}^{(3)}$
under $\boxtimes$ as $(L', L'')$ varies over all partitions of $L$ into two
disjoint subsets where $L''$ has cardinality 2, 4 or 6.
\end{proposition}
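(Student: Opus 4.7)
The plan has two reductions. \textbf{Step 1 (degree of the relation factor).} By Theorem~\ref{thm:gsc}, the ideal $I_{L'}$ is generated over $\Z$ by quadratic relations together with small generalized Segre cubic relations, so the image of $I_{L'}$ in $\Sym(V_{L'})/Q_{L'}$ is the ideal generated by $I^{(3)}_{L'}/Q^{(3)}_{L'}$. Thus any $r \in I^{(n)}_{L'}/Q^{(n)}_{L'}$ can be written $r = \sum_i a_i c_i$ with $c_i$ cubic relations and $a_i \in \Sym^{n-3}(V_{L'})/Q_{L'}$. Expanding $y \in \Sym^n(V_{L''})/Q^{(n)}_{L''}$ as a sum of products of degree-one elements and invoking the compatibility formula
\[
(a_i c_i) \boxtimes (b_1 \cdots b_n) \;=\; \bigl(a_i \boxtimes (b_1 \cdots b_{n-3})\bigr) \cdot \bigl(c_i \boxtimes (b_{n-2} b_{n-1} b_n)\bigr)
\]
recalled in the proof of Proposition~\ref{prop:stabdeg}, one concludes that $r \boxtimes y$ lies in the ideal generated by outer products $c \boxtimes z$ with $c \in I^{(3)}_{L'}/Q^{(3)}_{L'}$ a cubic relation and $z \in \Sym^3(V_{L''})/Q^{(3)}_{L''}$.

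\textbf{Step 2 (size of the non-relation factor).} Suppose we have such a generator $c \boxtimes z$ with $|L''| > 6$. By Proposition~\ref{prop:pfil3}, $z$ can be rewritten as a $\Z[\half]$-linear combination of $Y_{\Gamma_j}$'s, each $\Gamma_j$ being a disjoint union of benzene $2$-, $4$-, and $6$-cycles. Since each such cycle uses at most six vertices but $|L''| > 6$, every $\Gamma_j$ is disconnected. Isolate one benzene cycle of $\Gamma_j$ with vertex set $L''_1$ (so $|L''_1| \in \{2,4,6\}$), and set $L''_2 = L'' \setminus L''_1$. Writing $\Gamma_j = \Gamma_j^{(1)} \amalg \Gamma_j^{(2)}$ accordingly, we get $Y_{\Gamma_j} = Y_{\Gamma_j^{(1)}} \boxtimes Y_{\Gamma_j^{(2)}}$, and by the obvious associativity of outer multiplication
\[
c \boxtimes Y_{\Gamma_j} \;=\; \bigl(c \boxtimes Y_{\Gamma_j^{(2)}}\bigr) \boxtimes Y_{\Gamma_j^{(1)}}.
\]
The first factor is a cubic relation on the even set $L' \amalg L''_2$ (since the outer product of a relation with anything is again a relation), while the second factor lies in $\Sym^3(V_{L''_1})/Q^{(3)}_{L''_1}$ with $|L''_1| \in \{2,4,6\}$ --- exactly the shape described in the statement.

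Combining the two steps proves the proposition; the restriction to $\Z[\half]$ enters only through Proposition~\ref{prop:pfil3}. There is no substantive obstacle once these two inputs are in hand: the real work has already been done, namely the cubic generation result Theorem~\ref{thm:gsc} and the benzene decomposition Proposition~\ref{prop:pfil3}, and the remaining argument is just the careful bookkeeping of outer multiplications modulo $Q$. The only point requiring attention is that outer multiplications be well-defined and compose as expected after passing to the quotient by $Q$, both of which are encoded in the compatibility formula above.
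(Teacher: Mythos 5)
Your proof is correct and follows essentially the same route as the paper's: Step~1 reduces to cubic $r$ via Theorem~\ref{thm:gsc} and the compatibility formula established in the proof of Proposition~\ref{prop:stabdeg}, and Step~2 invokes Proposition~\ref{prop:pfil3} to decompose $z$ into benzene $2$-, $4$-, $6$-cycles and then peels one cycle off by associativity of $\boxtimes$. The only cosmetic difference is that the paper writes out the full factorization $Y_{\Gamma_i} = Y_{\Gamma_{i1}} \boxtimes \cdots \boxtimes Y_{\Gamma_{in}}$ before splitting off the last factor, whereas you isolate a single cycle directly; these are the same argument.
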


\begin{proof}
Proposition~\ref{prop:stabdeg} shows that $I_L^{\retro}$ is generated by
elements of the form $r \boxtimes Y_{\Gamma}$ where $r$ belongs to
$I_{L'}^{(3)}/Q_{L'}^{(3)}$ and $Y_{\Gamma}$ belongs to
$\Sym^3(V_{L''})/Q_{L''}^{(3)}$, as $(L', L'')$ varies over all partitions of
$L$ into two disjoint even subsets.  Proposition~\ref{prop:pfil3} shows that
we can write $Y_{\Gamma}=\sum a_i Y_{\Gamma_i}$ where $a_i$ belongs to
$\Z[\half]$ and each $\Gamma_i$ is a union of benzene 2-, 4- and 6-cycles.
It follows that $Y_{\Gamma_i}$ is itself an outer product $Y_{\Gamma_{i1}}
\boxtimes \cdots \boxtimes Y_{\Gamma_{in}}$ where each $Y_{\Gamma_{ij}}$
is a benzene 2-, 4- or 6-cycle.  Since $\boxtimes$ is associative, we
have $r \boxtimes Y_{\Gamma_i}=(r \boxtimes Y_{\Gamma_{i1}} \boxtimes
\cdots \boxtimes Y_{\Gamma_{i(n-1)}}) \boxtimes Y_{\Gamma_{in}}$, which
expresses $r \boxtimes Y_{\Gamma_i}$ as the outer product of a graph on
2, 4 or 6 fewer points with a graph on 2, 4 or 6 points.  Thus $r \boxtimes
Y_{\Gamma}$ is a sum of such graphs, which establishes the proposition.
\end{proof}

\subsection{Square rotation relations}

To prove Theorem~\ref{thm:retro} we introduce the \emph{square rotation
relations}, needed only in this proof.  A \emph{square rotation datum} is a
pair $\Pi=(\Gamma, U)$ where $U$ is a subset of $L$ of cardinality 4 and
$\Gamma$ is an undirected graph on $L$ with edges colored purple and black
such that:
\begin{itemize}
\item The vertices of $L \setminus U$ have black valence one and purple valence
two.
\item The vertices of $U$ have black valence zero and purple valence one.
\end{itemize}
Let $\Pi$ be a given square rotation datum.  Suppose that a pair of edges
$e$ and  $e'$ in $\Gamma$ have the same color.  Let $\Gamma+\Gamma'+\Gamma''=0$
be the Pl\"ucker relation on $e$ and $e'$.  Then $\Pi'=(\Gamma', U)$ and
$\Pi''=(\Gamma'', U)$ are both square rotation data.  We define the \emph{space
of square rotation data} to be the $\Z$-span of the square rotation data modulo
the relations $\Pi+\Pi'+\Pi''=0$.
\Ncom{$\text{square rot.\ rels., space of square rotation data}$}

Let $\Pi$ be a square rotation datum.  We have the following quadratic
relation on the four points in $U$:
\begin{equation}
\label{eq:sr}
\begin{xy}
(0, -5)*{}="A"; (10, -5)*{}="B"; (10, 5)*{}="C"; (0, 5)*{}="D";
{\ar@[Thistle]@{-} "A"; "B"};
{\ar@[Thistle]@{-} "C"; "D"};
{\ar@[black]@{-}@[|(3)] "A"; "D"};
{\ar@[black]@{-}@[|(3)] "B"; "C"};
\end{xy}
\qquad = \qquad
\begin{xy}
(0, -5)*{}="A"; (10, -5)*{}="B"; (10, 5)*{}="C"; (0, 5)*{}="D";
{\ar@[black]@{-}@[|(3)] "A"; "B"};
{\ar@[black]@{-}@[|(3)] "C"; "D"};
{\ar@[Thistle]@{-} "A"; "D"};
{\ar@[Thistle]@{-} "B"; "C"};
\end{xy}
\end{equation}
Multiplying both sides by $\Gamma$ we obtain an element of $R_L^{(1)}
\otimes R_L^{(2)}$ which maps to zero in $R_L$.  We may thus regard it as an
element of $I_L^{(3)}/Q_L^{(3)}$, similar to what we did for generalized Segre
relations.  We call such relations \emph{square rotation relations}.  We have
a linear map
\begin{displaymath}
\Rel:\{ \textrm{the space of square rotation data} \} \to I_L^{(3)}/Q_L^{(3)}
\end{displaymath}
mapping a square rotation datum to its associated relation.

\subsection{Retrogeneration of square rotation relations}

The result is:

\begin{proposition}
\label{prop:sr}
If $L$ has cardinality at least 12 then any square rotation relation is
retrogenerated.
\end{proposition}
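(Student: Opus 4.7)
The plan is to view $\Rel(\Pi)$ as an element of $\Sym^3(V_L)/Q_L^{(3)}$ and reduce to the case where the extended graph of $\Pi$ has a component of cardinality $2$, $4$, or $6$ lying entirely in $L \setminus U$, so that Proposition~\ref{prop:outer246} applies. The hypothesis $|L \setminus U| \geq 8$ is precisely what allows the structural results of Section~\ref{s:symvl} to generate the required disconnection.

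First, I would extend $\Gamma$ by the purple and black edges of one side of the quadratic \eqref{eq:sr} on $U$, producing a $3$-regular graph on $L$ consisting of a black matching and a purple $2$-regular subgraph. Writing the purple $2$-regular subgraph as a product of two matchings (possibly after same-color Plücker moves, which are admissible operations on the space of square rotation data) presents $Y_\Pi$ modulo $Q_L^{(3)}$ as a three-matching graph, hence as an element of $\Sym^3(V_L)/Q_L^{(3)}$. I would then apply Corollary~\ref{cor:benzene}, together with the $8$-point identity of Figure~\ref{f:id8} applied inside an $8$-vertex subset of $L \setminus U$, to rewrite this element modulo $Q_L^{(3)}$ as a sum of contributions whose three-matching graphs are disjoint unions of benzene $2$-, $4$-, and $6$-cycles. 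A combinatorial analysis — leveraging $|U| = 4$ and $|L \setminus U| \geq 8$ — shows that in each such contribution at least one benzene cycle can be arranged to lie entirely in $L \setminus U$. For any benzene cycle $L''$ of size $n'' \in \{2, 4, 6\}$ disjoint from $U$, the relation factors as $\Rel(\Pi') \boxtimes Y_{L''}$ for a smaller square rotation datum $\Pi'$ on $L \setminus L''$, and retrogeneration follows from Proposition~\ref{prop:outer246}.

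The main obstacle will be the borderline combinatorial configurations — for instance, two benzene $6$-cycles each containing two $U$-vertices and four $L \setminus U$-vertices, together consuming all of $L \setminus U$ when $|L| = 12$. In such cases one must manipulate a $U$-touching $6$-cycle via further same-color Plücker moves to split off a smaller $U$-free component; the hypothesis $|L| \geq 12$ is precisely what guarantees that the $\geq 8$ vertices in $L \setminus U$ permit the $8$-point identity to perform this split. This contrasts with the $|L|=10$ case of Theorem~\ref{thm:retro}, where $L \setminus U$ has only $6$ vertices and the $8$-point identity is unavailable, necessitating the ad hoc arguments foreshadowed at the beginning of the section.
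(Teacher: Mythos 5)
Your plan follows the same overall route as the paper's proof --- normalize the graph using Plücker moves, then use Corollary~\ref{cor:benzene} (via the identity of Figure~\ref{f:id8}) to break off a component of size $2$, $4$, or $6$ disjoint from $U$, and conclude via Proposition~\ref{prop:outer246}. However, the step you label ``a combinatorial analysis shows that in each such contribution at least one benzene cycle can be arranged to lie entirely in $L \setminus U$'' is precisely the heart of the matter, and you have not actually justified it; the ``borderline configuration'' you raise at the end (two benzene $6$-cycles each meeting $U$ in two vertices, exhausting $L \setminus U$ when $|L| = 12$) is a genuine obstruction to the argument as you have stated it, and you defer its resolution to unspecified ``further same-color Plücker moves.''

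The paper closes this gap by first proving a normalization lemma that you never invoke: if both special paths of the square rotation datum have even length then the relation already lies in $Q_L$, and if exactly one does then it is again quadratic. This reduces to the case where both special paths have odd length, after which they can be shortened to length three, the purple cycles can be collapsed to $2$-cycles, and the resulting black-purple graph on $L \setminus U$ is forced to be a single benzene chain joining the two vertices $x,y$ adjacent to $U$ (plus possible extra benzene cycles, which immediately yield retrogeneration). Only then is the hypothesis $|L|\ge 12$ used: the chain then has at least $|L|-4 \ge 8$ vertices, hence at least four single black edges, so the $8$-point identity can break it. In particular the two $6$-cycle configuration you worry about never arises in the normalized picture. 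Without the parity reduction and the benzene chain normalization, ``one benzene cycle can be arranged to lie in $L \setminus U$'' is unsubstantiated, so as written the proposal has a gap at exactly the point where the size hypothesis must be brought to bear.
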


We use a lemma to prove the proposition.  Let $\Pi$ be a
given square rotation datum.  The purple subgraph of $\Gamma$ has valence two
everywhere except at the four vertices in $U$.  It thus breaks up into a union
of cycles and two paths terminating in $U$.  We call these two
paths the \emph{special paths}.  The \emph{length} of a special path
is the number of vertices it contains. 
\Ncom{$\text{special path, length}$}

\begin{lemma}
(a) A square rotation relation with both special paths of even length lies
in $Q_L$. \newline
(b) A square rotation relation with one special path of even length is
quadratic.
\end{lemma}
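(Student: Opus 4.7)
The plan is to analyze how the cycle structure of the purple subgraph of $\Gamma$ changes after combining it with the purple edges from \eqref{eq:sr}, and to exploit even cycles via the decomposition results for $\Sym^2(V_L)$ from Section~\ref{s:symvl}. After multiplying $\Gamma$ by either side of \eqref{eq:sr}, the purple subgraph becomes $2$-regular on all of $L$; the left and right sides differ only in how the four endpoints (in $U$) of the two special paths are joined by the purple edges $\{ab,cd\}$ versus $\{ad,bc\}$. The lengths of the cycles so produced are controlled by the special path lengths together with how the paths attach to $U$, of which there are three combinatorial cases (pairings $ab$-$cd$, $ac$-$bd$, or $ad$-$bc$).

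For part (a), when both special paths have even length, a direct case check over the three attachment patterns shows that every cycle of the resulting $2$-regular purple subgraph has even length, on both the left and right sides of \eqref{eq:sr}. I would then invoke the procedure from the proof of Proposition~\ref{prop:pfil2}, which uses purple Pl\"ucker moves (quadratic, and so in $Q_L$) to decompose a $2$-regular graph into a product of two matchings modulo $Q_L$. Because all cycles are even, the decomposition may be chosen so that the two special purple edges coming from \eqref{eq:sr} lie in prescribed matchings. Arranging the choices on the two sides compatibly, the difference between the two sides of the square rotation relation reduces to a sum of the simplest binomial relations \eqref{e:simplest} on eight-point subgraphs assembled from $U$ and the ends of the two paths. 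These are quadratic, so the cubic square rotation relation itself lies in $Q_L$.

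For part (b), assume without loss of generality that the $a$-$b$ path has even length while the other does not. Then the $a$-$b$ path together with the purple edge $ab$ of \eqref{eq:sr} forms an even cycle, which can be split off as a union of $2$-cycles via repeated purple Pl\"ucker moves (in the spirit of Lemma~\ref{prop:pfil2-a}). This reduces the relation to one in which the interior vertices of the even path have been absorbed into a matching, and the residual square-rotation structure is supported on strictly fewer vertices. The resulting relation is either outer-multiplicative in the absorbed matching (hence retrogenerated) or is itself expressible as a multiple of a quadratic on the shortened support; either way it lies in $Q_L$ modulo $I_L^{\retro}$.

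The main obstacle is the bookkeeping in the decomposition procedure. Proposition~\ref{prop:pfil2} only asserts a decomposition modulo quadratics, but here one needs a \emph{controlled} decomposition in which the two special purple edges of \eqref{eq:sr} land in specified matchings on both sides of the relation, so that the residual discrepancy is exactly the simplest binomial \eqref{e:simplest}; a careful reading of the proof of Lemma~\ref{prop:pfil2-a} together with the identity in Figure~\ref{f:id6} should give enough flexibility to arrange this. The case analysis over the three attachment patterns of the paths to $U$ adds further combinatorial work, though each case follows the same template.
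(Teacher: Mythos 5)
Your treatment of part (a) follows essentially the same line as the paper's but has two gaps. First, your claim that every cycle of the combined $2$-regular purple subgraph is even implicitly requires that the purple cycles of $\Gamma$ itself are all even; an odd purple cycle of $\Gamma$ survives intact after adjoining the two edges from \eqref{eq:sr}. This is not automatic, and the paper explicitly invokes Pl\"ucker relations to force the purple cycles of $\Gamma$ to be even before starting (odd cycles occur in pairs by parity, so they can be merged). Second, the residual discrepancy is a \emph{simple} binomial relation in the sense of \S\ref{s:quad} --- the black/grey graph on both sides agrees off $U$ and differs by inverting colors on $U$, where $|U|=4$ --- but it is not obviously a \emph{simplest} binomial; for the lemma you need only that it is quadratic, and the extra appeal to \eqref{e:simplest} is unneeded. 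The paper's proof achieves this more directly: rather than invoking Proposition~\ref{prop:pfil2}, it constructs an explicit alternating grey/purple coloring of the purple subgraph that grays out both purple edges of \eqref{eq:sr} simultaneously, which yields the \emph{same} purple matching on both sides, so the whole relation factors as (purple matching) times a quadratic relation.

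Your treatment of part (b) has a more serious gap. You tacitly assume that the special paths pair $a$ with $b$ (and $c$ with $d$), but the pairing of $U$-vertices by the special paths is determined by $\Gamma$, not by \eqref{eq:sr}: the pairing could equally be $a$-$c$, $b$-$d$ or $a$-$d$, $b$-$c$, in which case there is no ``$a$-$b$ path'' to close with the purple edge $\uedge{ab}$. Furthermore, your conclusion that the relation ``lies in $Q_L$ modulo $I_L^{\retro}$'' is strictly weaker than what the lemma asserts, which is containment in $Q_L$ outright; invoking retrogeneration here both changes the statement being proved and fails to give the needed conclusion. The paper's argument is quite different and avoids this entirely: when one special path is odd and the other even, the fact that $|L|$ is even forces $\Gamma$ to contain an odd purple cycle, and Pl\"uckering the odd special path against this odd cycle produces two square rotation data in each of which both special paths have even length, reducing directly to part (a).
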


\begin{proof}
(a) The way we obtain a square rotation relation from the square rotation datum
is to simply append the graph $\Gamma$ to the relation \eqref{eq:sr}.  Now, on
each side of \eqref{eq:sr} there are two purple edges.  Change the color of
these edges to grey.  The hypothesis on the lengths of the special paths says
that we can color every other edge in the special paths grey (keeping the first
and last edges purple) so that each vertex belongs to one grey and one purple
edge.  We may assume that the purple cycles occurring in $\Gamma$ have even
size (as we can force this using Pl\"ucker relations), so we can pick a
purple-grey alternating coloring of them.  Thus we have factored the purple
subgraph of $\Gamma$ in such a way that all the purple edges appearing in
\eqref{eq:sr} have been colored grey.  But now the relation is evidently
essentially quadratic (i.e., belongs to $Q_L$) since it is taking place solely
on the black-grey graph.

(b) If one special path has even length and one has odd length then $\Gamma$
has an odd purple cycle.  We can thus Pl\"ucker the odd special path and the
odd cycle so that both special paths have even length.  We now use part (a).
\end{proof}

We now prove Proposition~\ref{prop:sr}.

\begin{proof}[Proof of Proposition~\ref{prop:sr}]
Let a square rotation datum be given.  By the above lemma, we can
assume that each of the special paths has odd length.  Using
arguments similar to those occurring in the proof of
Proposition~\ref{prop:pfil2}, we can force the special paths to have
lengths three.  Thus each special path contains a single vertex not
belonging to $U$.  Call these two vertices $x$ and $y$.  We now use
Proposition~\ref{prop:pfil2} to convert the purple cycles in $\Gamma$ into
2-cycles.  We thus have a benzene chain joining $x$ with $y$.  The graph
$\Gamma$ now looks like the following (with the possibility that there are
some additional black-purple benzene cycles not pictured):
\begin{displaymath}
\begin{xy}
(0, 0)*{}="X"; (0, 7)*{}="Y";
(7, 0)*{}="P1"; (14, 0)*{}="P2"; (14, 7)*{}="P3"; (7, 7)*{}="P4";
(21, 0)*{}="P5"; (21, 7)*{}="P6";
(-7, -1.75)*{}="1"; (-7, 1.75)*{}="2"; (-7, 5.25)*{}="3"; (-7, 8.75)*{}="4";
(0, -2)*{\ss x}; (0, 9)*{\ss y};
(-9, -1.75)*{\ss 1}; (-9, 1.75)*{\ss 2}; (-9, 5.25)*{\ss 3};
(-9, 8.75)*{\ss 4};
{\ar@[black]@{-}@[|(3)] "X"; "P1"};
{\ar@[black]@{-}@[|(3)] "Y"; "P4"};
{\ar@<.2ex>@[Thistle]@{-} "P1"; "P2"};
{\ar@<-.2ex>@[Thistle]@{-} "P1"; "P2"};
{\ar@<.2ex>@[Thistle]@{-} "P3"; "P4"};
{\ar@<-.2ex>@[Thistle]@{-} "P3"; "P4"};
{\ar@<.2ex>@[Thistle]@{-} "P5"; "P6"};
{\ar@<-.2ex>@[Thistle]@{-} "P5"; "P6"};
{\ar@[black]@{-}@[|(3)] "P2"; "P5"};
{\ar@[black]@{-}@[|(3)] "P3"; "P6"};
{\ar@[Thistle]@{-} "X"; "1"};
{\ar@[Thistle]@{-} "X"; "2"};
{\ar@[Thistle]@{-} "Y"; "3"};
{\ar@[Thistle]@{-} "Y"; "4"};
\end{xy}
\end{displaymath}
Here the numbered vertices constitute the set $U$.  The special paths are
1-$x$-2 and 3-$y$-4.  If there are in fact benzene cycles in $\Gamma$ then the
relation is retrogenerated (the proof of this is similar to that of
Proposition~\ref{prop:gsd-discon}).  We can thus assume that there are no
benzene cycles and so the graph really does look like the above one.  Since we
have at least 12 vertices, the benzene chain will have at least four single
black edges, so we can apply Corollary~\ref{cor:benzene} or identity of
Figure~\ref{f:id8} to break up the benzene chain and get a disconnected graph.
The associated relation will therefore be retrogenerated.
\end{proof}

\subsection{Retrogeneration of the ideal on 12 points (Theorem~\ref{thm:retro}
with $|L|=12$)}

We begin with two lemmas.

\begin{lemma}
\label{lem:retro-a}
Let $\Sigma=(\Gamma, \ms{U})$ be a generalized Segre datum, and let
$U \subset U_G$ be a
set of four vertices such that the blue-green graph of $\Gamma$ has a 4-cycle
contained in $U$.  Let $\ol{\Gamma}$ be the graph obtained by rotating the
colors in this 4-cycle and let $\ol{\Sigma}=(\ol{\Gamma}, \ms{U})$.  Then
$\Rel(\Sigma) \equiv  \Rel(\ol{\Sigma})$  modulo quadratic and square rotation
relations.
\end{lemma}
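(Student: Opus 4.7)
The plan is to write $\Rel(\Sigma) - \Rel(\ol\Sigma) = (Y_\Sigma - Y_{\ol\Sigma}) - (Y_\Sigma' - Y_{\ol\Sigma}')$ and handle the two differences separately: I will show that the first lies in $Q_L$, and that the second is (up to sign) the relation attached to a naturally constructed square rotation datum.

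For the first piece, I would write $Y_\Sigma$ as $\pm Y_{M_R} \cdot Y_{M_B} \cdot Y_{M_G}$ in $\Sym^3(V_L)$, where $M_R, M_B, M_G$ denote the red, blue, green matchings of $\Gamma$. Rotating colors on the 4-cycle leaves $M_R$ alone and swaps the two opposite pairs $\{e_1, e_3\}$ and $\{e_2, e_4\}$ of cycle edges between $M_B$ and $M_G$, yielding $\ol M_B$ and $\ol M_G$. The key observation is that $M_B \cup M_G$ and $\ol M_B \cup \ol M_G$ coincide as undirected $2$-regular graphs, so $Y_{M_B} Y_{M_G}$ and $Y_{\ol M_B} Y_{\ol M_G}$ have the same image in $R_L^{(2)}$, placing their difference in $I_L^{(2)}$. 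Multiplying by $Y_{M_R}$ puts $Y_\Sigma - Y_{\ol\Sigma}$ in $Q_L^{(3)}$.

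For the second piece, I would define $\Pi = (\Gamma_\Pi, U)$ with $U$ consisting of the four cycle vertices and $\Gamma_\Pi$ obtained from $\Gamma'$ by deleting the four edges of the 4-cycle. The valence conditions for a square rotation datum follow at once: outside $U$ nothing changes, so black valence is $1$ and purple valence is $2$; at each $u \in U$ we delete one black edge (the green edge of $\Gamma$ in the cycle) and one purple edge (the blue edge of $\Gamma$ in the cycle), leaving black valence $0$ and purple valence $1$, the remaining purple edge being the red edge at $u$, which must exit the cycle since each cycle vertex has already exhausted its blue and green valence within the cycle. Labelling so that $\{e_1, e_3\} = \{ab, cd\}$ and $\{e_2, e_4\} = \{bc, da\}$ as matchings on $U$, the 4-cycle contribution to $\Gamma'$ is black $\{bc, da\}$, purple $\{ab, cd\}$, matching the LHS of \eqref{eq:sr}, while the 4-cycle contribution to $\ol\Gamma'$ is black $\{ab, cd\}$, purple $\{bc, da\}$, matching the RHS. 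Hence $\Rel(\Pi) = \pm (Y_\Sigma' - Y_{\ol\Sigma}')$.

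Combining the two steps places $\Rel(\Sigma) - \Rel(\ol\Sigma)$ in $Q_L + \textrm{(square rotation relations)}$, as required. The main obstacle I anticipate is purely bookkeeping: $Y_\Sigma, Y_{\ol\Sigma}, Y_\Sigma', Y_{\ol\Sigma}'$ and $\Rel(\Pi)$ each carry $\epsilon$-factors from choices of directed lifts, and one must check that they conspire so the identification is exact rather than off by a sign. This is tedious but routine and poses no conceptual difficulty.
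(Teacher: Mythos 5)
Your proposal is correct and uses exactly the paper's decomposition: $\Rel(\Sigma)-\Rel(\ol\Sigma)=(Y_\Sigma - Y_{\ol\Sigma})-(Y'_\Sigma - Y'_{\ol\Sigma})$, with the first difference placed in $Q_L$ (the paper says ``since the red subgraph in each is the same,'' which is precisely your observation that the blue-green subgraphs agree as uncolored graphs, so the degree-two factor lies in $I_L^{(2)}$) and the second identified as a square rotation relation by construction. Your write-up is more detailed than the paper's terse two-sentence proof — in particular you verify the valence conditions for the square rotation datum, which the paper treats as clear — but there is no difference in method.
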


\begin{proof}
We must show that
\begin{displaymath}
\Rel(\Sigma)-\Rel(\ol{\Sigma})=Y_{\Gamma}-Y'_{\Gamma}
-Y_{\ol{\Gamma}}+Y'_{\ol{\Gamma}}
\end{displaymath}
belongs to the ideal generated by quadratic and square rotation relations.  
Clearly, $Y_{\Gamma}-Y_{\ol{\Gamma}} \in Q_L$ since the red
subgraph in each is the same, while $Y'_{\Gamma}-Y'_{\ol{\Gamma}}$ is
a square rotation relation by definition.
\end{proof}

\begin{lemma}
\label{lem:retro-b}
Let $\Sigma=(\Gamma, \ms{U})$ be a generalized Segre datum and let $U
\subset U_G$ be a set of four vertices such that the blue-green graph of
$\Gamma$ has a 4-cycle contained in $U$.  Let $\{ \Gamma_i \}$ be the three
graphs obtained by replacing the 4-cycle on $U$ by two 2-cycles (there are
three ways to do this).  Let $\{\Sigma_i\}$ be the corresponding generalized
Segre data.  Then $2 \Rel(\Sigma) \equiv \sum \Rel(\Sigma_i)$ modulo quadratic
and square rotation relations.
\end{lemma}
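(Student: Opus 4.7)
The plan is to apply the squared Pl\"ucker identity from the proof of Proposition~\ref{prop:pfil2-b} to the blue-green 4-cycle of $\Gamma$ inside $U$, first on the $Y$-side and then (more delicately) on the $Y'$-side, and to use Lemma~\ref{lem:retro-a} together with the definition of the square rotation relation to handle the 4-cycle terms that arise. The factor of $2$ in the statement should appear from the collapse, in $\Sym^3$, of the two distinct 4-cycle configurations that the squared Pl\"ucker identity produces in $V_L^{\otimes 3}$.

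On the $Y$-side, restrict attention to the vertex set $U$ and regard blue and green as the two colors in the proof of Proposition~\ref{prop:pfil2-b}. Multiplying the squared Pl\"ucker identity there by the unchanged red matching of $\Gamma$, one obtains (up to orientation signs)
\[
Y_{\Gamma_k}\ =\ Y_{\Gamma_i}+Y_{\Gamma_j}+Y_{C^+}+Y_{C^-},
\]
where $\{i,j,k\}=\{1,2,3\}$, the $Y_{\Gamma_i}$ are our three doubled-edge configurations, and $C^+,C^-$ are the two 4-cycle configurations on $U$ obtained by the product of the two ``parallel'' Pl\"ucker terms in opposite color orderings. Since $C^+$ and $C^-$ differ only by swapping blue and green, they are identified in $\Sym^3(V_L)$ and both equal $Y_\Gamma$. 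Hence modulo $Q_L^{(3)}$ we get $Y_{\Gamma_k}\equiv Y_{\Gamma_i}+Y_{\Gamma_j}+2Y_\Gamma$.

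On the $Y'$-side, apply the same Pl\"ucker-squaring argument, but now in the black/purple coloring: inside $U\subset U_G$ the green edges of $\Gamma$ are recolored black and the blue edges are recolored purple. The same identity produces the $Y'$-analogues of $C^+$ and $C^-$, but these two terms are no longer identified in $\Sym^3(V_L)/Q_L^{(3)}$, because in $Y'$ the black pair lives in $R_L^{(1)}$ and the purple pair lives in $R_L^{(2)}$. The crucial observation is that these two terms differ precisely by swapping which pair of $U$-edges is black and which is purple; this is exactly the square rotation move on the 4-cycle in $U$. Hence $Y'_{C^+}\equiv Y'_{C^-}$ modulo square rotation relations, and we obtain $Y'_{\Gamma_k}\equiv Y'_{\Gamma_i}+Y'_{\Gamma_j}+2Y'_\Gamma$ modulo $Q_L^{(3)}$ and square rotation relations.

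Subtracting the $Y'$ identity from the $Y$ identity (using $\Rel=Y-Y'$) yields
\[
\Rel(\Sigma_k)\ \equiv\ \Rel(\Sigma_i)+\Rel(\Sigma_j)+2\,\Rel(\Sigma)
\]
modulo $Q_L^{(3)}$ and square rotation relations, and rearranging (together with the absorption of signs via the orientation conventions of \S\ref{ss:orient}) produces $2\Rel(\Sigma)\equiv\sum_i \Rel(\Sigma_i)$. The main obstacle is the $Y'$-side identification: one must check carefully that the two mixed 4-cycle terms there truly differ by a square rotation relation and not by something more subtle. This step works only because $U$ is contained in the single part $U_G$, so the black/purple assignment on the $U$-edges is entirely dictated by the rule for edges internal to one part, and the black$\leftrightarrow$purple swap on the 4-cycle is precisely the defining move of a square rotation datum.
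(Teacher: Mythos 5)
Your proposal is correct and takes essentially the same approach as the paper's: apply the squared Pl\"ucker identity of Proposition~\ref{prop:pfil2-b} to the blue-green $4$-cycle in $U$, and dispose of the two residual $4$-cycle terms via the content of Lemma~\ref{lem:retro-a} (a quadratic relation on the $Y$-side, a square rotation relation on the $Y'$-side). The only real difference is presentational: the paper works in the space of generalized Segre data and applies $\Rel$ once, whereas you unfold into separate $Y$- and $Y'$-computations, in effect re-deriving Lemma~\ref{lem:retro-a} rather than simply citing it. Two small points. First, $Y_{C^+}$ and $Y_{C^-}$ are \emph{not} equal in $\Sym^3(V_L)$ itself --- a blue-green swap on the $4$-cycle alone is not a global color permutation --- they agree only modulo $Q_L^{(3)}$, which is what your next line actually uses, so this is just a slip of phrasing. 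Second, the ``absorption of signs via the orientation conventions'' does not occur: the $Y$'s already carry their orientation signs, and the rearrangement really gives $2\Rel(\Sigma)\equiv\Rel(\Sigma_k)-\Rel(\Sigma_i)-\Rel(\Sigma_j)$; the paper's own statement of the lemma shares this sign looseness, and it is harmless because every application only uses that $\Rel(\Sigma)$ lies in the span of the $\Rel(\Sigma_i)$ modulo quadratic and square rotation relations.
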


\begin{proof}
Recall the identity
\begin{displaymath}
\begin{xy}
(0, -4)*{}="A"; (0, 4)*{}="B"; (8, -4)*{}="C"; (8, 4)*{}="D";
{\ar@<.2ex>@[blue]@{..}@[|(2)] "A"; "B"};
{\ar@<-.2ex>@[green]@{-} "A"; "C"};
{\ar@<.2ex>@[blue]@{..}@[|(2)] "C"; "D"};
{\ar@<-.2ex>@[green]@{-} "B"; "D"};
\end{xy}
\qquad + \qquad
\begin{xy}
(0, -4)*{}="A"; (0, 4)*{}="B"; (8, -4)*{}="C"; (8, 4)*{}="D";
{\ar@<.2ex>@[blue]@{..}@[|(2)] "A"; "C"};
{\ar@<-.2ex>@[green]@{-} "A"; "B"};
{\ar@<.2ex>@[blue]@{..}@[|(2)] "B"; "D"};
{\ar@<-.2ex>@[green]@{-} "C"; "D"};
\end{xy}
\qquad = \qquad
\begin{xy}
(0, -4)*{}="A"; (0, 4)*{}="B"; (8, -4)*{}="C"; (8, 4)*{}="D";
{\ar@<.2ex>@[blue]@{..}@[|(2)] "A"; "D"};
{\ar@<-.2ex>@[green]@{-} "A"; "D"};
{\ar@<.2ex>@[blue]@{..}@[|(2)] "B"; "C"};
{\ar@<-.2ex>@[green]@{-} "B"; "C"};
\end{xy}
\qquad + \qquad
\begin{xy}
(0, -4)*{}="A"; (0, 4)*{}="B"; (8, -4)*{}="C"; (8, 4)*{}="D";
{\ar@<.2ex>@[blue]@{..}@[|(2)] "A"; "C"};
{\ar@<-.2ex>@[green]@{-} "A"; "C"};
{\ar@<.2ex>@[blue]@{..}@[|(2)] "B"; "D"};
{\ar@<-.2ex>@[green]@{-} "B"; "D"};
\end{xy}
\qquad + \qquad
\begin{xy}
(0, -4)*{}="A"; (0, 4)*{}="B"; (8, -4)*{}="C"; (8, 4)*{}="D";
{\ar@<.2ex>@[blue]@{..}@[|(2)] "A"; "B"};
{\ar@<-.2ex>@[green]@{-} "A"; "B"};
{\ar@<.2ex>@[blue]@{..}@[|(2)] "C"; "D"};
{\ar@<-.2ex>@[green]@{-} "C"; "D"};
\end{xy}
\end{displaymath}
of the proof of Lemma~\ref{prop:pfil2-b}.  This identity holds in
$V_L^{\otimes 2}$, that is, it follows from the colored Pl\"ucker relations.
This shows that in the space of generalized Segre data, we have $\Sigma+\
\ol{\Sigma}= \sum_{i=1}^3 \Sigma_i$ where $\ol{\Sigma}$ was defined in the
previous lemma.  By the previous lemma, we have $\Rel(\Sigma)=
\Rel(\ol{\Sigma})$ modulo quadratic and square rotation relations.  This
proves the current lemma.
\end{proof}

We now complete the proof of Theorem~\ref{thm:retro} when $|L| \ge 12$.

\begin{proof}[Proof of Theorem~\ref{thm:retro} when $|L| \ge 12$]
By Theorem~\ref{thm:gsc}, it suffices to show that small generalized Segre
cubic relations are retrogenerated.  Thus let $\Sigma$ be a given small
generalized Segre datum.  We assume without loss of generality that $U_R$ has
cardinality two, and that $\Sigma$ is non-degenerate.

We begin by considering the blue-green subgraph of $U_G$.  This is a union of
cycles and a single chain going between the two special blue edges.  By using
Proposition~\ref{prop:pfil2}, or more accurately the identity of
Figure~\ref{f:id6}, we can convert this graph into a union of 2-cycles and
4-cycles and make the chain have length  three (so that the two special
blue edges are joined by a single green edge).  We can now use the above
two lemmas to convert the 4-cycles into 2-cycles, modulo retrogenerated
relations.

We have thus reduced to the case where the blue-green subgraph of $\Gamma$
in $U_G$ is made up of 2-cycles (except for the path of length three involving
the two special blue edges).  We now consider the red edges.  Except for the
special blue edges, the graph $\Gamma \vert_{U_G}$ is a union of benzene
chains with the blue and green edges being paired.  There are two incoming
special red edges and two red edges connected to the two special blue
edges.  From each of these edges a benzene chain emanates which must terminate at
one of the other edges.  These four red edges are thus contained in two benzene
chains.  If the two special red edges are contained in the same benzene
chain then the graph is degenerate and the relation lies in $Q_L$.  It therefore
suffices to consider the case where each special red edge connects to a
special blue edge via a benzene chain.

Now, we may run the entire argument given above inside  $U_B$ as well.
We thus conclude that $\Gamma \vert_{U_B}$ is a union of benzene chains
with the blue and green edges being paired.  The two special red edges
connect with the two special green edges by benzene chains.  The picture
is thus something like:
\begin{displaymath}
\begin{xy}
(-10, 0)*{}="A1"; (-15, 0)*{}="A2";
(10, 0)*{}="B1"; (15, 0)*{}="B2";
(0, 10)*{}="C1"; (0, 15)*{}="C2";
(-10, -5)*{}="D1"; (-15, -5)*{}="D2";
(-15, -10)*{}="E2";
(10, -5)*{}="F1"; (15, -5)*{}="F2";
(5, -5)*{}="G1"; (15, -10)*{}="G2";
(-8, 0)*{\ss 1}; (-17, 0)*{\ss 2};
(-17, -5)*{\ss 3}; (-17, -10)*{\ss 4};
(5, -7)*{\ss 10}; (10, -7)*{\ss 9};
(17, -10)*{\ss 5}; (17, -5)*{\ss 6};
(17, 0)*{\ss 7}; (8, 0)*{\ss 8};
(0, 7)*{\ss 11}; (3, 15)*{\ss 12};
{\ar@[red]@{--} "C1"; "C2"};
{\ar@[blue]@{..}@[|(2)] "C1"; "A1"};
{\ar@[blue]@{..}@[|(2)] "C2"; "A2"};
{\ar@[green]@{-} "C1"; "B1"};
{\ar@[green]@{-} "C2"; "B2"};
{\ar@[green]@{-} "A1"; "A2"};
{\ar@[blue]@{..}@[|(2)] "B1"; "B2"};
{\ar@[red]@{--} "A1"; "D1"};
{\ar@[red]@{--} "A2"; "D2"};
{\ar@[blue]@{..}@[|(2)]@<.2ex> "D2"; "E2"};
{\ar@[green]@{-}@<-.2ex> "D2"; "E2"};
{\ar@[red]@{--} "B1"; "F1"};
{\ar@[red]@{--} "B2"; "F2"};
{\ar@[blue]@{..}@[|(2)]@<.2ex> "F1"; "G1"};
{\ar@[green]@{-}@<-.2ex> "F1"; "G1"};
{\ar@[blue]@{..}@[|(2)]@<.2ex> "F2"; "G2"};
{\ar@[green]@{-}@<-.2ex> "F2"; "G2"};
{\ar@[red]@{--} "D1"; "G1"};
{\ar@[red]@{--} "E2"; "G2"};
\end{xy}
\end{displaymath}
Here $U_G=\{1, 2, 3, 4\}$, $U_B=\{5, 6, 7, 8, 9, 10\}$ and $U_R=\{11, 12\}$.
If there are other benzene cycles present, 
then the relation is immediately retrogenerated.  Also, the two benzene chains
connecting the left and right side could be crossed; that is, the chains
could go from 1 to 7 and 2 to 8 instead of as they do.  However, one can
always rectify this by Pl\"uckering two red edges such as $\uedge{89}$ and
$\uedge{6 7}$ --- one of the resulting graphs has the chains uncrossed while the
other is degenerate.  So the above graph is the only sort we need
consider.

Now, we can repartition the vertices so that all the blue-green doubled
edges are contained in $U_G$.  For example, with the above graph we would
repartition so that $U_G=\{1, 2, 3, 4, 5, 6, 9, 10\}$, $U_B=\{7, 8\}$ and
$U_R=\{11, 12\}$.  The resulting Segre datum yields the same
relation as the original.  We next point out that we can put all the
blue-green doubled edges into a single benzene chain by Pl\"uckering two
red edges.  For example, in the above graph we would Pl\"ucker the edges
$\uedge{1 \;  10}$ and $\uedge{6 7}$.  In one graph, $\uedge{17}$ is a red edge and all the
blue-green doubled edges are in a single benzene chain running between 2 and
8.  The other graph is degenerate.  Now, since we have at least 12 vertices,
the benzene chain will have at least four single red edges.  It can
therefore be broken apart using Corollary~\ref{cor:benzene} or
identity of Figure~\ref{f:id8}, yielding a disconnected graph and therefore a
retrogenerated relation.  This completes the proof of Theorem~\ref{thm:retro}
when $|L| \ge 12$.
\end{proof}

\subsection{Retrogeneration of the ideal on 10 points (Theorem~\ref{thm:retro}
with $|L|=10$)}

For the remainder of \S \ref{s:retro} we let $L$ be a finite set of cardinality
10.  To prove Theorem~\ref{thm:retro} in this case, we show the following:

\begin{proposition}
\label{prop:retro10b}
The space $I_L^{(3)}/I_L^{\retro, (3)}$ has dimension at most two
(over $\Z[\half]$).
\end{proposition}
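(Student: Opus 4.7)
The plan is to reduce the problem to a case-by-case enumeration of small generalized Segre cubic relations on $10$ points. By Remark~\ref{rem:gsc}, after fixing a total order on $L$, the space $I_L^{(3)}$ modulo $Q_L^{(3)}$ is spanned by $\Rel(\Sigma)$ for small generalized Segre data $\Sigma=(\Gamma,\{U_G,U_R,U_B\})$ with $U_G<U_R<U_B$ and $|U_R|=2$. Since $|L|=10$, the triple of part sizes $(|U_G|,|U_R|,|U_B|)$ is one of $(2,2,6)$, $(4,2,4)$, or $(6,2,2)$. In each case, the valence constraints on $\Gamma$ pin down the multi-set of inter-part edges almost completely, and the only flexibility lies in the intra-part structure of $U_G$ and $U_B$.

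First I would run the reduction already carried out in the proof of Theorem~\ref{thm:retro} for $|L|\ge 12$: use the graphical identity of Figure~\ref{f:id6} (equivalently Proposition~\ref{prop:pfil2}) to normalize the blue-green subgraphs of $\Gamma$ inside $U_G$ and $U_B$ to unions of $2$-cycles together with the mandatory length-three path through each pair of special edges; then invoke Lemmas~\ref{lem:retro-a} and~\ref{lem:retro-b} to convert any residual blue-green $4$-cycles inside $U_G$ or $U_B$ into $2$-cycles, modulo quadratic and square rotation relations. After this normalization, $\Gamma$ takes a rigid ``canonical form'' in which the only remaining structure is a pair of short benzene chains joining the special edges of opposite color across $U_G$ and $U_B$, together with a fixed pattern of short internal edges dictated by the part sizes. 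Any disconnectedness of $\Gamma$ or degeneracy of $\Sigma$ yields a relation already in $I_L^{\retro}$ by Propositions~\ref{prop:gsd-discon} and~\ref{prop:gsd-degen}.

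The step where the argument departs from the $12$-point case is the final one: there, the benzene chains were long enough to be broken by Corollary~\ref{cor:benzene} or the identity of Figure~\ref{f:id8}, producing disconnected graphs whose associated relations are retrogenerated. For $|L|=10$ these chains are too short to apply that breakage, and Proposition~\ref{prop:sr} no longer guarantees that the square rotation relations produced during the reduction are themselves retrogenerated. The main obstacle, and the bulk of the work, is therefore to enumerate the finitely many canonical-form configurations in each of the three part-size cases and to show that after exploiting every remaining tool --- repartitioning the vertex sets $\{U_G,U_R,U_B\}$ by swapping the role of a doubled blue-green pair with a special edge, Pl\"uckering pairs of red or green edges to cross and uncross benzene chains, and using the representation-theoretic constraints on $\gr_p(\Sym^3 V_{U_G})$ and $\gr_p(\Sym^3 V_{U_B})$ from Propositions~\ref{prop:g4} and~\ref{prop:g6} --- the residual square rotation relations and the still-irreducible small generalized Segre cubic relations together span a subspace of $I_L^{(3)}/I_L^{\retro,(3)}$ of dimension at most two. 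I expect the hardest part to be bookkeeping: one must verify that no further reduction is possible in two specific configurations, exhibiting explicit witnesses, and check that every other configuration is related modulo $I_L^{\retro}$ to a $\Z[\half]$-linear combination of those two.
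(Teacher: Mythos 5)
Your high-level outline — reduce via Remark~\ref{rem:gsc} to small generalized Segre cubic data with $U_G<U_R<U_B$, $|U_R|=2$, and case-split on part sizes $(2,2,6)$, $(4,2,4)$, $(6,2,2)$ — matches the paper's. But your proposal contains a genuine gap that the paper fills with specific work you've sidestepped, and there's a methodological problem in the intermediate step you do describe.

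First, the methodological problem: you propose to run the same normalization as the $|L|\ge 12$ proof of Theorem~\ref{thm:retro}, in particular invoking Lemmas~\ref{lem:retro-a} and~\ref{lem:retro-b} to convert blue-green $4$-cycles into $2$-cycles ``modulo quadratic and square rotation relations.'' But Proposition~\ref{prop:sr} (square rotation relations are retrogenerated) has hypothesis $|L|\ge 12$; for $|L|=10$ those square rotation relations sitting in the error term are \emph{not} known to be retrogenerated. You acknowledge this but then treat it as a bookkeeping nuisance rather than a structural obstruction. The paper's proof of Proposition~\ref{prop:retro10c} deliberately avoids Lemmas~\ref{lem:retro-a}/\ref{lem:retro-b} altogether in the $10$-point case; it instead works directly with Figure~\ref{f:id6}, Figure~\ref{f:id8}, and ad hoc Pl\"uckering (Lemmas~\ref{lem:10a} and \ref{lem:10b}, then the Type A/B/C classification and Lemma on Type A relations), precisely because the square rotation machinery cannot be safely quotiented away.

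Second, and more importantly: you never actually establish the dimension bound. The paper proves Proposition~\ref{prop:retro10c}, which gives a partition-by-partition bound — contribution $0$ from the $(4,2,4)$ admissible partition, and $\le 1$ from each of the two admissible partitions with a cardinality-$6$ part — and then sums to get $\le 2$. Your proposal asserts that after the reductions ``the residual square rotation relations and the still-irreducible small generalized Segre cubic relations together span a subspace \ldots of dimension at most two'' and that ``I expect the hardest part to be bookkeeping,'' with no argument for why the bound is $2$ rather than $3$ or $5$. That bookkeeping \emph{is} the proof; the crucial content is exactly the case analysis (normalizing the blue-green subgraph of $U_G$ via Figure~\ref{f:id6}, eliminating the degenerate/disconnected configurations, classifying the surviving graphs into Types A, B, C, showing B and C are retrogenerated, and finally showing the Type A relations span at most one dimension via the $\sigma\Gamma \equiv \Gamma$ symmetry argument). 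Also note that Propositions~\ref{prop:g4} and~\ref{prop:g6}, which you invoke, are not used here — the $10$-point argument is purely graph-theoretic.
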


We now explain why this implies Theorem~\ref{thm:retro} for $|L|=10$.  For the next few
sentences we work over a field $k$ of characteristic not 2, 3, 5 or 7.  A
character computation shows that $\Sym^3(V_L)$ does not contain the trivial or
alternating representation.  Now, the representation $I_L^{(3)}/I_L^{\retro,
(3)}$ is at most two-dimensional, and a summand of $\Sym^3(V_L)$.
As any representation of $\mf{S}_L$ of dimension at most two is made up of some
combination of the trivial and alternating representations, it follows that
$I_L^{(3)}/I_L^{\retro, (3)}$ vanishes.  Thus $I_L^{(3)}=I_L^{\retro, (3)}$
over $k$.  Since this holds for all $k$ of characteristic not 2, 3, 5 or 7, we
conclude $I_L^{(3)}=I_L^{\retro, (3)}$.  Since $I_L$ is generated by quadratics
and $I_L^{(3)}$ (Theorem~\ref{thm:gsc}), this implies $I_L=I_L^{\retro}$.

We now turn to proving Proposition~\ref{prop:retro10b}.  We prove the
following result:

\begin{proposition}
\label{prop:retro10c}
Let $\ms{U}=\{U_G, U_R, U_B\}$ be a fixed partition of $L$ with $|U_R|=2$.
Consider the subspace $V$ of $I_L^{(3)}/I_L^{\retro, (3)}$ spanned by all
generalized Segre cubic relations coming from data with partition equal to
$\ms{U}$.  Then, over $\Z[\tfrac 1 2]$:
\begin{enumerate}
\item If $U_G$ (and thus $U_B$) has cardinality four then $V=0$.
\item If either $U_G$ or $U_B$ has cardinality six then $\dim{V} \le 1$.
\end{enumerate}
\end{proposition}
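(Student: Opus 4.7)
The plan is to reduce, via direct Pl\"ucker moves and the identities of Figures~\ref{f:id6} and~\ref{f:id8}, each non-degenerate generalised Segre relation with partition $\ms{U}$ to a short list of configurations, and then to verify that each representative is either retrogenerated by Proposition~\ref{prop:gsd-discon} (case 1) or equivalent modulo $I_L^{\retro}$ to a single canonical relation (case 2). By Propositions~\ref{prop:gsd-degen} and~\ref{prop:gsd-discon} we may assume $\Gamma$ is connected and $\Sigma$ is non-degenerate, so all three pairs of special edges are present. Since $|U_R|=2$ and non-special blue and green edges must stay within their part, both vertices of $U_R$ are saturated by special edges, and the unique internal edge of $U_R$ is a red edge joining its two vertices. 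All of the interesting structure thus lives in $\Gamma|_{U_G}$ and $\Gamma|_{U_B}$.

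For case (1), $|U_G|=|U_B|=4$, a direct valence count shows that each of $\Gamma|_{U_G}$ and $\Gamma|_{U_B}$ has exactly one internal red edge, two internal green edges forming a perfect matching, and one internal blue edge. Using the identity of Figure~\ref{f:id6} inside each of $U_G$ and $U_B$, the blue-green subgraph of each part can be brought to a normal form consisting of two-cycles together with a length-three chain whose end edges are the special blue (resp.\ green) edges. This leaves only finitely many configurations, parameterised by how the special red edges match the vertices of $U_G$ with those of $U_B$; in each, a suitable Pl\"ucker move on the special red edges produces a disconnected graph (together with a manifestly degenerate term absorbed by Proposition~\ref{prop:gsd-degen}), so Proposition~\ref{prop:gsd-discon} gives $\Rel(\Sigma)\in I_L^{\retro}$ and $V=0$.

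For case (2), say $|U_G|=6$ and $|U_B|=2$, the small part $U_B$ admits only one internal structure (a single blue edge connecting its two vertices). The analysis of $\Gamma|_{U_G}$ is richer: there are two internal red edges, three internal green edges forming a perfect matching, and two internal blue edges, plus the four endpoints of the special blue and red edges. After the same normalising reductions as in case (1), applied inside $U_G$ alone, a single short benzene chain remains inside the blue-green subgraph. An explicit enumeration of the possible attachments of the two special red edges inside $U_G$ shows that, after absorbing retrogenerated terms (disconnected or degenerate data) via Propositions~\ref{prop:gsd-discon} and~\ref{prop:gsd-degen} and after quadratic normalisation, all such $\Rel(\Sigma)$ are equivalent modulo $I_L^{\retro}$ to one canonical relation, giving $\dim V\le 1$.

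The main obstacle is the absence of the benzene-breaking technique of Corollary~\ref{cor:benzene} that drives the 12-point argument: in a 10-point graph, benzene chains and cycles are too short to support the identity of Figure~\ref{f:id8}, so one cannot simply ``spread out'' a chain until a disconnection appears. Consequently the case analysis must be carried out by hand. Moreover, one must be careful not to invoke Lemmas~\ref{lem:retro-a} and~\ref{lem:retro-b} naively: they work modulo quadratic and square rotation relations, but Proposition~\ref{prop:sr} fails to place the latter in $I_L^{\retro}$ when $|L|=10$, so whenever a square rotation remainder appears one must verify, for the specific graph at hand, that its own graph admits an independent disconnecting Pl\"ucker move.
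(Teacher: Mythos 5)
Your overall strategy — reduce by Pl\"ucker moves and the identities of Figures~\ref{f:id6}/\ref{f:id8} to a finite list of configurations, then show each is retrogenerated or degenerate (case 1) or equivalent modulo $I_L^{\retro}$ to a canonical relation (case 2) — is indeed the same strategy the paper uses. Your remark about the failure of Proposition~\ref{prop:sr} at $|L|=10$ is apt and shows good awareness, though the paper's proof of this proposition happens not to invoke Lemmas~\ref{lem:retro-a}/\ref{lem:retro-b} and so sidesteps the issue.

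However, there is a genuine gap in your case (1). You claim that after normalising the blue-green subgraphs of $U_G$ and $U_B$ to short chains, ``a suitable Pl\"ucker move on the special red edges produces a disconnected graph (together with a manifestly degenerate term).'' That is not what happens, and a single Pl\"ucker on the special red edges will not disconnect these graphs. The paper's Lemma~\ref{lem:10a} instead redraws the normalised graph so that the four red edges incident to $U_G$ occupy the eight positions of Figure~\ref{f:id8}, and then applies that \emph{ten-term} identity (a composite of several Pl\"ucker moves) to the red edges. The content of the lemma is the careful term-by-term check that every graph appearing on the right-hand side of Figure~\ref{f:id8} is either disconnected (hence retrogenerated via Proposition~\ref{prop:gsd-discon}) or gives a degenerate Segre datum (hence in $Q_L$ via Proposition~\ref{prop:gsd-degen}); the paper even lists, row by row, which terms are which. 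Your proposal asserts the conclusion without supplying this check, and the weaker move you describe would leave the graph connected. The same lacuna recurs in case (2): the paper must explicitly classify the surviving configurations as types A, B, C, show B and C are retrogenerated (again via Figure~\ref{f:id6} applied to specific chains), and then prove by a sequence of two-at-a-time Pl\"ucker moves that all type A relations coincide up to scalar. Your ``explicit enumeration'' is the right thing to do, but it is the proof, not a remark one can defer.
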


We explain why Proposition~\ref{prop:retro10c} implies
Proposition~\ref{prop:retro10b}.  Fix an  order on $L$.  Call a partition
$\ms{U}=\{U_G, U_R, U_B\}$ of $L$ \emph{admissible} if each part is non-empty
of even cardinality, $U_G<U_R<U_B$ in the order and $|U_R|=2$.  We call
a generalized Segre cubic datum \emph{admissible} if its partition is; we
extend the notion to relations in the obvious manner.  By inspection, there
are  three admissible partitions of $L$.  One of these has $|U_G|=
|U_B|=4$ while in the other two, one of $U_G$ or $U_B$ has cardinality six.
It follows from Proposition~\ref{prop:retro10c} that the admissible Segre cubic
relations span a subspace of $I_L^{(3)}/I_L^{\retro, (3)}$ of dimension at most
two.  On the other hand, we know that the admissible generalized Segre cubic
relations span $I_L^{(3)}/Q_L^{(3)}$ (see Remark~\ref{rem:gsc}).  As
$I_L^{\retro, (3)}$ contains $Q_L^{(3)}$ we conclude that $I_L^{(3)}/
I_L^{\retro, (3)}$ has dimension at most two, establishing
Proposition~\ref{prop:retro10b}. 
\Ncom{$\text{adm.\ partition or gSc datum}$}

We now begin proving Proposition~\ref{prop:retro10c}.  We first consider the
case where $|U_G|=4$ (so $|U_B|=4$ too).  Consider the blue-green subgraph of
$U_G$.  In it there are two green edges, two blue edges contained entirely in
$U_G$ and two blue edges (the special ones) going between $U_G$ and $U_R$.  By
using identity of Figure~\ref{f:id6} we can force there to be a blue-green
doubled edge.  This implies that the special blue edges are joined by one
green edge.  By now considering the red edges, we see that there are two
possibilities for the picture in $U_G$:
\begin{displaymath}
\begin{xy}
(0, -4)*{}="A1"; (0, 4)*{}="A2";
(8, -4)*{}="B1"; (8, 4)*{}="B2";
(16, -4)*{}="C1"; (16, 4)*{}="C2";
(24, -4)*{}="D1"; (24, 4)*{}="D2";
(32, -4)*{}="E1"; (32, 4)*{}="E2";
"B1"*{\bullet}; "B2"*{\bullet};
"C1"*{\bullet}; "D1"*{\bullet};
{\ar@[green]@{-} "B1"; "B2"};
{\ar@[blue]@{..}@[|(2)] "A1"; "B1"};
{\ar@[blue]@{..}@[|(2)] "A2"; "B2"};
{\ar@[red]@{--} "B1"; "C1"};
{\ar@[red]@{--} "B2"; "E2"};
{\ar@[blue]@{..}@[|(2)]@<.2ex> "C1"; "D1"};
{\ar@[green]@{-}@<-.2ex> "C1"; "D1"};
{\ar@[red]@{--} "D1"; "E1"};
\end{xy}
\qquad \textrm{or} \qquad
\begin{xy}
(0, 0)*{}="A1"; (0, -8)*{}="A2";
(8, 0)*{}="B1"; (8, -8)*{}="B2";
(16, 0)*{}="C1"; (16, -8)*{}="C2";
(4, 8)*{}="D1"; (12, 8)*{}="E1";
"B1"*{\bullet}; "B2"*{\bullet};
"D1"*{\bullet}; "E1"*{\bullet};
{\ar@[green]@{-} "B1"; "B2"};
{\ar@[blue]@{..}@[|(2)] "A1"; "B1"};
{\ar@[blue]@{..}@[|(2)] "A2"; "B2"};
{\ar@[red]@{--} "B1"; "C1"};
{\ar@[red]@{--} "B2"; "C2"};
{\ar@[blue]@{..}@[|(2)]@<.4ex> "D1"; "E1"};
{\ar@[green]@{-} "D1"; "E1"};
{\ar@[red]@{-}@<-.4ex> "D1"; "E1"};
\end{xy}
\end{displaymath}
The right case is disconnected, hence retrogenerated.  We thus need
only consider the left case.  We now go through the same considerations in
$U_B$ as we just did in $U_G$ and conclude that it too must look like the graph
on the left, except with the colors blue and green reversed.  We find that the
graph as a whole must be one of the two in the statement of the following
lemma.  That lemma then shows that that the generalized Segre relation we are
considering is retrogenerated, which completes the $|U_G|=4$ case.

\begin{lemma}
\label{lem:10a}
Let $\Gamma$ be one of the following two graphs.
\begin{displaymath}
\begin{xy}
(-10, -12)*{}="A1"; (-8, -3)*{}="A2";
(0, 3)*{}="B1"; (2, 12)*{}="B2";
(10, -12)*{}="C1"; (12, -3)*{}="C2";
(-6, -14)*{}="D1"; (-2, -14)*{}="D2";
(2, -14)*{}="E1"; (6, -14)*{}="E2";
(-12, -12)*{\ss 1}; (-10, -3)*{\ss 2};
(12, -12)*{\ss 3}; (14, -3)*{\ss 4};
(2, 3)*{\ss 9}; (5, 12)*{\ss 10};
(-6, -16)*{\ss 5}; (-2, -16)*{\ss 6};
(2, -16)*{\ss 7}; (6, -16)*{\ss 8};
{\ar@[blue]@{..}@[|(2)] "A1"; "B1"};
{\ar@[red]@{--} "A1"; "D1"};
{\ar@[red]@{--} "D2"; "E1"};
{\ar@[red]@{--} "E2"; "C1"};
{\ar@<.2ex>@[blue]@{..}@[|(2)] "D1"; "D2"};
{\ar@<-.2ex>@[green]@{-} "D1"; "D2"};
{\ar@<.2ex>@[blue]@{..}@[|(2)] "E1"; "E2"};
{\ar@<-.2ex>@[green]@{-} "E1"; "E2"};
{\ar@[green]@{-} "B1"; "C1"};
{\ar@[blue]@{..}@[|(2)] "A2"; "B2"};
{\ar@[red]@{--} "A2"; "C2"};
{\ar@[green]@{-} "B2"; "C2"};
{\ar@[blue]@{..}@[|(2)] "C1"; "C2"};
{\ar@[red]@{--} "B1"; "B2"};
{\ar@[green]@{-} "A1"; "A2"};
\end{xy}
\qquad \textrm{or} \qquad
\begin{xy}
(-10, -12)*{}="A1"; (-8, -3)*{}="A2";
(0, 3)*{}="B1"; (2, 12)*{}="B2";
(10, -12)*{}="C1"; (12, -3)*{}="C2";
(-2, -14)*{}="D1"; (2, -14)*{}="D2";
(0, -5)*{}="E1"; (4, -5)*{}="E2";
(-12, -12)*{\ss 1}; (-10, -3)*{\ss 2};
(12, -12)*{\ss 3}; (14, -3)*{\ss 4};
(2, 3)*{\ss 9}; (5, 12)*{\ss 10};
(-2, -16)*{\ss 5}; (2, -16)*{\ss 6};
(4, -7)*{\ss 7}; (0, -7)*{\ss 8};
{\ar@[blue]@{..}@[|(2)] "A1"; "B1"};
{\ar@[red]@{--} "A1"; "D1"};
{\ar@[red]@{--} "D2"; "C1"};
{\ar@[red]@{--} "A2"; "E1"};
{\ar@[red]@{--} "E2"; "C2"};
{\ar@<.2ex>@[blue]@{..}@[|(2)] "D1"; "D2"};
{\ar@<-.2ex>@[green]@{-} "D1"; "D2"};
{\ar@<.2ex>@[blue]@{..}@[|(2)] "E1"; "E2"};
{\ar@<-.2ex>@[green]@{-} "E1"; "E2"};
{\ar@[green]@{-} "B1"; "C1"};
{\ar@[blue]@{..}@[|(2)] "A2"; "B2"};
{\ar@[green]@{-} "B2"; "C2"};
{\ar@[blue]@{..}@[|(2)] "C1"; "C2"};
{\ar@[red]@{--} "B1"; "B2"};
{\ar@[green]@{-} "A1"; "A2"};
\end{xy}
\end{displaymath}
Let $\ms{U}=\{U_R, U_G, U_B\}$ be a partition such that $U_R=\{9, 10\}$,
$U_G \supset \{1, 2\}$, $U_B \supset \{3, 4\}$ and $\Sigma=(\Gamma,
\ms{U})$ is a generalized Segre cubic datum.  Then $\Rel(\Sigma)$ is
retrogenerated over $\Z[\half]$.
\end{lemma}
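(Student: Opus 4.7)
For each of the two graphs, the plan is to apply a single Pl\"ucker relation to a carefully chosen pair of red edges so that one of the three resulting terms collapses to a graph with a tripled edge on the matched pair $\{5,6\}$ (or $\{7,8\}$), thereby disconnecting that pair from the rest of the graph. Proposition~\ref{prop:gsd-discon} then yields that the corresponding Segre relation is retrogenerated, while the other Pl\"ucker outcome can be identified with a Segre datum obtained from $\Sigma$ by a label-swap transposition inside the matched pair. By $\mf{S}_L$-equivariance of $\Rel$, this produces an identity of the form $(1-\sigma)\Rel(\Sigma) \in I_L^{\retro}$ for specific transpositions $\sigma$. Iterating over the two matched pairs, and supplementing with analogous Pl\"uckers in the blue and green matchings (which yield degenerate data, absorbed into $Q_L \subset I_L^{\retro}$ via Proposition~\ref{prop:gsd-degen}), pins $\Rel(\Sigma)$ inside a small $\mf{S}_L$-invariant subspace of $I_L^{(3)}/I_L^{\retro,(3)}$ which is forced to vanish.

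Concretely, for the first graph the key move is the Pl\"ucker on the red edges $\uedge{15}$ and $\uedge{67}$. The three outcomes are $\Gamma$ itself; the graph $\Gamma_1$ with red edges $\uedge{16},\uedge{57}$, which is precisely $\sigma_{56}\Gamma$ (the red chain becomes $1\text{-}6\text{-}5\text{-}7\text{-}8\text{-}3$); and the graph $\Gamma_2$ with red edges $\uedge{17},\uedge{56}$, in which the new $\uedge{56}$ combines with the existing blue--green double edge to form a triple edge, disconnecting $\{5,6\}$. Because both $\uedge{15}$ and $\uedge{67}$ are purple in the recoloring used to define $Y'_{\Sigma}$, the same Pl\"ucker may be performed on the purple subgraph, giving the parallel three-term identity for $Y'_{\Sigma}$. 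Thus at the level of Segre relations one has $\Rel(\Sigma)+\Rel(\Sigma_1)+\Rel(\Sigma_2)=0$ with the partition $\ms{U}$ unchanged throughout; Proposition~\ref{prop:gsd-discon} gives $\Rel(\Sigma_2)\in I_L^{\retro}$, equivariance gives $\Rel(\Sigma_1)=-\sigma_{56}\Rel(\Sigma)$, and so $(1-\sigma_{56})\Rel(\Sigma)\in I_L^{\retro}$. The analogous Pl\"ucker on $\uedge{67}$ and $\uedge{83}$ yields $(1-\sigma_{78})\Rel(\Sigma)\in I_L^{\retro}$, and Pl\"uckers of the blue pair $\uedge{19},\uedge{2,10}$ and of the green pair $\uedge{93},\uedge{10,4}$ yield further congruences, the degenerate branches being handled by Proposition~\ref{prop:gsd-degen}. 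For the second graph the two benzene chains $1\text{-}5\text{-}6\text{-}3$ and $2\text{-}8\text{-}7\text{-}4$ are independent, so a single Pl\"ucker of $\uedge{15}$ against $\uedge{63}$ already produces the tripled edge on $\{5,6\}$ and an analogous move handles $\{7,8\}$; the symmetry reduction is identical.

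The main obstacle is that, because $|L|=10$, Proposition~\ref{prop:sr} is unavailable and so the $|L|\ge 12$ tools of Lemmas~\ref{lem:retro-a} and~\ref{lem:retro-b} cannot be invoked. The Pl\"ucker-plus-symmetry substitute above is more ad hoc, and the delicate part of turning this plan into a proof is the bookkeeping: verifying that each three-term Pl\"ucker identity really does lift cleanly to a three-term identity of Segre relations with the partition $\ms{U}$ preserved, and that the family of congruences $(1-\sigma)\Rel(\Sigma)\in I_L^{\retro}$ produced this way generates a subgroup of $\mf{S}_L$ large enough that, when combined with the constraints on $\Sym^3(V_L)$ obtained in \S \ref{s:symvl}, it forces $\Rel(\Sigma)\in I_L^{\retro}$ rather than merely $\Rel(\Sigma)\in I_L^{\retro}$ up to an unrecoverable $\mf{S}_L$-isotypic piece.
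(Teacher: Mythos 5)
Your approach is genuinely different from the paper's, and unfortunately it has a real gap that you yourself flag but do not close.

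The paper's proof is a direct calculation: after noting that $\Rel(\Sigma)$ is independent of the partition (so one may place $5,6,7,8$ in $U_G$ and Pl\"ucker freely among the non-special red edges), it redraws $\Gamma$ and applies the explicit eight-point linear identity of Figure~\ref{f:id8} to the four red edges $\uedge{15},\uedge{67},\uedge{83},\uedge{24}$. Every term on the right-hand side of that identity is then checked to be either disconnected (hence retrogenerated by Proposition~\ref{prop:gsd-discon}) or degenerate (hence in $Q_L$ by Proposition~\ref{prop:gsd-degen}). The second graph reduces to the first by one additional Pl\"ucker. This is a finite, term-by-term verification with no appeal to representation theory.

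Your plan instead produces congruences of the form $(1-\sigma)\Rel(\Sigma)\in I_L^{\retro}$ for a handful of transpositions $\sigma$. Your individual red-edge Pl\"uckers do check out: Pl\"uckering $\uedge{15},\uedge{67}$ yields exactly $\sigma_{56}\Gamma$ in one branch and a graph with a tripled edge on $\{5,6\}$ in the other, so $(1-\sigma_{56})\Rel(\Sigma)\in I_L^{\retro}$, and similarly for $\sigma_{78}$. But this only shows that the image of $\Rel(\Sigma)$ in $I_L^{(3)}/I_L^{\retro,(3)}$ is \emph{fixed} by the subgroup generated by those transpositions; it does not show that the image is \emph{zero}. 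Your closing sentence concedes this is the delicate point but offers no mechanism to complete it. The group you generate is $\langle\sigma_{56},\sigma_{78}\rangle\cong(\Z/2)^2$, and there is no a priori reason an element of $\Sym^3(V_L)/\cdots$ fixed by this small subgroup must vanish; closing this would essentially require re-doing the representation theory that the paper performs \emph{after} this lemma, not inside it, and which depends on already knowing the dimension bound this lemma helps establish.

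There is also a secondary problem in the blue and green Pl\"uckers you invoke ``for further congruences.'' Pl\"uckering the special blue pair $\uedge{19},\uedge{2\,10}$ does give a degenerate branch, but the other branch (blue edges $\uedge{1\,10},\uedge{29}$) is \emph{not} $\sigma_{12}\Gamma$: the transposition $\sigma_{12}$ would also swap the red edges $\uedge{15},\uedge{24}$ to $\uedge{25},\uedge{14}$, which a Pl\"ucker on blue edges alone leaves untouched. So the identity you would obtain from that Pl\"ucker does not translate into a transposition congruence; this part of the plan is incorrect as stated. Overall, to make your route work you would need to replace the vague final ``pinning'' step with an explicit linear identity that eliminates $\Rel(\Sigma)$ itself, and at that point you are back to something like the paper's Figure~\ref{f:id8} argument.
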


\begin{proof}
We first note that the relation $\Rel(\Sigma)$ is independent of the partition
$\ms{U}$.  We therefore assume that
the vertices 5, 6, 7 and 8 belong to $U_G$.  Now, let $\Gamma$ be the left
graph.  Redraw $\Gamma$ as follows:
\begin{displaymath}
\begin{xy}
(3.06, 7.39)*{}="A"; (7.39, 3.06)*{}="B"; (7.39, -3.06)*{}="C";
(3.06, -7.39)*{}="D"; (-3.06, -7.39)*{}="E"; (-7.39, -3.06)*{}="F";
(-7.39, 3.06)*{}="G"; (-3.06, 7.39)*{}="H";
(-7.39, -7.39)*{}="I"; (-11.39, -11.39)*{}="J";
(3.06, 9.39)*{\ss 6}; (9.39, 3.06)*{\ss 7};
(9.39, -3.06)*{\ss 8}; (-3.06, 9.39)*{\ss 5};
(-9.39, 3.06)*{\ss 1}; (-5.39, -2.56)*{\ss 2};
(3.06, -9.39)*{\ss 3}; (-2.56, -5.39)*{\ss 4};
(-6.39, -8.39)*{\ss 10}; (-13.39, -11.39)*{\ss 9};
{\ar@{--}@[red] "A"; "B"};
{\ar@{--}@[red] "C"; "D"};
{\ar@{--}@[red] "E"; "F"};
{\ar@{--}@[red] "G"; "H"};
{\ar@{--}@[red] "I"; "J"};
{\ar@{.}@[|(2)]@<-.2ex>@[blue] "B"; "C"};
{\ar@{-}@[|(2)]@[green] "F"; "G"};
{\ar@{.}@[|(2)]@<-.2ex>@[blue] "A"; "H"};
{\ar@{.}@[|(2)]@[blue] "D"; "E"};
{\ar@{-}@[green] "E"; "I"};
{\ar@{-}@<.2ex>@[green] "B"; "C"};
{\ar@{-}@/^2ex/@[green] "D"; "J"};
{\ar@{.}@[|(2)]@<.2ex>@[green] "A"; "H"};
{\ar@{..}@[blue]@[|(2)] "F"; "I"};
{\ar@{..}@[blue]@[|(2)] "G"; "J"};
\end{xy}
\end{displaymath}
All the red edges other than $\uedge{9 \;  10}$ contain at least one vertex in
$U_G$ so we are allowed to Pl\"ucker any of them together to obtain a relation
of generalized Segre cubic data.  We thus may apply identity of
Figure~\ref{f:id8} to the red edges.  All the resulting terms either come from
fewer points by outer multiplication or else are degenerate.  (Precisely:  the
first two graphs from the first row of Figure~\ref{f:id8} come from fewer
points, and the last two are degenerate.  In the second row, all are degenerate
except for the third.  In the third row, all are degenerate except the first.)
We conclude that $\Rel(\Sigma)$ is retrogenerated.

Now let $\Gamma$ be the right graph.  By applying the Pl\"ucker relation to the
edges $\uedge{28}$ and $\uedge{36}$ we get two graphs, one of which is
degenerate, the other of which looks like the graph handled in the previous
paragraph.  Thus $\Rel(\Sigma)$ for the right graph is retrogenerated as well.
\end{proof}

We now establish the second case of Proposition~\ref{prop:retro10c}.  Thus
let $\ms{U}$ be a partition in which $|U_G|=6$.  In this case, both $U_R$ and
$U_B$ have cardinality two and so the situation is symmetric with respect
to these two colors.  We begin with the following observation:

\begin{lemma}
\label{lem:10b}
If there is a green-blue or green-red doubled edge in $U_G$ then $\Rel(\Sigma)$
is retrogenerated over $\Z[\half]$.
\end{lemma}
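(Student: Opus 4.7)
The plan is to move the green--blue doubled pair $\{x,y\}$ out of $U_G$ and into $U_B$, forming a new partition $\ms{U}'=\{U_G\setminus\{x,y\},\,U_R,\,U_B\cup\{x,y\}\}$ with $|U_G'|=4$.  The key observation making this useful is that because $\{x,y\}$ carries a doubled edge, the ``black'' and ``purple'' subgraphs of $\Sigma$ and $\Sigma'$ coincide once the green--versus--blue distinction at $xy$ is forgotten: both subgraphs contain the edge $xy$ exactly once, only labeled differently.  Consequently $Y'_{\Sigma}=Y'_{\Sigma'}$ in $V_L\otimes R_L^{(2)}$, and since $Y_\Sigma=Y_{\Sigma'}$ trivially, we get $\Rel(\Sigma)=\Rel(\Sigma')$ in $\Sym^3(V_L)/Q_L^{(3)}$.

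Whether $\Sigma'$ is a valid generalized Segre datum (satisfying the $0$-or-$2$ edge count between parts) depends on where the red edges at $x$ and $y$ point.  If both go to $U_B$, then after repartitioning there are zero red edges between $U_G'$ and $U_B'$, so $\Sigma'$ is valid but degenerate of type~(1), and Proposition~\ref{prop:gsd-degen} gives $\Rel(\Sigma')\in Q_L\subset I_L^{\retro}$.  If exactly one of the two red edges goes to $U_B$, the repartitioned datum is valid and non-degenerate with $|U_G'|=4$, so $\Rel(\Sigma')\in I_L^{\retro}$ by part~(1) of Proposition~\ref{prop:retro10c} (which was established via Lemma~\ref{lem:10a}).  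The green--red doubled edge case follows by the symmetry that swaps $U_R$ and $U_B$.

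The main obstacle will be the remaining case where both red edges $xu$ and $yv$ at $x,y$ are internal to $U_G$: the direct repartition then produces four red edges between $U_G'$ and $U_B'$, invalidating the datum.  My plan is to reduce this ``both internal'' case to the previously handled ones by Plücker maneuvers.  First, applying the colored Plücker relation to $xu$ and a special red edge $a_1b_1$ writes $\Rel(\Sigma)$ as a sum of two terms: one in which $x$'s red edge becomes special (reducing to the handled ``mixed'' case) and one in which both are still internal but with shifted special-endpoints.  In parallel, the Plücker identity applied to $xu$ and $yv$ produces, as one summand, a graph in which $xy$ is a triple edge; then $\{x,y\}$ is an isolated component and the corresponding relation lies in $I_L^{\retro}$ by Proposition~\ref{prop:gsd-discon}.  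Combining these reductions, the hardest step will be to show that the process terminates and does not just rewrite $\Rel(\Sigma)$ as itself modulo retrogenerated relations; I expect this to need a decreasing invariant on the residual ``both internal'' configurations (the total length of the purple cycles of $\Gamma$ is a natural candidate), together with invertibility of $2$ to pass from symmetry-type identities of the form $\Rel(\Sigma)\equiv\Rel(\tau\Sigma)\bmod I_L^{\retro}$ to the desired conclusion.
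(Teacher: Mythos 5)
Your central observation is clever and correct: if $\{x,y\}\subset U_G$ carries the green--blue doubled edge and you move $\{x,y\}$ into $U_B$, then the black/purple recoloring of $\Gamma$ is unchanged (because both recolorings assign $xy$ exactly one black and one purple edge), so $Y'_{\Sigma}=Y'_{\Sigma'}$ and hence $\Rel(\Sigma)=\Rel(\Sigma')$ in $I_L^{(3)}/Q_L^{(3)}$.  Your case analysis of when $\Sigma'$ is a legal generalized Segre datum is also correct.  But this route is genuinely different from the paper's, and it leaves a real gap.

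The paper's proof does not repartition: it applies the graphical identity of Figure~\ref{f:id6} to the blue--green subgraph of $\Gamma$ restricted to $U_G$ to manufacture a \emph{second} green--blue doubled edge in $U_G$, exactly as in the $|U_G|=4$ step; once there are two such doubled edges in $U_G$, the graph is (up to relabeling) one of the two explicit graphs treated in Lemma~\ref{lem:10a}, and that lemma applies verbatim since its hypothesis $U_G\supset\{1,2\}$, $U_B\supset\{3,4\}$ does not fix $|U_G|$.  This sidesteps your problematic case entirely.

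The gap in your argument is precisely the ``both internal'' case, and the Pl\"ucker maneuvers you sketch do not close it.  Pl\"uckering the two internal red edges $\uedge{xu}$, $\uedge{yv}$ against each other gives one summand with $xy$ a triple edge (disconnected, hence retrogenerated by Proposition~\ref{prop:gsd-discon}), but the other summand is just $(u\,v)\Sigma$, another datum of the same ``both internal'' type.  What you deduce is $\Rel(\Sigma)\equiv\pm\Rel((u\,v)\Sigma)\pmod{I_L^{\retro}}$, a symmetry constraint on $\Rel(\Sigma)$, not membership in $I_L^{\retro}$; inverting~$2$ does not rescue this, since the identity does not take the form $\Rel(\Sigma)\equiv-\Rel(\Sigma)$.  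Pl\"uckering $\uedge{xu}$ against a special red edge produces a ``mixed'' term (handled) plus another ``both internal'' term in which a different vertex now carries a special red edge; iterating such moves can cycle (Pl\"uckering twice against the same special edge returns you to $\Sigma$), and you supply no monovariant ruling this out.  So as written the proposal handles two of the three sub-cases and leaves the third as a conjecture.  To complete it along your lines you would need, for example, to show directly that the ``both internal'' configuration can always be eliminated by a single explicit sequence of Pl\"uckers; the paper's detour through Figure~\ref{f:id6} and Lemma~\ref{lem:10a} is the cleaner way to achieve exactly that.
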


\begin{proof}
Say there is a green-blue doubled edge in $U_G$.  Then, as in the case
$|U_G|=4$, we can apply identity of Figure~\ref{f:id6} to force there to be a second
green-blue doubled edge in $U_G$.  The graph $\Gamma$ must now look like
one of the two in Lemma~\ref{lem:10a} and so, by that lemma, we obtain the
present one.
\end{proof}

Now consider a general generalized Segre cubic datum $\Sigma$ with
$|U_G|=6$.  By applying identity of Figure~\ref{f:id6} repeatedly, or appealing to
Proposition~\ref{prop:pfil2}, we can arrange it so that the blue-green
subgraph on $U_G$ is of the form
\begin{displaymath}
\begin{xy}
(0, -4)*{}="A1"; (0, 4)*{}="A2";
(8, -4)*{}="B1"; (8, 4)*{}="B2";
(16, -4)*{}="C1"; (16, 4)*{}="C2";
(24, -4)*{}="D1"; (24, 4)*{}="D2";
"B1"*{\bullet}; "B2"*{\bullet};
"C1"*{\bullet}; "C2"*{\bullet};
"D1"*{\bullet}; "D2"*{\bullet};
{\ar@[green]@{-} "B1"; "B2"};
{\ar@[blue]@{..}@[|(2)] "A1"; "B1"};
{\ar@[blue]@{..}@[|(2)] "A2"; "B2"};
{\ar@[blue]@{..}@[|(2)] "C1"; "D1"};
{\ar@[blue]@{..}@[|(2)] "C2"; "D2"};
{\ar@[green]@{-} "C1"; "C2"};
{\ar@[green]@{-} "D1"; "D2"};
\end{xy}
\qquad \textrm{or} \qquad
\begin{xy}
(0, -4)*{}="A1"; (0, 4)*{}="A2";
(8, -4)*{}="B1"; (8, 4)*{}="B2";
(16, -4)*{}="C1"; (16, 4)*{}="C2";
(24, -4)*{}="D1"; (24, 4)*{}="D2";
"B1"*{\bullet}; "B2"*{\bullet};
"C1"*{\bullet}; "C2"*{\bullet};
"D1"*{\bullet}; "D2"*{\bullet};
{\ar@[green]@{-} "B1"; "B2"};
{\ar@[blue]@{..}@[|(2)] "A1"; "B1"};
{\ar@[blue]@{..}@[|(2)] "A2"; "B2"};
{\ar@[blue]@{..}@[|(2)]@<.2ex> "C1"; "C2"};
{\ar@[green]@{-}@<-.2ex> "C1"; "C2"};
{\ar@[blue]@{..}@[|(2)]@<.2ex> "D1"; "D2"};
{\ar@[green]@{-}@<-.2ex> "D1"; "D2"};
\end{xy}
\end{displaymath}
The right graph has a blue-green doubled edge, and so the associated
relation belongs to $I_L'$ by Lemma~\ref{lem:10b}.  We therefore need
only consider the left graph.  We now consider the red edges.  There are
two that are completely contained in $U_G$ and the two special edges, each
of which has one vertex in $U_G$.  Here are three possible graphs:
\begin{displaymath}
\begin{xy}
(0, -4)*{}="A1"; (0, 4)*{}="A2";
(8, -4)*{}="B1"; (8, 4)*{}="B2";
(16, -4)*{}="C1"; (16, 4)*{}="C2";
(24, -4)*{}="D1"; (24, 4)*{}="D2";
(32, -4)*{}="E1"; (32, 4)*{}="E2";
(8, -6)*{\ss 1}; (8, 6)*{\ss 4};
(16, -6)*{\ss 2}; (24, -6)*{\ss 3};
(16, 6)*{\ss 5}; (24, 6)*{\ss 6};
"B1"*{\bullet}; "B2"*{\bullet};
"C1"*{\bullet}; "C2"*{\bullet};
"D1"*{\bullet}; "D2"*{\bullet};
{\ar@[green]@{-} "B1"; "B2"};
{\ar@[blue]@{..}@[|(2)] "A1"; "B1"};
{\ar@[blue]@{..}@[|(2)] "A2"; "B2"};
{\ar@[blue]@{..}@[|(2)] "C1"; "D1"};
{\ar@[blue]@{..}@[|(2)] "C2"; "D2"};
{\ar@[green]@{-} "C1"; "C2"};
{\ar@[green]@{-} "D1"; "D2"};
{\ar@[red]@{--} "D1"; "E1"};
{\ar@[red]@{--} "D2"; "E2"};
{\ar@[red]@{--} "B1"; "C1"};
{\ar@[red]@{--} "B2"; "C2"};
\end{xy}
\qquad
\begin{xy}
(0, -8)*{}="A1"; (0, 8)*{}="A2";
(8, -8)*{}="B1"; (8, 8)*{}="B2";
(16, -8)*{}="C1"; (16, 8)*{}="C2";
(24, -8)*{}="D1"; (24, 8)*{}="D2";
(32, -8)*{}="E1"; (32, 8)*{}="E2";
(16, 0)*{}="F1"; (24, 0)*{}="F2";
(8, -10)*{\ss 1}; (8, 10)*{\ss 4};
(16, -2)*{\ss 2}; (24, -2)*{\ss 3};
(16, 10)*{\ss 5}; (24, 10)*{\ss 6};
"B1"*{\bullet}; "B2"*{\bullet};
"C2"*{\bullet}; "D2"*{\bullet};
"F1"*{\bullet}; "F2"*{\bullet};
{\ar@[green]@{-} "B1"; "B2"};
{\ar@[blue]@{..}@[|(2)] "A1"; "B1"};
{\ar@[blue]@{..}@[|(2)] "A2"; "B2"};
{\ar@[blue]@{..}@[|(2)] "C2"; "D2"};
{\ar@[blue]@{..}@[|(2)]@<.2ex> "F1"; "F2"};
{\ar@[red]@{--}@<-.2ex> "F1"; "F2"};
{\ar@[green]@{-} "C2"; "F1"};
{\ar@[green]@{-} "D2"; "F2"};
{\ar@[red]@{--} "D2"; "E2"};
{\ar@[red]@{--} "B1"; "E1"};
{\ar@[red]@{--} "B2"; "C2"};
\end{xy}
\qquad
\begin{xy}
(0, -8)*{}="A1"; (0, 8)*{}="A2";
(8, -8)*{}="B1"; (8, 8)*{}="B2";
(16, -8)*{}="C1"; (16, 8)*{}="C2";
(24, -8)*{}="D1"; (24, 8)*{}="D2";
(32, -8)*{}="E1"; (32, 8)*{}="E2";
(16, 0)*{}="F1"; (24, 0)*{}="F2"; (32, 0)*{}="F3";
(8, -10)*{\ss 1}; (8, 10)*{\ss 4};
(16, -2)*{\ss 2}; (24, -2)*{\ss 3};
(16, 10)*{\ss 5}; (24, 10)*{\ss 6};
"B1"*{\bullet}; "B2"*{\bullet};
"C2"*{\bullet}; "D2"*{\bullet};
"F1"*{\bullet}; "F2"*{\bullet};
{\ar@[green]@{-} "B1"; "B2"};
{\ar@[blue]@{..}@[|(2)] "A1"; "B1"};
{\ar@[blue]@{..}@[|(2)] "A2"; "B2"};
{\ar@[blue]@{..}@[|(2)] "C2"; "D2"};
{\ar@[blue]@{..}@[|(2)] "F1"; "F2"};
{\ar@[red]@{--} "F1"; "D2"};
{\ar@[green]@{-} "C2"; "F1"};
{\ar@[green]@{-} "D2"; "F2"};
{\ar@[red]@{--} "F2"; "F3"};
{\ar@[red]@{--} "B1"; "E1"};
{\ar@[red]@{--} "B2"; "C2"};
\end{xy}
\end{displaymath}
We will refer to these as type A, B and C, respectively.  Of course, there are
many other possibilities for what the graph could look like.  However, all
reduce to one of the above three types after some simple Pl\"ucker relations.
For instance, one could consider the graph which is like the type A one, but
where the blue and green colors are switched in the square 2-3-5-6.  By
applying the Pl\"ucker relation to the edges $\uedge{4 5}$ and the red special
edge containing 3, this relation is rewritten as a sum of two of those
appearing above.

We now have the following result:

\begin{lemma}
Type B and C relations are retrogenerated over $\Z[\half]$.
\end{lemma}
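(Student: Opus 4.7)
The plan is to reduce each of type B and type C to the type A case by a short sequence of Pl\"ucker moves on the red subgraph, absorbing the error terms into either $Q_L$ or $I_L^{\retro}$. The structural feature that makes this possible is that in both types B and C the blue edge $\uedge{F_1 F_2}$ lies entirely inside $U_B$ (with one vertex, $F_1$, not adjacent to any part boundary except through the special edges or the extra doubled red edge in type B). This gives a natural edge pair to Pl\"ucker: the red portion of $\uedge{F_1 F_2}$ (type B) or the red special edge $\uedge{F_2 F_3}$ (type C), together with one of the red edges contained in $U_G$.

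For type B, I would apply the colored Pl\"ucker relation to the red edge $\uedge{F_1 F_2}$ and the red special edge $\uedge{D_2 E_2}$. This rewrites $\Rel(\Sigma)$ as a sum of three generalized Segre cubic relations $\Rel(\Sigma_1) + \Rel(\Sigma_2) + \Rel(\Sigma_3)$, and by construction each $\Sigma_i$ is again a valid generalized Segre datum with the same partition $\ms{U}$ (after moving at most one vertex between parts and using Lemma~\ref{lem:retro-a} or the argument of Proposition~\ref{prop:gsd-discon} to absorb the reshuffling). One of the three terms has the blue subgraph unchanged relative to the original and thus differs by a quadratic Pl\"ucker relation, hence lies in $Q_L \subset I_L^{\retro}$; another produces a disconnected graph, so its relation is retrogenerated by Proposition~\ref{prop:gsd-discon}; the remaining term has a blue-green subgraph that, after one further Pl\"ucker move on $U_G$ justified by the identity of Figure~\ref{f:id6}, matches the shape of type A. For type C, I would apply the analogous Pl\"ucker move to $\uedge{F_2 F_3}$ and $\uedge{B_1 E_1}$, yielding one degenerate term (in $Q_L$ by Proposition~\ref{prop:gsd-degen}), one disconnected term (in $I_L^{\retro}$), and one term that is of type B and is handled by the previous step.

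The main obstacle will be verifying in each case that after a Pl\"ucker move the resulting graph still fits a valid generalized Segre datum whose partition satisfies the small-$U_R$ hypothesis, and that the graph really is either type A, degenerate, disconnected, or differs from one of these by a quadratic move covered by Lemmas~\ref{lem:retro-a}--\ref{lem:retro-b}. This bookkeeping is delicate because the special edges and the partition are interdependent, and some intermediate graphs need a further reassignment of a single vertex between parts (as in the proof of Lemma~\ref{lem:10a}) before they are manifestly of the desired shape. Once this is done, type B and type C relations both lie in $I_L^{\retro} + (\text{type A relations})$, which together with the forthcoming treatment of type A will complete the proof of Proposition~\ref{prop:retro10c}.
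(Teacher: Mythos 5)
There are several genuine problems here, and the overall strategy is not the one that works.

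First, a structural misreading: the vertices $F_1$, $F_2$ (labeled $2$ and $3$ in the type B figure) lie in $U_G$, not in $U_B$. Since $|U_G|=6$ in this case, all six bulleted vertices of the top configuration are in $U_G$; the vertices $E_1$, $E_2$ off to the right are the two elements of $U_B$. So your premise that the double edge $\uedge{F_1 F_2}$ ``lies entirely inside $U_B$'' is wrong, and the Pl\"ucker you propose (on $\uedge{F_1 F_2}$ and $\uedge{D_2 E_2}$) is a move on two red edges with vertices in $U_G$. Moreover, a Pl\"ucker relation on two edges of the same color yields \emph{two} other graphs, not three. And tracking the red edges explicitly, Pl\"uckering $\uedge{2\,3}$ against $\uedge{6\,\ast}$ produces red configurations $\{\uedge{2\,6},\uedge{3\,\ast}\}$ or $\{\uedge{2\,\ast},\uedge{3\,6}\}$, neither of which matches the type A red configuration $\{\uedge{1\,2},\uedge{4\,5},\text{special from }3,6\}$; the blue-green subgraph is untouched by this move, so you cannot convert the shape into type A this way.

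Second, and more fundamentally, the target is wrong. The lemma asserts that type B and C relations \emph{are retrogenerated}, full stop; they must lie in $I_L^{\retro}$. Your plan would at best show they lie in $I_L^{\retro}$ plus the span of the type A relations, which is strictly weaker. (It would still suffice to deduce Proposition~\ref{prop:retro10c}, since the type A relations are bounded separately, but it does not prove this lemma.) The argument the paper needs is sharper: apply the identity of Figure~\ref{f:id6}, with black $=$ red and blue $=$ green, to the red-green alternating chain $4$-$5$-$2$-$3$-$6$-$\ast$ (for type C, the chain $4$-$5$-$2$-$6$-$3$-$\ast$). Every term on the right side of that identity is then handled uniformly: the first term is disconnected (the four interior vertices $5,2,3,6$ form a closed component), hence retrogenerated by Proposition~\ref{prop:gsd-discon}; every remaining term acquires a green-red doubled edge inside $U_G$ and is therefore retrogenerated by Lemma~\ref{lem:10b}. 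This lands directly in $I_L^{\retro}$ with no residual type A contribution. That is the idea you are missing: instead of Pl\"uckering two red edges (which never changes the blue-green picture, hence cannot produce the disconnections or the doubled edges that give retrogeneration), expand along a bicolored chain so that the ``error'' terms carry exactly the features (disconnection, green-red double) that are already known to force retrogeneration.
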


\begin{proof}
Consider the type B graph drawn above.  We apply identity of Figure~\ref{f:id6}
to the red-green chain 4-5-2-3-6-$\ast$, where $\ast$ is the relevant vertex of
$U_B$.  The first graph on the right side of the identity is disconnected and
therefore retrogenerated.  All the remaining graphs have green-red doubled
edges and so are also retrogenerated by Lemma~\ref{lem:10b}.  This completes
the type B case.  The type C case is handled similarly:  in the above labeling
one applies the identity of Figure~\ref{f:id6} to the chain 4-5-2-6-3-$\ast$
and then proceeds exactly as in the type B case.
\end{proof}

The following completes the proof of Proposition~\ref{prop:retro10c}.

\begin{lemma}
The type A relations span a subspace of $I_L^{(3)}/I_L^{\retro, (3)}$ of
dimension at most one over $\Z[\half]$.
\end{lemma}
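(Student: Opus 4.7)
The plan is to show that any two type A generalized Segre cubic relations are equal up to sign modulo $I_L^{\retro, (3)}$, which immediately yields the one-dimensional bound. A type A datum (with $|U_G|=6$) is specified by an ordered partition of $U_G$ into three pairs --- a ``left'' pair joined to $U_R$ by the special blue edges, a ``middle'' pair, and a ``right'' pair joined to $U_B$ by the special red edges --- together with a top/bottom labeling within each pair. The symmetric group $\mf{S}_{U_G}$ acts on the set of such data, and the challenge is to connect all of them by moves valued in $I_L^{\retro, (3)}$.

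The basic tool is to Pl\"ucker two of the three green edges inside $U_G$ (for example, the green edges of the middle and right pairs). This rewrites $\Rel(\Sigma)$ as a signed sum of the relations attached to the two graphs produced by the Pl\"ucker identity. I would then go through each such term and show it falls into one of the following cases: (i) it has a green-blue or green-red doubled edge in $U_G$, so is retrogenerated by Lemma~\ref{lem:10b}; (ii) it fits the type B or type C pattern, so is retrogenerated by the preceding lemma; (iii) it is disconnected or degenerate, hence retrogenerated by Propositions~\ref{prop:gsd-discon} and~\ref{prop:gsd-degen}; or (iv) it is another type A datum differing from $\Sigma$ only by a permutation of the three columns or a top/bottom relabeling within a pair. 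Case (iv) is what produces the desired equivalences among type A data; the other cases contribute only elements of $I_L^{\retro, (3)}$. Swapping top and bottom within a single pair is reflected in the orientation $\epsilon$ and merely flips the sign of $\Rel(\Sigma)$.

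The hard part will be the bookkeeping in case (iv): checking that a sufficient variety of Pl\"ucker moves is available to transpose any two columns (and to permute freely within any single column), and keeping track of the signs introduced by each move. This is analogous to the reductions performed in Lemmas~\ref{lem:10a} and~\ref{lem:10b} but with more cases, and is also where the $\tfrac{1}{2}$ enters --- combining identities like those of Figure~\ref{f:id6} may produce a factor of $2$ that must be inverted to solve for one type A relation in terms of another. Once the orbit of type A data under these retrogenerated moves is shown to be a single equivalence class up to sign, the subspace of $I_L^{(3)}/I_L^{\retro, (3)}$ spanned by type A relations is generated by a single element and therefore has dimension at most one, as required.
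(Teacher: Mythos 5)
Your high-level strategy is the same as the paper's: show that all type~A relations are equal (up to scalars) modulo $I_L^{\retro,(3)}$, so the span is at most one-dimensional. But the mechanism you propose --- Pl\"uckering two of the three green edges inside $U_G$ and classifying the two resulting terms into cases~(i)--(iv) --- does not work as stated, and this is where the real content of the lemma lives.

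Take the type~A graph $\Gamma$ labeled as in the paper, with green edges $\uedge{14},\uedge{25},\uedge{36}$ (the ``columns''), blue $\uedge{23},\uedge{56}$, red $\uedge{12},\uedge{45}$, blue specials at $1,4$ and red specials at $3,6$. Pl\"uckering green $\uedge{25}$ and $\uedge{36}$ yields two terms: one with green $\uedge{23},\uedge{56}$, which has a green--blue doubled edge and is retrogenerated by Lemma~\ref{lem:10b} (your case~(i)); and one with green $\uedge{26},\uedge{35}$. In the second, the columns determined by the green edges become $\{1,4\},\{2,6\},\{3,5\}$, but the two red specials sit at $3$ and $6$, which now lie in \emph{different} columns. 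This graph is therefore \emph{not} a type~A graph --- it does not fall into your case~(iv), nor is it obviously type~B/C, degenerate, or disconnected. Your case analysis is incomplete, and the ``bookkeeping'' you defer to cannot be made to close by green Pl\"ucker moves alone.

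The paper's argument deals with this by never starting with a green Pl\"ucker on a type~A graph. It first applies a Pl\"ucker on two \emph{red} edges (e.g.\ $\uedge{45}$ and $\uedge{12}$, or $\uedge{12}$ and a special red edge $\uedge{3\ast}$ into $U_B$), which produces one retrogenerated term (a doubled edge, or a type~B graph) and a second ``prepared'' graph; only then does it Pl\"ucker blue or green edges of the prepared graph, and only at that stage does a genuine type~A graph in the $\mf{S}_{U_G}$-orbit of $\Gamma$ appear. This two-step structure --- red/special Pl\"ucker, then green/blue Pl\"ucker --- is what generates the transpositions $(2\,5),(1\,4),(3\,6),(2\,3),(4\,5),(5\,6),(1\,2)$ needed to conclude that $\sigma\Gamma$ gives the same relation as $\Gamma$ for every $\sigma \in \mf{S}_{U_G}$. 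Your single green-Pl\"ucker move doesn't generate these permutations, and without them the ``single equivalence class'' conclusion does not follow.

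Two smaller points. First, the scalars that appear in the argument are $\pm1$ (each Pl\"ucker with one vanishing term flips the sign), not factors of $2$; the $\Z[\half]$ hypothesis is inherited from the earlier retrogeneration lemmas in this section, not created here, so that part of your speculation is off the mark. Second, your remark that a top/bottom swap within a column is just a sign change via $\epsilon$ is true but is not one of the moves a green Pl\"ucker produces, so it does not help patch the gap in case~(iv).
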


\begin{proof}
Put $U_G=\{1,2,3,4,5,6\}$ and let $\Gamma$ be a 3-colored graph on $L$ whose
restriction to $U_G$ is the type A graph
\begin{displaymath}
\begin{xy}
(0, -4)*{}="A1"; (0, 4)*{}="A2";
(8, -4)*{}="B1"; (8, 4)*{}="B2";
(16, -4)*{}="C1"; (16, 4)*{}="C2";
(24, -4)*{}="D1"; (24, 4)*{}="D2";
(32, -4)*{}="E1"; (32, 4)*{}="E2";
(8, -6)*{\ss 1}; (8, 6)*{\ss 4};
(16, -6)*{\ss 2}; (24, -6)*{\ss 3};
(16, 6)*{\ss 5}; (24, 6)*{\ss 6};
"B1"*{\bullet}; "B2"*{\bullet};
"C1"*{\bullet}; "C2"*{\bullet};
"D1"*{\bullet}; "D2"*{\bullet};
{\ar@[green]@{-} "B1"; "B2"};
{\ar@[blue]@{..}@[|(2)] "A1"; "B1"};
{\ar@[blue]@{..}@[|(2)] "A2"; "B2"};
{\ar@[blue]@{..}@[|(2)] "C1"; "D1"};
{\ar@[blue]@{..}@[|(2)] "C2"; "D2"};
{\ar@[green]@{-} "C1"; "C2"};
{\ar@[green]@{-} "D1"; "D2"};
{\ar@[red]@{--} "D1"; "E1"};
{\ar@[red]@{--} "D2"; "E2"};
{\ar@[red]@{--} "B1"; "C1"};
{\ar@[red]@{--} "B2"; "C2"};
\end{xy}
\end{displaymath}
Applying the Pl\"ucker relation to the edges $\uedge{4 5}$ and $\uedge{1 2}$
gives an expression for $\Gamma$ in terms of two other graphs $\Gamma_1$ and
$\Gamma_2$, one of which (say $\Gamma_1$) is degenerate.  Now apply the
Pl\"ucker relation to the edges $\uedge{5 6}$ and $\uedge{2 3}$ of $\Gamma_2$.
Again, this expresses $\Gamma_2$ in terms of two other graphs $\Gamma_3$ and
$\Gamma_4$, one of which (say $\Gamma_4$) is degenerate.  Thus, we see that
$\Rel(\Sigma)$ and $\Rel(\Sigma_4)$ are scalar multiples of each other.  But
$\Rel(\Sigma_4)$ is nothing other than $(2 5) \Rel(\Sigma)$.  We thus see that
$\Gamma$ and $(2 5) \Gamma$ yield the same relation (up to a scalar).  By
similar reasoning, we see that $(1 4)\Gamma$ and $(3  6)\Gamma$ give the same
relation as $\Gamma$.

Now consider the Pl\"ucker relation on $\Gamma$ on the edges $\uedge{12}$ and
$\uedge{3 \ast}$, where $\ast$ is the relevant vertex of $U_B$.  This results
in two graphs $\Gamma_1$ and $\Gamma_2$.  Say $\Gamma_1$ is the graph which
has the edge $\uedge{1 \ast}$.  Then $\Gamma_1$ is a type B graph and thus
retrogenerated.  Now apply the Pl\"ucker relation on the edges $\uedge{2 5}$
and $\uedge{3 6}$ to $\Gamma_2$ to get two new graphs $\Gamma_3$ and
$\Gamma_4$.  One of these, say $\Gamma_3$, has a green-blue doubled edge and
is thus retrogenerated.  The other, $\Gamma_4$, is just $(2 3)\Gamma$.  We
thus see that $\Gamma$ and $(2  3)\Gamma$ give the same relation, up to
a scalar.  Similar reasoning shows that $(4  5)\Gamma$, $(5  6)\Gamma$
and $(1  2)\Gamma$ give the same relation as $\Gamma$.

>From the previous two paragraphs, we see that if $\sigma$ is any permutation
of $U_G$ then $\Gamma$ and $\sigma \Gamma$ give the same relation, up to
a scalar.  Since every type A graph is of the form $\sigma \Gamma$ for some
$\sigma$, this establishes the lemma.
\end{proof}


\section{The ideal is generated by quadratics}
\label{s:base}

The goal of \S \ref{s:base} is to prove that $I_L$ is generated by quadratics
if $|L| \ge 8$:

\begin{theorem}
\label{thm:quadgen}
If $L$ is an even set of cardinality at least eight then $I_L= Q_L$ over
$\Z[\tfrac{1}{12!}]$.
\end{theorem}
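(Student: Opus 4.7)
The plan is to prove the theorem by strong induction on $|L|$. The base case $|L|=8$ is the main theorem of \cite{HMSV}, which establishes that the ideal of relations on eight points is generated by its quadratic part (essentially the simplest binomial \eqref{e:simplest}). For the inductive step, fix $|L|\ge 10$ and assume the theorem for all even cardinalities in $[8,|L|)$. Theorem~\ref{thm:retro} applies and yields $I_L=I_L^{\retro}$, so it suffices to show $I_L^{\retro}\subseteq Q_L$. By Proposition~\ref{prop:outer246}, $I_L^{\retro}$ is generated (over $\Z[\tfrac12]$) by outer products $r\boxtimes y$ with $L=L'\amalg L''$, $|L''|\in\{2,4,6\}$, $r\in I_{L'}^{(3)}/Q_{L'}^{(3)}$, and $y\in\Sym^3(V_{L''})/Q_{L''}^{(3)}$.

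I would then case-split on $|L'|$. When $|L'|\le 4$, one has $I_{L'}=0$ for dimension reasons. When $8\le |L'|<|L|$, the inductive hypothesis gives $I_{L'}=Q_{L'}$, so $r=0$ in $I_{L'}^{(3)}/Q_{L'}^{(3)}$; in either case Proposition~\ref{prop:stabdeg} puts $r\boxtimes y$ into $Q_L$. The only remaining case is $|L'|=6$, in which $I_{L'}^{(3)}/Q_{L'}^{(3)}$ is the one-dimensional $\mf{S}_6$-representation generated by the Segre cubic relation. When $|L|\ge 14$ this case never arises, since $|L'|=|L|-|L''|\ge |L|-6\ge 8$, so the inductive step for such $|L|$ is complete as soon as the base cases $|L|=8,10,12$ are settled.

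It therefore remains to settle $|L|=10$ and $|L|=12$ directly, treating $10$ before $12$ so that the induction on $12$ may use it. In each case the only retrogenerated relations not yet controlled are outer products $\mathrm{Segre}\boxtimes y$ where $\mathrm{Segre}$ is the Segre cubic on a 6-element subset $L'\subset L$ and $y\in\Sym^3(V_{L''})$ with $|L''|=4$ (when $|L|=10$) or $|L''|=6$ (when $|L|=12$). Using Proposition~\ref{prop:pfil3} and Corollary~\ref{cor:benzene}, I would rewrite $y$ as a combination of graphs that are unions of benzene 2-, 4-, and 6-cycles, reducing the task to showing that the outer product of the Segre cubic with any such benzene configuration lies in $Q_L$. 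For $|L|=12$, the representation-theoretic analysis of cubics on six extra vertices (Proposition~\ref{prop:g6}) further decomposes the problem into a manageable list of $\mf{S}_6$-cyclic classes. In each of the resulting concrete configurations, the enlarged vertex set furnishes new quadratic relations, and by a sequence of Pl\"ucker manipulations --- ultimately invoking the simplest binomial quadratic on an eight-vertex subset supplied by the base case --- one exhibits $\mathrm{Segre}\boxtimes y$ as a sum of elements of $Q_L$.

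The main obstacle will be precisely this last step. The 10- and 12-point cases lie at the boundary where the structural tools of \S\ref{s:retro} and \S\ref{s:symvl} are too coarse to conclude by themselves, and (as observed in \S\ref{outline}) no purely conceptual argument is available; the bulk of the work is a careful combinatorial expansion tracking Pl\"ucker rewrites on specific graphs, together with the invocation of the 8-point result at the correct moments. The 12-point case is the more delicate of the two, and is the source of the denominator $12!$ in the statement, since the representation-theoretic decomposition used to isolate and eliminate the Segre contribution on a 6-element subset of $L''$ requires inverting $|L|!$ at that stage.
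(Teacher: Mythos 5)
Your inductive framework for $|L| \ge 14$ is exactly the paper's: retrogeneration (Theorem~\ref{thm:retro}), Proposition~\ref{prop:outer246} to restrict to $|L''| \in \{2,4,6\}$, the vanishing $I_{L'} = 0$ for $|L'| \le 4$, Proposition~\ref{prop:stabdeg}, and the observation that when $|L| \ge 14$ one never meets $|L'|=6$. That part is correct. (Minor point: the 8-point base case is from \cite{8pts}, not \cite{HMSV}.)

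The genuine gap is in your treatment of the base cases $|L| = 10$ and $|L| = 12$. You claim that ``by a sequence of Pl\"ucker manipulations --- ultimately invoking the simplest binomial quadratic on an eight-vertex subset --- one exhibits $\mathrm{Segre}\boxtimes y$ as a sum of elements of $Q_L$.'' This is precisely the step the paper explains \emph{cannot} be carried out in this direct way: the authors remark that a naive Pl\"ucker-rewriting computation here would ``require an algorithm of equal difficulty to our proof.'' Nothing in your proposal supplies an actual mechanism for producing that sum of quadratics, and there is no reason to expect that one can simply invoke the 8-point binomial ``at the correct moments'' and have everything work out.

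What the paper actually does is entirely different and never writes $\mathrm{Segre}\boxtimes y$ as an explicit combination of quadratics. Instead it defines $I'$ as the cubic ideal modulo quadratics and modulo relations retrogenerated from $8$ (resp.\ $8$ and $10$) points, and proves $I' = 0$ \emph{indirectly} via a dimension-counting and representation-theoretic argument. The key new constructions are: the spaces $I'_{\ms{P}}$ attached to bipartitions, the type~A/B/C graphs (Proposition~\ref{prop-abc-ip}), the linear dependencies among the various $I'_{\ms{P}}$ supplied by Propositions~\ref{prop-A-rel}--\ref{prop-B-rel}, and the notion of a \emph{good} bipartition, used to show $\bigoplus_{\ms{P}\ \text{good}} I'_{\ms{P}} \twoheadrightarrow I'$. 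Counting good bipartitions gives $\dim I' \le 25$ (resp.\ $\le 560$), while Schur--Weyl / Littlewood--Richardson shows $\bigoplus_{\ms{P}} I'_{\ms{P}}$ is a sum of irreducibles of dimensions $84$, $126$ (resp.\ $616$, $1925$, $2079$); since these all exceed the bound, $I'$ must vanish. Your proposal contains none of this machinery, and without it the base cases remain unproved.
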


As described in \S \ref{outline}, this concludes the proof of
Theorem~\ref{informal}.  Thanks to our work in \S \ref{s:retro}, the main
remaining work is to prove three base cases:

\begin{theorem}
\label{thm:base}
The ideals $I_8$, $I_{10}$ and $I_{12}$ are generated by quadratics over
$\Z[\tfrac{1}{12!}]$.
\end{theorem}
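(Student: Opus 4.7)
The plan. My plan is to take $n=8$ as a base case inherited from earlier work and to bootstrap to $n=10$ and $n=12$ using the retrogeneration theorem (Theorem~\ref{thm:retro}) together with the structural results on tensor powers of $V_L$ established in \S \ref{s:symvl}. For the $n=8$ case, the result should be extracted from \cite{HMSV}, where the authors proved that simple binomial relations cut out the quotient scheme-theoretically and that the ideal is generated in degree at most four; upgrading to quadratic generation when $n=8$ is a finite linear-algebra verification on the cubic piece, which is small enough to treat directly.

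For $n=10$ and $n=12$, Theorem~\ref{thm:retro} gives $I_L = I_L^{\retro}$, so it suffices to prove $I_L^{\retro} \subset Q_L$. By Proposition~\ref{prop:outer246} this reduces to showing that each outer product $r \boxtimes Y_\Gamma$ belongs to $Q_L$, where $r \in I_{L'}^{(3)}/Q_{L'}^{(3)}$, $Y_\Gamma \in \Sym^3(V_{L''})/Q_{L''}^{(3)}$, and $|L''| \in \{2,4,6\}$. I would split by $|L'|$. When $|L'| \ge 8$ the inductive hypothesis (which for $n=12$ covers $|L'| \in \{8,10\}$ and for $n=10$ covers $|L'|=8$) gives $r \in Q_{L'}^{(3)}$ and so $r \boxtimes Y_\Gamma \in Q_L$ by direct compatibility of outer and inner multiplication (cf.\ Proposition~\ref{prop:stabdeg}); when $|L'| \in \{2,4\}$, the cubic piece $I_{L'}^{(3)}$ vanishes on formal grounds since $R_{L'}$ is a polynomial ring of low dimension.

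The only remaining case, and the main obstacle, is $|L'|=6$, where the cubic relation $r$ is (up to quadratics) an $\mf{S}_6$-image of the Segre cubic, and this must be outer-multiplied with a generic degree-three tensor on $2$, $4$, or $6$ additional vertices and the result shown to lie in $Q_L$. My approach here would be to invoke Proposition~\ref{prop:pfil3} to rewrite $Y_\Gamma$ as a sum of products of benzene $2$-, $4$-, and $6$-cycles on $L''$; in the hardest sub-case $|L''|=6$ one can further apply Proposition~\ref{prop:g6}(d) to restrict attention to benzene six-cycles containing a pre-chosen doubled edge, cutting the number of orbits to examine down to a small list. For each surviving shape one would then use colored Pl\"ucker relations together with the graphical identities of Figures~\ref{f:id6} and \ref{f:id8} to rotate and merge the benzene configuration with the Segre configuration, with the aim of producing a generalized Segre datum which is either disconnected (hence retrogenerated, by Proposition~\ref{prop:gsd-discon}) or degenerate in the sense of \S \ref{ss:gsd-degen} (hence lying in $Q_L$, by Proposition~\ref{prop:gsd-degen}).

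The hard part will be precisely this final combinatorial bookkeeping in the $|L'|=|L''|=6$ and $|L'|=6,\,|L''|=4$ sub-cases: the graphs are small enough that the generic structural lemmas of \S \ref{s:symvl}--\S \ref{s:retro} do not immediately cut the problem down to something conceptual, yet large enough that naive enumeration is unpleasant. As the introduction warns, one expects that most of the labor in this theorem goes into this step, and that a clean conceptual argument is not within reach; but the Segre cubic is a highly symmetric object and the additional freedom afforded by multiplying by an extra degree-three tensor gives enough room to reduce every case to a degenerate or disconnected Segre datum after finitely many Pl\"ucker moves.
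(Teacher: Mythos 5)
There is a genuine gap, and it sits exactly where you locate "the hard part." Your reduction via Theorem~\ref{thm:retro} and Proposition~\ref{prop:outer246} to the case $|L'|=6$ is correct, as is the observation that the inductive hypothesis handles $|L'|\ge 8$ and that $I_{L'}^{(3)}$ vanishes for $|L'|\in\{2,4\}$. But your proposed method for closing the $|L'|=6$ case is circular. The outer product of the Segre cubic on $L'$ with a benzene tensor on $L''$ is already, by construction, a generalized Segre datum whose underlying graph is \emph{disconnected}: Proposition~\ref{prop:gsd-discon} then tells you it is retrogenerated, which you knew. If you Pl\"ucker to "merge" the two halves and then re-disconnect in a different way, you arrive back at either a relation retrogenerated from $8$ or $10$ points (which the inductive hypothesis handles), or one retrogenerated from $6$ points (which is the case you are trying to kill). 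Nothing in the graphical toolkit of \S\ref{s:symvl}--\S\ref{s:retro} forces the second outcome not to occur, and the paper's authors in fact warn that showing these two specific relations lie in $Q_L$ "could in principle be checked by computer, but with current technology would probably require an algorithm of equal difficulty to our proof."

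What the paper actually does is entirely different in character and is the real content of \S\ref{s:base}. It sets $I' = I_L^{(3)}/(\text{quadratics}+\text{relations retrogenerated from }8\text{ or }8,10\text{ points})$, observes that $I'$ is a quotient of $\bigoplus_{\ms{P}} I'_{\ms{P}}$ with $I'_{\ms{P}}\cong I_P^{(3)}\otimes\gr_{4,Q}^{(3)}$ (resp.\ $\gr_{6,Q}^{(3)}$), and then plays two facts off each other: (i) the relations of Propositions~\ref{prop-A-rel} and~\ref{prop-B-rel} imply that the subspaces $I'_{\ms{P}}$ indexed by "good" bipartitions already span $I'$, giving a crude dimension bound ($\le 25$ for $n=10$, $\le 560$ for $n=12$); (ii) the Littlewood--Richardson rule shows that $\bigoplus_{\ms{P}} I'_{\ms{P}}$ decomposes into a small number of irreducibles all of whose dimensions exceed the bound from (i). Hence $I'=0$. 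This representation-theoretic squeeze is the missing ingredient; without it your plan reduces a known statement (retrogeneration) to itself rather than proving quadratic generation.

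Two smaller inaccuracies: the $n=8$ base case in the paper is cited from \cite{8pts}, not \cite{HMSV}, and it is not presented as a "finite linear-algebra verification on the cubic piece" that one upgrades from degree-four generation; it is a separate non-trivial result. And the reduction you want in the $|L''|=6$ subcase via Proposition~\ref{prop:g6}(d) is indeed used in the paper, but inside the proof of Proposition~\ref{prop-good-surj}, in service of the dimension bound, not as a step toward direct Pl\"ucker cancellation.
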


This implies Theorem~\ref{thm:quadgen} (and hence Theorem~\ref{informal}) by
the following inductive argument.  Let $n \ge 14$ and assume the theorem has
been established for even sets of cardinality less than $n$.  By Theorem~\ref{thm:retro}
we have $I_n=I_n^{\retro}$.  By Proposition~\ref{prop:outer246} we know that
$I_n^{\retro}$ is generated by the outer products of cubic relations on
$n-2$, $n-4$ and $n-6$ points with arbitrary graphs on 2, 4 and 6 points.
The inductive hypothesis, and the fact that $n \ge 14$, ensures that any
cubic relation on $n-2$, $n-4$ or $n-6$ points lies in the ideal generated by
quadratics.  Since outer multiplication does not increase essential degree
(Proposition~\ref{prop:stabdeg}), we find $I_n^{\retro}=Q_n$.  Thus
Theorem~\ref{thm:quadgen} is established by induction.

As for Theorem~\ref{thm:base}, quadratic generation of $I_8$ is proved over
$\Z[\tfrac{1}{3}]$ in \cite{8pts}.  (Quadratic generation of $I_8$ over $\Q$
had previously been established by computer; see e.g., \cite{Koike} or
\cite[Lemma~1.1]{fs}.)  We handle the 10 and 12 point cases by showing that
$I_{10}$ (resp.\ $I_{12}$) is generated by relations coming from 8 (resp.\ 8
and 10) points: one does not need relations coming from 6 points to generate
$I_L$.  The quadratic generation of $I_8$ then implies that of $I_{10}$ and
hence of $I_{12}$ as well.

Here is an overview of the argument.  We let $I'$ be the quotient of the cubic
part of the ideal by quadratic relations and relations retrogenerated from 8
(resp.\ 8 and 10) points.  We wish to show that this space is zero.  By
Theorem~\ref{thm:retro},  $I'$ is generated as an $\mf{S}_{10}$-module
(resp.\ $\mf{S}_{12}$-module) by a single relation:  the outer product of the
Segre cubic on 6 points by a benzene 4-cycle (resp.\ 6-cycle).  (Hence we only
need to prove that these two relations are generated by quadratics.  This
could in principle be checked by computer, but with current technology would
probably require an algorithm of equal difficulty to our proof!)  We can thus
write $I'$ as a quotient of a representation $V$ induced from $\mf{S}_6 \times
\mf{S}_4$ (resp.\ $\mf{S}_6 \times \mf{S}_6$), where the first factor acts on
the Segre cubic on 6 points and the second on the benzene cycle.  We then write
down a family of elements in the kernel of $V \to I'$, which allows us to
obtain an upper bound on the dimension of $I'$.  Finally, we examine the
irreducible $\mf{S}_{10}$-modules (resp.\ $\mf{S}_{12}$-modules) occurring in
$V$ and find that they all have larger dimension than the bound found for $I'$.
This shows that $I'$ is zero.  The fact that we use the representation theory
of $\mf{S}_{10}$ and $\mf{S}_{12}$ is the source of the denominator $12!$ in
Theorem~\ref{thm:quadgen}.
\Ncom{$I'$}

Throughout \S \ref{s:base}, $L$ will always denote a set of cardinality 10 or
12.  We handle the two cases simultaneously as much as possible.  When we
make different statements for the two cases, we give the 10-point statement
first and the 12-point statement second.

\subsection{The spaces $I_{\ms{P}}'$ and the A-, B- and C-relations}

By a \emph{bipartition} of $L$ we mean an ordered pair $\ms{P}=(P, Q)$ with
$P \coprod Q = L$,  where $P$ has cardinality six, and $Q$ has cardinality four
if $|L|=10$, and cardinality six if $|L|=12$.  By a graph of \emph{type A}
(resp.  \emph{type B}, resp.\ \emph{type C}) with respect to a bipartition
$\ms{P}$ we mean one of the form 
\Ncom{$\text{bipartition, type $A/B/C$}$}
\begin{displaymath}
\begin{xy}
(-3, 10.2)*{}="A"; (3, 10.2)*{}="B"; (6, 5)*{}="C";
(3, -.2)*{}="D"; (-3, -.2)*{}="E"; (-6, 5)*{}="F";
(-3, -4.2)*{}="G"; (3, -4.2)*{}="H"; (3, -10.2)*{}="I"; (-3, -10.2)*{}="J";
{\ar@{--}@[red] "A"; "B"};
{\ar@{--}@[red] "C"; "F"};
{\ar@{--}@[red] "D"; "E"};
{\ar@{--}@<-.2ex>@[red] "G"; "H"};
{\ar@{--}@<-.2ex>@[red] "I"; "J"};
{\ar@{.}@[|(2)]@[blue] "A"; "D"};
{\ar@{.}@[|(2)]@[blue] "B"; "C"};
{\ar@{.}@[|(2)]@[blue] "E"; "F"};
{\ar@{.}@[|(2)]@[blue] "H"; "I"};
{\ar@{.}@[|(2)]@[blue] "J"; "G"};
{\ar@{-}@[green] "A"; "F"};
{\ar@{-}@[green] "B"; "E"};
{\ar@{-}@[green] "C"; "D"};
{\ar@{-}@<.2ex>@[green] "G"; "H"};
{\ar@{-}@<.2ex>@[green] "I"; "J"};
\end{xy}
\qquad \textrm{respectively} \qquad
\begin{xy}
(-3, 10.2)*{}="A"; (3, 10.2)*{}="B"; (6, 5)*{}="C";
(3, -.2)*{}="D"; (-3, -.2)*{}="E"; (-6, 5)*{}="F";
(-3, -4.2)*{}="G"; (3, -4.2)*{}="H"; (3, -10.2)*{}="I"; (-3, -10.2)*{}="J";
{\ar@{--}@[red] "A"; "B"};
{\ar@{--}@[red] "C"; "F"};
{\ar@{--}@[red] "D"; "H"};
{\ar@{--}@[red] "E"; "G"};
{\ar@{--}@<-.2ex>@[red] "I"; "J"};
{\ar@{.}@[|(2)]@[blue] "A"; "D"};
{\ar@{.}@[|(2)]@[blue] "B"; "C"};
{\ar@{.}@[|(2)]@[blue] "E"; "F"};
{\ar@{.}@[|(2)]@[blue] "H"; "I"};
{\ar@{.}@[|(2)]@[blue] "J"; "G"};
{\ar@{-}@[green] "A"; "F"};
{\ar@{-}@[green] "B"; "E"};
{\ar@{-}@[green] "C"; "D"};
{\ar@{-}@[green] "G"; "H"};
{\ar@{-}@<.2ex>@[green] "I"; "J"};
\end{xy}
\qquad \textrm{respectively} \qquad
\begin{xy}
(-3, 10.2)*{}="A"; (3, 10.2)*{}="B"; (6, 5)*{}="C";
(3, -.2)*{}="D"; (-3, -.2)*{}="E"; (-6, 5)*{}="F";
(-3, -4.2)*{}="G"; (3, -4.2)*{}="H"; (3, -10.2)*{}="I"; (-3, -10.2)*{}="J";
{\ar@{--}@[red] "A"; "B"};
{\ar@{--}@[red] "C"; "F"};
{\ar@{--}@[red] "D"; "H"};
{\ar@{--}@[red] "E"; "G"};
{\ar@{--}@[red] "I"; "J"};
{\ar@{.}@[|(2)]@[blue] "A"; "D"};
{\ar@{.}@[|(2)]@[blue] "B"; "C"};
{\ar@{.}@[|(2)]@[blue] "E"; "F"};
{\ar@{.}@[|(2)]@<.2ex>@[blue] "H"; "I"};
{\ar@{.}@[|(2)]@<.2ex>@[blue] "J"; "G"};
{\ar@{-}@[green] "A"; "F"};
{\ar@{-}@[green] "B"; "E"};
{\ar@{-}@[green] "C"; "D"};
{\ar@{-}@<-.2ex>@[green] "H"; "I"};
{\ar@{-}@<-.2ex>@[green] "J"; "G"};
\end{xy}
\end{displaymath}
when $|L|=10$, or one of the form
\begin{displaymath}
\begin{xy}
(-3, 12.39)*{}="A"; (3, 12.39)*{}="B"; (6, 7.195)*{}="C";
(3, 2)*{}="D"; (-3, 2)*{}="E"; (-6, 7.195)*{}="F";
(-3, -2)*{}="G"; (3, -2)*{}="H"; (6, -7.195)*{}="I";
(3, -12.39)*{}="J"; (-3, -12.39)*{}="K"; (-6, -7.195)*{}="L";
{\ar@{--}@[red] "A"; "B"};
{\ar@{--}@[red] "C"; "F"};
{\ar@{--}@[red] "D"; "E"};
{\ar@{--}@<-.2ex>@[red] "G"; "H"};
{\ar@{--}@<-.2ex>@[red] "I"; "J"};
{\ar@{--}@<-.2ex>@[red] "K"; "L"};
{\ar@{.}@[|(2)]@[blue] "A"; "D"};
{\ar@{.}@[|(2)]@[blue] "B"; "C"};
{\ar@{.}@[|(2)]@[blue] "E"; "F"};
{\ar@{.}@[|(2)]@[blue] "H"; "I"};
{\ar@{.}@[|(2)]@[blue] "J"; "K"};
{\ar@{.}@[|(2)]@[blue] "L"; "G"};
{\ar@{-}@[green] "A"; "F"};
{\ar@{-}@[green] "B"; "E"};
{\ar@{-}@[green] "C"; "D"};
{\ar@{-}@<.2ex>@[green] "G"; "H"};
{\ar@{-}@<.2ex>@[green] "I"; "J"};
{\ar@{-}@<.2ex>@[green] "K"; "L"};
\end{xy}
\qquad \textrm{respectively} \qquad
\begin{xy}
(-3, 12.39)*{}="A"; (3, 12.39)*{}="B"; (6, 7.195)*{}="C";
(3, 2)*{}="D"; (-3, 2)*{}="E"; (-6, 7.195)*{}="F";
(-3, -2)*{}="G"; (3, -2)*{}="H"; (6, -7.195)*{}="I";
(3, -12.39)*{}="J"; (-3, -12.39)*{}="K"; (-6, -7.195)*{}="L";
{\ar@{--}@[red] "A"; "B"};
{\ar@{--}@[red] "C"; "F"};
{\ar@{--}@[red] "D"; "H"};
{\ar@{--}@[red] "E"; "G"};
{\ar@{--}@<-.2ex>@[red] "I"; "J"};
{\ar@{--}@<-.2ex>@[red] "K"; "L"};
{\ar@{.}@[|(2)]@[blue] "A"; "D"};
{\ar@{.}@[|(2)]@[blue] "B"; "C"};
{\ar@{.}@[|(2)]@[blue] "E"; "F"};
{\ar@{.}@[|(2)]@[blue] "H"; "I"};
{\ar@{.}@[|(2)]@[blue] "J"; "K"};
{\ar@{.}@[|(2)]@[blue] "L"; "G"};
{\ar@{-}@[green] "A"; "F"};
{\ar@{-}@[green] "B"; "E"};
{\ar@{-}@[green] "C"; "D"};
{\ar@{-}@[green] "G"; "H"};
{\ar@{-}@<.2ex>@[green] "I"; "J"};
{\ar@{-}@<.2ex>@[green] "K"; "L"};
\end{xy}
\qquad \textrm{respectively} \qquad
\begin{xy}
(-3, 12.39)*{}="A"; (3, 12.39)*{}="B"; (6, 7.195)*{}="C";
(3, 2)*{}="D"; (-3, 2)*{}="E"; (-6, 7.195)*{}="F";
(-3, -2)*{}="G"; (3, -2)*{}="H"; (6, -7.195)*{}="I";
(3, -12.39)*{}="J"; (-3, -12.39)*{}="K"; (-6, -7.195)*{}="L";
{\ar@{--}@[red] "A"; "B"};
{\ar@{--}@[red] "C"; "F"};
{\ar@{--}@[red] "D"; "H"};
{\ar@{--}@[red] "E"; "G"};
{\ar@{--}@[red] "I"; "J"};
{\ar@{--}@[red] "K"; "L"};
{\ar@{.}@[|(2)]@[blue] "A"; "D"};
{\ar@{.}@[|(2)]@[blue] "B"; "C"};
{\ar@{.}@[|(2)]@[blue] "E"; "F"};
{\ar@{.}@[|(2)]@<-.2ex>@[blue] "H"; "I"};
{\ar@{.}@[|(2)]@<-.2ex>@[blue] "J"; "K"};
{\ar@{.}@[|(2)]@<-.2ex>@[blue] "L"; "G"};
{\ar@{-}@[green] "A"; "F"};
{\ar@{-}@[green] "B"; "E"};
{\ar@{-}@[green] "C"; "D"};
{\ar@{-}@<.2ex>@[green] "H"; "I"};
{\ar@{-}@<.2ex>@[green] "J"; "K"};
{\ar@{-}@<.2ex>@[green] "L"; "G"};
\end{xy}
\end{displaymath}
when $|L|=12$.  In all cases the vertices of the top hexagon belong to $P$
while the vertices in the bottom square or hexagon belong to $Q$.

We name some edges that we will refer to often.  The most obvious distinguished
edges are the two red edges joining the $P$-vertices and $Q$-vertices.  We call
them the \emph{special red edges}.  There is a unique blue (resp.\ green) edge
in the top hexagon meeting one of the special red edges; we call it the
\emph{(top) special blue (resp.\ green) edge}.  Finally, for type B graphs, we
call the green edge in the bottom hexagon which meets the special red edges the
\emph{bottom special green edge}.  We call the chain $a$-$b$-$c$-$d$, where
$\uedge{ab}$ is the special blue edge, $\uedge{bc}$ is the special red edge to
which it connects and $\uedge{cd}$ is the bottom special green edge, the
\emph{special chain} (only defined for type B graphs).
\Ncom{$\text{special edges and chains}$}

Let $\Gamma$ be a graph of type A, B or C with respect to $\ms{P}$.  Define a
partition $\ms{U}=\{U_R, U_G, U_B\}$ of $L$ as follows.  In the type A case,
$U_R$, $U_G$ and $U_B$ each contain two vertices in the top hexagon, and these
vertices must be connected by a red, green and blue edge respectively.  The
bottom benzene cycle is in $U_B$.  In the type B and C cases we take $U_G$
to be the two vertices belonging to the top special green edge, we take $U_B$
to be the two vertices belonging to the top special blue edge together with the
bottom set (that is, the set $Q$), and we take $U_R$ to be the leftover two
vertices in the top hexagon (they form a red edge).  In each case, $(\Gamma,
\ms{U})$ is a generalized Segre cubic datum, and so has an associated
generalized Segre cubic relation.  In the type A case the partition $\ms{U}$
was not uniquely determined, but its image in $I'_L$ is independent of
the choice of partition.  We may thus in all cases speak unambiguously of the
relation defined by $\Gamma$, denoted  by $\Rel(\Gamma)$.  Note that if
$\sigma$ is a permutation of $L$ then $\Rel(\sigma \Gamma)=\epsilon(\sigma)
\sigma  ( \Rel(\Gamma))$.

We let $I'_{\ms{P}}$ be the subspace of $I'_L$ generated by the relations
attached to all type A graphs with respect to $\ms{P}$.  Now, $I'_{\ms{P}}$
carries a natural representation of $\mf{S}_P \times \mf{S}_Q$.  As such it is
a quotient of the representation $I_P^{(3)} \otimes \gr_{4, Q}^{(3)}$
(resp.\ $I_P^{(3)} \otimes \gr_{6, Q}^{(3)}$).  In particular, it is either
zero or irreducible of dimension one (resp.\ five), as $I_P^{(3)}$ is
irreducible of dimension one and $\gr_{4, Q}^{(3)}$ (resp.\ $\gr_{6, Q}^{(3)}$)
is irreducible of dimension one (resp.\ five; see Propositions~\ref{prop:g4}
and \ref{prop:g6}).  The natural map (of $\mf{S}_L$modules)  \label{placeone}
\begin{displaymath}
\bigoplus_{\ms{P}} I'_{\ms{P}} \to I'_L
\end{displaymath}
is surjective since $I_L$ is retrogenerated (Theorem~\ref{thm:retro}) and
there are no relations on 2 or 4 points.

\begin{proposition}
\label{prop-abc-ip}
Let $\ms{P}$ be a bipartition of $L$.  Then the type B relations with respect
to $\ms{P}$ belong to $I'_{\ms{P}}$ and span it.  The same is true for the
type C relations.
\end{proposition}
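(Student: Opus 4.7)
The plan is to deduce the proposition from a single Pl\"ucker identity applied to the two special red edges of a type B (or type C) graph. Before carrying out the computation, I record that $I'_{\ms{P}}$ is a quotient of the tensor product $I^{(3)}_P \otimes \gr^{(3)}_{|Q|, Q}$, and that both factors are irreducible $\mf{S}_P$- and $\mf{S}_Q$-representations (by the $n=6$ case of Proposition~\ref{prop:pfil2rep} together with Propositions~\ref{prop:g4} and \ref{prop:g6}). Hence $I'_{\ms{P}}$ is either zero or an irreducible $\mf{S}_P \times \mf{S}_Q$-representation of dimension one (when $|L|=10$) or five (when $|L|=12$); in particular, the span of any $\mf{S}_P \times \mf{S}_Q$-orbit of a non-zero element of $I'_{\ms{P}}$ is already all of $I'_{\ms{P}}$.

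For the containment $\langle \text{type B}\rangle \subseteq I'_{\ms{P}}$, I take a type B graph $\Gamma_B$ with its generalized Segre partition $\ms{U} = (U_R, U_G, U_B)$ and apply the Pl\"ucker identity to the pair of special red edges, both of which connect $U_G$ to $U_B$. This yields an identity $\Rel(\Gamma_B) = \Rel(\Gamma_1) + \Rel(\Gamma_2)$ in $I^{(3)}_L/Q^{(3)}_L$; both the ``main'' term $Y_{\Gamma}$ and its recoloring $Y'_{\Gamma}$ satisfy this Pl\"ucker identity, because the two edges being rearranged are purple (they lie between parts) and the black part of $\Gamma_B$ is untouched by the move. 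In $\Gamma_1$ the two new red edges lie respectively within $U_G$ and within $U_B$, so the associated Segre datum is degenerate of type (1) in \S\ref{ss:gsd-degen}, and $\Rel(\Gamma_1)$ vanishes in $I'_L$ by Proposition~\ref{prop:gsd-degen}. The graph $\Gamma_2$ has its two special red edges ``crossed'' through the opposite bottom endpoints; by applying the Kempe straightening algorithm of Theorem~\ref{thm:kempe2} together with a few further Pl\"ucker moves on the resulting non-standard red configuration in the top hexagon, $\Gamma_2$ becomes a linear combination of type A graphs, up to further degenerate Segre data (which again vanish in $I'_L$). This proves $\Rel(\Gamma_B) \in I'_{\ms{P}}$; and since the same identity, run backwards, also expresses a type A relation as a non-zero multiple of a type B relation modulo $I'_L$, the irreducibility recorded above forces the reverse containment, giving $\langle \text{type B}\rangle = I'_{\ms{P}}$.

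The argument for type C is structurally identical: one Pl\"uckers the analogous pair of special red edges of a type C graph, obtains one degenerate graph that vanishes in $I'_L$ and one non-standard graph that straightens to a type A combination, and concludes by the same irreducibility argument. The main obstacle throughout is the bookkeeping in the straightening step, where one must verify that each intermediate graph in the reduction of $\Gamma_2$ is either already in standard form (type A, B, or C) or is a degenerate generalized Segre datum, rather than introducing a spurious non-standard Segre datum that cannot be handled by the tools developed in \S\ref{s:ytree}--\S\ref{s:gsc}. This is the kind of finite case-check that the paper explicitly warns is unavoidable in Section~\ref{s:base}.
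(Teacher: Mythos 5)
The single Pl\"ucker you propose does not do what you claim, and this is the crux of the gap. First, a setup issue: with the partition $\ms{U}$ the paper assigns to a type B graph, the two edges of $\Gamma_B$ joining $P$ to $Q$ (the ``special red edges'' of the type B picture) are \emph{not} both special edges of the Segre datum. One of them does run between $U_G$ and $U_B$, but the other has both endpoints inside $U_B$ (since $Q \subset U_B$ and the other endpoint lies on the top special blue edge, which is also in $U_B$). So the premise ``both of which connect $U_G$ to $U_B$'' is false. More importantly, if you actually carry out the Pl\"ucker of those two red edges, one of the two resulting graphs is a type A graph (its red edges become confined to $P$ and $Q$ separately and the bottom becomes a benzene cycle), but the \emph{other} is another type B graph obtained by swapping the two $Q$-endpoints of the crossings. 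It is not degenerate. So the identity reads $\Rel(\Gamma_B) + \Rel(\Gamma_B') = \pm\Rel(\Gamma_A)$, i.e., a \emph{sum of two type B's} equals a type A --- which is exactly the paper's easy Lemma (``type A $\subseteq$ span(type B)''). It gives you nothing toward the harder containment type B $\subseteq$ $I'_{\ms{P}}$, because the unwanted type B term doesn't vanish. If instead you Pl\"ucker the two genuine special red edges of the \emph{Segre datum}, one term is indeed degenerate of type (1), but the other term is again a type B graph (with relabeled crossings), not a combination of type A's.

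The irreducibility argument also has a gap as stated. To conclude that the type B relations span $I'_{\ms{P}}$, you need to know that some type B relation is nonzero whenever $I'_{\ms{P}} \neq 0$. Your ``run backwards'' step only produces a linear combination of type A's equal to $\Rel(\Gamma_B)$ (itself a linear combination with unspecified coefficients), which does not by itself certify nonvanishing of either side. The clean way to get nonvanishing is precisely the containment type A $\subseteq$ span(type B), which is the paper's first lemma. Finally, the straightening of the ``crossed'' graph to a sum of type A's plus degenerates is treated as a routine case-check, but this is actually the substantive content of the proposition --- in the paper it is distributed across three careful moves (Pl\"ucker top-red/bottom-red for A$\to$B; a square-rotation/factoring argument for B$\to$C; and an application of the 8-point identity of Figure~\ref{f:id8} for C$\to$A), and the B$\to$C step in particular has no analogue in a single Pl\"ucker. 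As written, the proposal does not establish either direction of the proposition.
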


We prove this proposition by showing that type A relations can be written in
terms of type B relations, type B in terms of type C and finally type C in
terms of type A, all with respect to the same $\ms{P}$.  We accomplish
this in a series of lemmas.  After this proof we will have no need for the
type C relations.

\begin{lemma}
Any type A relation with respect to $\ms{P}$ belongs to the space spanned by
the type B relations with respect to $\ms{P}$.
\end{lemma}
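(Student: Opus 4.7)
The plan is to perform a single colored Pl\"ucker operation on red edges.  Let $\Gamma$ be a type A graph with respect to $\ms P=(P,Q)$; pick a red edge $\uedge{ab}$ in the top hexagon on $P$ and a vertex-disjoint red edge $\uedge{cd}$ in the bottom benzene cycle on $Q$ (this red edge is one copy of a doubled red--green edge of the cycle).  The colored red Pl\"ucker relation yields an identity
\begin{displaymath}
Y_\Gamma \;=\; \pm\,Y_{\Gamma_1} \pm\, Y_{\Gamma_2}
\end{displaymath}
in $V_L^{\otimes 3}$, where $\Gamma_1$ (resp.\ $\Gamma_2$) is obtained from $\Gamma$ by replacing the red pair $\{\uedge{ab},\uedge{cd}\}$ with $\{\uedge{ac},\uedge{bd}\}$ (resp.\ $\{\uedge{ad},\uedge{bc}\}$) and leaving untouched all green and blue edges as well as the remaining green copy of the benzene edge.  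Both $\Gamma_1$ and $\Gamma_2$ are then type B graphs with respect to $\ms P$: the two new special red edges go from $\{a,b\}\subset P$ to $\{c,d\}\subset Q$, the top hexagon retains its original green and blue matchings, and the bottom benzene cycle loses its doubling only at $\uedge{cd}$.

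The next step is to upgrade this tensor identity to an identity of relations in $I'_L$.  I would choose a single Segre partition $\ms U=(U_R,U_G,U_B)$ that is simultaneously valid for all three graphs by taking the three ``opposite pairs'' of the top hexagon as the $P$-parts, placing $a\in U_G$ and $b\in U_B$, and adjoining $Q$ to $U_B$.  Since $\Gamma_1$ and $\Gamma_2$ differ from $\Gamma$ only on the four vertices $\{a,b,c,d\}$---three of which now lie in $U_B$---the two new red edges consist of one edge between $U_G$ and $U_B$ and one edge internal to $U_B$, so the two-edges-between-parts condition persists and $\ms U$ is a valid Segre partition for each $\Gamma_i$.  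With this common choice the ``black'' matching entering $Y'_\Sigma$ is identical for the three graphs, while the ``purple'' degree-two subgraphs are related by the same colored red Pl\"ucker applied to their two purple red edges, so
\begin{displaymath}
\mathrm{purple}(\Gamma) \;=\; \pm\,\mathrm{purple}(\Gamma_1) \pm\, \mathrm{purple}(\Gamma_2)
\end{displaymath}
holds in $R_L^{(2)}$.  Multiplying by the common black matching---an operation that preserves $Q_L$---gives $Y'_\Gamma\equiv \pm Y'_{\Gamma_1}\pm Y'_{\Gamma_2}$ modulo $Q_L$, and combining with the identity on $Y_\Gamma$ yields $\Rel(\Gamma)\equiv\pm\Rel(\Gamma_1)\pm\Rel(\Gamma_2)$ modulo $Q_L$, hence in $I'_L$.

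The only real obstacle is the joint validity of the Segre partition $\ms U$ above; this is a direct check from the type A structure of the top hexagon and from the fact that the Pl\"ucker affects four vertices of which three lie in a common part.  The argument is uniform for $|L|=10$ and $|L|=12$, the bottom structure entering only via the unchanged blue matching of the benzene cycle.  Signs stemming from the red Pl\"ucker and from the orientation of $L$ cancel consistently between $Y_\Sigma$ and $Y'_\Sigma$ because both expressions are built from the same common black matching, so the ambiguity disappears in $\Rel=Y_\Sigma-Y'_\Sigma$.
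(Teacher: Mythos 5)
Your proof is correct and takes the same route as the paper: Pl\"ucker a red edge of the top hexagon against a red edge of the bottom benzene cycle, so the colored Pl\"ucker identity expresses the type A graph as a sum of two type B graphs with respect to the same bipartition $\ms{P}$. The careful verification in your second and third paragraphs (choosing a common Segre partition $\ms{U}$, checking that the black matching is shared, and lifting the Pl\"ucker from tensors to $I'_L$) is sound but is already built into the paper's formalism, since $\Rel$ is defined as a linear map on the space of generalized Segre cubic data in which Pl\"uckering two same-colored edges sharing a part is a built-in relation $\Sigma+\Sigma'+\Sigma''=0$, and for type A the choice of partition $\ms{U}$ does not affect the image in $I'_L$.
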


\begin{proof}
Given a type A graph, Pl\"ucker a red edge from the top hexagon and a red
edge from the bottom benzene cycle.  This expresses the type A relation as
a sum of two type B relations.
\end{proof}

\begin{lemma}
Any type B relation with respect to $\ms{P}$ belongs to the space spanned by
the type C relations with respect to $\ms{P}$.
\end{lemma}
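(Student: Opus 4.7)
The plan is to mirror the preceding lemma by applying a single colored Pl\"ucker relation. Given a type B graph $\Gamma_B$, I would Pl\"ucker the two green edges of the bottom cycle of $Q$ (namely $\uedge{GH}$ and $\uedge{IJ}$ in the 10-point case, with the analogous pair in the 12-point case). The resulting three-term colored Pl\"ucker identity can be rearranged to
\begin{displaymath}
\Rel(\Gamma_B) = \pm \Rel(\Gamma_C) \pm \Rel(\Gamma_\star),
\end{displaymath}
where $\Gamma_C$ has bottom green edges $\{\uedge{HI}, \uedge{JG}\}$ matching the type C pairing (so that $\Rel(\Gamma_C)$ is genuinely a type C relation), and $\Gamma_\star$ is a residue graph whose bottom green edges cross the ``diagonals'' of the cycle.

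To conclude, I would show $\Rel(\Gamma_\star) \in I_{\ms{P}}'$. My primary approach is to apply a second colored Pl\"ucker, this time to the bottom blue edges of $\Gamma_\star$. One of the three resulting graphs should concentrate triple (red+blue+green) edges on the vertex-pair already bearing the within-$Q$ red edge, disconnecting those two vertices from the rest of the graph and rendering the corresponding relation retrogenerated from eight (resp.\ ten) points by Proposition~\ref{prop:gsd-discon}, hence zero in $I_L'$. The other resulting graph(s) should decompose either into type C graphs---possibly after an $\mf{S}_Q$-transposition of the two bottom vertices not incident to the special red edges, which preserves $I_{\ms{P}}'$ by $\mf{S}_Q$-stability---or into further disconnected (and thus retrogenerated) terms.

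The principal obstacle is the careful bookkeeping of the residue $\Gamma_\star$: verifying that the cascade of at most two or three Pl\"ucker operations terminates cleanly in recognizable graphs, and that the signs induced by the chosen orientation on $L$ line up. An alternative route---which may short-circuit the explicit cascade---uses the observation that $I_{\ms{P}}'$ is a quotient of the irreducible $(\mf{S}_P \times \mf{S}_Q)$-module $I_P^{(3)} \otimes \gr_{|Q|,Q}^{(3)}$, so any non-zero $(\mf{S}_P \times \mf{S}_Q)$-submodule fills it; thus it would suffice to exhibit a single non-trivial identity between a type B relation and a linear combination of type C relations modulo $I_L^{\retro} + Q_L$ for some concrete choice. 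In the 12-point case the longer benzene 6-cycle on $Q$ introduces additional residue terms, which I expect to handle by one further application of Corollary~\ref{cor:benzene} or the identity of Figure~\ref{f:id6} to break long benzene subchains into short ones, with the remaining arguments proceeding as in the 10-point case.
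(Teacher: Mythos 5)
Your overall plan—convert the blue/green $Q$-cycle of the type B graph into doubled edges by Pl\"uckering within $Q$—is the right instinct, and it is essentially what the paper does. But the specific cascade you describe does not close, and the key step is wrong as stated. Take the 10-point case and label the bottom square $G,H,I,J$ with red on $\uedge{IJ}$, blue on $\uedge{HI},\uedge{JG}$, green on $\uedge{GH},\uedge{IJ}$. Your first Pl\"ucker (on the two green edges) produces the type C term (green $\uedge{GJ},\uedge{HI}$, doubled with the blues) and the residue $\Gamma_\star$ with green $\uedge{GI},\uedge{HJ}$. Pl\"uckering the blue edges $\uedge{HI},\uedge{JG}$ of $\Gamma_\star$ then gives one further type C term (blue $\uedge{HJ},\uedge{IG}$) and a residue with blue $\uedge{HG},\uedge{IJ}$; the edge $\uedge{IJ}$ in this residue carries \emph{blue and red} but not green, so it is \emph{not} a triple edge and the graph is \emph{not} disconnected. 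Proposition~\ref{prop:gsd-discon} therefore does not apply at this stage. A triple edge only appears two further Pl\"uckers downstream, and by then you have also generated new $4$-cycle residues, so the cascade keeps cycling through the six possible blue/green $4$-cycle colorings on $Q$ rather than terminating. The obstruction is real: a $4$-cycle on four vertices is nonzero in $\gr_4(V_Q^{\otimes 2})\cong\bw{V_Q}$, so no sequence of colored Pl\"uckers \emph{purely within the blue/green subgraph of $Q$} can write it as a sum of $2$-cycle graphs; one must exploit the red edges and the freedom to work modulo $Q_L+I_L^{\retro}$, and that is precisely what Lemmas~\ref{lem:retro-a} and~\ref{lem:retro-b} (together with the observation about square rotation relations having even special paths) supply, or alternatively what the factorization of the type~B relation as a genuine cubic supplies.

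The paper's proof uses one of two observations you don't capture. First (the argument it actually writes out): the relation associated to a type B graph factors as $Y_1Y_2Y_3 = Y_1'Y_2'Y_3'$ with each $Y_i$ a matching, and the restrictions of the two triples to $Q$ (including the colors) are \emph{identical}; one may therefore apply the $\gr$-machinery of \S\ref{s:symvl} simultaneously to both sides to convert the blue/green $Q$-cycle into doubled edges, with no leftover residue. Second (the argument it sketches first): invoke Lemmas~\ref{lem:retro-a} and~\ref{lem:retro-b} to replace $4$-cycles by $2$-cycles modulo square-rotation relations, and then check that the square-rotation relations arising here have both special paths of even length and are therefore already in $Q_L$. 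Your representation-theoretic fallback is also not enough on its own: it shows that the span of type~C relations is either $0$ or all of $I'_{\ms{P}}$, but ruling out the case ``span of C is $0$ while span of B is nonzero'' is exactly the content of the lemma, so it cannot be circumvented this way. As written, the proposal has a genuine gap.
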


\begin{proof}
To establish this lemma we just have to rewrite the blue/green graph on $Q$
in terms of squares of matchings.  This is exactly what we did in
when proving Theorem~\ref{thm:retro}(a) (see in particular
Lemmas~\ref{lem:retro-a} and~\ref{lem:retro-b}).  The key point is that
all square rotation relations which come up in our particular case have special paths of even length and are therefore quadratic.

Alternatively, one can argue as follows. 
The generalized Segre relation
associated to graphs of type B is a genuine cubic relation, that is, the
purple/black graph on the right side of the relation factors into a product
of degree one graphs.  For instance, in the 12 point case the factored 
relation may be written as
\begin{displaymath}
\begin{xy}
(-3, 12.39)*{}="A"; (3, 12.39)*{}="B"; (6, 7.195)*{}="C";
(3, 2)*{}="D"; (-3, 2)*{}="E"; (-6, 7.195)*{}="F";
(-3, -2)*{}="G"; (3, -2)*{}="H"; (6, -7.195)*{}="I";
(3, -12.39)*{}="J"; (-3, -12.39)*{}="K"; (-6, -7.195)*{}="L";
{\ar@{--}@[red] "A"; "B"};
{\ar@{--}@[red] "C"; "F"};
{\ar@{--}@[red] "D"; "H"};
{\ar@{--}@[red] "E"; "G"};
{\ar@{--}@<-.2ex>@[red] "I"; "J"};
{\ar@{--}@<-.2ex>@[red] "K"; "L"};
{\ar@{.}@[|(2)]@[blue] "A"; "D"};
{\ar@{.}@[|(2)]@[blue] "B"; "C"};
{\ar@{.}@[|(2)]@[blue] "E"; "F"};
{\ar@{.}@[|(2)]@[blue] "H"; "I"};
{\ar@{.}@[|(2)]@[blue] "J"; "K"};
{\ar@{.}@[|(2)]@[blue] "L"; "G"};
{\ar@{-}@[green] "A"; "F"};
{\ar@{-}@[green] "B"; "E"};
{\ar@{-}@[green] "C"; "D"};
{\ar@{-}@[green] "G"; "H"};
{\ar@{-}@<.2ex>@[green] "I"; "J"};
{\ar@{-}@<.2ex>@[green] "K"; "L"};
\end{xy}
\qquad = \qquad
\begin{xy}
(-3, 12.39)*{}="A"; (3, 12.39)*{}="B"; (6, 7.195)*{}="C";
(3, 2)*{}="D"; (-3, 2)*{}="E"; (-6, 7.195)*{}="F";
(-3, -2)*{}="G"; (3, -2)*{}="H"; (6, -7.195)*{}="I";
(3, -12.39)*{}="J"; (-3, -12.39)*{}="K"; (-6, -7.195)*{}="L";
{\ar@{--}@[red] "D"; "H"};
{\ar@{--}@[red] "B"; "C"};
{\ar@{--}@[red] "A"; "F"};
{\ar@{--}@[red] "E"; "G"};
{\ar@{--}@<-.2ex>@[red] "I"; "J"};
{\ar@{--}@<-.2ex>@[red] "K"; "L"};
{\ar@{.}@[|(2)]@[blue] "A"; "B"};
{\ar@{.}@[|(2)]@[blue] "E"; "F"};
{\ar@{.}@[|(2)]@[blue] "H"; "I"};
{\ar@{.}@[|(2)]@[blue] "J"; "K"};
{\ar@{.}@[|(2)]@[blue] "L"; "G"};
{\ar@{.}@[|(2)]@[blue] "C"; "D"};
{\ar@{-}@[green] "A"; "D"};
{\ar@{-}@[green] "C"; "F"};
{\ar@{-}@[green] "B"; "E"};
{\ar@{-}@[green] "G"; "H"};
{\ar@{-}@<.2ex>@[green] "I"; "J"};
{\ar@{-}@<.2ex>@[green] "K"; "L"};
\end{xy}
\end{displaymath}
(the black graph on the right is colored blue while the purple graph has
been factored into the green and red graphs).  Now, the bottom half of each
side is the same.  One can therefore apply the same colored Pl\"ucker and
quadratic relations on each side and rewrite the blue/green graph on $Q$ as 
a sum of squares of matchings.  This expresses the type B relation in terms
of type C relations.
\end{proof}

\begin{lemma}
Any type C relation with respect to $\ms{P}$ belongs to the space spanned by
the type A relations with respect to $\ms{P}$.
\end{lemma}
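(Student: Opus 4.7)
My plan is to apply the Pl\"ucker relation directly to the two special red edges of the given type C graph and then analyze each resulting term. Using the vertex labeling of the type C picture, I would Pl\"ucker the special red edges $\dedge{DH}$ and $\dedge{EG}$. The undirected Pl\"ucker relation
\begin{displaymath}
Y_{\uedgescr{DH}}Y_{\uedgescr{EG}} + Y_{\uedgescr{DE}}Y_{\uedgescr{HG}} + Y_{\uedgescr{DG}}Y_{\uedgescr{HE}} = 0
\end{displaymath}
lifts (modulo $Q_L$) to an identity $\Rel(\Sigma_{\text{C}}) = -\Rel(\Sigma_1) - \Rel(\Sigma_2)$ in $I_L^{(3)}/Q_L^{(3)}$, where $\Sigma_1$ has new red edges $\dedge{DE}$ and $\dedge{GH}$ and $\Sigma_2$ has new red edges $\dedge{DG}$ and $\dedge{HE}$.

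For $\Sigma_1$, the edges $\dedge{DE}$ and $\dedge{GH}$ sever all red connections between $P$ and $Q$. Combined with the top edges $\dedge{AB}$ and $\dedge{CF}$ already present, the top hexagon has a full Segre coloring, while the bottom square consists of $\dedge{GH}$ and $\dedge{IJ}$ together with the blue-green doubled edges $\dedge{HI}$ and $\dedge{JG}$; this is the outer product of a Segre hexagon on $P$ with a benzene 4-cycle on $Q$, and by reselecting the partition $\ms{U}$ (which does not affect the image in $I'_L$) we recognize $\Sigma_1$ as a type A graph with respect to $\ms{P}$.

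For $\Sigma_2$, the plan is to apply a second Pl\"ucker move, this time to the pair of red edges $\dedge{DG}$ and $\dedge{IJ}$. One of the two resulting terms has new red edge $\dedge{GJ}$; but $\dedge{GJ}$ already carries a blue edge and a green edge in the bottom doubled structure, so it becomes a tripled edge. The pair $\{G,J\}$ is then an isolated component of the graph, so by Proposition~\ref{prop:gsd-discon} this term is retrogenerated, hence zero in $I'_L$. The remaining sub-term still has a pair of crossing red edges; for it, I would iterate the argument (Pl\"uckering the surviving cross edge $\dedge{HE}$ with the analogous bottom red), and at each stage show that one branch becomes disconnected via a tripled edge while the other reduces either to a type A graph or to a degenerate Segre datum (to which Proposition~\ref{prop:gsd-degen} applies).

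The main obstacle will be ensuring that this iterative process actually terminates and that every intermediate graph admits one of the three verdicts (type A, retrogenerated, degenerate). Since the set of relevant red-edge configurations on the $\{G,H,I,J\}$ side of the bipartition is finite and each Pl\"ucker move either reduces to a ``good'' case or cycles among a small family of type-C-like configurations, one can close the loop by exploiting the $\mf{S}_Q$-symmetry of the bottom: the configurations missed by one iteration are precisely those hit by another, and summing over the orbit (as in the proofs of Lemmas~\ref{lem:retro-a} and~\ref{lem:retro-b}) allows one to express the type C relation as an explicit linear combination of type A relations modulo $I_L^{\retro,(3)}$. The combinatorial bookkeeping of partitions $\ms{U}$ and Segre data through these Pl\"ucker moves is the delicate part, but the techniques are entirely parallel to those already used in \S\ref{s:retro}.
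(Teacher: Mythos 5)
Your approach diverges from the paper's: you propose an iterative sequence of single Pl\"ucker moves on pairs of red edges, whereas the paper applies the $8$-point identity of Figure~\ref{f:id8} \emph{once} to four red edges of a suitably redrawn graph (exactly as in the proof of Lemma~\ref{lem:10a} for $|L|=10$, or to the two special red edges plus the two bottom red edges for $|L|=12$), and then inspects the finitely many resulting terms, showing that one is type A and the rest are degenerate or come from fewer points.

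The first step of your plan is fine: Pl\"uckering the two special red edges $\uedge{DH}$, $\uedge{EG}$ (a legitimate move in the space of Segre data since both have a vertex in $U_B$) produces one term $\Sigma_1$ whose red matching is entirely non-crossing and whose bottom becomes a benzene $4$-cycle; this is indeed a type A relation modulo $I_L^{\retro}$ --- not literally the picture in the definition because the doubled pair in the bottom benzene cycle is blue-green rather than red-green, but since $\gr_4(\Sym^3(V_Q))$ is one-dimensional with trivial $\mf{S}_Q$-action (Proposition~\ref{prop:g4}), the choice of coloring on a benzene $4$-cycle is irrelevant in $I'_{\ms{P}}$, so $\Rel(\Sigma_1)$ lies in the span of type A relations. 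You should make this recoloring step explicit rather than attributing it to ``reselecting the partition.''

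The genuine gap is the treatment of $\Sigma_2$. Each pairwise Pl\"ucker of a crossing red edge against the bottom red edge yields one term with a tripled $Q$-edge (disconnected, hence retrogenerated from $8$ points, fine) and one term that again has two crossing red edges and one bottom red edge, just permuted on the bottom vertices --- so the iteration does not monotonically approach any of your three ``verdicts''. Your claim that one can ``close the loop by exploiting the $\mf{S}_Q$-symmetry'' and ``summing over the orbit'' is not an argument: it is not clear which linear combination of the finitely many orbit elements you would take, nor that the system of identities produced actually solves for the original type C relation in terms of type A relations modulo $I_L^{\retro}$. Moreover, you only sketch the $10$-point case; for $|L|=12$ the bottom is a hexagon with more red edges, and the cycling is worse. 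The paper sidesteps this entirely because Figure~\ref{f:id8} is a single closed-form relation that simultaneously performs the ``$4$-vertex reversal'' on four red edges (it is the degree-$4$ analogue of a Pl\"ucker relation, obtained by unwinding the proof of Proposition~\ref{prop:discon-span}); applying it once leaves a manifestly finite list of terms with no cycling, each of which can be classified by inspection. To repair your proof you would need either to carry out the orbit bookkeeping explicitly (and separately for $n=10$ and $n=12$), or --- more in the spirit of the paper --- replace the iteration by a single application of the Figure~\ref{f:id8} identity.
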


\begin{proof}
The proof of the 10 point case proceeds exactly as the proof of
Lemma~\ref{lem:10a}.  Note that after applying the identity of
Figure~\ref{f:id8} to the redrawn graph, the first graph on the second line of
the right side is a type A graph with respect to $\ms{P}$.  All other graphs
either come from 8 points or else are degenerate.  To prove the 12 point case,
we again use the identity of Figure~\ref{f:id8}, applying it to the bottom four
red edges, that is, the two special red edges and the two red edges contained
in the bottom hexagon.  The first term in Figure~\ref{f:id8} is a type A
relation with respect to $\ms{P}$.  All the other terms come from 8 or 10
points.
\end{proof}

\subsection{Relations among different  $I'_{\ms{P}}$}

In this section we demonstrate some linear dependencies between the various
spaces $I'_{\ms{P}}$:

\begin{proposition}
\label{prop-A-rel}
Let $\Gamma$ be a type A graph with respect to a bipartition $\ms{P}=(P,Q)$,
let $a$ and $b$ be two distinct elements of $P$ and let $c$ and $d$ be two
distinct elements of $Q$ which are joined by a doubled edge.
Then $\Rel(\Gamma)$ is contained in $\sum I'_{\sigma \ms{P}}$, where
the sum is taken over those permutations $\sigma$ of $\{a,b,c,d\}$ 
for which $\sigma \ms{P} \ne \ms{P}$.
\end{proposition}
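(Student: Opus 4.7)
The relation $\Rel(\Gamma)$ factors as the outer product $\mathrm{Segre}_P \boxtimes Y_{\Gamma_Q}$ of the six-point Segre cubic on $P$ with the benzene cycle $Y_{\Gamma_Q}$ on $Q$. The hypothesis that $\uedge{cd}$ is a doubled edge means that $\uedge{cd}$ appears in two of the three matchings making up $\Gamma_Q$ (say the red and green matchings), while the third (blue) matching uses $c$ and $d$ to link to other vertices of $Q$.

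My plan is to combine Plücker relations in the colors present at $\uedge{cd}$ with 8-point retrogeneration in order to rewrite $\Rel(\Gamma)$ as a sum of type A/B/C relations for permuted bipartitions. Concretely, I would apply a colored Plücker to the pair $(\uedge{cd}, \uedge{xy})$, where $\uedge{xy}$ is a same-colored edge of $\Gamma_P$ incident to $\{a,b\}$ (say the red edge through $a$; one exists since each vertex of $\Gamma_P$ is red-adjacent to some other vertex of $P$). This yields $Y_\Gamma = Y_{\Gamma_1}+Y_{\Gamma_2}$, where $\Gamma_1$ and $\Gamma_2$ have two red edges crossing $\ms{P}$. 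Under the bipartition $\sigma \ms{P}$ for $\sigma\in\{(ac)(bd),(ad)(bc)\}$ (or one of the other permutations moving a single element of $\{a,b\}$ across), these crossing red edges become the special red edges of a type B or type C graph. Proposition~\ref{prop-abc-ip} then places the corresponding relation in $I'_{\sigma\ms{P}}$.

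The green and blue subgraphs of $\Gamma_1$ and $\Gamma_2$ need not immediately match the type-A/B/C template for the new bipartitions. Exploiting the fact that $\uedge{cd}$ is also a green edge of $\Gamma$, I would perform the analogous Plücker in green on the pair $(\uedge{cd},\uedge{x'y'})$ with $\uedge{x'y'}$ a green edge of $\Gamma_P$ through $a$ or $b$, and clean up using quadratic relations. The 8-point retrogeneration enters by absorbing any terms of the form $\mathrm{Segre}_{P'}\boxtimes (\uedge{uv})^{\mathrm{triple}}$ (equivalent to a quadratic by the 8-point base case) that appear when a Plücker produces a tripled edge on an 8-vertex subset.

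The main obstacle will be the combinatorial bookkeeping and case analysis: after each Plücker and quadratic simplification one has to verify that the resulting graph is in fact type A, B or C for some $\sigma \ms{P}$ with $\sigma \ms{P} \neq \ms{P}$, so that Proposition~\ref{prop-abc-ip} applies. The cases proliferate because the six-point Segre configuration on $P$ uses only nine of the fifteen edges of $K_6$ (so the edges incident to $a,b$ of a preferred color are not canonical), and, in the 12-point setting, because the benzene hexagon on $Q$ admits several inequivalent arrangements of the remaining doubled and blue edges relative to $\uedge{cd}$. I expect that working through these cases, with careful use of the identity of Figure~\ref{f:id6} (to interchange benzene configurations on small subsets without changing the class in $I'_L$), constitutes the bulk of the argument.
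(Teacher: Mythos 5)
Your proposal correctly identifies the opening move — a single colored Pl\"ucker pairing $\uedge{cd}$ with a same-colored edge of $\Gamma_P$ through $a$ or $b$ — and this indeed matches how the paper begins. But there is a genuine gap immediately afterward. You claim that the resulting graphs $\Gamma_1,\Gamma_2$ ``become the special red edges of a type B or type C graph'' with respect to some permuted bipartition $\sigma\ms{P}$, so that Proposition~\ref{prop-abc-ip} places $\Rel(\Gamma_i)$ in $I'_{\sigma\ms{P}}$. This is not established and is not true in the direct sense you need. After that single Pl\"ucker, $\Gamma_1$ and $\Gamma_2$ are type B graphs \emph{with respect to the original bipartition $\ms{P}$}: the Segre-like structure is still supported on $P$ and the benzene-like structure on $Q$, with only the two crossing red edges between them. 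Relabeling the vertices by a $\sigma$ that exchanges elements of $P$ and $Q$ does not turn the restriction of $\Gamma_i$ to $\sigma P$ into a Segre cubic, nor the restriction to $\sigma Q$ into a benzene cycle, so you cannot immediately invoke Proposition~\ref{prop-abc-ip} for $\sigma\ms{P}$. As it stands, your argument only re-establishes the tautology that $\Rel(\Gamma_i) \in I'_{\ms{P}}$, whereas the content of the proposition is precisely to escape from $I'_{\ms{P}}$ into $\sum_{\sigma\ms{P}\ne\ms{P}} I'_{\sigma\ms{P}}$.

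What is missing is the substance of Proposition~\ref{prop-B-rel}: a type B relation with special chain $a$-$b$-$c$-$d$ satisfies
$\Rel(\Gamma)=(a\,d)\Rel(\Gamma)-(b\,c)\Rel(\Gamma)+(a\,d)(b\,c)\Rel(\Gamma)+\Rel(\Delta)$, where $\Delta$ is type A with respect to $(b\,d)\ms{P}$. Establishing this is where the real work lies — it is proved via Lemmas~\ref{lem-gam2} and~\ref{lem-gam5}, each of which requires several carefully chosen Pl\"uckers, identification of degenerate Segre data as quadratic, and retrogeneration from eight (resp.\ ten) points. Your sketch gestures at ``further Pl\"uckers in green,'' Figure~\ref{f:id6}, and absorbing terms by retrogeneration, but none of this addresses the specific obstruction: you must produce an identity in $I'_L$ relating $\Rel(\Gamma_i)$ to relations whose defining generalized Segre data have their top-six and bottom-four/six structure supported on a \emph{different} bipartition. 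Without an argument of that shape — a rearrangement lemma like Lemma~\ref{lem-gam5} showing $(1\,4)$-invariance of a suitably Pl\"uckered graph, together with Lemma~\ref{lem-gam2} producing the correction term $\Delta$ on a permuted bipartition — your reduction does not close.
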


We deduce this proposition from the following one:

\begin{proposition}
\label{prop-B-rel}
Let $\Gamma$ be a type B graph with respect to $\ms{P}$ and let $a$-$b$-$c$-$d$
be its special chain.  Then
\begin{displaymath}
\Rel(\Gamma)=(a  d)\Rel(\Gamma) - (b  c) \Rel(\Gamma)
+(a  d) (b  c) \Rel(\Gamma)+\Rel(\Delta)
\end{displaymath}  
where $\Delta$ is a type A graph with respect to
$(b  d) \ms{P}$.  In particular, $\Rel(\Gamma)$ belongs to $\sum I'_{\sigma
\ms{P}}$, the sum taken over those permutations $\sigma$ of $\{ a,b,c,d\}$
for which $\sigma \ms{P} \ne \ms{P}$.
\end{proposition}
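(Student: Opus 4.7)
The plan is to derive the identity by a localized Plücker-type manipulation supported on the four vertices of the special chain $a$-$b$-$c$-$d$ in $\Gamma$, whose three edges $\uedge{ab}$ (blue), $\uedge{bc}$ (red), and $\uedge{cd}$ (green) are exactly the portion of $\Gamma$ on which the permutations $(a\,d)$, $(b\,c)$, $(a\,d)(b\,c)$ act nontrivially. Since $a,b\in P$ and $c,d\in Q$, each of the four permutations $(a\,d)$, $(b\,c)$, $(a\,d)(b\,c)$, $(b\,d)$ moves at least one vertex between the two parts of $\ms{P}$, so in each case the resulting bipartition differs from $\ms{P}$; this is what makes the ``in particular'' clause meaningful.

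Concretely I would apply the undirected Plücker identity on $\{a,b,c,d\}$ to the three chain edges (together with the color-permutation moves, which lie in $Q_L$ and hence vanish in $I'_L$), reexpressing the chain in terms of each of the three alternative configurations of three edges on those four vertices. Two of these configurations agree, after relabelling, with $(a\,d)\Gamma$ and $(b\,c)\Gamma$, while the third agrees with $(a\,d)(b\,c)\Gamma$; using the rule $\Rel(\sigma\Gamma)=\epsilon(\sigma)\sigma\Rel(\Gamma)$ from \S\ref{ss:outer} to convert labelling-permuted graphs back to permuted relations supplies the three signed transposition terms on the right-hand side. What remains after the three transposition terms have been accounted for is a single extra graph $\Delta$ in which the special chain has been ``straightened'' into three doubled edges incident to the four vertices $\{a,b,c,d\}$. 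Direct inspection of the resulting local picture against the type A template shows that $\Delta$ is of type A with respect to $(b\,d)\ms{P}$: the chain straightening naturally puts $b$ and $d$ on opposite sides of the partition, exchanging the roles of blue and green in the top hexagon and the bottom benzene piece.

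The ``in particular'' statement then follows immediately from Proposition~\ref{prop-abc-ip}: each of $(a\,d)\Rel(\Gamma)$, $(b\,c)\Rel(\Gamma)$, $(a\,d)(b\,c)\Rel(\Gamma)$ is, up to sign, a type B relation with respect to the corresponding bipartition $\sigma\ms{P}\neq\ms{P}$ and so lies in $I'_{\sigma\ms{P}}$, while $\Rel(\Delta)$ is a type A relation with respect to $(b\,d)\ms{P}\neq\ms{P}$ and so lies in $I'_{(b\,d)\ms{P}}$.

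The main obstacle is sign bookkeeping: one must coordinate the orientation $\epsilon$ on $L$ with the Plücker signs and with the $\epsilon(\sigma)$ prefactors so that the four coefficients on the right-hand side come out exactly $+1,-1,+1,+1$ as stated, and must verify that all other cubic-degree terms produced during the Plücker manipulations either cancel in pairs or lie in $Q_L$ (and hence vanish in $I'_L$). A secondary subtlety is confirming, by direct inspection of the explicit hexagon/benzene pictures of \S\ref{s:base} in both the ten- and twelve-point cases, that the leftover graph $\Delta$ genuinely fits the type A template — not, say, a type C or degenerate shape that would place it in some other $I'_{\ms{P}'}$.
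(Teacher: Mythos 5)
Your proposal does not work as stated, and it misses the key structural move of the paper's proof. The fundamental obstacle is that the three special chain edges $\uedge{ab}$, $\uedge{bc}$, $\uedge{cd}$ carry three \emph{different} colors (blue, red, green respectively), so there is no Pl\"ucker relation that exchanges all three: colored Pl\"ucker relations only apply to pairs of edges of the \emph{same} color. You gesture at "color-permutation moves, which lie in $Q_L$" as a way to engineer matching colors, but you never specify those moves, and it is far from clear that any manipulation supported only on $\{a,b,c,d\}$ can produce precisely the four terms of the stated identity with the specified signs $+,-,+,+$. Likewise, your description of $\Delta$ as the chain "straightened into three doubled edges incident to $\{a,b,c,d\}$" does not match a type A shape (top hexagon in Segre configuration plus bottom benzene cycle), and three doubled edges cannot even all be supported on four vertices.

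The paper's proof is not local in your sense. With the chain labeled $1$-$2$-$3$-$4$, it Pl\"uckers the green edges $\uedge{2\,7}$ and $\uedge{3\,4}$ (where $7\in P$ lies outside the chain) to obtain $\Gamma=-\Gamma_2-\Gamma_3$, then Pl\"uckers the blue edges $\uedge{1\,2}$ and $\uedge{3\,9}$ in $\Gamma_3$ (where $9\in Q$ is again outside the chain) to get $\Gamma_3=-\Gamma_4-\Gamma_5$, and observes $\Gamma_4=(2\,3)\Gamma$. This yields the Segre-data identity $\Gamma-(2\,3)\Gamma+\Gamma_2=\Gamma_5$. The heart of the argument is then two auxiliary lemmas established by further out-of-chain Pl\"ucker moves: Lemma~\ref{lem-gam5}, that $\Rel(\Gamma_5)$ is $(1\,4)$-invariant in $I'_L$; and Lemma~\ref{lem-gam2}, that $(1\,4)\Rel(\Gamma_2)=\Rel(\Gamma_2)+\Rel(\Delta)$ with $\Delta$ of type A for $(2\,4)\ms{P}$. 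Applying $(1\,4)$ to the identity, invoking the invariance of $\Gamma_5$, and cancelling the $\Gamma_2$ terms produces the four-term formula; the sign pattern $+,-,+,+$ falls directly out of this cancellation, and $\Delta$ arises from a specific Pl\"ucker step inside Lemma~\ref{lem-gam2}, not from any chain-straightening. None of these ingredients — the auxiliary graph $\Gamma_5$, its $(a\,d)$-invariance, or the Pl\"ucker moves through outside vertices $7$ and $9$ — are present in your proposal, and without them you have no mechanism to manufacture the $(a\,d)$-transposed terms. Your concluding deduction of the "in particular" clause from the identity via Proposition~\ref{prop-abc-ip} is correct, but that is the easy part.
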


\begin{proof}[Proof of Proposition~\ref{prop-A-rel}
given Proposition~\ref{prop-B-rel}] 
Let $\Gamma$, $a$, $b$, $c$ and $d$ be given as in the statement of 
Proposition~\ref{prop-A-rel}.  The vertices $c$ and $d$ are connected by
red and green edges.  Now, using a quadratic relation we may recolor the
$P$-part of $\Gamma$ so that $a$ and $b$ are connected by a blue edge.  Since
$\mf{S}_P$ acts on the Segre cubic relation on $P$ via the sign character,
all blue edges in $P$ are more or less the same so we may draw $\Gamma$ as:
\begin{displaymath}
\begin{xy}
(-3, 10.2)*{}="A"; (3, 10.2)*{}="B"; (6, 5)*{}="C";
(3, -.2)*{}="D"; (-3, -.2)*{}="E"; (-6, 5)*{}="F";
(-3, -4.2)*{}="G"; (3, -4.2)*{}="H"; (3, -10.2)*{}="I"; (-3, -10.2)*{}="J";
(-8, 5)*{\ss a}; (-5, -.2)*{\ss b}; (-5, -4.2)*{\ss c}; (5, -4.2)*{\ss d};
{\ar@{--}@[red] "A"; "B"};
{\ar@{--}@[red] "C"; "F"};
{\ar@{--}@[red] "D"; "E"};
{\ar@{--}@<-.2ex>@[red] "G"; "H"};
{\ar@{--}@<-.2ex>@[red] "I"; "J"};
{\ar@{.}@[|(2)]@[blue] "A"; "D"};
{\ar@{.}@[|(2)]@[blue] "B"; "C"};
{\ar@{.}@[|(2)]@[blue] "E"; "F"};
{\ar@{.}@[|(2)]@[blue] "H"; "I"};
{\ar@{.}@[|(2)]@[blue] "J"; "G"};
{\ar@{-}@[green] "A"; "F"};
{\ar@{-}@[green] "B"; "E"};
{\ar@{-}@[green] "C"; "D"};
{\ar@{-}@<.2ex>@[green] "G"; "H"};
{\ar@{-}@<.2ex>@[green] "I"; "J"};
\end{xy}
\qquad\qquad \textrm{respectively} \qquad\qquad
\begin{xy}
(-3, 12.39)*{}="A"; (3, 12.39)*{}="B"; (6, 7.195)*{}="C";
(3, 2)*{}="D"; (-3, 2)*{}="E"; (-6, 7.195)*{}="F";
(-3, -2)*{}="G"; (3, -2)*{}="H"; (6, -7.195)*{}="I";
(3, -12.39)*{}="J"; (-3, -12.39)*{}="K"; (-6, -7.195)*{}="L";
(-8, 7.195)*{\ss a}; (-5, 2)*{\ss b}; (-5, -2)*{\ss c}; (5, -2)*{\ss d};
{\ar@{--}@[red] "A"; "B"};
{\ar@{--}@[red] "C"; "F"};
{\ar@{--}@[red] "D"; "E"};
{\ar@{--}@<-.2ex>@[red] "G"; "H"};
{\ar@{--}@<-.2ex>@[red] "I"; "J"};
{\ar@{--}@<-.2ex>@[red] "K"; "L"};
{\ar@{.}@[|(2)]@[blue] "A"; "D"};
{\ar@{.}@[|(2)]@[blue] "B"; "C"};
{\ar@{.}@[|(2)]@[blue] "E"; "F"};
{\ar@{.}@[|(2)]@[blue] "H"; "I"};
{\ar@{.}@[|(2)]@[blue] "J"; "K"};
{\ar@{.}@[|(2)]@[blue] "L"; "G"};
{\ar@{-}@[green] "A"; "F"};
{\ar@{-}@[green] "B"; "E"};
{\ar@{-}@[green] "C"; "D"};
{\ar@{-}@<.2ex>@[green] "G"; "H"};
{\ar@{-}@<.2ex>@[green] "I"; "J"};
{\ar@{-}@<.2ex>@[green] "K"; "L"};
\end{xy}
\end{displaymath}
We now Pl\"ucker the red edge joining $c$ and $d$ and the unique red edge
containing $b$.  We obtain an expression $\Gamma=\Gamma_1+\Gamma_2$ where
each $\Gamma_I$ is a type B graph with respect to $\ms{P}$.  The special chain
in $\Gamma_1$ is $a$-$b$-$c$-$d$ while in $\Gamma_2$ it is
$a$-$b$-$d$-$c$.  By Proposition~\ref{prop-B-rel}, both $\Rel(\Gamma_1)$ and
$\Rel(\Gamma_2)$ belong to   $\sum I'_{\sigma \ms{P}}$, where the sum is over
those permutations $\sigma$ of $\{ a,b,c,d \}$ for which  $\sigma \ms{P} \neq
\ms{P}$.  Thus $\Rel(\Gamma)$ belongs to this space as well, which completes
the proof.
\end{proof}

We now begin proving Proposition~\ref{prop-B-rel}.  Consider a type B graph
$\Gamma$.  For convenience, we label it:
\begin{displaymath}
\begin{xy}
(-3, 10.2)*{}="A"; (3, 10.2)*{}="B"; (6, 5)*{}="C";
(3, -.2)*{}="D"; (-3, -.2)*{}="E"; (-6, 5)*{}="F";
(-3, -4.2)*{}="G"; (3, -4.2)*{}="H"; (3, -10.2)*{}="I"; (-3, -10.2)*{}="J";
(-8, 5)*{\ss 1}; (-5, -.2)*{\ss 2}; (-5, -4.2)*{\ss 3}; (5, -4.3)*{\ss 4};
(5, -.2)*{\ss 5}; (8, 5)*{\ss 6}; (5, 10.2)*{\ss 7}; (-5, 10.2)*{\ss 8};
(-5, -10.2)*{\ss 9}; (5, -10.2)*{\ss 10};
{\ar@{--}@[red] "A"; "B"};
{\ar@{--}@[red] "C"; "F"};
{\ar@{--}@[red] "D"; "H"};
{\ar@{--}@[red] "E"; "G"};
{\ar@{--}@<-.2ex>@[red] "I"; "J"};
{\ar@{.}@[|(2)]@[blue] "A"; "D"};
{\ar@{.}@[|(2)]@[blue] "B"; "C"};
{\ar@{.}@[|(2)]@[blue] "E"; "F"};
{\ar@{.}@[|(2)]@[blue] "H"; "I"};
{\ar@{.}@[|(2)]@[blue] "J"; "G"};
{\ar@{-}@[green] "A"; "F"};
{\ar@{-}@[green] "B"; "E"};
{\ar@{-}@[green] "C"; "D"};
{\ar@{-}@[green] "G"; "H"};
{\ar@{-}@<.2ex>@[green] "I"; "J"};
\end{xy}
\qquad\qquad \textrm{respectively} \qquad\qquad
\begin{xy}
(-3, 12.39)*{}="A"; (3, 12.39)*{}="B"; (6, 7.195)*{}="C";
(3, 2)*{}="D"; (-3, 2)*{}="E"; (-6, 7.195)*{}="F";
(-3, -2)*{}="G"; (3, -2)*{}="H"; (6, -7.195)*{}="I";
(3, -12.39)*{}="J"; (-3, -12.39)*{}="K"; (-6, -7.195)*{}="L";
(-5.5, 2)*{\ss 2}; (5, 2)*{\ss 5}; (8, 7.195)*{\ss 6}; (3, 14.39)*{\ss 7};
(-3, 14.39)*{\ss 8}; (-8, 7.195)*{\ss 1};
(-5, -2)*{\ss 3}; (5, -2)*{\ss 4}; (8, -7.195)*{\ss 10}; (3, -14.39)*{\ss 12};
(-3, -14.39)*{\ss 11}; (-8, -7.195)*{\ss 9};
{\ar@{--}@[red] "A"; "B"};
{\ar@{--}@[red] "C"; "F"};
{\ar@{--}@[red] "D"; "H"};
{\ar@{--}@[red] "E"; "G"};
{\ar@{--}@<-.2ex>@[red] "I"; "J"};
{\ar@{--}@<-.2ex>@[red] "K"; "L"};
{\ar@{.}@[|(2)]@[blue] "A"; "D"};
{\ar@{.}@[|(2)]@[blue] "B"; "C"};
{\ar@{.}@[|(2)]@[blue] "E"; "F"};
{\ar@{.}@[|(2)]@[blue] "H"; "I"};
{\ar@{.}@[|(2)]@[blue] "J"; "K"};
{\ar@{.}@[|(2)]@[blue] "L"; "G"};
{\ar@{-}@[green] "A"; "F"};
{\ar@{-}@[green] "B"; "E"};
{\ar@{-}@[green] "C"; "D"};
{\ar@{-}@[green] "G"; "H"};
{\ar@{-}@<.2ex>@[green] "I"; "J"};
{\ar@{-}@<.2ex>@[green] "K"; "L"};
\end{xy}
\end{displaymath}
With this labeling, the special chain $a$-$b$-$c$-$d$ is $1$-$2$-$3$-$4$.  We
let $\ms{P}$ be the relevant bipartition.

By Pl\"uckering the edges $\uedge{2 7}$ and $\uedge{3 4}$ we obtain $\Gamma
=-\Gamma_2 -\Gamma_3$ where $\Gamma_2$ and $\Gamma_3$ are:
\begin{displaymath}
\begin{xy}
(-3, 10.2)*{}="A"; (3, 10.2)*{}="B"; (6, 5)*{}="C";
(3, -.2)*{}="D"; (-3, -.2)*{}="E"; (-6, 5)*{}="F";
(-3, -4.2)*{}="G"; (3, -4.2)*{}="H"; (3, -10.2)*{}="I"; (-3, -10.2)*{}="J";
{\ar@{--}@[red] "A"; "B"};
{\ar@{--}@[red] "C"; "F"};
{\ar@{--}@[red] "D"; "H"};
{\ar@{--}@<-.2ex>@[red] "E"; "G"};
{\ar@{--}@<-.2ex>@[red] "I"; "J"};
{\ar@{.}@[|(2)]@[blue] "A"; "D"};
{\ar@{.}@[|(2)]@[blue] "B"; "C"};
{\ar@{.}@[|(2)]@[blue] "E"; "F"};
{\ar@{.}@[|(2)]@[blue] "H"; "I"};
{\ar@{.}@[|(2)]@[blue] "J"; "G"};
{\ar@{-}@[green] "A"; "F"};
{\ar@{-}@/_4ex/@[green] "H"; "B"};
{\ar@{-}@[green] "C"; "D"};
{\ar@{-}@<.2ex>@[green] "E"; "G"};
{\ar@{-}@<.2ex>@[green] "I"; "J"};
\end{xy}
\qquad \qquad
\begin{xy}
(-3, 10.2)*{}="A"; (3, 10.2)*{}="B"; (6, 5)*{}="C";
(3, -.2)*{}="D"; (-3, -.2)*{}="E"; (-6, 5)*{}="F";
(-3, -4.2)*{}="G"; (3, -4.2)*{}="H"; (3, -10.2)*{}="I"; (-3, -10.2)*{}="J";
{\ar@{--}@[red] "A"; "B"};
{\ar@{--}@[red] "C"; "F"};
{\ar@{--}@[red] "D"; "H"};
{\ar@{--}@[red] "E"; "G"};
{\ar@{--}@<-.2ex>@[red] "I"; "J"};
{\ar@{.}@[|(2)]@[blue] "A"; "D"};
{\ar@{.}@[|(2)]@[blue] "B"; "C"};
{\ar@{.}@[|(2)]@[blue] "E"; "F"};
{\ar@{.}@[|(2)]@[blue] "H"; "I"};
{\ar@{.}@[|(2)]@[blue] "J"; "G"};
{\ar@{-}@[green] "A"; "F"};
{\ar@{-}@[green] "G"; "B"};
{\ar@{-}@[green] "C"; "D"};
{\ar@{-}@[green] "E"; "H"};
{\ar@{-}@<.2ex>@[green] "I"; "J"};
\end{xy}
\qquad \qquad \textrm{respectively} \qquad \qquad
\begin{xy}
(-3, 12.39)*{}="A"; (3, 12.39)*{}="B"; (6, 7.195)*{}="C";
(3, 2)*{}="D"; (-3, 2)*{}="E"; (-6, 7.195)*{}="F";
(-3, -2)*{}="G"; (3, -2)*{}="H"; (6, -7.195)*{}="I";
(3, -12.39)*{}="J"; (-3, -12.39)*{}="K"; (-6, -7.195)*{}="L";
{\ar@{--}@[red] "A"; "B"};
{\ar@{--}@[red] "C"; "F"};
{\ar@{--}@[red] "D"; "H"};
{\ar@{--}@<-.2ex>@[red] "E"; "G"};
{\ar@{--}@<-.2ex>@[red] "I"; "J"};
{\ar@{--}@<-.2ex>@[red] "K"; "L"};
{\ar@{.}@[|(2)]@[blue] "A"; "D"};
{\ar@{.}@[|(2)]@[blue] "B"; "C"};
{\ar@{.}@[|(2)]@[blue] "E"; "F"};
{\ar@{.}@[|(2)]@[blue] "H"; "I"};
{\ar@{.}@[|(2)]@[blue] "J"; "K"};
{\ar@{.}@[|(2)]@[blue] "L"; "G"};
{\ar@{-}@[green] "A"; "F"};
{\ar@{-}@<.2ex>@[green] "E"; "G"};
{\ar@{-}@[green] "C"; "D"};
{\ar@{-}@/_4ex/@[green] "H"; "B"};
{\ar@{-}@<.2ex>@[green] "I"; "J"};
{\ar@{-}@<.2ex>@[green] "K"; "L"};
\end{xy}
\qquad \qquad
\begin{xy}
(-3, 12.39)*{}="A"; (3, 12.39)*{}="B"; (6, 7.195)*{}="C";
(3, 2)*{}="D"; (-3, 2)*{}="E"; (-6, 7.195)*{}="F";
(-3, -2)*{}="G"; (3, -2)*{}="H"; (6, -7.195)*{}="I";
(3, -12.39)*{}="J"; (-3, -12.39)*{}="K"; (-6, -7.195)*{}="L";
{\ar@{--}@[red] "A"; "B"};
{\ar@{--}@[red] "C"; "F"};
{\ar@{--}@[red] "D"; "H"};
{\ar@{--}@[red] "E"; "G"};
{\ar@{--}@<-.2ex>@[red] "I"; "J"};
{\ar@{--}@<-.2ex>@[red] "K"; "L"};
{\ar@{.}@[|(2)]@[blue] "A"; "D"};
{\ar@{.}@[|(2)]@[blue] "B"; "C"};
{\ar@{.}@[|(2)]@[blue] "E"; "F"};
{\ar@{.}@[|(2)]@[blue] "H"; "I"};
{\ar@{.}@[|(2)]@[blue] "J"; "K"};
{\ar@{.}@[|(2)]@[blue] "L"; "G"};
{\ar@{-}@[green] "A"; "F"};
{\ar@{-}@[green] "H"; "E"};
{\ar@{-}@[green] "C"; "D"};
{\ar@{-}@[green] "G"; "B"};
{\ar@{-}@<.2ex>@[green] "I"; "J"};
{\ar@{-}@<.2ex>@[green] "K"; "L"};
\end{xy}
\end{displaymath}
We now Pl\"ucker the edges $\uedge{12}$ and $\uedge{39}$ in $\Gamma_3$ to
obtain a relation $\Gamma_3 = - \Gamma_4 - \Gamma_5$ where $\Gamma_4$ and
$\Gamma_5$ are given by:
\begin{displaymath}
\begin{xy}
(-3, 10.2)*{}="A"; (3, 10.2)*{}="B"; (6, 5)*{}="C";
(3, -.2)*{}="D"; (-3, -.2)*{}="E"; (-6, 5)*{}="F";
(-3, -4.2)*{}="G"; (3, -4.2)*{}="H"; (3, -10.2)*{}="I"; (-3, -10.2)*{}="J";
{\ar@{--}@[red] "A"; "B"};
{\ar@{--}@[red] "C"; "F"};
{\ar@{--}@[red] "D"; "H"};
{\ar@{--}@[red] "E"; "G"};
{\ar@{--}@<-.2ex>@[red] "I"; "J"};
{\ar@{.}@[|(2)]@[blue] "A"; "D"};
{\ar@{.}@[|(2)]@[blue] "B"; "C"};
{\ar@{.}@[|(2)]@/_2ex/@[blue] "E"; "J"};
{\ar@{.}@[|(2)]@[blue] "F"; "G"};
{\ar@{.}@[|(2)]@[blue] "I"; "H"};
{\ar@{-}@[green] "A"; "F"};
{\ar@{-}@[green] "G"; "B"};
{\ar@{-}@[green] "C"; "D"};
{\ar@{-}@[green] "E"; "H"};
{\ar@{-}@<.2ex>@[green] "I"; "J"};
\end{xy}
\qquad \qquad
\begin{xy}
(-3, 10.2)*{}="A"; (3, 10.2)*{}="B"; (6, 5)*{}="C";
(3, -.2)*{}="D"; (-3, -.2)*{}="E"; (-6, 5)*{}="F";
(-3, -4.2)*{}="G"; (3, -4.2)*{}="H"; (3, -10.2)*{}="I"; (-3, -10.2)*{}="J";
{\ar@{--}@[red] "A"; "B"};
{\ar@{--}@[red] "C"; "F"};
{\ar@{--}@[red] "D"; "H"};
{\ar@{--}@<-.2ex>@[red] "E"; "G"};
{\ar@{--}@<-.2ex>@[red] "I"; "J"};
{\ar@{.}@[|(2)]@[blue] "A"; "D"};
{\ar@{.}@[|(2)]@[blue] "B"; "C"};
{\ar@{.}@[|(2)]@<.2ex>@[blue] "E"; "G"};
{\ar@{.}@[|(2)]@[blue] "F"; "J"};
{\ar@{.}@[|(2)]@[blue] "I"; "H"};
{\ar@{-}@[green] "A"; "F"};
{\ar@{-}@[green] "G"; "B"};
{\ar@{-}@[green] "C"; "D"};
{\ar@{-}@[green] "E"; "H"};
{\ar@{-}@<.2ex>@[green] "I"; "J"};
\end{xy}
\qquad \qquad \textrm{respectively} \qquad \qquad
\begin{xy}
(-3, 12.39)*{}="A"; (3, 12.39)*{}="B"; (6, 7.195)*{}="C";
(3, 2)*{}="D"; (-3, 2)*{}="E"; (-6, 7.195)*{}="F";
(-3, -2)*{}="G"; (3, -2)*{}="H"; (6, -7.195)*{}="I";
(3, -12.39)*{}="J"; (-3, -12.39)*{}="K"; (-6, -7.195)*{}="L";
{\ar@{--}@[red] "A"; "B"};
{\ar@{--}@[red] "C"; "F"};
{\ar@{--}@[red] "D"; "H"};
{\ar@{--}@[red] "E"; "G"};
{\ar@{--}@<-.2ex>@[red] "I"; "J"};
{\ar@{--}@<-.2ex>@[red] "K"; "L"};
{\ar@{.}@[|(2)]@[blue] "A"; "D"};
{\ar@{.}@[|(2)]@[blue] "B"; "C"};
{\ar@{.}@[|(2)]@[blue] "G"; "F"};
{\ar@{.}@[|(2)]@[blue] "H"; "I"};
{\ar@{.}@[|(2)]@[blue] "J"; "K"};
{\ar@{.}@[|(2)]@[blue] "L"; "E"};
{\ar@{-}@[green] "A"; "F"};
{\ar@{-}@[green] "H"; "E"};
{\ar@{-}@[green] "C"; "D"};
{\ar@{-}@[green] "G"; "B"};
{\ar@{-}@<.2ex>@[green] "I"; "J"};
{\ar@{-}@<.2ex>@[green] "K"; "L"};
\end{xy}
\qquad \qquad
\begin{xy}
(-3, 12.39)*{}="A"; (3, 12.39)*{}="B"; (6, 7.195)*{}="C";
(3, 2)*{}="D"; (-3, 2)*{}="E"; (-6, 7.195)*{}="F";
(-3, -2)*{}="G"; (3, -2)*{}="H"; (6, -7.195)*{}="I";
(3, -12.39)*{}="J"; (-3, -12.39)*{}="K"; (-6, -7.195)*{}="L";
{\ar@{--}@[red] "A"; "B"};
{\ar@{--}@[red] "C"; "F"};
{\ar@{--}@[red] "D"; "H"};
{\ar@{--}@<-.2ex>@[red] "E"; "G"};
{\ar@{--}@<-.2ex>@[red] "I"; "J"};
{\ar@{--}@<-.2ex>@[red] "K"; "L"};
{\ar@{.}@[|(2)]@[blue] "A"; "D"};
{\ar@{.}@[|(2)]@[blue] "B"; "C"};
{\ar@{.}@[|(2)]@<.2ex>@[blue] "E"; "G"};
{\ar@{.}@[|(2)]@[blue] "H"; "I"};
{\ar@{.}@[|(2)]@[blue] "J"; "K"};
{\ar@{.}@[|(2)]@[blue] "L"; "F"};
{\ar@{-}@[green] "A"; "F"};
{\ar@{-}@[green] "H"; "E"};
{\ar@{-}@[green] "C"; "D"};
{\ar@{-}@[green] "G"; "B"};
{\ar@{-}@<.2ex>@[green] "I"; "J"};
{\ar@{-}@<.2ex>@[green] "K"; "L"};
\end{xy}
\end{displaymath}

Now, we have $\Gamma_4=(2  3) \Gamma$.  Thus (as generalized Segre data)
\begin{equation}
\label{eq-gam}
\Gamma - (2  3) \Gamma +\Gamma_2 = \Gamma_5.
\end{equation}

\begin{lemma}
\label{lem-gam2}
We have $(1 4) \Rel(\Gamma_2) = \Rel(\Gamma_2) + \Rel(\Delta)$ where $\Delta$
is a type A graph with respect to $(2  4) \ms{P}$.
\end{lemma}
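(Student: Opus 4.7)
The plan is to realize $\Delta$ as a direct byproduct of a single colored Pl\"ucker move applied to $\Gamma_2$. The guiding observation is that when $\ms{P}=(P,Q)$ is permuted to $(24)\ms{P}=(P',Q')$ with $P'=\{1,4,5,6,7,8\}$ and $Q'=L\setminus P'$, the only edges of $\Gamma_2$ that cross between $P'$ and $Q'$ are the two blue edges $\uedge{21}$ and $\uedge{4\,10}$. Every other edge of $\Gamma_2$ lies entirely inside $P'$ or entirely inside $Q'$; in particular the doubled red-green edges $\uedge{23}$ and $\uedge{9\,10}$ sit inside $Q'$, and the ``curved'' green edge $\uedge{47}$ sits inside $P'$.

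First I would apply the colored Pl\"ucker relation to this pair of blue edges, writing $\Gamma_2$ (as a generalized Segre cubic datum) as $\pm\Gamma_A\pm\Gamma_B$, where $\Gamma_A$ has the blue pair replaced by $\uedge{24},\uedge{1\,10}$ and $\Gamma_B$ has it replaced by $\uedge{2\,10},\uedge{14}$. A direct check shows that $\Gamma_B$ has no edges crossing $(24)\ms{P}$: its restriction to $P'$ is a Segre cubic configuration on $\{1,4,5,6,7,8\}$, and its restriction to $Q'$ is a benzene cycle (a 4-cycle in the 10-point case, a 6-cycle in the 12-point case). Hence $\Gamma_B$ is a type A graph with respect to $(24)\ms{P}$, and it is the desired $\Delta$ up to sign.

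For the other term $\Gamma_A$, the blue subgraph coincides with that of $(14)\Gamma_2$, while the red and green subgraphs agree with those of $\Gamma_2$ itself. I would then manipulate $\Gamma_A$ by further colored Pl\"ucker moves on its red and green subgraphs to identify it (modulo quadratic and retrogenerated relations) with $-(14)\Gamma_2$, with the correct sign forced by the original Pl\"ucker identity. The crucial structural feature is that the doubled edges $\uedge{23}$ and $\uedge{9\,10}$ in $Q'$ are preserved by all of these moves, so every ``extra'' graph produced by Pl\"uckering is disconnected along $Q'$ and therefore retrogenerated by outer multiplication. Combining this identification with the basic sign identity $(14)\Rel(\Gamma_2)=-\Rel((14)\Gamma_2)$ gives the claimed equality $(14)\Rel(\Gamma_2)=\Rel(\Gamma_2)+\Rel(\Delta)$ in the quotient $I_L^{(3)}/(Q_L^{(3)}+I_L^{\retro,(3)})$ where the lemma lives.

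The main obstacle is to track signs and the implicit reduction modulo $Q_L$ and $I_L^{\retro}$ through the chain of Pl\"ucker moves identifying $\Gamma_A$ with $-(14)\Gamma_2$. This identification is not a literal equality of colored graphs but only holds modulo quadratic and retrogenerated relations, and ensuring that every auxiliary graph produced along the way is disconnected requires a careful, case-by-case examination --- much as in Lemmas~\ref{lem:retro-a} and~\ref{lem:retro-b}, but now with the preserved doubled edges in $Q'$ serving as the source of the necessary disconnections.
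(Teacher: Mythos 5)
Your plan is sound and is essentially the paper's own proof with the Pl\"ucker moves applied in reverse order: you Pl\"ucker the pair of blue edges $\uedge{12}$, $\uedge{4\,10}$ first (which immediately produces the type A graph $\Delta=\Gamma_B$), whereas the paper Pl\"uckers the special green pair $\uedge{81}$, $\uedge{47}$ and then the special red pair $\uedge{61}$, $\uedge{54}$ first, and only afterwards the blue pair. Both orderings lead to the same terminal graphs, $\Delta$ and $(1\,4)\Gamma_2$, so your step 1 and step 2 are fine.

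However, your explanation of why the intermediate ``extra'' graphs from the red and green Pl\"uckers can be discarded is wrong. You assert these graphs are disconnected along $Q'$ and therefore retrogenerated; this cannot be the case, since $\Gamma_A$ carries the blue edges $\uedge{24}$ and $\uedge{1\,10}$, both of which cross $P'/Q'$, and Pl\"uckering the red or green subgraphs leaves those blue edges untouched. The actual reason the extra terms vanish is \emph{degeneracy}, not disconnection. To complete step 3 properly one should Pl\"ucker the special red edges $\uedge{61}$, $\uedge{54}$ of $\Gamma_A$; the unwanted third term has red edges $\uedge{65}$ (entirely in $U_G$) and $\uedge{14}$ (entirely in $U_B$), so the pair of special red edges is missing and that Segre datum is degenerate of type (1). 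Its relation lies in $Q_L$ by Proposition~\ref{prop:gsd-degen}, hence dies in $I'_L$. The same happens with the special green edges $\uedge{81}$, $\uedge{47}$: the discarded term has green $\uedge{87}$ (in $U_R$) and $\uedge{14}$ (in $U_B$), again degenerate. After these two moves $\Gamma_A$ becomes exactly $(1\,4)\Gamma_2$, and together with the sign identity this gives the lemma. So the route is right; you only need to replace ``disconnected, hence retrogenerated'' with ``degenerate, hence in $Q_L$,'' and make the two red/green Pl\"ucker moves explicit.
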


\begin{proof}
The graph $\Gamma_2$ has a benzene chain with three (resp.\ four) blue edges,
beginning at vertex 1 and ending at vertex 4.  The picture is thus:
\begin{displaymath}
\begin{xy}
(-3, -6)*{}="A"; (3, -6)*{}="B"; (0, 0)*{}="C"; (0, 6)*{}="D"; (6, 6)*{}="E";
(12, 6)*{}="F"; (18, 6)*{}="G"; (18, 0)*{}="H"; (15, -6)*{}="I";
(21, -6)*{}="J";
(-5, -6)*{\ss 8}; (5, -6)*{\ss 6}; (-2, 0)*{\ss 1}; (0, 8)*{\ss 2};
(6, 8)*{\ss 3}; (12, 8)*{\ss 9}; (18, 8)*{\ss 10}; (20, 0)*{\ss 4};
(13, -6)*{\ss 7}; (23, -6)*{\ss 5};
{\ar@{-}@[green] "A"; "C"};
{\ar@{-}@<.2ex>@[green] "D"; "E"};
{\ar@{-}@<.2ex>@[green] "F"; "G"};
{\ar@{} "D"; "E"};
{\ar@{} "F"; "G"};
{\ar@{-}@[green] "I"; "H"};
{\ar@{--}@[red] "B"; "C"};
{\ar@{--}@<-.2ex>@[red] "D"; "E"};
{\ar@{--}@<-.2ex>@[red] "F"; "G"};
{\ar@{--}@[red] "J"; "H"};
{\ar@{.}@[|(2)]@[blue] "C"; "D"};
{\ar@{.}@[|(2)]@[blue] "E"; "F"};
{\ar@{.}@[|(2)]@[blue] "H"; "G"};
\end{xy}
\qquad\qquad \textrm{respectively} \qquad\qquad
\begin{xy}
(-3, -6)*{}="A"; (3, -6)*{}="B"; (0, 0)*{}="C"; (0, 6)*{}="D"; (6, 6)*{}="E";
(12, 6)*{}="F"; (18, 6)*{}="G"; (24, 6)*{}="H"; (30, 6)*{}="I";
(30, 0)*{}="J"; (27, -6)*{}="K"; (33, -6)*{}="L";
(-5, -6)*{\ss 8}; (5, -6)*{\ss 6}; (-2, 0)*{\ss 1}; (0, 8)*{\ss 2};
(6, 8)*{\ss 3}; (12, 8)*{\ss 9}; (18, 8)*{\ss 11}; (24, 8)*{\ss 12};
(30, 8)*{\ss 10}; (32, 0)*{\ss 4}; (25, -6)*{\ss 7}; (35, -6)*{\ss 5};
{\ar@{-}@[green] "A"; "C"};
{\ar@{-}@<.2ex>@[green] "D"; "E"};
{\ar@{-}@<.2ex>@[green] "F"; "G"};
{\ar@{-}@<.2ex>@[green] "H"; "I"};
{\ar@{} "D"; "E"};
{\ar@{} "F"; "G"};
{\ar@{} "H"; "I"};
{\ar@{-}@[green] "K"; "J"};
{\ar@{--}@[red] "B"; "C"};
{\ar@{--}@<-.2ex>@[red] "D"; "E"};
{\ar@{--}@<-.2ex>@[red] "F"; "G"};
{\ar@{--}@<-.2ex>@[red] "H"; "I"};
{\ar@{--}@[red] "L"; "J"};
{\ar@{.}@[|(2)]@[blue] "C"; "D"};
{\ar@{.}@[|(2)]@[blue] "E"; "F"};
{\ar@{.}@[|(2)]@[blue] "G"; "H"};
{\ar@{.}@[|(2)]@[blue] "J"; "I"};
\end{xy}
\end{displaymath}
Note that $U_R = \{7, 8 \}$ and $U_G = \{ 5, 6 \}$ while $U_B$ consists of the
remaining vertices.

We now Pl\"ucker the edges $\uedge{81}$ and $\uedge{74}$.  In the first term,
$8$ connects to $4$ and $7$ to $1$.  The second term is a degenerate Segre
datum (since $\uedge{81}$ and $\uedge{74}$ are the special green edges in
$U_B$).  It is thus quadratic and can be ignored.  We next Pl\"ucker the edges
$\uedge{61}$ and $\uedge{54}$, the results being similar.  We have thus shown
that $\Rel(\Gamma_2)$ defines the same element of $I'_L$ as the relation
associated to following graph (since we used two Pl\"ucker relations no
sign is introduced):
\begin{displaymath}
\begin{xy}
(-3, -6)*{}="A"; (3, -6)*{}="B"; (0, 0)*{}="C"; (0, 6)*{}="D"; (6, 6)*{}="E";
(12, 6)*{}="F"; (18, 6)*{}="G"; (18, 0)*{}="H"; (15, -6)*{}="I";
(21, -6)*{}="J";
(-5, -6)*{\ss 8}; (5, -6)*{\ss 6}; (-2, 0)*{\ss 4}; (0, 8)*{\ss 2};
(6, 8)*{\ss 3}; (12, 8)*{\ss 9}; (18, 8)*{\ss 10}; (20, 0)*{\ss 1};
(13, -6)*{\ss 7}; (23, -6)*{\ss 5};
{\ar@{-}@[green] "A"; "C"};
{\ar@{-}@<.2ex>@[green] "D"; "E"};
{\ar@{-}@<.2ex>@[green] "F"; "G"};
{\ar@{} "D"; "E"};
{\ar@{} "F"; "G"};
{\ar@{-}@[green] "I"; "H"};
{\ar@{--}@[red] "B"; "C"};
{\ar@{--}@<-.2ex>@[red] "D"; "E"};
{\ar@{--}@<-.2ex>@[red] "F"; "G"};
{\ar@{--}@[red] "J"; "H"};
{\ar@{.}@[|(2)]@[blue] "C"; "G"};
{\ar@{.}@[|(2)]@[blue] "E"; "F"};
{\ar@{.}@[|(2)]@[blue] "H"; "D"};
\end{xy}
\qquad\qquad \textrm{respectively} \qquad\qquad
\begin{xy}
(-3, -6)*{}="A"; (3, -6)*{}="B"; (0, 0)*{}="C"; (0, 6)*{}="D"; (6, 6)*{}="E";
(12, 6)*{}="F"; (18, 6)*{}="G"; (24, 6)*{}="H"; (30, 6)*{}="I";
(30, 0)*{}="J"; (27, -6)*{}="K"; (33, -6)*{}="L";
(-5, -6)*{\ss 8}; (5, -6)*{\ss 6}; (-2, 0)*{\ss 4}; (0, 8)*{\ss 2};
(6, 8)*{\ss 3}; (12, 8)*{\ss 9}; (18, 8)*{\ss 11}; (24, 8)*{\ss 12};
(30, 8)*{\ss 10}; (32, 0)*{\ss 1}; (25, -6)*{\ss 7}; (35, -6)*{\ss 5};
{\ar@{-}@[green] "A"; "C"};
{\ar@{-}@<.2ex>@[green] "D"; "E"};
{\ar@{-}@<.2ex>@[green] "F"; "G"};
{\ar@{-}@<.2ex>@[green] "H"; "I"};
{\ar@{} "D"; "E"};
{\ar@{} "F"; "G"};
{\ar@{} "H"; "I"};
{\ar@{-}@[green] "K"; "J"};
{\ar@{--}@[red] "B"; "C"};
{\ar@{--}@<-.2ex>@[red] "D"; "E"};
{\ar@{--}@<-.2ex>@[red] "F"; "G"};
{\ar@{--}@<-.2ex>@[red] "H"; "I"};
{\ar@{--}@[red] "L"; "J"};
{\ar@{.}@[|(2)]@[blue] "C"; "I"};
{\ar@{.}@[|(2)]@[blue] "E"; "F"};
{\ar@{.}@[|(2)]@[blue] "G"; "H"};
{\ar@{.}@[|(2)]@[blue] "J"; "D"};
\end{xy}
\end{displaymath}
We now Pl\"ucker $\uedge{1 2}$ and $\uedge{4 \; 10}$.  The first term is $(1 4)
\Gamma_2$.  The second term is a type A graph $\Delta$ with respect to $(2 4)
\ms{P}$.  We thus find $\Rel(\Gamma_2)=-\Rel((1  4) \Gamma_2) -\Rel(\Delta)$,
which establishes the proposition.
\end{proof}

\begin{lemma}
\label{lem-gam5}
We have $(1  4) \Rel(\Gamma_5)=\Rel(\Gamma_5)$.
\end{lemma}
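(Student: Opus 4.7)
By the equivariance $\sigma\Rel(\Gamma)=\sgn(\sigma)\Rel(\sigma\Gamma)$, the assertion $(1\;4)\Rel(\Gamma_5)=\Rel(\Gamma_5)$ is equivalent to $\Rel(\Gamma_5)+\Rel((1\;4)\Gamma_5)=0$ in $I'_L$. I will prove this by a short sequence of colored Pl\"ucker operations parallel to (but simpler than) those of Lemma~\ref{lem-gam2}. The key structural observation, drawn from the explicit pictures of $\Gamma_5$ appearing just before the statement, is that $\Gamma_5$ carries a doubled red-green edge on the pair $\{9,10\}$ (and additionally on the pairs $\{9,11\}$ and $\{10,12\}$ in the 12-point case), together with a doubled red-blue edge on $\{2,3\}$. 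These doubled bonds will force every unwanted side-term produced by my manipulations to be either disconnected (hence retrogenerated by Proposition~\ref{prop:gsd-discon}) or degenerate (hence quadratic by Proposition~\ref{prop:gsd-degen}), so no residual type A correction survives.

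My plan is to realize the transposition $(1\;4)$ on $\Gamma_5$ as a composition of colored Pl\"ucker swaps, one for each color class of edges incident to $1$ and $4$. Starting from $\Gamma_5$, I first Pl\"ucker the pair of blue edges incident to $1$ and $4$ respectively (namely $\uedge{1\;9}$ and $\uedge{4\;10}$): one summand acquires a tripled red-green-blue edge on $\{9,10\}$, forcing the graph to split into an isolated component on $\{9,10\}$ plus a smaller graph, so that the associated generalized Segre relation lies in $I_L^{\retro}$; the other summand implements the partial swap of the blue endpoints $9$ and $10$. I then repeat the procedure on the green edges incident to $1$ and $4$, and finally on the red edges incident to $1$ and $4$. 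In each step, one side-term is again disconnected or degenerate, by the same doubled-edge dichotomy, while the other propagates the swap. After three such steps the cumulative principal term is exactly $\Rel((1\;4)\Gamma_5)$, up to an overall sign $-1$ produced by the parity of the reorientations along the way; combining this with the retrogenerated side-terms yields $\Rel(\Gamma_5)=-\Rel((1\;4)\Gamma_5)$ in $I'_L$, which is the claim.

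The main obstacle is the combinatorial bookkeeping for the 12-point case, where the bottom hexagon admits more possible intermediate configurations and one must check, step by step, that each bad Pl\"ucker side-term is indeed disconnected or degenerate rather than a new non-degenerate type A graph on a different bipartition. This is precisely the point at which the present lemma differs from Lemma~\ref{lem-gam2}: there, a genuine residual $\Rel(\Delta)$ had to be admitted, whereas here the triply-redundant structure around the doubled pairs $\{9,10\}$, $\{9,11\}$, $\{10,12\}$ blocks the creation of any such residual term, because any attempted rearrangement at these vertices either introduces a tripled edge (forcing disconnection) or preserves the existing doubled structure (forcing degeneracy). To clean up any residual benzene chains on $Q$ that arise during the iteration, one can appeal to the identities of Figures~\ref{f:id6} and~\ref{f:id8} exactly as in the proof of Theorem~\ref{thm:retro}(a); the remaining verification is entirely routine but tedious, and is the only non-conceptual part of the argument.
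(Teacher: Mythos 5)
Your proof is essentially the paper's: realize $(1\;4)$ on $\Gamma_5$ by three colored Pl\"ucker operations, one per color, verifying at each step that the unwanted side-term is retrogenerated, and collect the cumulative sign $(-1)^3=-1$ from the three binomial Pl\"ucker relations to get $\Rel(\Gamma_5)=-\Rel((1\;4)\Gamma_5)$; the paper performs the swaps in the order red, green, blue, while you use blue, green, red, but the order is immaterial and the side-terms remain retrogenerated either way. Two small points to tighten when writing out details: (i) the disconnected blue side-term is a tripled edge on $\{9,10\}$ only when $n=10$; for $n=12$ the isolated component is the benzene $4$-cycle $\{9,10,11,12\}$, not a tripled edge, though it is still a separate component so the conclusion is unaffected. (ii) The green and red side-terms are not disconnected but \emph{degenerate} in the sense of Proposition~\ref{prop:gsd-degen}: the red one by condition (1), since $\uedge{5\;6}$ becomes a red edge inside $U_G$ and the red special pair from $U_G$ to $U_B$ disappears; the green one by condition (2), since in that side-term the subgraph on $U_B$ splits into $\{2,3\}$ and its complement with the green-special endpoints $\{2,3\}$ separated from the red-special endpoints $\{1,4\}$. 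Your phrase ``by the same doubled-edge dichotomy'' papers over these two distinct degeneracy mechanisms, and the remark about cleaning up residual benzene chains via Figures~\ref{f:id6} and~\ref{f:id8} is a red herring: no such residue arises here, which is precisely what distinguishes this lemma from Lemma~\ref{lem-gam2}, where a genuine $\Rel(\Delta)$ correction survives the last (blue) Pl\"ucker.
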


\begin{proof}
We redraw $\Gamma_5$ in a more convenient way.
\begin{displaymath}
\begin{xy}
(0, -6)*{}="A"; (0, 0)*{}="B"; (6, 0)*{}="C"; (12, 0)*{}="D"; (12, -6)*{}="E";
(12, 6)*{}="F"; (24, 6)*{}="I"; (24, 0)*{}="J"; (21, -6)*{}="K";
(27, -6)*{}="L";
(-2, -6)*{\ss 7}; (0, 2)*{\ss 3}; (6, 2)*{\ss 2}; (14, 0)*{\ss 4};
(14, -6)*{\ss 5}; (12, 8)*{\ss 10}; (24, 8)*{\ss 9};
(26, 0)*{\ss 1}; (19, -6)*{\ss 8}; (29, -6)*{\ss 6};
{\ar@{-}@[green] "A"; "B"};
{\ar@{-}@[green] "C"; "D"};
{\ar@{-}@<.2ex>@[green] "F"; "I"};
{\ar@{} "F"; "I"};
{\ar@{-}@[green] "K"; "J"};
{\ar@{--}@<-.2ex>@[red] "B"; "C"};
{\ar@{} "B"; "C"};
{\ar@{--}@[red] "E"; "D"};
{\ar@{--}@<-.2ex>@[red] "F"; "I"};
{\ar@{--}@[red] "L"; "J"};
{\ar@{.}@[|(2)]@<.2ex>@[blue] "B"; "C"};
{\ar@{.}@[|(2)]@[blue] "D"; "F"};
{\ar@{.}@[|(2)]@[blue] "J"; "I"};
\end{xy}
\qquad\qquad \textrm{respectively} \qquad\qquad
\begin{xy}
(0, -6)*{}="A"; (0, 0)*{}="B"; (6, 0)*{}="C"; (12, 0)*{}="D"; (12, -6)*{}="E";
(12, 6)*{}="F"; (18, 6)*{}="G"; (24, 6)*{}="H"; (30, 6)*{}="I";
(30, 0)*{}="J"; (27, -6)*{}="K"; (33, -6)*{}="L";
(-2, -6)*{\ss 7}; (0, 2)*{\ss 3}; (6, 2)*{\ss 2}; (14, 0)*{\ss 4};
(14, -6)*{\ss 5}; (12, 8)*{\ss 10}; (18, 8)*{\ss 12}; (24, 8)*{\ss 11};
(30, 8)*{\ss 9}; (32, 0)*{\ss 1}; (25, -6)*{\ss 8}; (35, -6)*{\ss 6};
{\ar@{-}@[green] "A"; "B"};
{\ar@{-}@[green] "C"; "D"};
{\ar@{-}@<.2ex>@[green] "F"; "G"};
{\ar@{-}@<.2ex>@[green] "H"; "I"};
{\ar@{} "F"; "G"};
{\ar@{} "H"; "I"};
{\ar@{-}@[green] "K"; "J"};
{\ar@{--}@<-.2ex>@[red] "B"; "C"};
{\ar@{} "B"; "C"};
{\ar@{--}@[red] "E"; "D"};
{\ar@{--}@<-.2ex>@[red] "F"; "G"};
{\ar@{--}@<-.2ex>@[red] "H"; "I"};
{\ar@{--}@[red] "L"; "J"};
{\ar@{.}@[|(2)]@<.2ex>@[blue] "B"; "C"};
{\ar@{.}@[|(2)]@[blue] "D"; "F"};
{\ar@{.}@[|(2)]@[blue] "G"; "H"};
{\ar@{.}@[|(2)]@[blue] "J"; "I"};
\end{xy}
\end{displaymath}
As before, $U_R=\{7, 8\}$ and $U_G=\{5, 6\}$ while $U_B$ consists of the
remaining vertices.

We now Pl\"ucker the edges $\uedge{6 1}$ and $\uedge{5 4}$.  In the first term,
5 connects to 1 and 6 to 4.  The second term is a degenerate Segre datum and
can be ignored.  We next Pl\"ucker $\uedge{2 4}$ with $\uedge{8 1}$.  In the
first term, 8 connects to 4 and 2 to 1.  The second term is degenerate and can
be ignored.  We have thus shown that $\Rel(\Gamma_5)$ defines the same
element of $I'_L$ as the relation associated to the following graph:
\begin{displaymath}
\begin{xy}
(0, -6)*{}="A"; (0, 0)*{}="B"; (6, 0)*{}="C"; (12, 0)*{}="D"; (12, -6)*{}="E";
(12, 6)*{}="F"; (24, 6)*{}="I"; (24, 0)*{}="J"; (21, -6)*{}="K";
(27, -6)*{}="L";
(-2, -6)*{\ss 7}; (0, 2)*{\ss 3}; (6, 2)*{\ss 2}; (14, 0)*{\ss 1};
(14, -6)*{\ss 5}; (12, 8)*{\ss 10}; (24, 8)*{\ss 9};
(26, 0)*{\ss 4}; (19, -6)*{\ss 8}; (29, -6)*{\ss 6};
{\ar@{-}@[green] "A"; "B"};
{\ar@{-}@[green] "C"; "D"};
{\ar@{-}@<.2ex>@[green] "F"; "I"};
{\ar@{} "F"; "I"};
{\ar@{-}@[green] "K"; "J"};
{\ar@{--}@<-.2ex>@[red] "B"; "C"};
{\ar@{} "B"; "C"};
{\ar@{--}@[red] "E"; "D"};
{\ar@{--}@<-.2ex>@[red] "F"; "I"};
{\ar@{--}@[red] "L"; "J"};
{\ar@{.}@[|(2)]@<.2ex>@[blue] "B"; "C"};
{\ar@{.}@[|(2)]@[blue] "D"; "I"};
{\ar@{.}@[|(2)]@[blue] "J"; "F"};
\end{xy}
\qquad\qquad \textrm{respectively} \qquad\qquad
\begin{xy}
(0, -6)*{}="A"; (0, 0)*{}="B"; (6, 0)*{}="C"; (12, 0)*{}="D"; (12, -6)*{}="E";
(12, 6)*{}="F"; (18, 6)*{}="G"; (24, 6)*{}="H"; (30, 6)*{}="I";
(30, 0)*{}="J"; (27, -6)*{}="K"; (33, -6)*{}="L";
(-2, -6)*{\ss 7}; (0, 2)*{\ss 3}; (6, 2)*{\ss 2}; (14, 0)*{\ss 1};
(14, -6)*{\ss 5}; (12, 8)*{\ss 10}; (18, 8)*{\ss 12}; (24, 8)*{\ss 11};
(30, 8)*{\ss 9}; (32, 0)*{\ss 4}; (25, -6)*{\ss 8}; (35, -6)*{\ss 6};
{\ar@{-}@[green] "A"; "B"};
{\ar@{-}@[green] "C"; "D"};
{\ar@{-}@<.2ex>@[green] "F"; "G"};
{\ar@{-}@<.2ex>@[green] "H"; "I"};
{\ar@{} "F"; "G"};
{\ar@{} "H"; "I"};
{\ar@{-}@[green] "K"; "J"};
{\ar@{--}@<-.2ex>@[red] "B"; "C"};
{\ar@{} "B"; "C"};
{\ar@{--}@[red] "E"; "D"};
{\ar@{--}@<-.2ex>@[red] "F"; "G"};
{\ar@{--}@<-.2ex>@[red] "H"; "I"};
{\ar@{--}@[red] "L"; "J"};
{\ar@{.}@[|(2)]@<.2ex>@[blue] "B"; "C"};
{\ar@{.}@[|(2)]@[blue] "D"; "I"};
{\ar@{.}@[|(2)]@[blue] "G"; "H"};
{\ar@{.}@[|(2)]@[blue] "J"; "F"};
\end{xy}
\end{displaymath}
We now Pl\"ucker the edges $\uedge{1 9}$ and $\uedge{4 \; 10}$.  The first term
is just $(1  4) \Gamma_5$.  The second term is retrogenerated from $8$
(resp.\ $10$) points and therefore does not contribute in $I'_L$.  We thus
have $\Rel(\Gamma_5)= -\Rel((1  4)\Gamma_5)$, which establishes the
proposition.
\end{proof}

We can now complete the proof of Proposition~\ref{prop-B-rel}.

\begin{proof}[Proof of Proposition~\ref{prop-B-rel}]
By Lemma~\ref{lem-gam5} and \eqref{eq-gam}, we see that $\Rel(\Gamma) -
\Rel((2  3) \Gamma) + \Rel(\Gamma_2)$ is invariant under $(1  4)$, that is,
\begin{displaymath}
\Rel(\Gamma)-\Rel((2  3) \Gamma)+\Rel(\Gamma_2)=
(1  4) \Rel(\Gamma)-(1  4)\Rel((2  3) \Gamma)+(1  4) \Rel(\Gamma_2).
\end{displaymath}
We now apply Lemma~\ref{lem-gam2} and write $(1 4)\Rel(\Gamma_2)=\Rel(\Gamma_2)
+\Rel(\Delta)$ where $\Delta$ is a type A graph with respect to $(2  4)
\ms{P}$.  The $\Gamma_2$ terms on each side of the equation cancel, and we are
left with
\begin{displaymath}
\Rel(\Gamma)= (1  4) \Rel(\Gamma) -(2  3)\Rel(\Gamma)
+(1  4)(2  3) \Rel(\Gamma)+\Rel(\Delta).
\end{displaymath}
This completes the proof.
\end{proof}

\subsection{Proof of Theorem~\ref{thm:base}}

We now complete the proof of Theorem~\ref{thm:base}.  As we have explained, to
do this it suffices to show that $I'_L=0$.  To do this we first use
Proposition~\ref{prop-A-rel} to obtain an upper bound for the dimension of
$I'_L$, and then we use representation theory to show that this upper bound
forces $I'_L$ to be zero.

Fix an order on $L$.  For $n=10$ (resp.\ $n=12$), we say that a bipartition
$\ms{P}=(P, Q)$ of $L$ is \emph{good} if there is at most one (resp.\ if there
are at most two) element(s) of $Q$ which are larger than the second smallest
element of $P$.  
\Ncom{$\text{good bipartition}$}

\begin{proposition}
\label{prop-good-surj}
The natural map $\bigoplus I'_{\ms{P}} \to I'$ is surjective, where the sum
is taken over all good bipartitions $\ms{P}$.
\end{proposition}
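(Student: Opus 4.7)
The plan is to induct on the second-smallest element $p_2(\ms{P})$ of $P$, using Proposition~\ref{prop-A-rel} as the engine.  Good bipartitions are exactly those with $p_2 \geq 5$ (for $n=10$) or $p_2 \geq 6$ (for $n=12$), since the good condition amounts to $|Q \cap [1, p_2]| \geq |Q| - 1$, which simplifies to $p_2 - 2 \geq |Q| - 1$.  These form the base case, and I will show that for each non-good $\ms{P}$ one has $I'_{\ms{P}} \subset \sum_{\ms{P}' : p_2(\ms{P}') > p_2(\ms{P})} I'_{\ms{P}'}$.  Iterating this upward from each non-good $\ms{P}$ through $\ms{P}'$ with strictly larger $p_2$ eventually reaches good bipartitions, giving the desired surjectivity.

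For the inductive step, fix a non-good $\ms{P}$, a type A graph $\Gamma$ possessing a doubled edge $\uedge{cd}$ with both $c, d > p_2$, and apply Proposition~\ref{prop-A-rel} with $a = p_1$ and $b = p_2$.  In each of the five resulting bipartitions $\sigma\ms{P}$, the new set $P'$ consists of $\{p_3, \ldots\}$ together with two elements drawn from $\{p_1, p_2, c, d\}$, of which at most one is $\leq p_2$.  A case-by-case check shows $p_2(\sigma\ms{P}) > p_2(\ms{P})$ in all five cases, so by the inductive hypothesis each $I'_{\sigma\ms{P}}$ lies in $W := \sum_{\text{good}} I'_{\ms{P}'}$, and hence $\Rel(\Gamma) \in W$.

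The main obstacle is that for some type A graphs $\Gamma$, no doubled edge has both endpoints above $p_2$, because the matching on $Q$ might pair every large vertex of $Q$ with a small one.  This only happens near the good boundary: for $n=10$ with $p_2 = 4$, and for $n = 12$ with $p_2 = 5$.  For $n=10$ the difficulty is sidestepped because $I'_{\ms{P}}$ is at most one-dimensional---it is a quotient of the irreducible $\mf{S}_P \times \mf{S}_Q$-module $I_P^{(3)} \otimes \gr_{4,Q}^{(3)}$---so it suffices to produce a single $\Gamma_0$ with a nice doubled edge and $\Rel(\Gamma_0) \neq 0$.  Such a $\Gamma_0$ exists by choosing the matching on $Q$ that pairs the two large vertices with each other, and a nonzero relation of this form is obtained by transporting any nonzero $\Rel(\Gamma')$ through the $\mf{S}_Q$-action on matchings.

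For $n=12$ the space $I'_{\ms{P}}$ has dimension five (being a quotient of the irreducible $I_P^{(3)} \otimes \gr_{6,Q}^{(3)}$), so a single nice generator no longer suffices; one must handle the problematic ``all-cross'' matchings individually, for instance via auxiliary Pl\"ucker moves inside $Q$ that rewrite $\Rel(\Gamma)$ in terms of $\Rel(\Gamma')$ for non-exceptional $\Gamma'$ with an internal large-large edge.  The hard part will be verifying that these auxiliary moves do not reintroduce bipartitions with strictly smaller $p_2$, which would break the downward induction; this is consistent with the authors' remark in \S\ref{outline} that this is the least conceptual part of the paper.
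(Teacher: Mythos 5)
Your overall strategy---descending induction on $p_2(\ms{P})$, driven by Proposition~\ref{prop-A-rel} applied with $a=p_1$, $b=p_2$, and a doubled edge $\uedge{cd}$ in $Q$ with $c,d>p_2$---coincides with the paper's proof, merely phrased in terms of $p_2$ itself rather than the equivalent count $|\{q\in Q: q>p_2\}|$.  (Incidentally, your formula ``$|Q\cap[1,p_2]|\geq |Q|-1$'' only matches the definition of good when $n=10$; for $n=12$ it should read $|Q|-2$, though you state the correct thresholds $p_2\ge 5$ and $p_2\ge 6$.)  Your treatment of $n=10$ via the one-dimensionality of $I_P^{(3)}\otimes\gr_{4,Q}^{(3)}$ is also exactly the paper's.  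The genuine gap is in the $n=12$ case: you correctly recognize that a single nice representative no longer suffices, and you name the right repair---rewriting $\Rel(\Gamma)$ for an arbitrary type~A $\Gamma$ in terms of relations $\Rel(\Gamma_i)$ whose matching on $Q$ contains a doubled edge among the three largest elements of $Q$---but you do not prove that such a rewriting is possible.  That is precisely the content of Proposition~\ref{prop:g6}(d), which the paper establishes by a (small) computer calculation and invokes at exactly this point; without a proof of some lemma of that strength your argument does not close.

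Your closing worry is misplaced, however.  The auxiliary Pl\"ucker moves you propose live entirely inside $Q$: they replace one benzene $6$-cycle on $Q$ by a linear combination of others, and they never touch $P$, so they cannot change $\ms{P}$ and in particular cannot change $p_2(\ms{P})$.  The bipartition changes only when Proposition~\ref{prop-A-rel} is applied, and your five-case check already shows $p_2(\sigma\ms{P})>p_2(\ms{P})$ in every resulting term.  So once the rewriting lemma is established, the induction runs with no further danger.  One final stylistic remark: your pigeonhole analysis isolating $p_2=4$ ($n=10$) and $p_2=5$ ($n=12$) as the only problematic boundary values is correct but unnecessary extra casework---the paper simply invokes one-dimensionality (resp.\ Proposition~\ref{prop:g6}(d)) uniformly for \emph{every} non-good $\ms{P}$, which dispenses with the distinction between ``boundary'' and ``non-boundary'' values of $p_2$ altogether.
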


\begin{proof}
We handle the two cases $n=10$ and $12$ separately, for
the sake of clarity.  We begin with the 10 point case.  Let $\ms{P}$ be a
bipartition which is not good.  Let $a<b$ be the smallest two elements of $P$
and let $c<d$ be the largest two elements of $Q$.  Since $\ms{P}$ is not good,
we have $a<b<c<d$.  Now let $\Gamma$ be a type A graph with respect to
$\ms{P}$.  Since $I_P^{(3)} \otimes \gr_{4, Q}^{(3)}$ is one-dimensional, if
$\Gamma'$ is any other type A graph with respect to $\ms{P}$ then we have
$\Rel(\Gamma)=\Rel(\Gamma')$.  Thus we may assume that $c$ and $d$ are
connected by a doubled edge in $\Gamma$.  We may then apply
Proposition~\ref{prop-A-rel} to conclude that $\Gamma$ belongs to $\sum
I'_{\sigma \ms{P}}$, the sum taken over those permutations $\sigma$ of
$\{a,b,c,d\}$ for which $\sigma \ms{P} \ne \ms{P}$.  Each of these $\sigma
\ms{P}$ is closer to being good than $\ms{P}$ (measured by how many
elements of $Q$ are larger than the second smallest element of $P$, for
instance).  By induction, we deduce the proposition.

We now handle the case where $L$ has cardinality 12.  Let $\ms{P}$ be a
bipartition which is not good.  Let $a<b$ be the smallest two elements of $P$
and let $c<d<e$ be the largest three elements of $Q$.  Since $\ms{P}$ is not
good, we have $a<b<c<d<e$.  Now let $\Gamma$ be a type A graph with respect
to $\ms{P}$.  By Proposition~\ref{prop:g6}(d)  we can rewrite $\Rel(\Gamma)$ in
terms of $\Rel(\Gamma_i)$, where each $\Gamma_i$ is a type A graph with respect
to $\ms{P}$ in which either $\uedge{cd}$ or $\uedge{ce}$ appears as a doubled
edge.  We may as well then just assume that $\Gamma$ has $\uedge{c d}$ as a
doubled edge (the $\uedge{c e}$ argument is the same).  We now apply
Proposition~\ref{prop-A-rel} and find that $\Rel(\Gamma)$ belongs to $\sum
I'_{\sigma \ms{P}}$ as $\sigma$ varies over those permutations of $\{a, b, c,
d\}$ for which $\sigma \ms{P} \ne \ms{P}$.  As before, each $\sigma \ms{P}$ is
closer to being good than $\ms{P}$ and so we deduce the proposition by
induction.
\end{proof}

\begin{proposition}
For $n=10$ (resp.\ $n=12$) there are 25 (resp.\ 112) good bipartitions of $L$.
\end{proposition}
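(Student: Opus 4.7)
The plan is a direct enumeration, parameterized by the second smallest element of $P$. Fix a total order on $L = \{1,2,\dots,n\}$, and for a bipartition $\ms{P} = (P,Q)$ write $a < b$ for the two smallest elements of $P$. Since $b$ is the second smallest element of $P$, exactly one of the $b-1$ elements of $L$ below $b$ lies in $P$ (namely $a$) and the remaining $b-2$ lie in $Q$. Consequently the number of elements of $Q$ that exceed $b$ equals $|Q| - (b-2)$, so the good condition (at most $k$ elements of $Q$ above $b$, with $k=1$ for $n=10$ and $k=2$ for $n=12$) is equivalent to the clean inequality $b \geq |Q| - k + 2$.

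With this reformulation I would then just iterate over the admissible values of $b$. The upper constraint $b \leq n-4$ comes from the need to fit the remaining $4$ elements of $P$ in $\{b+1,\dots,n\}$, and the lower constraint is the one just derived. For each admissible $b$, the number of bipartitions is $(b-1) \cdot \binom{n-b}{4}$, since $a$ can be any of the $b-1$ elements below $b$, and the remaining four elements of $P$ form an arbitrary $4$-subset of the $n-b$ elements above $b$.

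For $n=10$ the inequality reads $b \geq 5$, and combined with $b \leq 6$ this leaves only $b \in \{5,6\}$, giving
\begin{equation*}
4 \cdot \binom{5}{4} + 5 \cdot \binom{4}{4} = 20 + 5 = 25.
\end{equation*}
For $n=12$ the inequality reads $b \geq 6$, and combined with $b \leq 8$ this leaves $b \in \{6,7,8\}$, giving
\begin{equation*}
5 \cdot \binom{6}{4} + 6 \cdot \binom{5}{4} + 7 \cdot \binom{4}{4} = 75 + 30 + 7 = 112.
\end{equation*}

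There is no real obstacle here; the only thing to be careful about is correctly translating the definition of ``good'' into a condition on $b$ alone, and keeping track of the fact that $a$ (the single element of $P$ below $b$) is unconstrained among $\{1,\dots,b-1\}$ while all the other ``below $b$'' positions are forced to lie in $Q$. Once that reduction is in place, the remainder is the two short arithmetic tallies displayed above.
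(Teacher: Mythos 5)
Your proof is correct and follows the same strategy as the paper's: parameterize by the second smallest element $b$ of $P$, determine its admissible range, and multiply the count of choices for $a$ by the count of choices for the four elements of $P$ above $b$. Your reformulation that exactly $|Q|-(b-2)$ elements of $Q$ exceed $b$ (hence the good condition is $b \geq |Q|-k+2$) is a slightly cleaner derivation of the lower bound on $b$ than the paper's more informal observation, but the underlying count is identical.
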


\begin{proof}
We again consider the two cases separately, and begin with the 10 point case.
Identify $L$ with $\{1, 2, \ldots, 10\}$.  Consider bipartitions of $L$ for
which the second smallest element of $P$ is $x$.  Of course, we must have
$x \le 6$.  On the other hand, if $x \le 4$ then at least two elements of $Q$
will be larger than $x$ and so the bipartition will not be good.  Thus we must
have $x=5$ or $x=6$.
\begin{itemize}
\item $x=6$.  In this case, $P$ must contain each of 7, 8, 9 and 10.  In
addition, $P$ contains one number less than 6.  There are five such choices,
each of which gives a good bipartition.
\item $x=5$.  In this case, $P$ must contain each of 6, 7, 8, 9 and 10 except
for one (it cannot contain them all for then $x$ would not be second smallest).
There are five ways to choose what to omit.  In addition, $P$ must contain one
number less than 5.  There are four such choices.  Thus, all told, there are
20 good bipartitions.
\end{itemize}
Thus in total we have $5+20=25$ good bipartitions.

We now handle the case where $L$ has cardinality 12.  Again, we identity
$L$ with $\{1, \ldots, 12\}$ and consider bipartitions for which the second
smallest element of $P$ is $x$.  We must have $x \le 8$.  Now, if $x \le 5$
then $Q$ necessarily has three elements larger than $x$ and so the bipartition
is not good.  Thus we only need to consider the cases $x=6,7,8$.
\begin{itemize}
\item $x=8$.  In this case, $P$ must contain each of 9, 10, 11 and 12.
In addition, $P$ contains one number less than 8.  There are seven choices
for such a number, and all give good bipartitions.
\item $x=7$.  In this case, $P$ must contain each of 8, 9, 10, 11 and 12
except for one (it cannot contain them all for then $x$ would not be
second smallest).  There are five ways to choose what to omit.  Furthermore,
$P$ must contain one element smaller than 7.  There are six such choices.
Thus there are 30 good bipartitions in this case.
\item $x=6$.  In this case, $P$ must contain each of 7, 8, 9, 10, 11 and 12
except for two.  There are 15 ways to omit two of these numbers.  Furthermore,
$P$ must contain one element smaller than 6.  There are five such choices.
Thus there are 75 good bipartitions in this case.
\end{itemize}
Thus in total we have $7+30+75=112$ good bipartitions.
\end{proof}

\begin{corollary}
For $n=10$ (resp.\ $n=12$) we have $\dim{I'_L} \le 25$ (resp.\ $\le 560$).
\end{corollary}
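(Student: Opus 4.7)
The plan is to combine the previous two propositions with the dimension bounds on the individual $I'_{\ms{P}}$ that were recorded earlier (just after the definition of $I'_{\ms{P}}$). Recall from that discussion that each $I'_{\ms{P}}$ is an $\mf{S}_P \times \mf{S}_Q$-equivariant quotient of $I_P^{(3)} \otimes \gr_{4,Q}^{(3)}$ (when $|L|=10$) or of $I_P^{(3)} \otimes \gr_{6,Q}^{(3)}$ (when $|L|=12$). Since $I_P^{(3)}$ is the one-dimensional span of the Segre cubic, and since Propositions \ref{prop:g4} and \ref{prop:g6} identify $\gr_{4,Q}^{(3)}$ as one-dimensional and $\gr_{6,Q}^{(3)}$ as five-dimensional, we obtain the a priori bounds $\dim I'_{\ms{P}} \le 1$ in the ten-point case and $\dim I'_{\ms{P}} \le 5$ in the twelve-point case.

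Next I invoke Proposition~\ref{prop-good-surj}, which furnishes a surjection
\begin{displaymath}
\bigoplus_{\ms{P} \text{ good}} I'_{\ms{P}} \twoheadrightarrow I'_L.
\end{displaymath}
Combining this surjection with the per-summand bound and with the previous proposition (which counts 25 good bipartitions when $n=10$ and 112 good bipartitions when $n=12$) yields
\begin{displaymath}
\dim I'_L \le 25 \cdot 1 = 25 \qquad \text{and} \qquad \dim I'_L \le 112 \cdot 5 = 560
\end{displaymath}
in the respective cases. This is precisely the statement of the corollary.

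There is essentially no obstacle: the proof is a one-line tally, since all the substantive work has already been done in establishing (i) irreducibility and the dimension of $\gr_{4,Q}^{(3)}$ and $\gr_{6,Q}^{(3)}$, (ii) the surjectivity statement reducing to good bipartitions (which in turn rested on Propositions~\ref{prop-A-rel} and~\ref{prop-B-rel} together with Proposition~\ref{prop:g6}(d) in the twelve-point case), and (iii) the enumeration of good bipartitions. Thus the only thing to verify is that the tensor-product upper bound and the enumeration combine multiplicatively under a surjection, which is immediate.
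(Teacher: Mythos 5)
Your proof matches the paper's almost exactly: both use the surjection from $\bigoplus_{\ms{P} \text{ good}} I'_{\ms{P}}$ onto $I'_L$, the per-summand bound $\dim I'_{\ms{P}} \le 1$ (resp.\ $\le 5$) coming from the identification of $I'_{\ms{P}}$ as a quotient of $I_P^{(3)} \otimes \gr_{4,Q}^{(3)}$ (resp.\ $\gr_{6,Q}^{(3)}$), and the count of 25 (resp.\ 112) good bipartitions. The only difference is that you spell out the provenance of the per-summand dimension bound a bit more explicitly, which is fine.
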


\begin{proof}
We know that $I'_L$ admits a surjection from $\bigoplus I'_{\ms{P}}$, the
sum taken over the good bipartitions $\ms{P}$.  When $L$ has cardinality 10,
there are 25 of these bipartitions and each $I'_{\ms{P}}$ has dimension at most
one.  When $L$ has cardinality 12, there are 112 of these bipartitions and each
$I'_{\ms{P}}$ has dimension at most five.  This gives the result.
\end{proof}

\begin{proposition}
For $n=10$ (resp.\ $n=12$) the space $\bigoplus I'_{\ms{P}}$ (summed over all
bipartitions $\ms{P}$) is either zero a direct sum of two irreducible
$\mf{S}_L$-representations of dimensions 84 and 126 (resp.\ three irreducible
representations of dimensions 616, 1925 and 2079).
\end{proposition}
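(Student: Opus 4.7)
The plan is to recognize $\bigoplus_{\ms{P}} I'_{\ms{P}}$ as an induced representation of $\mf{S}_L$ and then decompose it by Pieri's rule. Since the two parts of an ordered bipartition $\ms{P}=(P,Q)$ are distinguished, $\mf{S}_L$ acts transitively on the set of bipartitions; fix a reference bipartition $\ms{P}_0=(P,Q)$ and let $H=\mf{S}_P\times\mf{S}_Q$ be its stabilizer. The transformation rule $\sigma\Rel(\Sigma)=\sgn(\sigma)\Rel(\sigma\Sigma)$ shows that any $\sigma\in\mf{S}_L$ carries $I'_{\ms{P}}$ isomorphically onto $I'_{\sigma\ms{P}}$, so the external direct sum $\bigoplus_{\ms{P}} I'_{\ms{P}}$ is canonically identified with $\Ind_H^{\mf{S}_L}(I'_{\ms{P}_0})$ as $\mf{S}_L$-representations.

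The next step is to identify $I'_{\ms{P}_0}$ as an $H$-module, and in particular to show an ``all-or-nothing'' dichotomy. By construction $I'_{\ms{P}_0}$ is a quotient of $I_P^{(3)}\otimes \gr_{|Q|,Q}^{(3)}$ as an $H$-representation. Now $I_P^{(3)}$ is one-dimensional, and $\mf{S}_P$ acts on it by the sign character $S^{(1^6)}$ (as already noted in the proof of Proposition~\ref{prop-A-rel}). When $|L|=10$, the second factor $\gr_{4,Q}^{(3)}$ is the one-dimensional trivial representation of $\mf{S}_Q=\mf{S}_4$ by Proposition~\ref{prop:g4}(b); when $|L|=12$, the factor $\gr_{6,Q}^{(3)}$ is the five-dimensional irreducible $S^{(3,3)}$ of $\mf{S}_Q=\mf{S}_6$ by Proposition~\ref{prop:g6}(c). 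In either case $I_P^{(3)}\otimes\gr_{|Q|,Q}^{(3)}$ is already irreducible as an $H$-module, so its quotient $I'_{\ms{P}_0}$ is either zero (in which case the entire direct sum vanishes) or is the whole module.

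In the nontrivial case I will decompose $\Ind_H^{\mf{S}_L}(S^{(1^6)}\boxtimes S^{\mu})$ via Pieri's rule, treating $S^{(1^6)}$ as a single-column shape. For $|L|=10$ and $\mu=(4)$, the Pieri formula says the constituents are the $S^{\nu}$ for partitions $\nu$ of $10$ such that $\nu/(1^6)$ is a horizontal $4$-strip; an elementary case analysis (each of $\nu_2,\ldots,\nu_7$ must lie in $\{0,1\}$, and $\nu_2=\cdots=\nu_6=1$ is forced by $\nu\supset(1^6)$) yields $\nu\in\{(5,1^5),(4,1^6)\}$. For $|L|=12$ and $\mu=(3,3)$, Pieri (symmetrically, via $c^{\nu}_{(3,3),(1^6)}$) gives the $S^{\nu}$ for which $\nu/(3,3)$ is a vertical $6$-strip, forcing $\nu\in\{(4,4,1^4),(4,3,1^5),(3,3,1^6)\}$. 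A direct application of the hook length formula then produces the dimensions $126$ and $84$ in the first case, and $1925$, $2079$, and $616$ in the second, matching the claim exactly.

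There is no real analytic obstacle: the content of the proof is structural, with the only subtle point being the irreducibility of $I_P^{(3)}\otimes \gr_{|Q|,Q}^{(3)}$ as an $H$-module, which is what forces the ``zero-or-full'' dichotomy and which follows immediately from the earlier identifications of $I_P^{(3)}$ and $\gr_{|Q|,Q}^{(3)}$.
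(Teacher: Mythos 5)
Your proof is correct and takes essentially the same route as the paper: recognize the direct sum as $\Ind_{\mf{S}_P\times\mf{S}_Q}^{\mf{S}_L} I'_{\ms{P}}$, identify $I'_{\ms{P}}$ as zero or all of $I_P^{(3)}\otimes \gr_{|Q|,Q}^{(3)}$, apply Littlewood--Richardson (which you correctly specialize to Pieri/dual Pieri since $I_P^{(3)}$ is a column and $\gr_{|Q|,Q}^{(3)}$ is $(4)$ or $(3,3)$), and finish with the hook-length formula; the resulting partitions and dimensions all check. The only thing you make explicit that the paper leaves implicit is the justification of the ``zero or all'' dichotomy via irreducibility of $I_P^{(3)}\otimes\gr_{|Q|,Q}^{(3)}$ as an $H$-module, which is a worthwhile clarification but not a different argument.
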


\begin{proof}
The space $\bigoplus I'_{\ms{P}}$ is isomorphic as an $\mf{S}_L$-module to
$\Ind_{\mf{S}_P \times \mf{S}_Q}^{\mf{S}_L} I'_{\ms{P}}$ for any fixed
bipartition $\ms{P}=(P, Q)$.  We now handle the two cases separately.

First suppose $n=10$.  Then $I'_{\ms{P}}$ is either zero or isomorphic as an
$(\mf{S}_P \times \mf{S}_Q)$-module to $I_P^{(3)} \otimes \gr_{4, Q}^{(3)}$.
As an $\mf{S}_P$-module $I_P^{(3)}$ corresponds to the partition $1+1+1+1+1+1$ 
(the alternating representation) while as an $\mf{S}_Q$-module
$\gr_{4, Q}^{(3)}$ corresponds to the partition 4 (the trivial representation).
We now use the Littlewood-Richardson rule to compute $\Ind_{\mf{S}_P \times
\mf{S}_Q}^{\mf{S}_L} I'_{\ms{P}}$.  Excluding the case where $I'_{\ms{P}}=0$,
we find that the induction decomposes into a direct sum of two irreducibles,
corresponding to the partitions
\begin{displaymath}
5+1+1+1+1+1, \qquad 4+1+1+1+1+1+1.
\end{displaymath}
The hook length formula shows that these irreducibles have dimensions 126 and
84, respectively.

Now say $n=12$.  Again, $I'_{\ms{P}}$ is either zero or isomorphic as an
$(\mf{S}_P \times \mf{S}_Q)$-module to $I_P^{(3)} \otimes \gr_{6, Q}^{(3)}$.
The representation $I_P^{(3)}$ corresponds to the partition $1+1+1+1+1+1$ while
$\gr_{6, Q}^{(3)}$ corresponds to $3+3$.  The Littlewood-Richardson rule shows
that the induction decomposes into three irreducibles, corresponding to the
partitions
\begin{displaymath}
4+3+1+1+1+1+1, \qquad 4+4+1+1+1+1, \qquad 3+3+1+1+1+1+1+1.
\end{displaymath}
These irreducibles have dimensions 2079, 1925 and 616, respectively.
\end{proof}

We now complete the proof of Theorem~\ref{thm:base}.

\begin{proof}[Proof of Theorem~\ref{thm:base}]
We must show $I'_L=0$.  If $n=10$, we know that on one hand, $I'_L$ is at
most 25-dimensional, while on the other, it is a direct sum of at most two
irreducibles of dimensions 84 and 126.  It follows that $I'_L$ must be zero.
Similarly, when $n=12$, we know that, on one hand, $I'_L$ is
at most 560-dimensional, while on the other, it is a direct sum of at most
three irreducibles of dimensions 616, 1925 and 2079.  Again, we conclude
$I'_L=0$.  
\end{proof}


\section{The quadratics are generated by the simplest binomials}
\label{s:quad}

In \S \ref{s:quad}, we  prove the following, concluding the proof of
Theorem~\ref{mainthm}:
\Ncom{$\text{simplest/simple bin.\ rels.}$}

\begin{theorem}
\label{thm:quad}
If $L$ has even cardinality $n$, then the simplest binomial relations span
$I_L^{(2)}$ over $\Z[\tfrac{1}{n!}]$.
\end{theorem}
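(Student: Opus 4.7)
The plan is to combine the multiplicity-free decomposition of $I_L^{(2)}$ supplied by Proposition~\ref{prop:pfil2rep} with an induction on $n$. Proposition~\ref{prop:pfil2rep} identifies $I_L^{(2)}=\bigoplus_\lambda M_\lambda$ as $\lambda$ runs over partitions of $n$ into exactly four even parts; since any $\mf{S}_L$-submodule of a multiplicity-free $\mf{S}_L$-module is a direct sum of a subset of the $M_\lambda$, the theorem reduces to showing that the $\mf{S}_L$-submodule $W\subseteq I_L^{(2)}$ spanned by the simplest binomial relations contains every such $M_\lambda$.

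The base case $n=8$ is immediate: $I_8^{(2)}=M_{(2,2,2,2)}$ is irreducible and the 8-point simplest binomial is a nonzero vector, so its $\mf{S}_8$-orbit spans. For $n\ge 10$, fix a pair $\{a,b\}\subset L$ and set $L'=L\setminus\{a,b\}$. A simplest binomial on $L$ whose auxiliary matching contains $\uedge{ab}$ has the form $r\boxtimes Y_{\uedge{ab}}^{\,2}$ for some simplest binomial $r$ on $L'$, i.e.\ the image of $r$ under $\phi=\Sym^2(\iota)$, where $\iota\colon V_{L'}\to V_L$ sends $Y_A$ to $Y_{A\cup\uedge{ab}}$. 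Placing $a$ and $b$ adjacent on the circle, $\iota$ sends the non-crossing basis of $V_{L'}$ (Theorem~\ref{thm:kempe2}) injectively into the non-crossing basis of $V_L$, so $\phi$ restricts to an $\mf{S}_{L'}$-equivariant injection $I_{L'}^{(2)}\hookrightarrow I_L^{(2)}$ whose image lies in $W$ by the inductive hypothesis. By Frobenius reciprocity the $\mf{S}_L$-span of $\phi(I_{L'}^{(2)})$ is the image of the induction map $\widetilde\phi\colon\Ind_{\mf{S}_{L'}\times\mf{S}_{\{a,b\}}}^{\mf{S}_L}(I_{L'}^{(2)}\otimes\mathrm{triv})\to I_L^{(2)}$, and Pieri's rule decomposes each $\Ind(M_\mu\otimes\mathrm{triv})=\bigoplus_\lambda M_\lambda$ over partitions $\lambda$ with $\lambda/\mu$ a horizontal two-strip. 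Given any $\lambda\ne(2,2,2,2)$ of $n$ into four even parts, set $i=\max\{j:\lambda_j\ge 4\}$ and $\mu=(\lambda_1,\dots,\lambda_i-2,\dots,\lambda_4)$; the maximality of $i$ forces $\lambda_{i+1}\le 2\le\lambda_i-2$, so $\mu$ is again a partition of $n-2$ into four even parts with $\lambda/\mu$ a horizontal two-strip, and thus every required $M_\lambda$ appears as a summand of $\Ind(I_{L'}^{(2)}\otimes\mathrm{triv})$.

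The remaining point, and the main obstacle, is non-vanishing: when several $\lambda$'s extend a common $\mu$, the unique copy of $M_\mu$ inside $\phi(I_{L'}^{(2)})$ must project nontrivially onto each desired $\lambda$-component of $I_L^{(2)}\rvert_{\mf{S}_{L'}}$. The plan is to verify this by taking an explicit polytabloid highest-weight vector $v_\mu\in M_\mu$ adapted to the Young shape of $\mu$ (with each row realized as a matching on the corresponding part of $L'$), forming $\phi(v_\mu)=v_\mu\boxtimes Y_{\uedge{ab}}^{\,2}$, and evaluating the $\lambda$-Young symmetrizer against it through a direct tabloid computation; this is the single place where the precise combinatorics of the 8-point simplest binomial, beyond the representation-theoretic bookkeeping, enters. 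Once non-vanishing is established the induction closes and $W=I_L^{(2)}$.
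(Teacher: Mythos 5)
Your approach genuinely differs from the paper's: you induct on $n$ by appending a doubled edge $\uedge{ab}$, embedding $I_{L'}^{(2)}$ (for $|L'|=n-2$) into $I_L^{(2)}$ and invoking the Pieri rule to account for each irreducible constituent $M_\lambda$. The paper works in a single step, fixing $U$ of cardinality four and building the simple binomial map $\iota\colon\bw{V_{L'}}\otimes\bw{V_U}\to I_L^{(2)}$ with $|L'|=n-4$. That choice is not incidental: $\bw{V_U}$ is the sign representation of $\mf{S}_U$, and the unlabelled Littlewood--Richardson lemma in the paper's \S 8.2 shows that $\Hom_{\mf{S}_{L'}\times\mf{S}_U}(M_{\lambda^-}\otimes\epsilon,\,M_\mu)$ is nonzero (and then one-dimensional) only when $\mu$ is obtained from $\lambda^-$ by adding exactly one box to each of its four rows. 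Since $\lambda^-\mapsto\lambda$ is a bijection $X^-\to X$, injectivity of $\iota$ together with Schur's lemma immediately forces the image to project nontrivially onto \emph{every} $I_L^{(2)}[\lambda]$; there is no competition between targets and no further computation is needed.

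Your two-box version lacks this rigidity, and that is exactly where your proof has a gap that you acknowledge but do not close. The ``non-vanishing'' claim --- that the copy of $M_\mu$ in $\phi(I_{L'}^{(2)})$ projects nontrivially onto the desired $\lambda$-component of $I_L^{(2)}$ --- is nontrivial and is left as a ``plan.'' It is genuinely needed: already at $n=12$ the only available $\mu$ is $(4,2,2,2)$, while $I_{12}^{(2)}=M_{(6,2,2,2)}\oplus M_{(4,4,2,2)}$, and the image $\phi(I_{10}^{(2)})$ is a single copy of $M_{(4,2,2,2)}$ sitting inside the restriction of that direct sum to $\mf{S}_{10}$. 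Injectivity of $\phi$ only tells you it projects nontrivially onto \emph{some} summand; nothing in the argument rules out that it lies entirely in one of the two, in which case the other constituent of $I_{12}^{(2)}$ would be missed and the induction would fail. The rest of the argument (the $n=8$ base case, injectivity of $\phi$ via the non-crossing basis, the Pieri bookkeeping identifying a good $\mu$ for each $\lambda$) is correct, but without an actual proof of the non-vanishing the inductive step is incomplete. The cleaner route is the paper's: take $|U|=4$ so the induction/restriction multiplicities are one-to-one and the non-vanishing comes for free from injectivity.
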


The proof uses the \emph{simple binomial relations}.  We show that the spans of
the simple binomial relations and simplest binomial relations coincide.  We
then use the $\mf{S}_L$-module structure of $I_L^{(2)}$ to show that simple
binomial relations generate it.  It suffices to prove the result over any
field of characteristic $0$ or greater than $n$, so we do this.

\subsection{Simple binomial relations}

Let $L$ be an even set.  A \emph{binomial quadratic datum} is a pair
$D=(\Gamma, U)$ where $\Gamma$ is an undirected regular 2-colored graph on $L$
and $U$ is a subset of $L$ such that all edges of $\Gamma$ are  contained in
$U$ or $L \setminus U$.  Define $\Gamma'$ to be the graph obtained by inverting
the colors of the edges of $\Gamma$ contained in $U$, and set 
\begin{displaymath}
\Rel(D)=Y_{\Gamma}-Y_{\Gamma'},
\end{displaymath}
which is clearly a relation.
\Ncom{$\text{binom.\ quad. datum}$}

Let $D$ be a binomial quadratic datum.  Suppose that a pair of edges $e$ and
$e'$ of $\Gamma$  have the same color and both lie in $U$ or $L \setminus U$.
Let $\Gamma'$ and $\Gamma''$ be the other graphs occurring in the colored
Pl\"ucker relation on $e$ and $e'$.  Then $D'=(\Gamma', U)$ and $D''=(\Gamma'',
U)$ are binomial quadratic data.  We define the \emph{space of binomial
quadratic data} to be the $\Z$-span of the binomial quadratic data modulo the
(Pl\"ucker) relations $D+D'+D''=0$.  The association $D \mapsto \Rel(D)$
descends to a linear map
\begin{displaymath}
\Rel:\{ \textrm{the space of binomial quadratic data} \} \to I_L^{(2)}.
\end{displaymath}

We call a binomial quadratic datum \emph{simple} if $U$ has cardinality four.
We call the resulting relations \emph{simple binomial relations}.  We say
that a binomial quadratic datum is \emph{simplest} if it is simple and
in addition $\Gamma$ is made up of 2-cycles and two 4-cycles.  The
associated relations are the \emph{simplest binomial relations} defined in
the introduction; \eqref{e:simplest} is an example.
Note that if $D=(U, \Gamma)$ is a binomial quadratic datum and $\Gamma$ is a
union of 2-cycles and zero or one 4-cycle then $\Rel(D)=0$.  Although there are
more simple binomial relations than simplest binomial relations, they span the
same space: 
\Ncom{$\text{simple bin q datum, simple bin rels, simplest bin q datum, simplest bin rels}$}

\begin{proposition}
\label{prop:simp-binom}
Every simple binomial relation is a linear combination of simplest binomial
relations.
\end{proposition}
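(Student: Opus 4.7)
The plan is to simplify an arbitrary simple binomial datum $D=(\Gamma,U)$, with $|U|=4$, by applying same-color Pl\"ucker relations confined to $L\setminus U$ until the graph takes the shape of a simplest binomial datum, and to show that every other shape yields the zero relation in $\Sym^2(V_L)$. The starting observation is that if $m_1$ and $m_2$ denote the red and blue subgraphs of $\Gamma$, viewed as matchings on $L$, then $Y_\Gamma=Y_{m_1}Y_{m_2}$ in $\Sym^2(V_L)$, and the color-inverting operation on $U$ sends the unordered pair $\{m_1,m_2\}$ to
$$\bigl\{\,(m_2\cap U)\cup(m_1\cap(L\setminus U))\,,\,(m_1\cap U)\cup(m_2\cap(L\setminus U))\,\bigr\}.$$
An elementary check shows these two unordered pairs coincide---and hence $\Rel(D)=0$ in $\Sym^2(V_L)$---precisely when $m_1\cap U=m_2\cap U$ (i.e., the restriction of $\Gamma$ to $U$ is a pair of doubled edges) or when $m_1\cap(L\setminus U)=m_2\cap(L\setminus U)$ (i.e., the restriction of $\Gamma$ to $L\setminus U$ is a union of doubled edges).

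Next I would reduce the restriction of $\Gamma$ to $L\setminus U$ to a standard form. Any colored Pl\"ucker relation applied to a pair of same-colored edges both lying in $L\setminus U$ preserves the defining property that every edge of the graph is contained in $U$ or in $L\setminus U$, so it descends to a three-term relation among binomial quadratic data with the same $U$. Applying Proposition~\ref{prop:pfil2} to $V_{L\setminus U}^{\otimes 2}$ and iterating such Pl\"ucker moves, I would rewrite $\Rel(D)$ as a $\Z[\half]$-linear combination of $\Rel(D_i)$ for data $D_i=(\Gamma_i,U)$ in which the restriction of $\Gamma_i$ to $L\setminus U$ is a union of 2-cycles and at most one 4-cycle. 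The restriction of $\Gamma_i$ to $U$ is unaffected by these moves, and since it is a regular 2-colored graph on four vertices, it is automatically either a union of two 2-cycles or a single 4-cycle.

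The case analysis is then immediate. If the restriction of $\Gamma_i$ to $U$ consists of two 2-cycles, or if the restriction to $L\setminus U$ contains no 4-cycle, the vanishing criterion of the first paragraph gives $\Rel(D_i)=0$. The only remaining possibility is that the restriction of $\Gamma_i$ to $U$ is a 4-cycle on all four vertices of $U$ and the restriction to $L\setminus U$ consists of one 4-cycle on four of its vertices together with 2-cycles on the remaining $|L|-8$ vertices---which is precisely a simplest binomial datum. (In particular, when $|L|<8$ the last configuration cannot arise, and every simple binomial relation is already zero, consistent with $I_L^{(2)}=0$.) The only substantive step in the argument is the Pl\"ucker reduction supplied by Proposition~\ref{prop:pfil2}; the vanishing criterion itself is a tautological check that color-inversion on $U$ swaps $m_1$ and $m_2$ as soon as their restrictions to $L\setminus U$ agree.
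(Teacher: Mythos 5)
Your proof is correct and follows the same route as the paper: the key step in both is to apply Proposition~\ref{prop:pfil2} to $\Gamma|_{L\setminus U}$ to reduce to graphs that are unions of $2$-cycles and at most one $4$-cycle on $L\setminus U$. You additionally spell out the vanishing criterion (color inversion on $U$ fixes or swaps the unordered pair of matchings when $\Gamma|_U$ or $\Gamma|_{L\setminus U}$ is all doubled edges) and the resulting case analysis, which the paper handles implicitly by the remark, just before the proposition, that a binomial quadratic datum whose graph has zero or one $4$-cycle gives the zero relation.
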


Thus to prove Theorem~\ref{thm:quad} it suffices to show that the
simple binomial relations span $I_L^{(2)}$.
(We only need characteristic not $2$ to prove this.)

\begin{proof}
Let $D=(\Gamma, U)$ be a simple binomial quadratic datum.  By
Proposition~\ref{prop:pfil2} we can use colored Pl\"ucker relations
in $L \setminus U$ to write $\Gamma \vert_{L \setminus U}$ as a sum of graphs,
each of which is a union of 2-cycles and at most one 4-cycle.  This expresses
$\Rel(D)$ in terms of simplest binomial relations.
\end{proof}

Let $L$ be an even set and let $U$ be a subset of cardinality four.  Let $L'
= L \setminus U$.  Define a map
\begin{displaymath}
\iota:\bw{V_{L'}} \otimes \bw{V_U} \to I_L^{(2)}
\quad \quad \textrm{by} \quad \quad 
(Y_{\Gamma} \wedge Y_{\Gamma'}) \otimes (Y_{\Delta} \wedge Y_{\Delta'})
\mapsto
Y_{\Gamma \Delta} Y_{\Gamma' \Delta'}-Y_{\Gamma' \Delta} Y_{\Gamma \Delta'}.
\end{displaymath}
This is  the simple binomial relation associated to the
the simple binomial data $(\Phi, U)$ where $\Phi$ is the 2-colored graph
$\Gamma \Gamma' \Delta \Delta'$ in which $\Gamma \Delta$ has one color and
$\Gamma' \Delta'$ the other.
Note that $\iota$ is $(\mf{S}_{L'} \times \mf{S}_U)$-equivariant.

\begin{lemma}
\label{lem:iota}
The map $\iota$ is injective.
\end{lemma}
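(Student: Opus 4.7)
The plan is to choose a circular embedding in which the four vertices of $U$ sit consecutively, and then use the non-crossing basis of Kempe (Theorem \ref{thm:kempe2}) to read off $\iota$ on a basis of $\bw V_{L'}\otimes\bw V_U$ explicitly as a collection of \emph{differences of pairwise distinct monomial basis vectors} of $\Sym^2V_L$. Linear independence will then be immediate.

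First I would fix an embedding of $L$ into the unit circle so that the elements of $U$ are consecutive and write $U=\{u_1,u_2,u_3,u_4\}$ in cyclic order, so that the two non-crossing matchings on $U$ are $\Delta_0=\{u_1u_2,u_3u_4\}$ and $\Delta_1=\{u_1u_4,u_2u_3\}$. By Theorem \ref{thm:kempe2}, $V_{L'}$ has a basis $\{Y_{\Gamma_1},\dots,Y_{\Gamma_N}\}$ indexed by non-crossing matchings on $L'$ (for the induced embedding), and $V_U$ has the basis $\{Y_{\Delta_0},Y_{\Delta_1}\}$; consequently $\bw V_{L'}\otimes\bw V_U$ has the basis $\{e_{ij}:1\le i<j\le N\}$ with $e_{ij}=(Y_{\Gamma_i}\wedge Y_{\Gamma_j})\otimes(Y_{\Delta_0}\wedge Y_{\Delta_1})$. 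By the definition of $\iota$,
\begin{equation*}
\iota(e_{ij})\;=\;Y_{\Gamma_i\Delta_0}\,Y_{\Gamma_j\Delta_1}\;-\;Y_{\Gamma_j\Delta_0}\,Y_{\Gamma_i\Delta_1}.
\end{equation*}

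Next I would note that because the elements of $U$ are consecutive on the circle, the disjoint union of any non-crossing matching on $L'$ with any non-crossing matching on $U$ is a non-crossing matching on $L$. Hence each $\Gamma_k\Delta_\ell$ is non-crossing, so $Y_{\Gamma_k\Delta_\ell}$ is a basis vector of $V_L$ (again by Theorem \ref{thm:kempe2}), and products $Y_{\Gamma_k\Delta_\ell}\,Y_{\Gamma_{k'}\Delta_{\ell'}}$ are among the standard basis elements of $\Sym^2V_L$ indexed by unordered pairs of non-crossing matchings on $L$. I would then check by bookkeeping on $L'$- and $U$-parts that as $(i,j)$ ranges over pairs with $i<j$, the pairs $\{\Gamma_i\Delta_0,\Gamma_j\Delta_1\}$ and $\{\Gamma_j\Delta_0,\Gamma_i\Delta_1\}$ are all distinct from one another: any coincidence would force either $\Gamma_a\Delta_0=\Gamma_b\Delta_1$ (impossible since the $U$-parts $\Delta_0,\Delta_1$ differ), or $\Gamma_a=\Gamma_b$ with $a\ne b$ (impossible since non-crossing matchings give distinct basis vectors). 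Thus $\iota(e_{ij})$ is a difference of two distinct basis vectors of $\Sym^2V_L$, and no ``positive'' term for one $(i,j)$ coincides with a ``positive'' or ``negative'' term for another.

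Once this combinatorial bookkeeping is in place, a vanishing linear combination $\sum_{i<j}c_{ij}\,\iota(e_{ij})=0$ in $\Sym^2V_L$ forces every $c_{ij}=0$, completing the proof. The only real obstacle is the combinatorial verification just described; in particular, it is essential that $U$ was placed as four \emph{consecutive} points of the circular embedding, since otherwise the concatenation of non-crossing matchings on $L'$ and on $U$ need not be non-crossing on $L$, and the images $\iota(e_{ij})$ would no longer lie in the monomial basis of $\Sym^2V_L$ that we used to detect linear independence.
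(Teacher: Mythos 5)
Your proof is correct and takes essentially the same approach as the paper: both place the four vertices of $U$ consecutively on the circle so that concatenation preserves non-crossing matchings (Theorem~\ref{thm:kempe2}) and then read off linear independence of the images from distinctness of the resulting monomial basis elements of $\Sym^2 V_L$. The paper merely packages the final monomial bookkeeping as a short abstract linear-algebra lemma about two injective maps $f,g\colon V\to W$ with linearly disjoint images, whereas you carry it out explicitly; the content is the same.
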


\begin{proof}
Let $Y_{\Delta}$, $Y_{\Delta'}$ be a basis of $V_U$.  It suffices to show that
the map
$\bw{V_{L'}} \to \Sym^2(R_L^{(1)})$ given by 
\begin{displaymath}
Y_{\Gamma} \wedge Y_{\Gamma'} \mapsto
Y_{\Gamma \Delta} Y_{\Gamma' \Delta'}-Y_{\Gamma' \Delta} Y_{\Gamma \Delta'}
\end{displaymath}
is injective.  Let $f$ (resp.\ $f'$) be the map $V_{L'} \to R_L^{(1)}$ given by
$Y_{\Gamma} \mapsto Y_{\Gamma \Delta}$ (resp.\ $Y_{\Gamma} \mapsto
Y_{\Gamma \Delta'}$).  Both
$f$ and $f'$ are injective.  Furthermore, the images of $f$ and $f'$ are
linearly disjoint.  To see this, use the fact that planar graphs
form a basis for $R_L^{(1)}$ (Theorem~\ref{thm:kempe2}):  Put $L$ on a circle
so that the vertices in $U$
are consecutive.  If $\Gamma$ and $\Gamma'$ are planar graphs on $L'$
and $\Delta$ and $\Delta'$ are distinct planar graphs on $U$ then $\Gamma
\Delta$ and $\Gamma' \Delta'$ are distinct planar graphs on $L$.  The
lemma now follows from the following lemma in linear algebra.
\end{proof}

\begin{lemma}
Let $f, g:V \to W$ be linear maps of vector spaces which are injective and
have linearly disjoint images.  Then the map
\begin{displaymath}
\bw{V} \to \Sym^2(W), \qquad v \wedge w \mapsto f(v) g(w) - f(w) g(v)
\end{displaymath}
is also injective.
\end{lemma}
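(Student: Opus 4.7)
The plan is to reduce the problem to a straightforward linear independence statement among monomials in $\Sym^2(W)$ by exploiting the linear disjointness hypothesis.

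First I would choose a basis $\{e_i\}_{i \in I}$ of $V$ and set $x_i = f(e_i)$ and $y_i = g(e_i)$. Since $f$ is injective the family $\{x_i\}$ is linearly independent in $W$, and similarly $\{y_i\}$ is linearly independent. The key observation is that $\{x_i\} \cup \{y_j\}$ is jointly linearly independent: any relation $\sum a_i x_i + \sum b_j y_j = 0$ would put $\sum a_i x_i = -\sum b_j y_j$ in $f(V) \cap g(V) = 0$, and then injectivity of $f$ and $g$ separately forces all $a_i$ and $b_j$ to vanish.

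Next I would extend $\{x_i\} \cup \{y_j\}$ to a basis of $W$. Then in $\Sym^2(W)$ the products $x_i y_j$ (for arbitrary ordered pairs $(i,j)$, including $i=j$) are distinct monomials in the basis-monomial basis of $\Sym^2(W)$: the unordered pairs $\{x_i, y_j\}$ are all different from one another as $i, j$ vary, precisely because $x_i \ne x_{i'}$ for $i \ne i'$, $y_j \ne y_{j'}$ for $j \ne j'$, and $x_i \ne y_j$ for all $i, j$. In particular $\{x_i y_j : i,j \in I\}$ is a linearly independent set in $\Sym^2(W)$.

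Finally, the map in question sends the basis element $e_i \wedge e_j$ (with $i < j$) of $\bw V$ to $x_i y_j - x_j y_i$. A linear dependence $\sum_{i<j} c_{ij}(x_i y_j - x_j y_i) = 0$ is then a linear relation among distinct basis monomials of $\Sym^2(W)$, forcing every $c_{ij} = 0$, which is the desired injectivity. There is no real obstacle here; the only point that needs any care is confirming the disjoint-monomial conclusion, and that is immediate once the joint linear independence of $\{x_i\} \cup \{y_j\}$ is established.
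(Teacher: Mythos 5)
Your proof is correct and takes essentially the same approach as the paper: reduce to the statement that the products $f(v_i)g(v_j)$ are linearly independent in $\Sym^2(W)$, using injectivity plus linear disjointness. The only difference is that you spell out the step the paper leaves implicit — namely that $\{f(v_i)\} \cup \{g(v_j)\}$ is jointly linearly independent, hence extends to a basis of $W$, so the monomials $f(v_i)g(v_j)$ are distinct basis monomials of $\Sym^2(W)$.
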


\begin{proof}
Let $v_i$ be a basis for $V$ and assume that $v=\sum \alpha_{ij} v_i \wedge
v_j$ belongs to the kernel, so
\begin{displaymath}
0=\sum \alpha_{ij} (f(v_i) g(v_j)-f(v_j) g(v_i))=\sum (\alpha_{ij} -
\alpha_{ji}) f(v_i) g(v_j).
\end{displaymath}
The vectors $f(v_i) g(v_j)$ are linearly independent in $\Sym^2(W)$, so
 $\alpha_{ij}=\alpha_{ji}$ and hence $v=0$.
\end{proof}

\subsection{Completion of the proof of Theorem~\ref{thm:quad}}

Let $X$ denote the set of partitions of $n$ into exactly four even parts and
let $X^-$ be the set of partitions of $n-4$ into exactly four odd parts.
Thus $X$ has the partitions occurring in $I_L$ and $X^-$ has those occurring
in $\bw{V_{L'}}$ (Proposition  \ref{prop:pfil2rep}).  For a partition $\lambda
\in X$ we define a corresponding partition $\lambda^- \in X^-$ by removing one
box from each row in the Young diagram:
$(a,b,c,d) \mapsto (a-1, b-1, c-1, d-1)$.   This bijection
of sets $X \rightarrow X^-$
has a representation-theoretic characterization:
\begin{lemma}
Let $\lambda^- \in X^-$ and $\mu \in X$ be given. 
(Recall from \S \ref{action} that $M_{\lambda}$ is 
the irreducible representation of $\mf{S}_L$ corresponding to
partition $\lambda$.) Then
\begin{displaymath}
\dim \Hom_{\mf{S}_{L'} \times \mf{S}_U}(M_{\lambda^-} \otimes \epsilon, M_{\mu})
=\begin{cases}
1 & \textrm{if $\lambda=\mu$} \\
0 & \textrm{otherwise}
\end{cases}
\end{displaymath}
Here $\epsilon$ denotes the sign representation of $\mf{S}_U$.
\end{lemma}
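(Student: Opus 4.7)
The plan is to apply Frobenius reciprocity and then invoke Pieri's rule. First I would observe that by Frobenius reciprocity (valid since $|L|!$ is invertible),
\begin{displaymath}
\Hom_{\mf{S}_{L'} \times \mf{S}_U}(M_{\lambda^-} \otimes \epsilon, M_{\mu}) \cong \Hom_{\mf{S}_L}\bigl(\Ind_{\mf{S}_{L'} \times \mf{S}_U}^{\mf{S}_L}(M_{\lambda^-} \otimes \epsilon),\, M_{\mu}\bigr).
\end{displaymath}
The sign representation $\epsilon$ of $\mf{S}_U$ corresponds to the column partition $(1,1,1,1)$ of $4$, so the induced representation on the left is computed by the vertical-strip case of the Littlewood--Richardson rule (i.e., Pieri's rule for the sign tensor): it decomposes as $\bigoplus_{\nu} M_{\nu}$, where $\nu$ ranges over the partitions of $n$ whose Young diagrams are obtained from that of $\lambda^-$ by adding four boxes, no two in the same row. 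Thus the dimension in question equals the number of such $\nu$ that are equal to $\mu$, and the lemma reduces to a purely combinatorial claim about Young diagrams.

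Next I would carry out that combinatorial analysis. By hypothesis $\lambda^-$ has exactly four parts, all odd, while $\mu$ must have exactly four parts, all even. Since no two of the four added boxes can lie in the same row, each row of $\lambda^-$ receives at most one box, and at most one new row can be created (of length exactly $1$). A row of $\lambda^-$ that receives a box changes from odd to even length; a row that receives no box remains odd; and any new row created has odd length $1$. The only way to obtain a partition with exactly four parts all even is therefore to add one box to each of the four existing rows of $\lambda^-$, producing the unique partition $(\lambda^-_1+1, \lambda^-_2+1, \lambda^-_3+1, \lambda^-_4+1) = \lambda$, which is automatically a valid Young diagram since $\lambda^-$ is. Hence the multiplicity is $1$ when $\mu = \lambda$ and $0$ otherwise.

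There is no real obstacle; the content is a careful parity count inside Pieri's rule. If anything, the step requiring attention is verifying that a new fifth row is excluded by the even-parts hypothesis on $\mu$ (it would have odd length $1$), and that the strict partition inequalities $\lambda^-_1 \ge \lambda^-_2 \ge \lambda^-_3 \ge \lambda^-_4$ pass to $\lambda$ after incrementing, so that adding one box to every row actually yields a partition.
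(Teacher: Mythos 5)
Your proof is correct and follows essentially the same route as the paper's: Frobenius reciprocity converts the Hom space to a multiplicity in the induced representation, and the vertical-strip (Pieri) form of the Littlewood--Richardson rule together with a parity count shows that the only way to add four boxes, no two in a row, to a partition with four odd parts and land in a partition with exactly four even parts is to add one box to each existing row. The paper states this more tersely (noting only that $\epsilon$ corresponds to a single column and that the only way to add four boxes and stay in $X$ is one per row), whereas you spell out the parity bookkeeping and explicitly rule out the creation of a fifth row; the content is the same.
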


\begin{proof}
By Frobenius reciprocity, the dimension of the $\Hom$ space is equal to the
multiplicity of $M_{\mu}$ occurring in the induction $\Ind_{\mf{S}_{L'} \times
\mf{S}_U}^{\mf{S}_L} \left( M_{\lambda^-} \otimes \epsilon \right)$.  This can
be computed using the Littlewood-Richardson rule,  which is simple
in this case because $\epsilon$ is just the alternating representation (the
Young diagram is a single column of four boxes).  The key point is that
the only way to add on four boxes to $\lambda$ and end up with something
in $X$ is to put one box at the end of each row.
\end{proof}

\begin{corollary}
The map $\iota$ carries $\left( \bw{V_{L'}} \right)[\lambda^-] \otimes
\bw{V_U}$ into $I_L^{(2)}[\lambda]$ for any $\lambda \in X$,
where as usual $V[\lambda]$ is the $\lambda$-isotypic part of
a representation $V$ of the symmetric group.  \label{lastcor}
\end{corollary}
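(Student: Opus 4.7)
The plan is to invoke the preceding lemma together with the equivariance of $\iota$, reducing the statement to a small representation-theoretic computation identifying $\bw{V_U}$ as the sign character of $\mf{S}_U$.

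First I would pin down $\bw{V_U}$ as an $\mf{S}_U$-module. Since $|U|=4$, the only partition of $4$ into exactly four odd parts is $1+1+1+1$. Proposition~\ref{prop:pfil2rep} says that $\bw{V_U}$ is multiplicity free and consists of exactly those irreducibles indexed by partitions with four odd parts, so $\bw{V_U}$ is one-dimensional and isomorphic as an $\mf{S}_U$-module to the sign representation $\epsilon$. This observation is what makes the preceding lemma directly applicable.

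Next I would exploit the $(\mf{S}_{L'} \times \mf{S}_U)$-equivariance of $\iota$, together with the multiplicity-freeness statements of Proposition~\ref{prop:pfil2rep}. Since $\bw{V_{L'}}$ is multiplicity free, its $\lambda^-$-isotypic component is (zero or) isomorphic to $M_{\lambda^-}$, so $(\bw{V_{L'}})[\lambda^-] \otimes \bw{V_U} \cong M_{\lambda^-} \otimes \epsilon$ as a module for $\mf{S}_{L'} \times \mf{S}_U$. The target $I_L^{(2)}$ decomposes as the multiplicity-free sum $\bigoplus_{\mu \in X} M_\mu$. For each $\mu \in X$, composing $\iota$ with the projection onto $I_L^{(2)}[\mu] \cong M_\mu$ yields an $(\mf{S}_{L'} \times \mf{S}_U)$-equivariant map $M_{\lambda^-} \otimes \epsilon \to M_\mu$. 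By the preceding lemma, this $\Hom$ space is zero unless $\mu=\lambda$. Consequently the image of $\iota$ on $(\bw{V_{L'}})[\lambda^-] \otimes \bw{V_U}$ sits inside $I_L^{(2)}[\lambda]$, which is the desired conclusion.

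There is no genuine obstacle here: once the identification $\bw{V_U} \cong \epsilon$ is in hand, the rest is a mechanical application of Schur's lemma in the multiplicity-free setting provided by Proposition~\ref{prop:pfil2rep}. The only point one must be mildly careful about is that the identification $\bw{V_U} \cong \epsilon$ is genuinely as $\mf{S}_U$-representations — not merely as abstract one-dimensional spaces — but Proposition~\ref{prop:pfil2rep} is a statement about the full module structure, so this is automatic.
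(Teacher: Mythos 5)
Your proof is correct and follows the argument the paper leaves implicit: identify $\bw{V_U} \cong \epsilon$ via Proposition~\ref{prop:pfil2rep} (for $|U|=4$ the only partition into four odd parts is $1+1+1+1$), then combine the $(\mf{S}_{L'}\times\mf{S}_U)$-equivariance of $\iota$ with the preceding Hom-space computation and multiplicity-freeness. The paper states the corollary without proof, and your argument is the intended one.
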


We now can prove Theorem~\ref{thm:quad}.

\begin{proof}[Proof of Theorem~\ref{thm:quad}]
According to the above corollary, $\iota$ induces a map
\begin{displaymath}
\left( \bw{V_{L'}} \right)[\lambda^-] \otimes \bw{V_U} \to
I_L^{(2)}[\lambda]
\end{displaymath}
for any $\lambda \in X$.  This map is injective (Lemma~\ref{lem:iota}), and
$\left( \bw{V_{L'}} \right)[\lambda^-]$ is non-zero
(Proposition \ref{prop:pfil2rep}).  Thus
the image of $\iota$ has non-zero projection to each $I_L^{(2)}
[\lambda]$.  Since $I_L^{(2)}$ is multiplicity free
(Proposition \ref{prop:pfil2rep}), 
the image of $\iota$ generates $I_L^{(2)}$ as an $\mf{S}_L$-module.  Hence
the simple binomial relations span $I_L^{(2)}$.  Since every
simple binomial relation is a linear combination of simplest binomial
relations (Proposition \ref{prop:simp-binom}), we deduce
Theorem~\ref{thm:quad}.
\end{proof}


\end{document}